\tikzset{->-/.style={decoration={
  markings,
  mark=at position #1 with {\arrow{Classical TikZ Rightarrow[length=3pt]}}},postaction={decorate}}}
\newtheorem{theorem}{Theorem}[section]
\newtheorem{proposition}[theorem]{Proposition}
\newtheorem{lemma}[theorem]{Lemma} 
\newtheorem{corollary}[theorem]{Corollary}
\theoremstyle{definition}
\newtheorem{definition}[theorem]{Definition}
\newtheorem{conjecture}[theorem]{Conjecture}
\newtheorem{question}[theorem]{Question}
\newtheorem*{wordprob}{The Word Problem}
\newcommand{\la}{\langle}
\newcommand{\ra}{\rangle}
\newcommand{\A}{{\mathcal A}}
\newcommand{\B}{{\mathcal B}}
\newcommand{\N}{{\mathbb N}}
\newcommand{\R}{{\mathbb R}}
\renewcommand{\S}{{\mathbb S}}
\newcommand{\Z}{{\mathbb Z}}
\newcommand{\prq}{\preceq}
\newcommand{\Lk}{\operatorname{Lk}}
\newcommand{\id}{\operatorname{id}}
\newcommand{\FA}{\operatorname{FA}}
\newcommand{\Vertices}{\operatorname{Vertices}}
\newcommand{\Edges}{\operatorname{Edges}}
\newcommand{\HArea}{\operatorname{HArea}}
\newcommand{\Area}{\operatorname{Area}}
\newcommand{\DArea}{\operatorname{Area}}
\newcommand{\CAT}{\operatorname{CAT}}
\newcommand{\mass}{\operatorname{mass}}
\newcommand{\supp}{\operatorname{supp}}
\newcommand{\Lip}{\operatorname{Lip}}
\DeclareMathOperator*{\bigast}{\raisebox{-0.5ex}{\scalebox{2.0}{$\ast$}}}
\begin{document}

\date{\today}	
%\date{December 9, 2020}	
	
\title{Homological Dehn functions of groups of type $FP_2$}
	
\author{Noel Brady}
\address{Department of Mathematics\\
University of Oklahoma \\
Norman\\   OK 73019\\ USA} \email{nbrady@ou.edu}

\author{Robert Kropholler}
\address{Mathematics M\"unster\\
Fachbereich Mathematik und Informatik der Universit\"at M\"unster\\
Orl\'eans-Ring 10 \\48149 M\"unster\\ Germany}
\email{robertkropholler@gmail.com}

\author{Ignat Soroko}
\address{Department of Mathematics\\
Louisiana State University\\
Baton Rouge\\ LA 70803\\ USA}
\email{ignat.soroko@gmail.com}
	
\subjclass[2020]{Primary  20F65, 20J05, 20F05, 57M07.}

	\begin{abstract}
	    We prove foundational results for homological Dehn functions of groups of type $FP_2$ such as superadditivity and the invariance under quasi-isometry.
	    We then study the homological Dehn functions for Leary's groups $G_L(S)$ providing methods to obtain uncountably many groups with a given homological Dehn function. 
	    This allows us to show that there exist groups of type $FP_2$ with quartic homological Dehn function and unsolvable word problem.
	\end{abstract}
	
	\maketitle
	
	\section{Introduction}
	
	Geometric group theory can trace its roots back to the foundational work of Dehn \cite{Dehn1911}, in which he posed his famous three fundamental problems of group theory. 
	The first of these problems is:
	\begin{wordprob}
		Let $H$ be a finitely generated group with a generating set $S$. 
		Is there an algorithm that takes as input words in $S$ and outputs whether or not they are trivial?
	\end{wordprob}
	Dehn's seminal work showed that this problem is solvable for fundamental groups of surfaces. 
	However, in the 1950's Novikov~\cite{Nov} and Boone~\cite{Boo} showed that there exist finitely presented groups for which the word problem is unsolvable.
	
	For finitely presented groups the Dehn function $\delta_H$ determines exactly when such an algorithm exists.
	Moreover, there is an upper bound on the time complexity of such an algorithm in terms of $\delta_H$. 
	The Dehn function of finitely presented groups has received a great deal of interest over the last 100 years. 
	For a more complete background we refer the reader to \cite{BriChap} and the references therein.
	
	For infinitely presented groups the notion of Dehn function is not as useful as for finitely presented ones, since it can always be made linear. Hence it is desirable to find a suitable analog of the Dehn function for at least certain classes of infinitely presented groups.
	
	Groups of type $FP_2$ form a class of groups for which this can be done. While almost all of these groups are infinitely presented, they enjoy a remarkable property that their relation module is  finitely generated. This makes it natural to consider homological variants of the Dehn function, namely, the abelianised Dehn function $\delta_H^{ab}$ and the homological Dehn function~$\FA_H$. 

	Geometrically, the Dehn function can be viewed as a notion of area. 
	Namely, the Dehn function measures the area required to fill a loop of a given length with a disk,  
	the abelianised Dehn function measures the area required to fill a loop of a given length with a surface, and
	the homological Dehn function, measures the area required to fill a $1$-cycle with a $2$-chain. 
	For finitely presented groups all three functions have received a great deal of interest. 
	There are various bounds known, for instance, $\delta_H^{ab}\preceq \delta_H$ \cite{BMS}. 
	It is also known that for finitely presented groups the above inequality can be strict 
	\cite{ABDY}. 
	
	For groups of type $FP_2$ some aspects of homological Dehn functions were studied in~\cite{Ger,HM,MP,FM18}.
	In this article, we unify all these results into a single framework by considering homological finite presentations (\Cref{def:CayCx}). Using this notion we prove foundational results for homological Dehn functions such as their superadditivity (\Cref{prop:superadd}) and the invariance under quasi-isometry (\Cref{thm:qi}).
	
	A natural hope is that for groups of type $FP_2$, the homological Dehn function provides a notion of complexity for the word problem similar to that provided by the Dehn function of a finitely presented group. 
	We show that this is not the case by proving the following result.
	
	\begin{restatable*}{corollary}{thmsubrecursunsolv}
		There exists a group $H$ of type $FP_2$ with $\FA_H(n)\simeq n^4$ which has unsolvable word problem. 
	\end{restatable*}
	
	It is unclear whether one can prove the corresponding result for finitely presented groups and we leave this as an open question. 
	\begin{question}
		Suppose that $H$ is finitely presented. 
		If $\FA_H$ is bounded by a recursive function, does $H$ have decidable word problem?
	\end{question}
	
	Our proof of this result stems from the work of Leary. 
	In \cite{Lea}, it is shown that the class of groups of type $FP_2$ is uncountable, in stark contrast to the countable class of finitely presented groups. 
	The construction of Leary takes as input a flag complex $L$ and an arbitrary subset $S\subset \Z$ and outputs a group $G_L(S)$. 
	It is shown that $G_L(S)$ has type $FP_2$ if $H_1(L) = 0$.
	Thus, one can say that the property of being $FP_2$ is independent of $S$. 
	We prove a similar result for the homological Dehn functions of the groups $G_L(S)$. 
	Namely, combining Theorems \ref{thm:boundingfill} and \ref{thm:lowerbound} we obtain the following theorem. 
	\begin{theorem}\label{thm:1.2}
		Let $L$ be a finite connected flag complex with no local cut points and $H_1(L) = 0$. 
		Let $\FA_L$ be the homological Dehn function for $\pi_1(L)$ and let $H = G_L(S)$. 
		Then the following hold:
		\begin{enumerate}
			\item for any $S$, we have $\FA_H(n)\preceq n^4\FA_L(n)$;
			\item for any $S$, we have $\FA_H(n)\preceq n^2\FA_L(n^2)$; 
			\item if $S\neq \Z$, then $\FA_H(n)\succeq \FA_L(n)$. 
		\end{enumerate}
	\end{theorem}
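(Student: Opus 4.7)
The plan is to prove the three statements by separately establishing the upper bounds (1) and (2) via constructive fillings in the cube complex associated to Leary's construction, and the lower bound (3) via a retraction argument at a level excluded from $S$. Since the theorem is explicitly announced as a combination of Theorems \ref{thm:boundingfill} and \ref{thm:lowerbound}, I would treat the upper and lower parts as two independent tasks and amalgamate them at the end.

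For the upper bounds, I would work in the natural cube complex $X_L(S)$ on which $G_L(S)$ acts, together with the Bestvina--Brady style Morse function $\phi\colon X_L(S)\to\R$ coming from the $\Z$-valued character summing signed edge labels. Given an integer $1$-cycle $c$ in the homological Cayley complex for $G_L(S)$ of mass at most $n$, its image $\phi(c)$ lies in an interval of diameter at most $n$. I would slice $c$ at integer levels of $\phi$: at each level $k$ one produces a horizontal $1$-cycle $c_k$ lying in a copy of the universal cover $\widetilde L$, which can be filled at cost $\FA_L(|c_k|)$. The slicing also produces vertical strips (annular product pieces) that must be filled with squares. Careful bookkeeping yields the two bounds as a tradeoff: in (1) one sums $\FA_L(n)$ over $O(n)$ slices and pays an additional $n^3$ for the vertical assembly, while in (2) one uses a coarser strategy with a single slice of horizontal length $O(n^2)$ and pays only an $n^2$ vertical factor.

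For the lower bound (3), I would pick $m\in\Z\setminus S$ and exploit the fact that at height $m$ the branching of $X_L(S)$ degenerates: there is essentially a unique horizontal sheet through this level, and a canonical Lipschitz retraction from a neighborhood onto a copy of $\widetilde L$. Given a $1$-cycle $c$ in a homological Cayley complex for $\pi_1(L)$ requiring large filling, I would lift it to the height-$m$ sheet inside $X_L(S)$; any $2$-chain filling of it in $X_L(S)$ can be pushed forward to a $2$-chain filling of $c$ in $\widetilde L$ via the retraction, with mass controlled by a constant. This gives $\FA_H(n)\succeq \FA_L(n)$.

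The main obstacle will be the vertical filling bookkeeping in the upper bounds. The Leary complex $X_L(S)$ differs from the usual Salvetti universal cover precisely at levels in $S$, where the branching produces many parallel sheets that must be connected by $2$-chains without inflating the mass. Showing that one can always route vertical fillings through controlled branched levels, and verifying that both budgets $n^4\FA_L(n)$ and $n^2\FA_L(n^2)$ are honestly achieved (with their complementary tradeoff between slice count and slice length) will require a careful analysis of how squares interact across branch loci. The lower bound, by contrast, should be comparatively routine once the retraction at level $m$ is set up correctly.
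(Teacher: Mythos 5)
Your high-level plan is correct — the theorem is exactly the concatenation of the upper bounds (\Cref{thm:boundingfill}) and the lower bound (\Cref{thm:lowerbound}), and the lower bound does rest on a Lipschitz retraction to a copy of $\widetilde L$ at an excluded level — but there are two substantive gaps in how you propose to execute it.

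\textbf{Upper bounds.} Your slicing-at-integer-levels plan does not get off the ground, because there is nothing to slice. The group $G_L(S)$ acts on $X_L^{(S)}$ preserving the Morse function $f^{(S)}$, so the Cayley graph (and hence every $1$-cycle $c$ of the homological Cayley complex) sits inside a single level set $Z_L^{(S)} = (f^{(S)})^{-1}(1/2)$. The image $\phi(c)$ is a point, not an interval of diameter $n$, and there are no horizontal slices $c_k$. What the paper does instead is: (a) build a controlled \emph{disk filling} $\Delta$ of $\gamma$ in the CAT(0) complex $X_L^{(S)}$ using Niblo--Reeves normal cube paths, with at most $n^2/2$ vertices mapping to $X_c^{(0)}$, each with link loop of size $\leq 3n$, and image confined to $f^{-1}([-n/4,1+n/4])$ (\Cref{prop:controldehn}); (b) truncate $\Delta$ near each such vertex and fill the resulting link loops $\gamma_v$ inside $\Lk(\pi(v),X_L^{(S)})\in\{\S(L),\S(\widetilde L)\}$ at cost $\FA_L(|\gamma_v|)$; (c) push everything to $Z$ via the pushing map $\mathcal Q$, which is $O(n)$-Lipschitz on the relevant height range, giving an $O(n^2)$ mass multiplier by \Cref{lem:lipfillingconstant}. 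Crucially, bounds (1) and (2) of the theorem come from two different analyses of the \emph{same} construction: bound (1) uses $\sum_v\FA_L(|\gamma_v|)\leq (n^2/2)\FA_L(3n)$, while bound (2) uses superadditivity to get $\sum_v\FA_L(|\gamma_v|)\leq\FA_L(2n^2)$. Your proposal of two distinct constructions with $n^3$ versus $n^2$ ``vertical assembly'' budgets is not justified and does not match the mechanism.

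\textbf{Lower bound.} The idea of retracting at a level $m\notin S$ is correct, but the statement ``any $2$-chain filling of it in $X_L(S)$ can be pushed forward\ldots with mass controlled by a constant'' is wrong on two counts. First, $X_L^{(S)}$ is a CAT(0) cube complex, so every loop has a filling of quadratic size there; pushing such a filling forward would only give a quadratic bound, not $\FA_L$. Second, the geodesic retraction $r\colon X_L^{(S)}\smallsetminus\{v\}\to\Lk(v,X_L^{(S)})$ is \emph{not} Lipschitz with a uniform constant on $X_L^{(S)}\smallsetminus\{v\}$ — near $v$ its Lipschitz constant blows up. What makes the argument work in the paper (\Cref{lem:Lipretract}, \Cref{thm:lowerbound}) is that one takes a minimal filling $\beta$ \emph{in $Z$}, which is bounded away from $v$, where $r$ restricted to $Z$ is honestly Lipschitz; one then applies \Cref{lem:lipfillingconstant} to bound $\mass(r(\beta))$ and Federer--Fleming (\Cref{thm:FF}) to convert the mass bound into a cellular-chain bound, yielding $\FA_L(n)\leq Kk^2D\,\FA_Z(n)$. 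You should also note the preliminary diagonal push that turns the $1$-cycle $\gamma$ in the descending link into a $1$-cycle $\alpha$ of the same size in $Z$, so that it is a $1$-cycle of the Cayley complex that you are filling.
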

	It is worth noting that the case $S = \Z$ has been previously studied in \cite{ABDDY}, where it is shown that the homological Dehn function for $G_L(\Z)$ is always bounded by $n^4$. 
	
	By taking $\pi_1(L)$ to be a perfect hyperbolic group we obtain uncountably many groups of type $FP_2$ with the homological Dehn function bounded above by $n^4$. 
	In fact, by utilizing specific flag complexes $L$, we can show that this bound is sharp. 
	Moreover, since the homological Dehn function in this case is independent of $S$ we can use the results from \cite{MOW} to prove the following theorem. 
	\begin{restatable*}{theorem}{thmquartic}\label{thm:qiquartic}
		There are uncountably many quasi-isometry classes of groups of the form $G_L(S)$ of type $FP$ with the homological Dehn function $n^4$.
	\end{restatable*}
	
	We obtain similar results for iterated exponentials. 
	\begin{restatable*}{theorem}{thmiteratedexp}
		There are uncountably many quasi-isometry classes of groups of the form $G_L(S)$ of type $FP$ with the homological Dehn function $\simeq \exp^{(m)}(n)$. 
	\end{restatable*}
	
	When we consider the larger class of groups of type $FP_2$, we can obtain similar results for many functions growing faster than $n^4$. 
	Namely, we obtain the following theorem. 
	
	\begin{restatable*}{theorem}{thmsnowflake}\label{thm:manyclasses}
		Let $H$ be a finitely presented group. 
		Suppose that $\FA_H\succeq n^4$. 
		Then there are uncountably many quasi-isometry classes of groups with homological Dehn function $\FA_H$. 
		Moreover, these groups can be constructed to satisfy the same homological finiteness properties as $H$. 
	\end{restatable*}
	
	Our work here suggests that the homological Dehn function of $G_L(S)$ is independent of $S$ as long as $S\neq \Z$. 
	This leads us to the following fundamental question. 
	
	\begin{restatable*}{question}{Qdependence}
		Let $S, T\subsetneq \Z$. 
		Is the filling function for $G_L(S)$ the same as that for $G_L(T)$?
	\end{restatable*}
	
	The uncountable set of choices for $S$ is key in showing that there are uncountably many groups of type $FP_2$. 
	If one were able to show that there is a flag complex $L$ for which the homological Dehn function of $G_L(S)$ depends on $S$ in a nontrivial way, one may hope to show that there are uncountably many functions that appear as homological Dehn functions. Denote the \emph{homological isoperimetric spectrum} as the set
	\[
	\operatorname{HIP} = \{\alpha\mid n^{\alpha} \text{ is the homological Dehn function of a group of type $FP_2$} \}.
	\]
	We leave the following as an open question that appears to be a gap in our understanding of groups of type $FP_2$. 
	\begin{restatable*}{question}{Quncountii}
		    Do we have an equality $\operatorname{HIP} = \{1\}\cup [2, \infty)$?
	\end{restatable*}
	
	The general plan of the paper is as follows. 
	In Section \ref{sec:homfunctions} we discuss the basics of homological filling functions and prove many key theorems that are required for the general subject. 
	In Section \ref{sec:leary} we give the necessary background on $M_\kappa$-polyhedral complexes as well as the theorems we require from \cite{Lea}.
	We also prove various theorems about controlled fillings in CAT(0) cube complexes, using ideas from~\cite{NR}. These results are used in \Cref{sec:pushbounds}, but may present an independent interest. 
	The upper and lower bounds for~\Cref{thm:1.2} are proved in \Cref{sec:pushbounds}, where also the examples required for \Cref{thm:uncountablymanyexp} are given. In \Cref{sec:n4bounds} we prove \Cref{thm:qiquartic} giving uncountably many groups of type $FP_2$ with quartic homological Dehn function. In \Cref{sec:wordproblem}, we establish an exact criterion when the word problem for $G_L(S)$ is solvable. Finally, in \Cref{sec:con}, we discuss extending our results to groups that are not of the form $G_L(S)$ and discuss a few intriguing open questions. 
	
	\subsection*{Acknowledgements} The authors acknowledge support from Simons Foundation award 430097. The second author was funded by the Deutsche Forschungsgemeinschaft (DFG, German Research Foundation) under Germany's Excellence Strategy EXC 2044--390685587, Mathematics M\"unster: Dynamics--Geometry--Structure. The third author was supported by the AMS--Simons Travel Grant. The authors are grateful to \mbox{Eduardo} Mart\'{\i}nez-Pedroza for his comments which improved the quality of this paper.

	\section{Homological Dehn functions}\label{sec:homfunctions}
	
	\subsection{Finiteness properties of groups}
	
	Throughout the paper we will be interested in finding groups with certain homological finiteness properties. 
	We are primarily concerned with the finiteness conditions $FP_n$ and $FP$. 
	For completeness recall the definitions here. 
	\begin{definition}
		A group $H$ is of type $FP_n$ if there is an exact sequence of the form:
		\begin{equation*}
		P_n\longrightarrow P_{n-1}\longrightarrow \,\dots\,\longrightarrow P_1\longrightarrow P_0\longrightarrow \Z\longrightarrow 0
		\end{equation*}
		where $P_i$ are finitely generated projective $\Z H$-modules. A group $H$ is of type $FP_\infty$ if it is of type $FP_n$ for all $n\in\N$.  
	\end{definition}
	
	\begin{definition}
		A group $H$ is of type $FP$, if there is an exact sequence of the form:
		\begin{equation*}
		0\longrightarrow P_n\longrightarrow P_{n-1}\longrightarrow \,\dots\,\longrightarrow P_1\longrightarrow P_0\longrightarrow \Z\longrightarrow 0
		\end{equation*}
		where $P_i$ are finitely generated projective $\Z H$-modules.  
	\end{definition}
	
	We will be particularly interested in the case of groups of type $FP_2$. In this case, there are many equivalent conditions to the above. 
	
	\begin{definition}
		Let $H$ be a finitely generated group given by the presentation $\la \A\,|\, R\ra$, i.e. there is a short exact sequence 
		\[
		1\longrightarrow N\longrightarrow  F(\A)\longrightarrow  H\longrightarrow  1
		\]
		where $F(\A)$ is a free group on a finite set of free generators $\A$ and $N=\la\!\la R\ra\!\ra$ is the normal closure of $R$ in $F(\A)$. The action by conjugation of $F(\A)$ on $N$ induces an action of $H$ on $N^{ab}=N/[N,N]$. 
		We call $N^{ab}$ with this action the {\em relation module} for $H$. 
	\end{definition}
	
	Strictly speaking the relation module is an invariant of a given presentation. However, we will see that finite generation of the relation module is an invariant of $H$ and not the choice of presentation. 
	
	Throughout this paper we will be interested in various types of complexes. All the complexes considered can be given the structure of combinatorial complexes. These are  CW complexes with  combinatorial restrictions on the attaching maps of cells. We refer the reader to~\cite[Appendix]{Hat02} for the background and the notation of CW complexes.
	
	\begin{definition}[Combinatorial complexes and maps] Combinatorial complexes and combinatorial maps between them are defined recursively on dimension. Zero dimensional cell complexes are defined to be combinatorial as are arbitrary maps between them. In general, a continuous map between CW complexes is said to be \emph{combinatorial} if it maps each open cell of the domain homeomorphically to an open cell of the target. Having defined combinatorial $k$-complexes and combinatorial maps between combinatorial $k$-complexes, one defines a combinatorial $(k+1)$-complex to be one whose $k$-skeleton is a combinatorial complex and whose attaching maps $\phi_e$ of $(k+1)$-cells $e$ are combinatorial maps for suitable combinatorial structures on $\partial D_e^{k+1} = S^k$. 
	\end{definition}
	
	\begin{definition}\label{def:CayCx}
		(Cayley complex relative to a pair, homological finite presentation)
		Let $H=\la \A\,|\, R\ra$ be a group presentation with $\A$ finite and let $R_0\subset R$. 
		The \emph{Cayley complex relative to a pair $(\A, R_0)$} is a combinatorial $2$-complex $X(\A,R_0)$ obtained from the Cayley graph $\Gamma(H,\A)$ in the following way. For each word $r\in R_0$ and each vertex $v$ of $\Gamma(H,\A)$, we attach a disk $D_r^v$ to the path in $\Gamma(H,\A)$ labeled with the word $r$ which starts at $v$. We set 
		\[
		X(\A,R_0)=\Gamma(H,\A)\cup\textstyle\bigcup_{r\in R_0,v\in H} D_r^v.
		\]
		We extend the action of $H$ over $X(\A,R_0)$ by $g\cdot D_r^v = D_r^{g\cdot v}$. (Note that if a relation is a proper power, i.e. $r = w^n$ with $n>1$, then there will be $n$ disks at $v$ with the same boundary label $r$. These disks are $D_r^{w^k\cdot v}$ for $0\leq k < n$.) This action of $H$ on $X(\A,R_0)$ is free.
		
		A \emph{homological finite presentation} for $H$ is a pair $\la \A \left|\right| R_0\ra$ such that $R_0$ is a finite subset of $R$ and $H_1(X(\A,R_0))=0$. 
		We call the $2$-complex $X(\A,R_0)$ in this case {\em the homological Cayley complex} for $\la \A\,||\, R_0\ra$. 
	\end{definition}
	
	\begin{proposition}\label{prop:freeimpliesCayley}
		Let $X$ be a connected $2$-dimensional combinatorial complex such that $H_1(X) = 0$. 
		Suppose that $H$ acts on $X$ freely, cellularly, cocompactly and with one orbit of vertices. 
		Then there is a homological finite presentation $\la \A \left|\right| R_0\ra$ for $H$ such that $X$ is the homological Cayley complex. 
	\end{proposition}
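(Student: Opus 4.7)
The plan is to read off the generators and (a subset of the) relators directly from the combinatorics of $X$. Since $H$ acts freely, cellularly, cocompactly, and transitively on vertices, fix a basepoint vertex $v_0\in X^{(0)}$ and identify $V(X)$ with $H$ via $h\cdot v_0\leftrightarrow h$. For each $H$-orbit of oriented edges in $X^{(1)}$, choose a representative $e_i$ with initial vertex $v_0$ and terminal vertex $g_iv_0$; freeness and cocompactness ensure that there are finitely many such orbits, say $e_1,\dots,e_m$. Let $\A=\{a_1,\dots,a_m\}$ with the letter $a_i$ interpreted as $g_i\in H$. By $H$-equivariance of the attaching maps, the $1$-skeleton $X^{(1)}$ is $H$-equivariantly isomorphic to the Cayley graph $\Gamma(H,\A)$, and connectivity of $X$ forces $\A$ to generate $H$.

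Next I would extract the relators. Since the action on $2$-cells is free and cocompact, there are finitely many $2$-cell orbits; pick a representative $D_j$ for each one, then translate within its orbit so that some fixed vertex on $\partial D_j$ maps to $v_0$. The combinatorial attaching map $\phi_j\colon \partial D_j\to X^{(1)}=\Gamma(H,\A)$ is then a combinatorial loop at $v_0$, and reading edge labels around $\partial D_j$ produces a word $r_j\in F(\A)$. Each $r_j$ represents the identity in $H$ because its image is a loop at $v_0=1$. Set $R_0=\{r_1,\dots,r_k\}$, and take $R\supseteq R_0$ to be the full kernel of $F(\A)\to H$, so that $\la\A\mid R\ra$ is a group presentation of $H$ with $R_0\subset R$.

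It remains to identify $X$ with $X(\A,R_0)$ as $H$-complexes. The $1$-skeletons agree as noted. For $2$-cells, the orbit of $D_j$ in $X$ is a free $H$-set; for each $h\in H$, the translate $h\cdot D_j$ is attached along the path in $\Gamma(H,\A)$ labeled by $r_j$ and starting at $h$. This matches the disk $D_{r_j}^h$ in the construction of $X(\A,R_0)$, so the two complexes coincide $H$-equivariantly. Since $H_1(X)=0$ by hypothesis, $\la \A\,\|\,R_0\ra$ is a homological finite presentation with homological Cayley complex $X$.

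The main subtlety to keep an eye on is the proper-power case: if some $r_j$ has the form $w^n$ with $n>1$, then \Cref{def:CayCx} places $n$ distinct disks $D_{r_j}^{w^kv}$ ($0\le k<n$) at each vertex $v$, and I must verify that the $n$ translates $w^k\cdot D_j$ in $X$ provide exactly these. Freeness of the $H$-action guarantees that the $n$ translates are distinct and have the required basepoints, while the choice of one orbit representative per $H$-orbit of $2$-cells avoids overcounting on the $X$ side. A small amount of care is also needed to ensure orientations of the edges $e_i$ are chosen consistently so that the boundary words $r_j$ use the generators and their inverses unambiguously.
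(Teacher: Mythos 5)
Your proof is correct and takes essentially the same approach as the paper: identify the vertex set with $H$, read the generating set off the $H$-orbits of edges to obtain the Cayley graph as $1$-skeleton, read the relators off orbit representatives of $2$-cells based at the identity, and then observe that the resulting relative Cayley complex $X(\A,R_0)$ coincides $H$-equivariantly with $X$; the hypothesis $H_1(X)=0$ then makes $\la\A\,\|\,R_0\ra$ a homological finite presentation. Your extra remarks on orientations and the proper-power case are careful additions but reflect subtleties already built into Definition~\ref{def:CayCx}, which the paper's proof handles implicitly.
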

	\begin{proof}
		Since $H$ acts freely with a single orbit of vertices, it follows that $X^{(1)}$ is a Cayley graph for $H$ with respect to some generating set $\A$. This generating set is in one-to-one correspondence with orbits of edges for $H$ acting on $X^{(1)}$. 
		
		Since the action is cocompact there are only finitely many orbits of $2$-cells $[D_1], \dots, [D_n]$. 
		Moreover, each orbit of $2$-cells contains a $2$-cell based at the identity, we will assume this is $D_i$. 
		Let $r_i$ be the attaching map of $D_i$ read as a word in $\A$. 
		
		Now consider the Cayley complex relative to $\mathcal{P} = \la \A \left|\right| r_1, \dots, r_n\ra$. 
		It follows from the choices of $\A$ and $r_i$ that this relative Cayley complex is exactly $X$. 
		
		Since $H_1(X)$ was assumed trivial, we see that $X$ the homological Cayley complex for $\la \A \left|\right| R_0\ra$.
	\end{proof}
	
	The following relates the existence of a finite homological presentation to being of type~$FP_2$. 
	
	\begin{proposition}\label{prop:equiv}
		Let $H=\la \A\,|\,R\ra$ with $\A$ finite, and let  $N=\la\!\la R\ra\!\ra\le F(\A)$. The following are equivalent:
		\begin{enumerate}
			\item $H$ is of type $FP_2$;
			\item The relation module $N^{ab}$ is finitely generated as a $\Z H$-module;
			\item There exist a homological finite presentation $\la \A \left|\right| R_0\ra$ for $H$.
		\end{enumerate}
	\end{proposition}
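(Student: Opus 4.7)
The strategy is to cycle $(3)\Rightarrow(1)\Rightarrow(2)\Rightarrow(3)$ by reading off information from the equivariant cellular chain complex of the relative Cayley $2$-complex. Since $H$ acts freely on $X(\A,R_0)$ with one orbit of vertices, the chain complex takes the form
\[
\Z H^{|R_0|}\xrightarrow{\partial_2}\Z H^{|\A|}\xrightarrow{\partial_1}\Z H\xrightarrow{\varepsilon}\Z\to 0,
\]
with $\varepsilon$ the augmentation. Connectedness forces exactness at $\Z H$, while covering-space theory applied to the Cayley graph $\Gamma=\Gamma(H,\A)$ identifies $\ker\partial_1=H_1(\Gamma)$ with the relation module $N^{ab}$ (the deck action of $H$ on $\Gamma$ matches the conjugation action on $N/[N,N]$). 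Consequently the failure of exactness at $\Z H^{|\A|}$ is precisely $H_1(X(\A,R_0))$. In particular, for $(3)\Rightarrow(1)$, the defining condition $H_1(X(\A,R_0))=0$ together with finiteness of $R_0$ turns the displayed sequence into a length-$2$ partial free resolution of $\Z$ by finitely generated free $\Z H$-modules, establishing type $FP_2$.

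For $(1)\Rightarrow(2)$, I would start from the finite-$\A$ partial resolution $0\to N^{ab}\to\Z H^{|\A|}\to\Z H\to\Z\to 0$ and invoke the standard homological-algebra principle that groups of type $FP_n$ are precisely those for which every partial projective resolution of length $n-1$ by finitely generated modules has finitely generated $n$-th syzygy. Applied here, this forces $N^{ab}$ to be finitely generated over $\Z H$.

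For $(2)\Rightarrow(3)$, the family $\{[r]:r\in R\}$ already spans $N^{ab}$ as a $\Z H$-module since $N$ is the normal closure of $R$. Using finite generation of $N^{ab}$, I would pick finitely many $\Z H$-generators, express each as a $\Z H$-linear combination of such $[r]$'s, and let $R_0\subset R$ be the finite collection of relators thereby appearing; then the image of $\partial_2$ in the chain complex of $X(\A,R_0)$ equals $N^{ab}=\ker\partial_1$, so $H_1(X(\A,R_0))=0$. The only substantive step in the whole argument is $(1)\Rightarrow(2)$: one must pass from the abstract existence of \emph{some} finite-type projective partial resolution to finite generation of the \emph{specific} syzygy $N^{ab}$ arising from our chosen presentation. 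This is a routine consequence of Schanuel's lemma together with the fact that finitely generated projectives are summands of finitely generated frees, and I would cite it rather than reprove it.
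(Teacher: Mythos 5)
Your proof is correct and follows essentially the same route as the paper's: both hinge on identifying the relation module $N^{ab}$ with $H_1(\Gamma(H,\A))=\ker\partial_1$ in the equivariant chain complex of the relative Cayley complex, and then reading off the equivalence. The only difference is organizational: the paper cites Brown for $(1)\Leftrightarrow(2)$ and proves $(2)\Leftrightarrow(3)$ directly, whereas you run the cycle $(3)\Rightarrow(1)\Rightarrow(2)\Rightarrow(3)$ and unpack the $(1)\Rightarrow(2)$ step via Schanuel's lemma rather than citing it, which is fine and arguably more self-contained.
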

	\begin{proof}
		The equivalence of the first two conditions is well known, see~\cite[VIII.5,\,Exerc.\,3b]{Bro} and also~\cite[\textsection2,\,Prop.\,3]{Tw}. 
		
		For the equivalence of the latter two conditions, note that the relation module is also isomorphic to $H_1(\Gamma(H,\A))$ where $\Gamma(H,\A)$ is the Cayley graph of $H$ with respect to the generating set $\A$.
		Thus, if it is finitely generated as a $\Z H$-module, then picking a finite generating set of loops gives us a finite homological presentation. 
		
		Conversely, if we have a homological finite  presentation, then the set $R_0$ gives a finite generating set for $H_1(\Gamma(H,\A))$.
	\end{proof}
	
	\subsection{Dehn functions and homological Dehn functions}\label{dehn}
	We will consider Dehn functions (homological or otherwise) up to the following equivalence relation.
	
	\begin{definition} ($\simeq$ equivalence)
		Let $f,g\colon[0,\infty)\to[0,\infty)$ be two functions, we say $f\preceq g$ if there exist constants $A,B>0$ and $C,D,E\ge0$ such that $f(n)\leq Ag(Bn+C)+Dn+E$ for all $n\geq0$.
		We say $f\simeq g$ if $f\prq g$ and $g\prq f$, where $f\prq g$. We extend this equivalence relation to functions $\N\to[0,\infty)$ by assuming them to be constant on each interval $[n,n+1)$. 
	\end{definition} 
	
	We are interested in  disk fillings of combinatorial loops in  combinatorial complexes. 
	
	\begin{definition}[Combinatorial loop] Let $X$ be a combinatorial complex. A \emph{combinatorial loop in $X$} consists of a combinatorial map $\gamma\colon S^1 \to X$ for some combinatorial structure on $S^1$.  Note that the domain $S^1$ may be replaced by an interval $[a,b]$ with the requirement that $\gamma(a) = \gamma(b)$. The \emph{length} of the combinatorial loop $\gamma$ is denoted by $|\gamma|$ and is defined to be the number of $1$-cells in the combinatorial structure on the domain $S^1$. 
	\end{definition}
	
	\begin{definition}[Disk diagram]
		A \emph{disk diagram} $\Delta$ is a finite combinatorial $2$-complex which is defined as in Section~1.1 of Part~II of \cite{BRS} to be  $\Delta = S^2\smallsetminus e_\infty$ for some open $2$-cell $e_\infty$ in a combinatorial cell structure on the $2$-sphere $S^2$. One can think of a  disk diagram $\Delta$ as living in the plane; in this case $e_\infty$ represents the exterior region of $\Delta$ together with the point at infinity. We call the attaching map $\phi_{e_\infty}: \partial D^2_{e_\infty} \to \Delta^{(1)} \hookrightarrow \Delta$ the \emph{boundary circle} of $\Delta$. 
	\end{definition}
	
	\begin{definition}[Disk filling of a combinatorial loop]
		Let $X$ be a combinatorial complex and $\gamma\colon S^1 \to X$ be a combinatorial loop.   A \emph{disk filling} of $\gamma$ consists of a  disk diagram $\Delta$ and a combinatorial map $\pi: \Delta \to X$ such that $\gamma$ factors through the attaching map of $e_\infty$; that is, $\gamma = \pi \circ \phi_{e_\infty}$.  	\end{definition}
	
	We  use annuli to realise free homotopies between combinatorial loops. 
	
	\begin{definition}[Annular diagram]
		In an analogous fashion, an \emph{annular diagram} $A$ is defined be a finite combinatorial $2$-complex   $A = S^2\smallsetminus(e_\infty \cup e_0)$ for some open $2$-cells $e_\infty$ and $e_0$ in a combinatorial cell structure on $S^2$.  As is the case with  disk diagrams, we refer to  the attaching maps $\phi_{e_\infty}\colon \partial D^2_{e_\infty} \to A^{(1)} \hookrightarrow A$  and  $\phi_{e_0}\colon \partial D^2_{e_0} \to A^{(1)} \hookrightarrow A$ as \emph{boundary circles} of~$A$.
	\end{definition}
	
	\begin{definition}[Annular diagram realizing a free homotopy of loops]
		Let $X$ be a combinatorial complex and  let $\gamma\colon S^1 \to X$ and $\alpha\colon S^1 \to X$  be two combinatorial loops. We say that the  annular diagram $A$ \emph{realises a free homotopy between $\gamma$ and $\alpha$} if  there is a combinatorial map $\pi\colon A \to X$ such that $\gamma$ (resp.\ $\alpha$) factors through the attaching map of $e_\infty$ (resp.\ $e_0$); that is, $\gamma = \pi \circ \phi_{e_\infty}$ and $\alpha = \pi \circ \phi_{e_0}$.  
	\end{definition}
	
	When we use the pushing fillings machinery of \cite{ABDDY}, we need bounds on the number of $1$-cells in links of vertices in disk fillings of combinatorial loops. 
	
	\begin{definition}[Size of link in a disk or annular diagram]
		Note that a  disk diagram $\Delta$ is obtained from its $1$-skeleton by attaching  (a finite number of)  $2$-cells $e$ via combinatorial attaching maps $\phi_{e}: S^1_e =\partial D^2_e \to \Delta^{(1)}$. Corresponding to  each vertex $w \in \partial D^2_{e}$ the corner of the $2$-cell $e$ at $w$ contributes a segment to the link of the vertex $\phi_e(w) \in \Delta$.   We define the \emph{size of the link} of a vertex $v \in \Delta^{(0)}$ (denoted by $|\Lk(v, \Delta)|$) to be the total number of such $1$-cells:
		\[
		|\Lk(v, \Delta)| \; =\; |\{w \in (\partial D_e^2)^{(0)} \, | \, e {\hbox{ is a $2$-cell of $\Delta$, }} \phi_{e}(w) = v\}| .
		\]
		Similarly, one defines the size, $|\Lk(v, A)|$, of the link $\Lk(v, A)$ of a vertex in an annular diagram $A$. 
	\end{definition}
	
	\begin{definition} \label{def:area}
		(Area, filling function, Dehn function)
		Let $X$ be a simply connected combinatorial complex. 
		For each loop $\gamma$ in $X^{(1)}$ there exist a  disk diagram $\Delta$ with boundary $\gamma$. 
		Given such a  disk diagram let $|\Delta|$ denote the number of $2$-cells in the diagram. 
		Then the \emph{area} of $\gamma$ is 
		\[
		\Area_X(\gamma)\vcentcolon=\min\big\{\text{$|\Delta|\mid \Delta$ is a  disk diagram with boundary $\gamma$}\big\}.
		\]
		We define the \emph{filling function} of $X$ as
		\[
		\delta_X(n)\vcentcolon=\sup\big\{\Area_X(\gamma)\mid \gamma\text{ is a  loop in }X^{(1)}\text{ of length $\leq n$}\big\},
		\]
		
		We define the {\em Dehn function} $\delta_H$ of a finitely presented group $H$ to be the filling function of the universal cover of a finite presentation $2$-complex. 
	\end{definition}
	
	If $H$ is a finitely presented group, then the Dehn functions for two finite presentation complexes are the same up to $\simeq$ equivalence,  see~\cite[1.3.3]{BriChap}.
	Thus, for finitely presented groups the above definition gives a well-defined group invariant. 
	
	If one were to take an infinite presentation complex, then a similar function could be defined. 
	However, it is no longer invariant of the group $H$. 
	
	For groups of type $FP_2$, one can use homological notions to define filling functions. 
	There are two competing notions well studied in the literature. 
	There are the abelianised Dehn function $\delta_H^{ab}$ and the homological Dehn function $\FA_H$. 
	We define these notions for spaces with $H_1$ trivial. 
	Throughout, we will focus our interests on the homological filling function as the superadditivity property (\Cref{prop:superadd}) makes certain computations easier. 
	We leave it as an exercise to the reader to verify many of the key results of this section for the abelianised filling function.
	
	\begin{definition}\label{def:homarea}
		(Homological area, Homological filling function, Size) Let $X$ be a combinatorial complex with $H_1(X,\Z)=0$. For each $1$-cycle $\gamma$ in $X^{(1)}$ there exist a $2$-chain $c=\sum_i a_i\sigma_i$ in $X$, where $a_i\in\Z$, and $\sigma_i$ are $2$-cells, such that $\gamma=\partial c$. Then the \emph{homological area} of $\gamma$ is 
		\[
		\HArea_X(\gamma)\vcentcolon=\min\big\{\textstyle\sum_i|a_i|\mid\gamma=\partial c,\, c=\textstyle\sum_i a_i\sigma_i\big\}.
		\]
		We define the \emph{homological filling function} of $X$ as
		\[
		\FA_X(n)\vcentcolon=\sup\big\{\HArea_X(\gamma)\mid\gamma\text{ is a  $1$-cycle in }X^{(1)}\text{ with }|\gamma|\le n\big\},
		\]
		where $|\gamma|$ is the \emph{size} of $\gamma$, given by the $\ell^1$--norm on the cellular chain group $C_1(X)$. 
		
		This quantity can be realised geometrically as follows. Since $\gamma$ is a $1$-cycle it can be represented as a cellular map $\sqcup_i S^1\to X$. Then $|\gamma|$ is the minimal number of $1$-cells of $\sqcup_i S^1$ across all representation $\sqcup_i S^1\to X$ of $\gamma$. 
		
		We define the \emph{abelianised filling function} of $X$ as
		\[
		\delta_X^{ab}(n)\vcentcolon=\sup\big\{\HArea_X(\gamma)\mid \gamma\text{ is a  loop in }X^{(1)}\text{ with }|\gamma|\le n\big\}.
		\]
	\end{definition}

	\begin{lemma}\label{prop:compact}
		Let $X$ be a compact combinatorial complex with $H_1(X) = 0$, then ${\FA_X(n)\simeq n}$.
	\end{lemma}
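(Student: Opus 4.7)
The plan is to exploit the compactness of $X$ to reduce the problem to a finite-dimensional linear algebra statement. Since $X$ is compact, the cellular chain groups $C_1(X)$ and $C_2(X)$ are finitely generated free abelian groups. The hypothesis $H_1(X)=0$ then means the cellular boundary $\partial_2\colon C_2(X)\to Z_1(X):=\ker\partial_1$ is surjective. I would then choose a $\Z$-linear section $s\colon Z_1(X)\to C_2(X)$ of $\partial_2$. Such a section exists because $Z_1(X)$, being a subgroup of the free abelian group $C_1(X)$, is itself free abelian, so the surjection $\partial_2$ splits.

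The key observation is that both $Z_1(X)\otimes\R$ and $C_2(X)\otimes\R$, equipped with the $\ell^1$ norms used to measure $|\gamma|$ and $\sum_i|a_i|$ respectively, are finite-dimensional normed real vector spaces. Any linear map between finite-dimensional normed spaces is bounded, so there is a constant $C>0$ such that
\[
\|s(\gamma)\|_1\le C\,\|\gamma\|_1\quad\text{for every } \gamma\in Z_1(X).
\]

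Given any $1$-cycle $\gamma$ in $X^{(1)}$ with $|\gamma|\le n$, the integral $2$-chain $s(\gamma)$ satisfies $\partial s(\gamma)=\gamma$ and has $\ell^1$-norm at most $Cn$, so $\HArea_X(\gamma)\le Cn$. This proves $\FA_X(n)\preceq n$. Under the equivalence relation $\simeq$, the additive term $Dn$ in the definition of $\preceq$ makes every function that is bounded above by a linear function equivalent to $n$, and thus $\FA_X(n)\simeq n$.

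There is essentially no serious obstacle here; the only point requiring any care is the existence and boundedness of the section $s$, and both follow from standard facts (free abelian subgroups of free abelian groups split off, and finite-dimensional linear maps are bounded). The lemma is really a statement that compactness forces linear filling, and the proof is just a careful unpacking of this.
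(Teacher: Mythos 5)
Your proof is correct, and it takes a genuinely different route from the paper's. The paper's argument is geometric/combinatorial: it shows that any $1$-cycle $w$ in $X$ decomposes as $w = \sum_i w_i$ with each $w_i$ a loop of length at most $k$ (the number of vertices), and with $|w| = \sum_i |w_i|$; this is done by taking a minimal cellular representative and repeatedly splitting each loop at a pair of coincident vertices. Then $\HArea(w) \leq \sum_i \HArea(w_i) \leq |w|\cdot\FA_X(k)$, and since there are only finitely many $1$-cycles of size at most $k$ in a compact complex, $\FA_X(k)$ is a finite constant. Your argument is linear-algebraic: you use $H_1(X)=0$ to get a $\Z$-linear section $s\colon Z_1(X)\to C_2(X)$ of $\partial_2$ (valid since $Z_1(X)$ is free abelian, being a subgroup of the free abelian group $C_1(X)$), then bound its operator norm with respect to the $\ell^1$ norms using finite-dimensionality of $Z_1(X)\otimes\R$ and $C_2(X)\otimes\R$. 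Both are correct; one small thing you should make explicit is that the $\Z$-linear section $s$ is bounded because it extends to an $\R$-linear map on the tensored spaces, where the boundedness is automatic — you mention the idea but jump straight to the inequality. Your approach is arguably slicker, gives the bound in one stroke, and works verbatim over any PID; the paper's argument is more elementary, avoids the passage to real scalars, and the splitting-into-short-loops idea it introduces here is reused in the subsequent proofs (e.g., Propositions~\ref{prop:superaddforspaces} and~\ref{prop:loops}), so the authors likely preferred to establish it early.
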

	\begin{proof}
		Up to equivalence every function is bounded below by a linear function, thus we focus on obtaining the upper bound. 
		
		Let $k$ be the number of vertices in $X$. 
		We claim that $w = \sum_i w_i$ where $w_i$ are loops with $|w_i| \leq k$ and $|w| = \sum_i|w_i|$.
		As $w$ is a $1$-cycle it can be written as a sum of loops $v_i$ and $|w| = \sum_i|v_i|$. 
		If each $v_i$ satisfies $|v_i|\leq k$, then we can take $w_i = v_i$. 
		Thus we can reduce to the case that $w$ is a loop, i.e.\ an image of $S^1$ and $|w|\geq k$.
		
		Pick a minimal cellular representative $S^1\to X$ for $w$. 
		Since $|w|>k$ we see that $f$ is not an embedding. 
		Thus there are two vertices of $S^1$ which map to the same vertex of $X$. 
		Let $x_1, x_2$ be two such vertices of $S^1$. 
		Let $u_1, u_2$ be the closures of the two intervals of $S^1\smallsetminus\{x_1, x_2\}$. 
		Then $v_i = f(u_i)$ are loops such that $w = v_1 + v_2$.
		The number of edges remains unchanged in this process so we see that $|w| = |v_1| + |v_2|$.
		We can repeat this process with the $v_i$, until we arrive at loops $w_i$ such that $|w_i|\leq k$.
		By construction $w = \sum_i w_i$ and there is no cancellation in this sum. 
		
		To complete the proof, write $w$ as a sum of loops $w_i$ such that $|w_i|\leq k$ and $|w| = \sum_i |w_i|$.
		Note that this means $|w|\geq \sum_i 1$.
		We can now compute $\HArea(w)$ using the following inequalities. 
		\[
		\HArea(w)\leq \textstyle\sum_i\HArea(w_i)\leq \sum_i \FA_X(|w_i|)\leq \sum_i\FA_X(k)\leq |w|\FA_X(k).
		\]
		Hence $w$ has a filling of size bounded by $|w|\FA_X(k)$ and hence $\FA_X(n)\preceq n$. 
	\end{proof}

	\begin{definition}(Superadditive)
		We say that a function $f\colon\N\to[0,\infty)$ is {\em superadditive} if $f(m+n) \geq f(m) + f(n)$ for all $m,n\ge0$. If $h\colon \N\to[0,\infty)$ is any function, the smallest superadditive function that is greater than or equal to $h$ is called the \emph{superadditive closure} of $h$ and is denoted $\bar h$.
	\end{definition}
	
	The following lemma is an easy exercise.
	
	\begin{lemma}\label{lem:closure}
		For an arbitrary function $f\colon\N\to[0,\infty)$ its superadditive closure equals
		\[
		\bar f(n)=\max\big\{f(n_1)+f(n_2)+\dots+f(n_r)\big\},
		\]
		where $\max$ is taken for all $r\ge1$ and all $n_1,\dots,n_r\in\N$ such that $n_1+\dots+n_r=n$.\qed
	\end{lemma}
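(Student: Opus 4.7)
The plan is to define $g(n) \vcentcolon= \max\{f(n_1) + \dots + f(n_r) : r\ge1,\, n_1+\dots+n_r = n\}$ and show that $g$ coincides with $\bar f$. The maximum exists because for each $n\ge1$ there are only finitely many ordered compositions $(n_1,\dots,n_r)$ of $n$ into positive integers. Taking the trivial composition $r=1$, $n_1=n$ immediately yields $g(n) \ge f(n)$, so $g$ dominates $f$.

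Next I would verify that $g$ is superadditive. Given $m,n\in\N$, pick an optimal composition $m = m_1 + \dots + m_s$ realizing $g(m)$ and an optimal composition $n = n_1 + \dots + n_t$ realizing $g(n)$. Concatenating gives a composition $m + n = m_1 + \dots + m_s + n_1 + \dots + n_t$, and therefore
\[
g(m+n) \;\ge\; f(m_1) + \dots + f(m_s) + f(n_1) + \dots + f(n_t) \;=\; g(m) + g(n),
\]
as required. This shows that $g$ is a superadditive function satisfying $g\ge f$, hence $g \ge \bar f$ by definition of the superadditive closure.

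For the reverse inequality, let $h$ be any superadditive function with $h \ge f$. For any composition $n = n_1 + \dots + n_r$, iterating the inequality $h(a+b) \ge h(a) + h(b)$ gives
\[
h(n) \;\ge\; h(n_1) + \dots + h(n_r) \;\ge\; f(n_1) + \dots + f(n_r).
\]
Taking the maximum over all compositions yields $h(n) \ge g(n)$. Applying this to $h = \bar f$ gives $\bar f \ge g$, and combining with the previous paragraph completes the proof.

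There is no serious obstacle here: the argument is purely formal, and the only thing to watch for is the standard (but easy) induction on the number of parts $r$ in the concatenation step, which follows by iterating the two-variable superadditivity relation.
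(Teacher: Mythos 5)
Your proof is correct; the paper explicitly leaves this lemma as an easy exercise (it is stated with a \qed and no argument), and your argument is the standard one: show the right-hand side $g$ is a superadditive majorant of $f$, then show every superadditive majorant of $f$ dominates $g$ by iterating the two-term inequality over a composition. Nothing to add.
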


	\begin{proposition}\label{prop:superaddforspaces}
		Let $X$ be a connected combinatorial complex which is locally finite and admits a cellular cocompact group action.
		Suppose that $H_1(X) = 0$.  
		Then $\FA_X$ is $\simeq$ equivalent to a superadditive function. 
	\end{proposition}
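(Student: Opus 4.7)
The natural candidate superadditive function is the superadditive closure $\bar{\FA_X}$ of $\FA_X$, defined in \Cref{lem:closure}. Since $\FA_X \le \bar{\FA_X}$ is automatic, the task reduces to showing $\bar{\FA_X} \preceq \FA_X$. By \Cref{lem:closure}, this amounts to proving, for every partition $n = n_1 + \cdots + n_r$ with each $n_i \ge 1$, a bound of the form $\sum_{i=1}^r \FA_X(n_i) \le A\,\FA_X(Bn + C) + Dn + E$ for constants depending only on $X$ and the acting group. My strategy is to manufacture a single $1$-cycle $\widetilde{\gamma}$ in $X$ of size at most $n$ whose homological area dominates $\sum_i \FA_X(n_i)$ up to a linear additive error.

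For each $i$ I pick a $1$-cycle $\gamma_i$ with $|\gamma_i| \le n_i$ and $\HArea(\gamma_i) \ge \FA_X(n_i) - 1$. If the group $G$ acting cocompactly on $X$ is finite, then $X$ is compact and by \Cref{prop:compact} we already have $\FA_X \simeq n$, which is superadditive; hence we may assume $G$ is infinite. Using the cocompact cellular $G$-action together with the local finiteness of $X$, one picks elements $g_i \in G$ so that the translates $g_i \gamma_i$ have pairwise disjoint supports separated by an arbitrarily large combinatorial distance in $X^{(1)}$. Setting $\widetilde{\gamma} := \sum_{i=1}^r g_i \gamma_i$ yields a $1$-cycle with $|\widetilde{\gamma}| = \sum_i |\gamma_i| \le n$.

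The remaining step, and the main obstacle, is the lower bound $\HArea(\widetilde{\gamma}) \ge \sum_i \FA_X(n_i) - O(n)$. Given any $2$-chain $c$ with $\partial c = \widetilde{\gamma}$, I decompose $c = \sum_i c_i$ using a nearest-neighbour partition with respect to the supports of the $g_i \gamma_i$. Then $\partial c_i = g_i \gamma_i + \xi_i$ for some interface $1$-cycle $\xi_i$ supported on the boundary between regions; because $H_1(X) = 0$, each $\xi_i$ bounds a $2$-chain $d_i$, and so $c_i - d_i$ is an honest filling of $g_i \gamma_i$. Summing over $i$ yields
\[
\sum_{i=1}^r \FA_X(n_i) \;\le\; \|c\|_1 + \sum_i \|d_i\|_1 + r \;=\; \HArea(\widetilde{\gamma}) + \sum_i \|d_i\|_1 + r.
\]
The delicate point is controlling $\sum_i \|d_i\|_1$: by arranging the translates sufficiently far apart (for instance along an axis of an infinite-order element of $G$) and invoking local finiteness, one arranges each interface $\xi_i$ to have uniformly bounded length, so that each $\|d_i\|_1 = O(1)$ and the total correction is $O(r) = O(n)$. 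Combined with $|\widetilde{\gamma}| \le n$, this gives $\sum_i \FA_X(n_i) \le \FA_X(n) + O(n) \preceq \FA_X(n)$, as required.
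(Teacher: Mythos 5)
Your overall strategy---using the cocompact action to translate the witness cycles $\gamma_i$ far apart and then arguing that the area of the combined cycle dominates the sum---follows the same line as the paper, and your reduction via the superadditive closure together with the compact/non-compact split using \Cref{prop:compact} is fine. But the crucial step has a genuine gap. You assert that ``by arranging the translates sufficiently far apart $\dots$ one arranges each interface $\xi_i$ to have uniformly bounded length.'' This is unjustified and is, in general, false: the interface $\xi_i$ depends on the particular $2$-chain $c$ being decomposed, not on how far apart the translates are. Since you are establishing a \emph{lower} bound on $\HArea(\widetilde\gamma)=\min_c\|c\|_1$, the estimate must hold for \emph{every} filling $c$; and a filling may sprawl arbitrarily far from $\widetilde\gamma$, so that the nearest-neighbour cut produces $\xi_i$ of unbounded length. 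Then $\sum_i\|d_i\|_1$ is uncontrolled and your displayed inequality yields nothing.

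The paper sidesteps this by decomposing a filling along connected components of its support rather than geometrically, yielding a dichotomy. Either the components of $\supp(c)$ touching $w_1$ are disjoint from those touching $g\cdot w_2$---in which case there is a genuine coefficientwise subfilling $c'$ of $c$ with $\partial c' = w_1$, and $|c|=|c'|+|c-c'|\ge\HArea(w_1)+\HArea(w_2)$ with no interface error at all---or some component spans the gap, producing a chain of $2$-cells from $w_1$ to $g\cdot w_2$ whose length forces $|c|$ to be at least comparable to $d(v,g\cdot v)$, a quantity which can be chosen in advance to exceed $\HArea(w_1)+\HArea(w_2)$. Your write-up is missing exactly this second branch: to repair it you would have to show that whenever the interfaces are long, $\|c\|_1$ itself is forced to be large, and nothing in the argument as given does this.
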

	\begin{proof}
		Let $H$ be the group acting cocompactly on $X$. 
		If $X$ is compact, then the filling function is linear by \Cref{prop:compact} and hence superadditive.

		We will use the following metric on the $1$-skeleton of $X$. 
		Given $v, w\in X^{(1)}$, define $d(v, w)$ to be the length of the shortest path in $X^{(1)}$ from $v$ to $w$. 
		Since $H$ acts cellularly, we see that $d(g\cdot v, g\cdot w) = d(v, w)$ for all vertices $v, w\in X$ and all $g\in H$. 
		
		Now suppose $X$ is not compact.
		Since $X$ is locally finite, we see that $X^{(1)}$ is unbounded. 
		We will show that given $1$-cycles $w_1$ and $w_2$, we can find a $g\in H$ such that the following hold:
		\begin{enumerate}
			\item $|w_1 + g\cdot w_2| = |w_1| + |w_2|$; 
			\item $\HArea(w_1 + g\cdot w_2)\geq \HArea(w_1) + \HArea(w_2)$.
		\end{enumerate}
		Now taking the supremum over all $1$-cycles $w_1$ such that $|w_1|\leq m$ and $w_2$ such that $|w_2|\leq n$ we obtain the result. 
		
		Let $v$ be a base vertex in the space $X$. 
		We will prove the above properties in two claims. 
		Since $X$ is unbounded and $H$ acts cocompactly for any fixed $R$ there is an element $g\in H$ such that $d(v, g\cdot v)\geq R$. 
		
		\textit{Claim~1. There exists $g\in H$ such that $|w_1+g\cdot w_2|=|w_1|+|w_2|$.} 
		To see this let $R>0$ be the radius of the ball $B(v,R)$ in $X^{(1)}$ such that both $1$-cycles $w_1$ and $w_2$ lie entirely in $B(v,R)$. 
		By the above we can find $g\in H$ such that $d(v, g\cdot v)> 2R$. 
		In this case, the $1$-cycles $w_1$ and $g\cdot w_2$ are disjoint in $X$. 
		Indeed, for any $x\in w_1$ and any $y\in g\cdot w_2$ the triangle inequality yields:
		$d(v,g\cdot v)\le d(v,x)+d(x,y)+d(y,g\cdot v)$. 
		Notice that $y=g\cdot x'$ for some $x'\in w_2$, and hence $d(y,g\cdot v)=d(g\cdot x',g\cdot v)=d(x',v)\le R$, which implies
		$d(x,y)\ge d(v,g\cdot v)-d(v,x)-d(x',v)> 2R-R-R=0$. 
		This means that $w_1$ and $g\cdot w_2$ are disjoint $1$-cycles, and hence the size of $w_1+g\cdot w_2$ is the sum of the sizes of $w_1$ and $w_2$.
		
		\textit{Claim~2. There exists $g\in H$ such that the homological area of $w_1 + g\cdot w_2$ is $\HArea(w_1) + \HArea(w_2)$.} 
		
		It is clear that there is a filling for $w_1+g\cdot w_2$ of size $\HArea(w_1) + \HArea(w_2)$. 
		Suppose that there were a more optimal filling $c = \sum_i a_i\sigma_i$.
		We say that $c'$ is a {\em subfilling of $c$} if $c' = \sum_i a_i'\sigma_i$ where $0\leq |a_i'|\leq |a_i|$ and $a_i, a_i'$ have the same sign. The following two cases are possible.
		
		Suppose that there is a subfilling $c' = \sum_i a_i'\sigma_i$ of $c$ such that $\partial c' = w_1$. 
		Let $c'' = c-c' = \sum_i a_i''\sigma_i$, this is a subfilling of $c$ with $\partial c'' = g\cdot w_2$. Then $|c'|\ge\HArea(w_1)$ and $|c''|\ge\HArea(g\cdot w_2)$, and we have:
		$|c|=\sum_i |a_i| = \sum_i|a_i'|+|a_i''| \geq \HArea(w_1) + \HArea(w_2)$.
		
		If there is no subfilling of $c$ with the boundary $w_1$, then we can find a sequence of $2$-cells $\sigma_1, \dots, \sigma_k$ belonging to $c$ such that $\partial \sigma_1$ contains an edge of $w_1$, $\partial\sigma_k$ contains an edge of $g\cdot w_2$ and $\sigma_{i}\cap \sigma_{i+1}$ is non-empty for all $i=1,\dots,k-1$. However, in this case we see that there is a vertex $x$ of $w_1$ and a vertex $y$ of $g\cdot w_2$ that are at a distance $d(x,y)\leq k\cdot L/2$, where $L$ is the largest length of a relator of $R_0$. 
		Arguing as in the proof of Claim~1 above, we see that 
		$d(x,y)\ge d(v,g\cdot v)-2R$, where $R$ is the radius of a ball centered at $v$ containing both $w_1$ and $w_2$. This gives us an inequality on $k$, and hence a lower bound on the size of $c$:
		\[
		|c|\ge k\ge \big(d(v,g\cdot v)-2R\big)\cdot 2/L.
		\]
		Thus if $d(v,g\cdot v)$ is large enough, any filling $c$ of $w_1+g\cdot w_2$ with the above property will have at least $\HArea(w_1) + \HArea(w_2)$ $2$-cells. This proves Claim~2.
	\end{proof}

In the following proposition $\bar \delta^{ab}_X$ denotes the superadditive closure of the abelianised Dehn function $\delta_X^{ab}$.

	\begin{proposition}\label{prop:loops}
		Let $X$ be a combinatorial complex which is locally finite with $H_1(X) = 0$. 
		Suppose that $X$ admits a cellular cocompact group action. 
		Then $\FA_X\simeq\bar\delta_X^{ab}$.
	\end{proposition}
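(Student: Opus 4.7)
The plan is to prove the two inequalities $\bar\delta_X^{ab}\preceq\FA_X$ and $\FA_X\preceq\bar\delta_X^{ab}$ separately. The core tools are two: first, the trivial pointwise bound $\delta_X^{ab}(n)\le\FA_X(n)$ coming from the fact that every combinatorial loop is a $1$-cycle of the same size; and second, the decomposition of an integral $1$-cycle in a graph into a sum of cellular loops without cancellation in the $\ell^1$-norm, which was already invoked inside the proof of \Cref{prop:compact}.

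For the direction $\bar\delta_X^{ab}\preceq\FA_X$, I would observe that a combinatorial loop $w$ of length $k$ is in particular a $1$-cycle with $|w|=k$, hence $\delta_X^{ab}(k)\le\FA_X(k)$ for every $k$. Since taking superadditive closures preserves pointwise inequalities (by \Cref{lem:closure}), we get $\bar\delta_X^{ab}\le\overline{\FA_X}$. By \Cref{prop:superaddforspaces}, $\FA_X$ is $\simeq$--equivalent to a superadditive function $f$; a short direct check from the definitions of $\simeq$ and of superadditive closure (splitting $f(Bn+C)+Dn+E$ across partitions and absorbing the additive error into a linear term, which is dominated by any superadditive function that is at least linear) gives $\overline{\FA_X}\simeq\FA_X$. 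Combining these yields $\bar\delta_X^{ab}\preceq\FA_X$.

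For the other direction $\FA_X\preceq\bar\delta_X^{ab}$, I would take an arbitrary $1$-cycle $\gamma$ in $X^{(1)}$ with $|\gamma|\le n$ and decompose it as $\gamma=\sum_{i=1}^r w_i$ in $C_1(X)$ where each $w_i$ is a cellular loop and $\sum_i|w_i|=|\gamma|$. Each loop admits a $2$-chain filling of size at most $\delta_X^{ab}(|w_i|)$, and the sum of these fillings is a $2$-chain bounding $\gamma$, giving
\[
\HArea_X(\gamma)\;\le\;\sum_{i=1}^r\HArea_X(w_i)\;\le\;\sum_{i=1}^r\delta_X^{ab}(|w_i|)\;\le\;\bar\delta_X^{ab}\!\Big(\sum_{i=1}^r|w_i|\Big)\;=\;\bar\delta_X^{ab}(|\gamma|)\;\le\;\bar\delta_X^{ab}(n),
\]
by \Cref{lem:closure}; taking the supremum over all such $\gamma$ finishes this direction.

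The main delicate point is the cycle-to-loops decomposition with $\ell^1$-norm additivity. Viewing $\gamma$ as an integer-weighted, vertex-balanced flow on the (oriented) $1$-skeleton, this is a classical Eulerian-style argument: one repeatedly extracts a simple directed cycle carried by the support of $\gamma$, subtracts it with an appropriate sign, and recurses on the remaining cycle, whose $\ell^1$-norm drops by exactly the length of the extracted loop. Local finiteness of $X^{(1)}$ ensures this procedure is well-defined on a finitely supported cycle, and the cocompact action hypothesis enters only through its role in making \Cref{prop:superaddforspaces} (and hence the first direction) available.
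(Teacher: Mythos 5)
Your proof is correct and follows essentially the same route as the paper's: the easy direction from $\delta_X^{ab}\le\FA_X$ plus \Cref{prop:superaddforspaces}, and the reverse direction by decomposing a $1$-cycle into loops without $\ell^1$-cancellation and invoking \Cref{lem:closure}. The extra detail you supply (the flow-decomposition justification and the check that $\overline{\FA_X}\simeq\FA_X$) fills in steps the paper leaves implicit, but the argument is the same.
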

	\begin{proof}
		By \Cref{prop:superaddforspaces}, we can assume up to equivalence that $\FA_X$ is superadditive and hence $\overline{\FA}_X \simeq \FA_X$. 
		From the definition it is clear that $\delta_X^{ab}\preceq \FA_X$. 
		Thus by taking superadditive closures we see $\bar{\delta}_X^{ab}\preceq \FA_X$.
		
		Now suppose that $w$ is a $1$-cycle, with $w = \sum_i w_i$, where $w_i$ are edge loops in $X$. 
		We may assume that there is no cancellation of $1$-cells in the sum $\sum_i w_i$, hence $|w|=\sum_i|w_i|$.
		Then we have the following inequalities: 
		\[
		\HArea_X(w) \leq \textstyle\sum_i \HArea_X(w_i)\leq \sum_i \delta_X^{ab}(|w_i|)\leq \bar{\delta}_X^{ab}(\sum_i|w_i|) = \bar{\delta}_X^{ab}(|w|).
		\]
		This implies that $\FA_X(n)\le\bar{\delta}_X^{ab}(n)$, which means that the two functions are equivalent.
	\end{proof}		

An analog of the following proposition was proved in~\cite[Th.\,1.1]{FM18} with the local finiteness condition replaced by the requirement that the action of the group is proper (and cocompact). There is also an instructive example there~\cite[Ex.\,4.3]{FM18} which shows that if neither of these conditions is satisfied, the conclusion of the proposition does not hold.
	
	\begin{proposition}\label{prop:attain}
		Let $X$ be a connected combinatorial complex which is locally finite with $H_1(X)=0$ which admits a cellular cocompact group action.
		Then the supremum in the definition of $\FA_X(n)$ is attained.
	\end{proposition}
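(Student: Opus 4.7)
The plan is to exploit the fact that homological area is integer-valued. Since $\HArea_X(\gamma)$ equals the minimum $\ell^1$-norm of an integer cellular $2$-chain bounding $\gamma$, the set
\[
S_n \;=\; \{\HArea_X(\gamma) \,:\, \gamma \text{ a 1-cycle with } |\gamma|\le n\}
\]
is contained in $\N$. Any bounded non-empty subset of $\N$ attains its supremum, so it suffices to prove $\FA_X(n) < \infty$ for each fixed $n$.

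To bound $\FA_X(n)$, I would first decompose an arbitrary 1-cycle $\gamma$ with $|\gamma|\le n$ into its connected-support pieces. Writing $\gamma=\sum_e a_e e$, the subgraph $\supp(\gamma)\subseteq X^{(1)}$ has finitely many connected components $C_1,\dots,C_m$, and the restrictions $\gamma_i \vcentcolon= \sum_{e\in C_i} a_e e$ are themselves 1-cycles, since vertices lying in distinct $C_i$'s do not interact in the equation $\partial\gamma=0$. Summing fillings yields $\HArea_X(\gamma)\le \sum_i \HArea_X(\gamma_i)$, and from $\sum_i |\gamma_i| = |\gamma| \le n$ together with each $|\gamma_i|\ge 1$ we conclude $m\le n$.

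Next, I would bound each $\HArea_X(\gamma_i)$ uniformly using the hypotheses. The cellular cocompact action of $H$ on $X$ produces finitely many vertex orbits; fix representatives $v_1,\dots,v_k$. A connected-support 1-cycle of size $\le n$ lies in the combinatorial ball $B(v,n)$ about any of its vertices $v$, so after translating by an element of $H$ we may assume $v=v_j$ for some $j$. Local finiteness makes each $B(v_j,n)$ a finite graph supporting only finitely many 1-cycles of size $\le n$, and each of these has finite homological area since $H_1(X)=0$ guarantees an integer filling. Letting $M_n$ denote the maximum of these finitely many areas across all $j$, the $H$-invariance of $\HArea_X$ yields $\HArea_X(\gamma_i)\le M_n$ for every connected-support 1-cycle $\gamma_i$ of size at most $n$. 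Combining with the first paragraph, $\FA_X(n)\le n\cdot M_n<\infty$, so the supremum is attained. The only step that truly requires care is the decomposition into connected-support pieces — specifically, verifying that these pieces are themselves cycles — while the rest follows directly from integrality of area, cocompactness, and local finiteness.
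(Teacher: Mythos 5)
Your proof is correct, and it takes a genuinely different route from the paper's. The paper first proves that $\delta^{ab}_X$ attains its supremum (by counting orbits of loops of bounded length), then invokes \Cref{prop:loops} to pass to the superadditive closure $\bar\delta^{ab}_X$, and finally uses the translation argument from the proof of \Cref{prop:superaddforspaces} to build an explicit $1$-cycle realizing $\FA_X(n)$; a byproduct of that argument is the exact equality $\FA_X(n) = \bar\delta^{ab}_X(n)$. You instead observe that $\HArea_X$ takes values in $\N$, so attainability reduces to finiteness of $\FA_X(n)$, which you get directly by splitting a $1$-cycle into its connected-support pieces (each a $1$-cycle, as you verify), translating each piece into one of finitely many balls $B(v_j,n)$ via cocompactness, and bounding by the maximum area $M_n$ over the finitely many cycles supported there. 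Your decomposition (connected-support components) is finer than the paper's (loops, via $\delta^{ab}_X$), and your argument is more self-contained: it does not route through \Cref{prop:loops} or the superadditivity machinery at all. What the paper's route buys is that the realizing cycle is constructed explicitly and the equality $\FA_X = \bar\delta^{ab}_X$ drops out; what yours buys is brevity and independence from the preceding propositions.
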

	\begin{proof}
		Let $k$ be the maximum valence of a vertex of $X$, note that $k$ is bounded since there are only finitely many orbits of vertices under the group action. 
		Then there are at most $k^n$ paths of length $n$ based at any fixed vertex $v$ of $X$. (Indeed, given a path $\lambda$ of length $n-1$ based at $v$, we have at most $k$ choices of an edge to extend $\lambda$ to a path of length $n$.)
		In particular, there are only finitely many orbits of loops of a given length. 
		Thus we see that the supremum in the definition of $\delta_X^{ab}$ is attained. 
		It follows from \Cref{prop:loops} that to obtain $\FA_X$ it suffices to compute $\delta_X^{ab}$ and take the superadditive closure. 
		By~\Cref{lem:closure}, there exists a partition $n=n_1+n_2+\dots+n_r$ such that 
		\[
		\FA_X(n) = \delta_X^{ab}(n_1)+\delta_X^{ab}(n_2)+\dots+\delta_X^{ab}(n_r).
		\]
		Hence by finding loops that realise $\delta_X^{ab}(n_i)$ for each $i$ and arguing as in the proof of \Cref{prop:superaddforspaces}, we obtain a $1$-cycle $w$ of length $\leq n$ such that 
		\[
		\HArea(w) =\delta_X^{ab}(n_1)+\delta_X^{ab}(n_2)+\dots+\delta_X^{ab}(n_r) = \FA_X(n).\qedhere
		\]
	\end{proof}
	
	\begin{definition}\label{def:homdehn} (Homological Dehn function, abelianised Dehn function)
		Let $H$ be a group of type $FP_2$.
		Consider a homological finite presentation $\la \A \left|\right| R_0\ra$ for $H$ existing by  \Cref{prop:equiv}.
		Let $X(\A,R_0)$ be the Cayley complex relative to $\la \A \left|\right| R_0\ra$.
		The \emph{homological Dehn function} of the triple $\la H,\A,R_0\ra$ is defined as the homological filling function of the complex $X(\A,R_0)$: $\FA_{\la H,\A,R_0\ra}(n)\vcentcolon=\FA_{X(\A,R_0)}(n)$.
		
		The \emph{abelianised Dehn function} of the triple $\la H,\A,R_0\ra$ is defined as the abelianised filling function of the complex $X(\A,R_0)$: $\delta^{ab}_{\la H,\A,R_0\ra}(n)\vcentcolon=\delta^{ab}_{X(\A,R_0)}(n)$.
	\end{definition}
	
	In \cite{BMS}, the abelianised area of a word $w$ is defined as the minimal $n$ such that $w = \prod_{i=1}^n v_ir_i^{\pm 1}v_i^{-1}$, in the relation module, where the $r_i$ are taken from a generating set for the relation module.
	Since in~\cite{BMS} only finite presentations are considered, the generating set comes from relations of a finite presentation. 
	Since we are considering groups of type $FP_2$, we will pick our finite generating set to come from a homological finite presentation $\langle \A\left|\right| R_0\rangle$.  
	Thus, we have a surjection from a free $\Z H$-module $F$ with basis $R_0$ to the relation module and the abelianised area is the length of a minimal representative in $F$ considered as a free $\Z$-module.
	However, we can view $F$ as the $2$-chains in the homological Cayley complex for $H$. 
	Thus, we see that our definition coincides exactly with that of \cite{BMS}.
	
	As previously stated, we will focus our attention on the homological Dehn function. 
	In \Cref{prop:loops}, we give a close relationship between these two functions. 
	
	Propositions~\ref{prop:indeppres} and \ref{prop:spaces} below can also be obtained from Theorem~4.4 and Proposition~4.10 of~\cite{MP}, where the relative homological Dehn functions are studied. We include them here for completeness and ease of the reader. 
	
	\begin{proposition}\label{prop:indeppres}
		Up to $\simeq$ equivalence, the homological Dehn function is independent of a homological finite presentation for $H$. 
	\end{proposition}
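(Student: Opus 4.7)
The plan is to mimic the classical argument that the Dehn function of a finitely presented group is independent of the choice of finite presentation, but carried out at the level of cellular chain complexes rather than disk diagrams. Given two homological finite presentations $\la\A_1\,||\,R_1\ra$ and $\la\A_2\,||\,R_2\ra$ of $H$, write $X_i = X(\A_i,R_i)$ for the corresponding homological Cayley complexes. I would construct $H$-equivariant chain maps $\phi_\ast\colon C_\ast(X_1)\to C_\ast(X_2)$ and $\psi_\ast\colon C_\ast(X_2)\to C_\ast(X_1)$, together with an $H$-equivariant chain homotopy from $\psi_\ast\phi_\ast$ to $\id_{C_\ast(X_1)}$ (and symmetrically), such that each of these maps is Lipschitz on $1$-chains and uniformly bounded on individual $2$-cells. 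Finiteness of each $\A_i$ and $R_i$, together with finiteness of the $H$-orbits of cells, will supply the constants.

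To build $\phi_\ast$, define $\phi_0$ by the canonical identification of each vertex set with $H$. For each $a\in\A_1$ pick a word $w_a$ over $\A_2$ representing $a$ in $H$, send the edge $(1,a)$ in $X_1$ to the corresponding edge-path of length $|w_a|$ in $X_2$, and extend $H$-equivariantly to all of $C_1(X_1)$. For each $r\in R_1$, the image $\phi_1(\partial D_r)$ is a $1$-cycle in $X_2$ because $r$ represents the identity in $H$; since $H_1(X_2)=0$ this cycle bounds some $2$-chain $c_r$, and I set $\phi_2(D_r)=c_r$, extended equivariantly. The constants $K_\phi = \max_a|w_a|$ and $L_\phi = \max_r|c_r|$ are finite because $\A_1$ and $R_1$ are. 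Construct $\psi_\ast$ symmetrically, arranging that $\psi_0\phi_0 = \id$ on $C_0(X_1)$.

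The chain homotopy is built one orbit at a time. For each $H$-orbit representative $e$ of a $1$-cell of $X_1$, the chain $\psi_1\phi_1(e)-e$ is a $1$-cycle in $X_1$ of length at most $K_\phi K_\psi + 1$; using $H_1(X_1)=0$, pick $h(e)\in C_2(X_1)$ with $\partial h(e) = \psi_1\phi_1(e)-e$ and extend $\Z H$-linearly. Because there are only finitely many orbits of $1$-cells, the resulting $h\colon C_1(X_1)\to C_2(X_1)$ is Lipschitz on $1$-chains with some constant $K_h$, and with $h$ declared zero on $C_0$ it satisfies $\partial h + h\partial = \psi_\ast\phi_\ast-\id$ on $C_1$.

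With these maps in hand the filling comparison is immediate. Given a $1$-cycle $\gamma$ in $X_1$ with $|\gamma|\le n$, the $1$-cycle $\phi_1(\gamma)$ in $X_2$ has size at most $K_\phi n$ and admits an optimal filling $c\in C_2(X_2)$ with $|c|\le\FA_{X_2}(K_\phi n)$. Since $\gamma$ is a cycle,
\[
\partial\bigl(\psi_2(c)-h(\gamma)\bigr) \;=\; \psi_1\phi_1(\gamma)-\partial h(\gamma) \;=\; \gamma,
\]
so $\HArea_{X_1}(\gamma)\le L_\psi\,\FA_{X_2}(K_\phi n) + K_h n$, whence $\FA_{X_1}\preceq\FA_{X_2}$. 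The symmetric argument gives the reverse inequality. The main delicate point is arranging $\phi_\ast$, $\psi_\ast$ and $h$ simultaneously to be Lipschitz on $1$-chains and bounded on $2$-cells; this is exactly what equivariance plus finite orbit counts buys, after which the rest is bookkeeping.
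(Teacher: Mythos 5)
Your argument is correct, but it takes a genuinely different route from the one in the paper. You work throughout at the level of $H$-equivariant cellular chain complexes: after constructing chain maps $\phi_\ast$, $\psi_\ast$ and a chain homotopy $h$ between $\psi_\ast\phi_\ast$ and the identity on $C_1(X_1)$, with uniform bounds on orbit representatives supplied by the finiteness of the presentation data, the filling comparison falls out of the identity $\partial h(\gamma) = \psi_1\phi_1(\gamma) - \gamma$ for a $1$-cycle $\gamma$. The paper's proof is more geometric and proceeds in two stages: it first treats two presentations on the same generator set by substituting each $2$-cell of one presentation for a minimal filling in the other, and then reduces the general case to this one by enlarging both generating sets to $\A\sqcup\A_1$, adding relations expressing the new generators in terms of the old, and building a deformation retraction of the enlarged homological Cayley complex onto the original. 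In particular, the paper's lower bound $\FA_{X_1}\succeq\FA_{X(\A,R_0)}$ invokes the attainment of the supremum (\Cref{prop:attain}) together with the observation that the retraction cannot increase the size of a filling, whereas your chain-homotopy machinery handles both directions symmetrically without needing that proposition. Your approach is in the spirit of the algebraic arguments used for higher-dimensional filling functions (cf.\ the Alonso--Wang--Pride machinery on which \Cref{thm:qi} is modeled), and treats different generating sets in one step, at the cost of having the reader verify equivariance and finiteness of the several constants $K_\phi$, $L_\phi$, $K_h$.
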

	\begin{proof}
		We begin with the case that the homological finite presentations are on the same set of generators.
		Let $P_0 = \la \A \left|\right| R_0\ra$ and $P_1 = \la \A\left|\right| R_1\ra$ be homological finite presentations on the same generating set $\A$. 
		Let $\gamma$ be a $1$-cycle in the Cayley graph of $H$. 
		In the homological Cayley complex for $P_0$ let $\sum_i a_i\sigma_i$ be a filling for $\gamma$. Recall that $\sigma_i=g_i.r_i$ for some $g_i\in H$ and some relator $r_i\in R_0$ (viewed as a $2$-cell). Thus the boundary $\partial \sigma_i$ is a loop, hence a $1$-cycle, in the Cayley graph for $H$, corresponding to one of the relators in $R_0$. 
		Let $\beta_i$ be a filling for $\partial \sigma_i$ in the homological Cayley complex for $P_1$. 
		Note that if $\sigma_i$ and $\sigma_j$ are two translates of the same relator $r\in R_0$, the homological areas with respect to $X(\A,R_0)$ of $\beta_i=\partial\sigma_i$ and $\beta_j=\partial\sigma_j$ are the same.
		
		We can obtain a filling for $\gamma$, in the homological Cayley complex for $P_1$, of the form $\sum_i a_i\beta_i$. 
		Thus the filling in the homological Cayley complex for $P_1$ is bounded by 
		\[
		\max_i \{\HArea_{X(\A,R_1)}(\partial\beta_i)\}\cdot \HArea_{X(\A,R_0)}(\gamma).
		\]
		Thus we obtain equivalent filling functions if the homological finite presentations have the same generating set. 
		
		Now we must consider the general case. 
		Suppose that $\la \A\left|\right| R_0\ra$, $\la \A_1\left|\right| S_0\ra$ are homological finite presentations for $H$.
		Since $\A$ is a generating set, we have that for each $b\in\A_1$ there exists a word $v(\A)\in F(\A)$ such that $b=v(\A)$ in $H$.
		Let $R_1$ be the set of relations of the form $b=v(\A)$ for each $b\in\A_1$. We are going to construct another complex $X_1$ which is obtained from $X(\A, R_0)$ by adding one orbit of new edges for each element of $\A_1$ and one orbit of new $2$-cells for each relation in $R_1$. More specifically, for each $b\in\A_1$ and every vertex $x\in X(\A,R_0)^{(0)}=H$ we consider a new $1$-cell $e_b^x$ and a new $2$-cell $D_b^x$. Let $b=v(\A)$ be a relator in $R_1$. We attach $e_b^x$ to $X$ with initial and terminal vertices $x$ and $x\cdot v(\A)$ respectively, and label it with letter $b\in\A_1$. And we attach the $2$-cell $D_b^x$ to the closed path 	which reads $b^{-1}v(\A)$. Hence $X_1$ is set  to 
		\[
		X_1\vcentcolon=X(\A,R_0)\cup\textstyle\bigcup_{x\in H,b\in\A_1}e_b^x\cup\textstyle\bigcup_{x\in H,b\in\A_1} D_b^x.
		\]
		We extend the action of $H$ over $X_1$ by declaring $g\cdot e_b^x=e_{b}^{g\cdot x}$ and $g\cdot D_b^x=D_b^{g\cdot x}$ to obtain a free vertex transitive and cocompact action of $H$ on $X_1$.
		We claim that $H_1(X_1) = 0$. 
		Indeed, there exists a retraction $r\colon X_1\to X(\A, R_0)$, given as follows. 
		The retraction $r$ is the identity on $X(\A, R_0)$. 
		Each edge not in $X(\A, R_0)$ has the form $e_b^x$ for some $b\in \A_1$.
		We send this edge to the word labeled $v(\A)$ with the same endpoints where $b = v(\A)$ is a relation in $R_1$. 
		We can extend $r$ over the $2$-cells $D_b^x$ since their boundary now corresponds to the null-homotopic word $v(\A)^{-1}v(\A)$ in $X(\A,R_0)$. 
		
		The retraction $r$ is a deformation retraction. 
		To see this, note that every disk $D_b^x$ in $R_1$ has exactly one edge $e_b^x$ not in $X(\A, R_0)$ and this edge appears only on the boundary of $D_b^x$. 
		Thus we can realise the retraction $r$ by pushing the disks $D_b^x$ in from the edges $e_b^x$. 
		
		We conclude that $\la \A \sqcup \A_1\left|\right| R_0\sqcup R_1\ra$ is a homological finite presentation for $H$. 
		It is also clear by construction that $X_1$ is the homological Cayley complex for $\la \A \sqcup \A_1\left|\right| R_0\sqcup R_1\ra$.
		
		Let $w$ be a $1$-cycle in $X_1$. 
		For every edge in $w$ which is labeled by an element of $\A_1$, we can use a relator from $R_1$ to remove this edge. 
		Thus after applying $\leq |w|$ relators from $R_1$ we obtain a $1$-cycle $w'$ only using edges labeled by elements of $\A$. 
		Let $k$ be the maximum length of a word $v(\A)$ among all relators $b=v(\A)$ in $R_1$. 
		Then $|w'|\leq k|w|$. 
		
		Now we can fill $|w'|$ with a $2$-chain consisting of relators in $R_0$. 
		Thus $\HArea_{X_1}(w')\leq \FA_{X(\A, R_0)}(|w'|)$.
		And $\HArea_{X_1}(w)\leq \FA_{X(\A, R_0)}(k|w|) + |w|$. 
		Thus $\FA_{X_1}(n) \leq \FA_{X(\A, R_0)}(kn) + n$.
		We conclude that $\FA_{X_1}\preceq \FA_{X(\A, R_0)}$. 
		
		We must now obtain a similar lower bound for $\FA_{X_1}$. 
		Let $n\in \N$. 
		Let $w$ be a $1$-cycle in $X(\A, R_0)$ consisting entirely of edges labeled by $\A$ such that $|w|\leq n$ and $\HArea_{X(\A, R_0)}(w) = \FA_{X(\A, R_0)}(n)$, such a $1$-cycle exists by \Cref{prop:attain}.
		Let $c$ be a filling for $w$ in $X_1$. 
		Under the retraction $r$, $w$ is fixed and $c$ is sent to a filling for $w$ in $X(\A, R_0)$. 
		Since the minimal filling for $w$ in $X(\A, R_0)$ has size $\FA_{X(\A, R_0)}(n)$, we conclude that the image of $c$ must have size $\geq \FA_{X(\A, R_0)}(n)$. 
		Since the retraction $r$ can only decrease the size of $c$, we obtain that $\HArea_{X_1}(w)\geq \FA_{X(\A, R_0)}(n)$. This means that for each $n$ we found a $1$-cycle $w$ with $|w|\leq n$ on which the function $\HArea_{X_1}$ is at least as big as $\FA_{X(\A, R_0)}(n)$. We conclude that $\FA_{X_1} \succeq \FA_{X(\A, R_0)}$. 
		
		This proves that $\FA_{X_1}\simeq \FA_{X(\A, R_0)}$. 
		
		Now for every $a\in\A$ there exist a word $u(\A_1)\in F(\A_1)$ such that $a=u(\A_1)$ in $H$.
		Once again, we obtain a homological finite presentation $\la \A \sqcup \A_1\left|\right| S_0\sqcup S_1\ra$, where $S_1$ is the set of relations of the form 
		$a=u(\A_1)$ for each $a\in\A$.
		Let $Y_1$ be the homological Cayley complex for $\la \A \sqcup \A_1\left|\right| S_0\sqcup S_1\ra$. 
		In the same way as above we see that $\FA_{Y_1}\simeq \FA_{X(\A_1, S_0)}$. 
		
		We now have two homological finite presentations with the same set of generators. 
		Thus applying the first half of the proof we obtain $\FA_{X_1}\simeq \FA_{Y_1}$. 
		Finally we conclude that $\FA_{X(\A, R_0)}\simeq\FA_{X_1}\simeq \FA_{Y_1}\simeq \FA_{X(\A_1, S_0)}$. 
		This completes the proof.
	\end{proof}

	Since $\FA_{\la H,\A,R_0\ra}(n)$ is independent of choices of $\A$, $R$ and a finite $R_0\subset R$, we denote $\FA_{\la H,\A,R_0\ra}(n)$ as $\FA_H(n)$ from now on.

	\begin{proposition}\label{prop:spaces}
		Let $X$ be a connected combinatorial complex with $H_1(X) = 0$. 
		Suppose that $X$ admits an action of a group $H$ which is free, proper, cellular and cocompact. 
		Then the homological filling function $\FA_X$ is $\simeq$ equivalent to the homological Dehn function $\FA_H$.
	\end{proposition}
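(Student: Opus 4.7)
My plan is to reduce the statement to Propositions~\ref{prop:equiv} and~\ref{prop:indeppres} by constructing $H$-equivariant, bounded chain-level maps between $C_*(X)$ and the cellular chain complex of a homological Cayley complex for $H$. First, since $\FA_X$ depends only on the $2$-skeleton $X^{(2)}$ and $H_1(X^{(2)})=H_1(X)=0$, I may replace $X$ by $X^{(2)}$. The cellular chain complex of $X$ is then a complex of finitely generated free $\Z H$-modules (free because the $H$-action is free, finitely generated by cocompactness). Augmenting by $\varepsilon\colon C_0(X)\to\Z$ and using connectedness of $X$ together with $H_1(X)=0$, the sequence $C_2(X)\to C_1(X)\to C_0(X)\to\Z\to 0$ is exact, so $H$ is of type $FP_2$. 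By \Cref{prop:equiv} fix a homological finite presentation $\la\A\,||\,R_0\ra$ for $H$; let $Z=X(\A,R_0)$ and note $\FA_Z\simeq\FA_H$ by \Cref{def:homdehn} together with \Cref{prop:indeppres}.

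Next I would build $H$-equivariant chain maps $\phi\colon C_*(Z)\to C_*(X)$ and $\psi\colon C_*(X)\to C_*(Z)$, each bounded on cell representatives. Fix a basepoint $v_1\in X^{(0)}$ and write the vertex set as $\bigsqcup_{i=1}^k H\cdot v_i$; identifying $Z^{(0)}$ with $H$, set $\psi_0(gv_i)=g$. For $\psi_1$, on each edge-orbit representative of $X$ with endpoints $gv_i$ and $g'v_{i'}$, pick an edge-path in $Z^{(1)}$ from $g$ to $g'$ and extend by $\Z H$-linearity. For $\psi_2$, the chain $\psi_1(\partial\sigma)$ is a $1$-cycle in $Z$ for each $2$-cell orbit representative $\sigma$, and it has a filling because $H_1(Z)=0$; choose one and extend equivariantly. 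Define $\phi$ analogously, using paths $\beta_a\subset X^{(1)}$ from $v_1$ to $a\cdot v_1$ for each $a\in\A$, and fillings in $X$ of the images of relator boundaries in $R_0$. Cocompactness ensures only finitely many cell orbits, giving constants $C_\phi,C_\psi>0$ such that any single $i$-cell ($i\in\{1,2\}$) has image under $\phi$ or $\psi$ of size at most $C_\phi$ or $C_\psi$ respectively.

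Both $C_*(X)$ and $C_*(Z)$ are partial free $\Z H$-resolutions of $\Z$ in degrees $\le 2$, so the standard comparison theorem of homological algebra supplies an $H$-equivariant chain homotopy $h$ with $\phi\circ\psi-\id_{C_*(X)}=\partial h+h\partial$; equivariance plus finitely many cell orbits allow $h_1$ to be chosen with $|h_1(e)|\le K$ on every single edge $e$, for some constant $K$. Given a $1$-cycle $\gamma$ in $X^{(1)}$ with $|\gamma|\le n$, the homotopy identity together with $\partial\gamma=0$ yields $\gamma-\phi_1\psi_1(\gamma)=-\partial_2 h_1(\gamma)$, so if $c$ is a minimal filling of $\psi_1(\gamma)$ in $Z$ then $\phi_2(c)-h_1(\gamma)$ is a filling of $\gamma$ in $X$ with
\[
\HArea_X(\gamma)\;\le\;C_\phi\cdot\FA_Z(C_\psi\,n)\;+\;K\,n,
\]
establishing $\FA_X\preceq\FA_Z\simeq\FA_H$. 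The reverse direction follows symmetrically by starting with a $1$-cycle in $Z$ and using the analogous $H$-equivariant bounded chain homotopy for $\psi\circ\phi$. The main technical obstacle is the simultaneous $H$-equivariant and size-bounded construction of $\psi_2$, $\phi_2$, and $h_1$: each requires choosing, $H$-equivariantly, a filling of a prescribed $1$-cycle in the ambient complex, which is possible only because that complex has trivial $H_1$ and there are only finitely many orbit representatives of cells on which choices need to be specified.
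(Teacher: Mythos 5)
Your proof is correct, but it takes a genuinely different route from the paper's. The paper's proof is geometric: one picks a maximal tree $T$ in $W=X/H$, lifts it to an $H$-invariant forest $\widetilde T\subset X$, and collapses each component of $\widetilde T$ to obtain a vertex-transitive complex $Y$. By \Cref{prop:freeimpliesCayley}, $Y$ is a homological Cayley complex, and the collapsing map induces a bijection both on $1$-cycles and on $2$-cells, so $\HArea_X(\gamma)=\HArea_Y(f_\sharp\gamma)$ exactly, with $|f_\sharp\gamma|\le|\gamma|\le(K+1)|f_\sharp\gamma|$ for $K$ the number of edges of $T$. Your argument instead treats $C_*(X)$ and $C_*(Z)$ as truncated projective $\Z H$-resolutions of $\Z$, constructs $H$-equivariant bounded chain maps $\phi,\psi$ between them by hand, and invokes the comparison theorem of homological algebra to produce a bounded equivariant chain homotopy; the filling estimate then drops out of the identity $\gamma-\phi_1\psi_1(\gamma)=-\partial_2 h_1(\gamma)$ on $1$-cycles. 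Both are valid. The paper's tree-collapse gives an exact equality of homological areas and avoids the bookkeeping of the chain homotopy; your argument is closer to the standard homological-algebra template for proving quasi-isometry invariance and generalizes more readily (e.g.\ to higher-dimensional filling functions), at the cost of several constants. One small point of hygiene: since $C_*(X)$ is only known to be exact at $C_0$ and $C_1$, the comparison theorem delivers only $h_0,h_1$ (not a full homotopy in all degrees), but that is exactly what your argument uses, so no harm is done; it is worth stating the homotopy only in those degrees to avoid overclaiming.
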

	\begin{proof}
		Since the homological filling function is defined at the level of $2$-chains, we will work with $X^{(2)}$ and thus assume that $X$ is $2$-dimensional. 
		
		Let $W = X/H$. 
		Since $H$ is acting freely, properly, cellularly and cocompactly we see that $X$ is a covering space of $W$ and $W$ is a compact combinatorial complex.
		Let $T$ be a maximal tree in the $1$-skeleton $W^{(1)}$ of $W$.
		We can lift $T$ to an $H$-invariant forest $\widetilde T$ in $X$, note that each component is isomorphic to $T$. 
		
		By collapsing each connected component of $\widetilde T$, we obtain a space $Y$ which is a combinatorial complex with an action of $H$ on it. 
		Let $f\colon X\to Y$ be the collapsing map. 
		This action is free, cellular, cocompact and is transitive on vertices. 
		Thus, by \Cref{prop:freeimpliesCayley}, we see that $Y$ is a homological Cayley complex for some homological finite presentation for $H$. 
		Thus, the homological filling function of $Y$ is $\FA_H$. 
		
		We claim that the map $f$ induces a bijection $Z_1(X)\to Z_1(Y)$ between $1$-cycles of $X$ and $1$-cycles of $Y$. 
		To see that it is injective, notice that the map $Z_1(X)\to Z_1(Y)$ is the restriction of the map of $1$-chains $f_\sharp\colon C_1(X)\to C_1(Y)$. 
		The $1$-chain group $C_1(X)$ is the free abelian group on the edges of $X$, i.e.\  $C_1(X)=\Z[E(X)]$, which can be decomposed as a direct sum $\Z[E(X)]=\Z[E(\widetilde T)]\oplus \Z[E(X\smallsetminus \widetilde T)]$ of the free abelian group on edges of $X$ belonging to $\widetilde T$ and the free abelian group on edges of $X$ not in $\widetilde T$. The map $f_\sharp\colon C_1(X) \to C_1(Y)$ can then be viewed as the composition: $\Z[E(X)]\to \Z[E(X\smallsetminus \widetilde T)]\to \Z[E(Y)]$, where the first map is a projection and the second is an isomorphism. 
		Thus if $\gamma$ is an element of $Z_1(X)$ that maps to 0 in $Z_1(Y)$, we see that $\gamma$ lies in the kernel $\Z[E(\tilde T)]$ of the projection. 
		Thus, $\gamma$ is a $1$-cycle in the forest $\widetilde T$, however all such $1$-cycles are trivial and hence $\gamma = 0$. This shows that the map $Z_1(X)\to Z_1(Y)$ is injective. 
		
		To prove surjectivity, let $\gamma'$ be a $1$-cycle in $Y$. 
		We can construct a $1$-cycle $\gamma$ in $X$ which maps onto $\gamma'$, in the following way. 
		If $\gamma'=\sum a_i\gamma_i'$ for some loops $\gamma'_i$, we see that the preimage of each edge of $\gamma_i'$ is an edge in $X$ with endpoints belonging to $\widetilde T$. 
		Moreover, if two edges of $\gamma'_i$ are adjacent to the same vertex in $Y$, their preimages are adjacent to the same component of $\widetilde T$ in $X$. 
		This means that given a loop $\gamma'_i\subset Y$ we can form a loop $\gamma_i\subset X$ which maps onto $\gamma'_i$ by connecting preimages of edges of $\gamma'_i$ by paths lying entirely within $\widetilde T$. 
		Thus, $Z_1(X)\to Z_1(Y)$ is surjective. 
		
		Moreover, if $K$ is the number of edges in $T$, we see that $|\gamma_i|\le (K+1)|\gamma_i'|$. 
		We conclude that $|\gamma'|\leq |\gamma|\leq (K+1)|\gamma'|$. 
		
		The map $f$ also induces a bijection between $2$-cells of $X$ and $2$-cells of $Y$. 
		Given a $2$-cell $\sigma$ of $X$ we denote the corresponding $2$-cell of $Y$ by $\sigma'$. 
		
		Let $\gamma$ be a $1$-cycle in $X$ and $\gamma'$ its image in $Y$. 
		If $c$ is a filling for $\gamma$, then the image $c'$ of $c$ is a filling of $\gamma'$ containing the same number of $2$-cells as $c$. 
		Thus $\HArea_X(\gamma) \geq \HArea_Y(\gamma')$. 
		
		Let $d' = \sum_ia_i\sigma_i'$ be a filling for $\gamma'$.
		Then $d = \sum_ia_i\sigma_i$ is a $2$-chain in $X$. 
		We have the equality $f_\sharp(\partial d) = \partial f_\sharp(d) = \partial d' = \gamma'$.
		Since $f_\sharp$ gives a bijection $Z_1(X)\to Z_1(Y)$ we see that $\partial d = \gamma$. 
		Since $|d| = |d'|$ we see that $\HArea_X(\gamma) \leq \HArea_Y(\gamma')$. 
		Thus, we obtain the equality $\HArea_X(\gamma) = \HArea_Y(\gamma')$.
		
		We now obtain the following inequalities: 
		\begin{multline*}
		\FA_X(n) = \max\{\,\HArea_X(\gamma)\mid|\gamma|\leq n\,\} = \max\{\,\HArea_Y(f_\sharp(\gamma))\mid|\gamma|\leq n\,\} \\
		\leq \max\{\,\HArea_Y(\alpha)\mid |\alpha|\leq n\,\} = \FA_Y(n).
		\end{multline*}
		
		We also have the following inequalities: 
		\begin{multline*}
		\FA_Y(n) = \max\{\,\HArea_Y(\gamma')\mid |\gamma'|\leq n\,\} = \max\{\,\HArea_X(\gamma)\mid |f_\sharp(\gamma)|\leq n\,\} \\
		\leq  \max\{\,\HArea_X(\beta)\mid |\beta|\leq (K+1)n\,\}= \FA_X\bigl((K+1)n\bigr).
		\end{multline*}
		These two inequalities prove that $\FA_X\simeq\FA_Y=\FA_H$.
	\end{proof}

	The next proposition follows immediately from \Cref{prop:superaddforspaces}. 
	\begin{proposition}\label{prop:superadd}
		Every homological Dehn function is $\simeq$ equivalent to a superadditive function. \qed
	\end{proposition}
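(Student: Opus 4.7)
The plan is to reduce Proposition \ref{prop:superadd} directly to Proposition \ref{prop:superaddforspaces} by choosing the right model space. Given a group $H$ of type $FP_2$, Proposition \ref{prop:equiv} furnishes a homological finite presentation $\langle \A \,||\, R_0\rangle$ for $H$, and by Definition \ref{def:homdehn} the homological Dehn function $\FA_H$ is realised as the homological filling function $\FA_{X(\A,R_0)}$ of the associated homological Cayley complex. So it suffices to check that $X(\A,R_0)$ satisfies the hypotheses of Proposition \ref{prop:superaddforspaces}.

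First I would verify these hypotheses one by one. The complex $X(\A,R_0)$ is connected because its $1$-skeleton is the Cayley graph $\Gamma(H,\A)$ of the finitely generated group $H$; it is locally finite since $\A$ and $R_0$ are both finite (so each vertex meets finitely many edges of $\Gamma(H,\A)$, and each edge bounds finitely many of the $2$-cells $D_r^v$); the action of $H$ on $X(\A,R_0)$ given in Definition \ref{def:CayCx} is cellular, free, and cocompact with a single orbit of vertices; and $H_1(X(\A,R_0))=0$ by the very definition of a homological finite presentation.

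With all hypotheses met, Proposition \ref{prop:superaddforspaces} yields that $\FA_{X(\A,R_0)}$ is $\simeq$ equivalent to a superadditive function, and since $\FA_H \simeq \FA_{X(\A,R_0)}$ by Proposition \ref{prop:indeppres}, the same holds for $\FA_H$. There is no genuine obstacle here; the statement is essentially a packaging of Proposition \ref{prop:superaddforspaces} in group-theoretic language, and the only thing to keep track of is that the particular complex chosen as a model for $\FA_H$ meets the local-finiteness and cocompactness conditions, which is immediate from finiteness of $\A$ and $R_0$.
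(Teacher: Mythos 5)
Your proof matches the paper's: the paper states that Proposition~\ref{prop:superadd} ``follows immediately from \Cref{prop:superaddforspaces},'' and you have simply spelled out the verification that the homological Cayley complex satisfies the connectedness, local finiteness, cocompactness, and $H_1=0$ hypotheses. The reasoning is correct and complete.
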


	The corresponding statement about Dehn functions is a conjecture of Guba and Sapir:
	\begin{conjecture}[\protect{\cite[Conjecture\,1]{GS}}]\label{conj:gs}
		The Dehn function of a finitely presented group is $\simeq$ equivalent to a superadditive function.
	\end{conjecture}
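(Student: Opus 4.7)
The statement is Guba--Sapir's conjecture, which remains open, so I can only outline the approach that most directly generalises \Cref{prop:superaddforspaces} and indicate where it breaks down. In the homological proof of that proposition one translates $1$-cycles $w_1,w_2$ by a group element $g$ whose orbit is far from the basepoint, uses disjointness of $w_1$ and $g\cdot w_2$ to add sizes, and decomposes any $2$-chain $c$ with $\partial c=w_1+g\cdot w_2$ into subchains supported on each component. To mimic this for disks, I would fix a finite presentation $\la \A\,|\, R\ra$ of $H$ with universal cover $\widetilde X$ of the presentation $2$-complex, and for each $m,n\in\N$ pick edge loops $w_1,w_2$ in $\widetilde X$ based at a vertex $v$ with $|w_1|\le m$, $|w_2|\le n$, attaining $\delta_H(m)$ and $\delta_H(n)$ respectively.

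The natural homotopical analogue of Claim~1 in the proof of \Cref{prop:superaddforspaces} is to form a single loop $w=w_1\cdot u\cdot(g\cdot w_2)\cdot u^{-1}$, where $g\in H$ is chosen so that $d_{\widetilde X}(v,g\cdot v)$ is linear in $m+n$ and $u$ is a geodesic from $v$ to $g\cdot v$, so that $|w|\le C(m+n)$ for some fixed constant $C$. The homotopical analogue of Claim~2 would be a dichotomy for any disk diagram $\Delta$ with boundary $w$: either there is an embedded arc separating $\Delta$ into two subdiagrams with boundaries $w_1$ and $g\cdot w_2$, yielding $|\Delta|\ge\Area(w_1)+\Area(g\cdot w_2)=\delta_H(m)+\delta_H(n)$; or there is a chain of $2$-cells crossing $\Delta$ from $w_1$ to $g\cdot w_2$, whose length is bounded below by $d_{\widetilde X}(v,g\cdot v)/L$ with $L$ the maximum length of a relator, in which case choosing $g$ so that $d_{\widetilde X}(v,g\cdot v)>L\cdot(\delta_H(m)+\delta_H(n))$ already forces $|\Delta|\ge\delta_H(m)+\delta_H(n)$. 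Taking the supremum over such $w_1,w_2$ would prove superadditivity up to $\simeq$-equivalence.

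The main obstacle---and the reason the conjecture remains unresolved---is that this dichotomy genuinely fails in the homotopical setting. A connected van Kampen diagram for $w$ can interleave fillings of $w_1$ and $g\cdot w_2$ through thin ``tubes'' of $2$-cells that exploit short topological relations, without possessing any separating arc and without developing the large diameter that the bridging argument requires; moreover, the portion of $\partial\Delta$ running along $u\cdot u^{-1}$ may be filled by $2$-cells shared with both subloops to effect cancellations unavailable to a chain-level decomposition. Unlike $2$-chains, disk diagrams admit no formal disjoint-union operation, so there is no pigeonhole mechanism forcing one of the two alternatives. A genuine proof would seem to require a new combinatorial isoperimetric tool controlling the connectivity of area-minimising diagrams, or a construction of $w$ from $w_1,w_2$ that structurally precludes interleaving---for instance by exploiting a quasi-isometrically embedded free subgroup, or a specific geometric feature of $\widetilde X$ such as $\CAT(0)$ or acylindrical hyperbolicity, to force the two subloops to lie in distinct topological regions of every filling.
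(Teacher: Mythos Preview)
Your assessment is correct: the paper does not prove this statement. \Cref{conj:gs} is presented in the paper precisely as an open conjecture of Guba and Sapir, with no attempted proof; the paper merely remarks that the analogous question for the abelianised Dehn function is likewise open, and that Guba and Sapir themselves suggest a negative answer would indicate the definition of Dehn function is deficient. Your outline of how one would try to adapt the argument of \Cref{prop:superaddforspaces} and your diagnosis of the obstruction---that van Kampen diagrams, unlike $2$-chains, need not decompose along a separating arc even when the boundary loop visits two far-apart regions---accurately captures why the homological proof does not transfer to the homotopical setting.
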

	The same question is open for the abelianised Dehn function. 
	Guba and Sapir say that if this conjecture is wrong, then we may be using a wrong definition of the Dehn function. 
	Notably, a modification of the Dehn function is used in~\cite{Ger}, one can show that the definition there is superadditive using the same proof as \Cref{prop:superaddforspaces}. 
	
	Since clearly we have the inequality $\delta_H^{ab}\preceq \delta_H$ %\cite{BMS}
	for finitely presented groups, we obtain the following result. 
	\begin{proposition}\label{prop:boundbyDehn}
		Let $H$ be a finitely presented group and let $\bar{\delta}_H$ be the superadditive closure of the Dehn function $\delta_H$ of $H$. 
		Then $\FA_H(n) \preceq \bar{\delta}_H(n)$.
	\end{proposition}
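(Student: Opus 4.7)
The plan is to combine \Cref{prop:loops} with the standard pointwise inequality $\delta_H^{ab}(n) \leq \delta_H(n)$, plus the observation that the superadditive-closure operation respects the ordering $\preceq$.

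First I would fix a finite presentation $\langle \A \mid R\rangle$ of $H$. Its standard Cayley $2$-complex $X = X(\A, R)$ is simply connected, so $H_1(X) = 0$, and $\langle \A \,||\, R\rangle$ is a homological finite presentation in the sense of \Cref{def:CayCx}. Since $X$ is locally finite and carries a free, cellular, cocompact $H$-action, \Cref{prop:loops} applies and yields $\FA_H \simeq \bar{\delta}_H^{ab}$. The pointwise bound $\delta_H^{ab}(n) \leq \delta_H(n)$ is immediate from the definitions: a combinatorial disk filling $\Delta$ of a loop $\gamma$ in $X$, viewed as the formal $\Z$-sum of its oriented $2$-cells, is a $2$-chain with boundary $\gamma$ whose $\ell^1$-norm is exactly $|\Delta|$. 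This is the inequality from~\cite{BMS}.

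It then remains to verify that taking superadditive closure is monotone with respect to $\preceq$, for if so, $\delta_H^{ab} \preceq \delta_H$ gives $\bar{\delta}_H^{ab} \preceq \bar{\delta}_H$ and hence $\FA_H \simeq \bar{\delta}_H^{ab} \preceq \bar{\delta}_H$. To prove monotonicity, assume $f(m) \leq A g(Bm+C) + Dm + E$ for all $m$. By \Cref{lem:closure}, for any partition $n = n_1 + \dots + n_r$ with $n_i \geq 1$ (so $r \leq n$), the superadditivity of $\bar g$ together with $\bar g \geq g$ gives
\[
\textstyle\sum_i f(n_i) \;\leq\; A\sum_i g(Bn_i+C) + Dn + Er \;\leq\; A\,\bar g\bigl((B+C)n\bigr) + (D+E)n.
\]
Maximizing over partitions yields $\bar f(n) \leq A\,\bar g((B+C)n) + (D+E)n$, i.e.\ $\bar f \preceq \bar g$. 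Applying this with $f = \delta_H^{ab}$ and $g = \delta_H$ completes the proof.

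The argument is essentially mechanical; the only piece of bookkeeping that requires any care is the constant-tracking in the monotonicity step above, and this is handled by the estimate $r \leq n$ which absorbs the additive term $Er$ into $En$. No substantive obstacle is expected.
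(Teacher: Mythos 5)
Your proof is correct, but it takes a somewhat different route from the paper's. You factor the argument through \Cref{prop:loops} (getting $\FA_H \simeq \bar\delta_H^{ab}$), bound $\delta_H^{ab}\le\delta_H$ pointwise via the observation that a disk filling is a $2$-chain of the same size, and then supply a general lemma that the superadditive-closure operation is monotone with respect to $\preceq$. The paper instead argues directly: it picks a minimal multi-loop representative $w=\sum_i w_i$ of a $1$-cycle (so $|w|=\sum_i|w_i|$) and chains the inequalities $\HArea_X(w)\le\sum_i\HArea_X(w_i)\le\sum_i\Area_X(w_i)\le\sum_i\delta_H(|w_i|)\le\bar\delta_H(|w|)$, then takes suprema. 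These are logically close --- the loop decomposition you borrow from \Cref{prop:loops} is exactly the decomposition the paper performs inline --- but the routes differ in emphasis: the paper's version is shorter and self-contained, needing only \Cref{lem:closure}, whereas yours is more modular and isolates a reusable fact (that $f\preceq g$ implies $\bar f\preceq\bar g$) which the paper never states explicitly. Your monotonicity step is handled carefully and correctly; the only hidden ingredient is that the superadditive closure of a non-negative function is non-decreasing (needed to pass from $\bar g(Bn+Cr)$ to $\bar g((B+C)n)$ using $r\le n$), which follows from superadditivity and non-negativity and is worth flagging in a polished write-up.
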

		\begin{proof}
			Recall, that by \Cref{lem:closure}, the superadditive closure of a function $f\colon \N\to [0,\infty)$ is given by $\bar{f}(n) = \max\{\sum_i f(n_i) \mid n_i
			\ge1,\sum_i n_i  = n\}$. 
			
			Since $H$ is finitely presented, we can assume that the homological Cayley complex $X$ is the universal cover of the presentation complex for $H$. 
			Let $\gamma$ be a loop in $X$. 
			Then from Definitions \ref{def:area} and \ref{def:homarea} we immediately see $\HArea_X(\gamma)\leq \Area_X(\gamma)$.

			Let $w$ be a $1$-cycle in $X$. 
			Pick a minimal representative $\sqcup_i S^1\to X$ for $w$.  
			We refer to the map on the $i$-th copy of $S^1$ by $w_i$. 
			By minimality, we see that $|w| = \sum_i |w_i|$. 
			
			We now have the following inequalities: 
			\[
			\HArea_X(w)
			\leq \textstyle\sum_i\HArea_X(w_i)
			\leq \textstyle\sum_i\DArea_X(w_i)
			\leq \textstyle\sum_i \delta(|w_i|)
			\leq \bar{\delta}\left(\textstyle\sum_i|w_i|\right) = \bar{\delta}(|w|),
			\]
			and by taking supremum, we get $\FA_H(n)\le \bar\delta_H(n)$ for an arbitrary $n$.
		\end{proof}
	
	We suspect that the above holds without using superadditive closures, namely, 
	\begin{conjecture}
		Let $H$ be a finitely presented group. 
		Then $\FA_H\preceq \delta_H$. 
	\end{conjecture}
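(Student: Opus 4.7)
The plan is to bootstrap \Cref{prop:boundbyDehn} from the superadditive closure $\bar\delta_H$ down to $\delta_H$ itself. Given a $1$-cycle $w$ of size $n$ in the universal cover $X$ of a finite presentation complex for $H$, the proof of \Cref{prop:boundbyDehn} picks a minimal cellular representative $\bigsqcup_i S^1 \to X$ with components $w_i$, fills each $w_i$ by a disk of area at most $\delta_H(|w_i|)$, and sums these disk fillings as $2$-chains. The term that must disappear is $\sum_i \delta_H(|w_i|)$; we need to replace it by $\delta_H(n)$ up to $\simeq$.

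The first step is to try to merge the components $w_i$ into a single loop of size comparable to $n$ via \emph{lollipop paths}. Choose a tree $T$ in $X^{(1)}$ spanning the basepoints $v_i$ of the $w_i$ together with a root $v_0$, and let $p_i$ be the tree geodesic from $v_0$ to $v_i$. Form the single loop
\[
\gamma \;=\; p_1\cdot w_1 \cdot p_1^{-1} \cdot p_2 \cdot w_2\cdot p_2^{-1}\cdots p_k\cdot w_k\cdot p_k^{-1},
\]
so that any disk filling of $\gamma$ yields, after the $p_i$--$p_i^{-1}$ pairs cancel in the boundary, a $2$-chain filling of $w$. When the basepoints $v_i$ lie in a region of diameter $O(n)$ the tree can be chosen with total length $O(n)$, one has $|\gamma| = O(n)$, and hence $\HArea_X(w) \le \Area_X(\gamma) \le \delta_H(|\gamma|) \preceq \delta_H(n)$ as required.

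The main obstacle is the well-separated case, in which the basepoints $v_i$ are pairwise far apart. There the lollipop loop has $|\gamma| \gg n$, and the disk-filling bound on $\gamma$ is too weak. On the other hand, the argument of \Cref{prop:superaddforspaces} shows that when the $w_i$ are mutually well separated any $2$-chain filling must split as the sum of independent fillings of the $w_i$, so $\HArea_X(w) = \sum_i \HArea_X(w_i) \le \sum_i \delta_H(|w_i|)$; collapsing this sum to $\delta_H(n)$ is precisely \Cref{conj:gs} of Guba--Sapir. Thus the conjecture reduces, modulo the lollipop step, to a superadditivity statement for $\delta_H$ of the group in question. A proof that bypasses Guba--Sapir would seem to require genuinely exploiting the integer cancellations available to $2$-chains but not to disk diagrams in order to absorb any non-superadditive excess; finding such a mechanism is, in our view, the central difficulty and the reason the result is formulated as a conjecture rather than a theorem.
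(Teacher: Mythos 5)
The statement you were asked to prove is, in the paper, an open \emph{conjecture}: the authors explicitly declare they cannot prove it, and only observe that it would follow from the Guba--Sapir superadditivity conjecture (\Cref{conj:gs}). So there is no proof in the paper to compare against. Your analysis correctly arrives at the same reduction the authors record: in the regime where the components $w_i$ of the $1$-cycle are mutually well separated, any filling decomposes and one lands on $\sum_i \delta_H(|w_i|)$, which is exactly what Guba--Sapir would be needed to collapse to $\delta_H(n)$. That part of your write-up is in agreement with the paper, and you were right not to claim a proof.

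There is, however, a gap in your lollipop step for the ``bounded-diameter'' regime. You assert that when the basepoints $v_i$ lie in a region of diameter $O(n)$ the connecting tree has total length $O(n)$, hence $|\gamma| = O(n)$. This is false in general. A $1$-cycle of size $n$ can decompose into $k$ loops with $k$ on the order of $n$, and even when the basepoints all lie in a ball of radius $R = O(n)$, the minimum length of a spanning tree of $k$ points in a graph can be on the order of $kR$ (e.g.\ a star graph: $k$ tips at distance $R$ from a central hub have mutual distance $2R$ but pairwise-disjoint geodesics to the hub, so no spanning subtree is shorter than $(k-1)R$). Thus $|\gamma|$ can be $\Theta(n^2)$, and $\delta_H(|\gamma|) \preceq \delta_H(n)$ fails for any $\delta_H$ growing faster than linearly. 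The same issue affects any clean ``dichotomy'' framing: components can cluster at many different scales, and the lollipop-versus-separation split doesn't partition the cases in a way that the above estimate survives. So even modulo Guba--Sapir for the well-separated tail, your argument does not close the bounded case. The reduction you end on --- that one must somehow exploit cancellation available to $2$-chains but not to disk diagrams --- is indeed the heart of the difficulty and the reason the paper leaves this as a conjecture.
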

	It is clear that this conjecture is implied by 
	\Cref{conj:gs}.
	It is not clear to us whether the two are equivalent. 
	
	\medskip
	In certain cases we can also use loops to obtain a lower bound on $\FA_H$. 
	This will be key to the proof of several lower bounds in the following sections. 
	
	\begin{proposition}\label{thm:embeddeddisk}
		Let $H$ be a finitely presented group. 
		Let $X$ be the universal cover of a presentation $2$-complex for $H$. Suppose that $X$ satisfies $H_2(X)=0$. 
		If $\gamma$ is a loop in $X$ which is the boundary of an embedded  diagram $\Delta$, then $|\Delta|\leq \FA_H(|\gamma|)$.
	\end{proposition}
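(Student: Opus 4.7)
The plan is to use the hypothesis $H_2(X)=0$ to force the homological filling of $\gamma$ to be \emph{unique}, and then to observe that the embedded disk diagram $\Delta$ realises this unique filling, so that its $2$-cell count coincides with $\HArea_X(\gamma)$ on the nose.

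First I would promote the embedded diagram to a $2$-chain. Since $\Delta$ is a planar combinatorial disk diagram, fix a coherent orientation on its $2$-cells. Because $\pi\colon \Delta\to X$ is an embedding, the $|\Delta|$ two-cells of $\Delta$ are carried homeomorphically onto $|\Delta|$ pairwise distinct $2$-cells of $X$. Pushing the chosen orientations forward produces a $2$-chain
\[
c_\Delta \;=\; \sum_{i=1}^{|\Delta|} \epsilon_i \sigma_i \;\in\; C_2(X),
\]
with $\epsilon_i\in\{\pm 1\}$ and the $\sigma_i$ pairwise distinct, whose algebraic boundary is precisely the $1$-cycle associated with $\gamma$. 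In particular $\|c_\Delta\|_{\ell^1}=|\Delta|$.

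The key step is uniqueness of the filling. Because $X$ is the universal cover of a $2$-dimensional presentation complex, $X$ itself is $2$-dimensional, hence $C_3(X)=0$; combined with the hypothesis this gives $Z_2(X)=H_2(X)=0$. Thus the cellular boundary map $\partial\colon C_2(X)\to C_1(X)$ is injective, so $c_\Delta$ is the \emph{only} $2$-chain in $X$ with boundary $\gamma$. Consequently every filling of $\gamma$ has $\ell^1$-norm at least $|\Delta|$, and in fact $\HArea_X(\gamma)=|\Delta|$.

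To finish, observe that any finite presentation of $H$ is in particular a homological finite presentation, and the universal cover $X$ is exactly its homological Cayley complex. Therefore by \Cref{def:homdehn} (and \Cref{prop:spaces}) the homological filling function of $X$ is $\FA_H$, and the definition of $\FA_X$ yields
\[
|\Delta| \;=\; \HArea_X(\gamma) \;\le\; \FA_X(|\gamma|) \;=\; \FA_H(|\gamma|),
\]
as desired. The only mildly subtle point is the orientation step in the construction of $c_\Delta$; it is immediate from the planarity of $\Delta$ and the injectivity of $\pi$, and once $Z_2(X)=0$ is in hand the remainder of the argument is formal.
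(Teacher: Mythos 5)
Your argument is correct and is essentially the paper's own proof: both pass to the $2$-chain determined by the embedded diagram, then use $H_2(X)=0$ together with the absence of $3$-cells (i.e.\ $C_3(X)=0$) to conclude that the filling is unique, so $\HArea_X(\gamma)=|\Delta|\leq\FA_H(|\gamma|)$. Your version simply spells out the orientation/push-forward step and the identification $\FA_X=\FA_H$ in a bit more detail, but the content and structure are the same.
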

	\begin{proof}
		The map $\Delta$ defines a $2$-chain in $C_2(X)$ with boundary $\gamma$. 
		Thus $\HArea_X(\gamma)\leq|\Delta|$.
		
		Let $c$ be another $2$-chain with boundary $\gamma$. 
		Then $\Delta - c$ is a $2$-cycle. 
		Since $H_2(X)=0$ we see that $\Delta-c$ is a boundary. 
		However, there are no $3$-cells in $X$ and so all the boundaries are trivial. 
		Hence $\Delta - c = 0$ and $c = \Delta$. 
		Thus $|\Delta| = \HArea(\gamma) \leq \FA_H(|\gamma|)$. 
	\end{proof}

	\medskip
	To state the next proposition, we need the notion of the maximum of two $\simeq$ equivalence classes of non-decreasing functions. The following lemma shows that it is well defined. Recall that, given two functions $f,g$, the function $\max(f,g)$ is defined for each $n$ as 
	\[
	\max(f,g)(n)\vcentcolon=\max\{f(n),g(n)\}. 
	\]
	
	The following lemma is an easy exercise.
	\begin{lemma}
		Let $f,f',g,g'$ be non-decreasing functions $\N\to[0,\infty)$. If $f\simeq f'$ and $g\simeq g'$, then $\max(f,g)\simeq\max(f',g')$.\qed 
	\end{lemma}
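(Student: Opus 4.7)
By symmetry (swapping the roles of the primed and unprimed functions) it suffices to establish one direction, namely $\max(f,g)\preceq\max(f',g')$, and the plan is to derive this directly from the two given inequivalences $f\preceq f'$ and $g\preceq g'$ by collecting constants into a single witness.

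Unpacking the definition of $\preceq$, choose $A_1,B_1>0$, $C_1,D_1,E_1\ge 0$ so that $f(n)\le A_1 f'(B_1 n+C_1)+D_1 n+E_1$ for all $n$, and similarly $A_2,B_2,C_2,D_2,E_2$ for $g(n)\le A_2 g'(B_2 n+C_2)+D_2 n+E_2$. Set $A=\max(A_1,A_2)$, $B=\max(B_1,B_2)$, $C=\max(C_1,C_2)$, $D=\max(D_1,D_2)$, $E=\max(E_1,E_2)$. The key observation is that since $f'$ and $g'$ are non-decreasing, both $f'(B_1n+C_1)\le f'(Bn+C)$ and $g'(B_2n+C_2)\le g'(Bn+C)$ hold, and moreover $f'(Bn+C),g'(Bn+C)\le\max(f',g')(Bn+C)$. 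Combining these with the two defining inequalities yields
\[
f(n)\le A\,\max(f',g')(Bn+C)+Dn+E,\qquad g(n)\le A\,\max(f',g')(Bn+C)+Dn+E,
\]
and taking the pointwise maximum gives $\max(f,g)(n)\le A\,\max(f',g')(Bn+C)+Dn+E$, i.e.\ $\max(f,g)\preceq\max(f',g')$.

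There is no real obstacle here; the only subtlety is that the monotonicity hypothesis on all four functions is used twice, once to absorb the unequal dilation/translation constants $B_i,C_i$ into the common $B,C$, and once implicitly to know that $\max(f',g')$ is itself non-decreasing so that its evaluation at $Bn+C$ dominates its evaluation at $B_i n+C_i$. Applying the same argument with the primed and unprimed functions interchanged gives the reverse relation $\max(f',g')\preceq\max(f,g)$, and hence $\max(f,g)\simeq\max(f',g')$.
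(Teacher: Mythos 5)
Your proof is correct, and since the paper states this lemma without proof (leaving it as ``an easy exercise''), there is no paper argument to compare against; your write-up supplies exactly the straightforward bookkeeping that the authors had in mind. One tiny inaccuracy in your commentary: you do not actually need $\max(f',g')$ to be non-decreasing anywhere, since you already upgraded $f'(B_1n+C_1)$ to $f'(Bn+C)$ using monotonicity of $f'$ alone (and likewise for $g'$) before taking the pointwise maximum; the argument as carried out is fine, only the remark is redundant.
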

	%\begin{proof}
	%	Since $f\preceq f'$, there are constants $A,B>0$ and $C,D,E\ge0$ such that for all $n\ge0$ we have $f(n)\le Af'(Bn+C)+Dn+E$. Since $f'$ is non-decreasing, this implies that for $K=\max\{A,B,C,D,E\}$ we have $f(n)\le Kf'(Kn+K)+Kn+K$ for all $n\ge0$. Similarly, since $g\preceq g'$, there exists $L>0$ such that $g(n)\le Lg'(Ln+L)+Ln+L$ for all $n\ge0$. 
	%	
	%	Let $h = \max(f', g')$. Denoting $M=\max\{K,L\}$ and using the fact that $f'$ and $g'$ are non-decreasing again, we conclude that both $f(n)$ and $g(n)$ are bounded above by $M\cdot h(Mn+M)+Mn+M$, for all $n\ge0$. This implies that $\max(f,g)\preceq\max(f',g')$. By symmetry, the opposite inequality also holds, therefore $\max(f,g)\simeq\max(f',g')$.
	%\end{proof}
	
	\begin{definition}
		(Retraction)
		Let $H, H'$ be groups.
		Let $\iota\colon H'\to H$ be an injective homomorphism.
		We say that $H'$ is a {\em retract} of $H$, if there is a homomorphism $\phi\colon H\to H'$ such that $\phi\circ \iota = \id_{H'}$. In this case $\phi$ is surjective and is called a \emph{retraction} of $H$ onto $H'$. 
	\end{definition}
	
	\begin{proposition}\label{prop:retract}
		Let $H$ be a group of type $FP_2$, and $H'\subseteq H$. 
		Suppose that there exists a retraction $\phi\colon H\to H'$.
		Then $\FA_{H'}\preceq \FA_H$. 
	\end{proposition}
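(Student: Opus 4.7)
The plan is to push $2$-chain fillings from a homological Cayley complex of $H$ down to one of $H'$ via the retraction $\phi$. First I would align the presentations of $H$ and $H'$. Starting from any homological finite presentation $\la\A\,||\,R_0\ra$ of $H$ and a finite generating set $\A'$ of $H'$ (which exists because $H'$, as a quotient of $H$ via $\phi$, is finitely generated), I would apply the construction from the proof of \Cref{prop:indeppres} to enlarge the presentation to $\la\A\sqcup\A'\,||\,R_0\cup R_1\ra$, where $R_1=\{a\cdot v_a^{-1}:a\in\A'\}$ expresses each $a\in\A'$ as a word $v_a\in F(\A)$. Denote its relative Cayley complex by $X$; we still have $H_1(X)=0$, and by \Cref{prop:indeppres} the homological filling function of $X$ is $\simeq\FA_H$.

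Next I would produce a candidate homological finite presentation for $H'$ by applying $\phi$ letter-wise. For each $a\in\A$, fix $w_a\in F(\A')$ representing $\phi(a)\in H'$, and for $a\in\A'$ set $w_a=a$ (valid since $\phi$ is a retraction). For a relator $r=a_{i_1}^{\varepsilon_1}\cdots a_{i_k}^{\varepsilon_k}\in R_0\cup R_1$, set $\phi_*(r)=w_{a_{i_1}}^{\varepsilon_1}\cdots w_{a_{i_k}}^{\varepsilon_k}\in F(\A')$; this equals $1$ in $H'$ because $r=1$ in $H$. Let $R_0'=\{\phi_*(r):r\in R_0\cup R_1\}$ and let $X'$ be the corresponding relative Cayley complex for $H'$.

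Then I would define a chain map $\Phi_\sharp\colon C_*(X)\to C_*(X')$ modeling $\phi$ on the level of complexes: on vertices send $g\in H$ to $\phi(g)\in H'$; on an edge $g\cdot e_a$ send it to the $1$-chain given by the edge-path labeled $w_a$ in $X'$ starting at $\phi(g)$; and on a $2$-cell $g\cdot D_r$ send it to $\phi(g)\cdot D_{\phi_*(r)}$. A routine check shows $\Phi_\sharp\circ\partial=\partial\circ\Phi_\sharp$. The key observation is that any $1$-cycle $\gamma'$ in $X'$ lifts along the inclusion $\A'\hookrightarrow\A\sqcup\A'$ to a $1$-cycle $\tilde\gamma$ in $X$ of the same size, and $\Phi_\sharp(\tilde\gamma)=\gamma'$ since $\phi$ fixes $H'$ pointwise and $w_a=a$ for $a\in\A'$.

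To finish, since $H_1(X)=0$, the cycle $\tilde\gamma$ bounds a $2$-chain $c=\sum_i a_i\sigma_i$ in $X$ with $|c|\leq\FA_H(|\gamma'|)$. Then $\Phi_\sharp(c)$ is a $2$-chain in $X'$ with boundary $\Phi_\sharp(\partial c)=\Phi_\sharp(\tilde\gamma)=\gamma'$, and since each $2$-cell of $X$ is sent to a single $2$-cell of $X'$, we have $|\Phi_\sharp(c)|\leq|c|$. This simultaneously confirms that $H_1(X')=0$ (so $\la\A'\,||\,R_0'\ra$ is a genuine homological finite presentation for $H'$) and yields $\FA_{H'}(n)\leq\FA_H(n)$, whence $\FA_{H'}\preceq\FA_H$. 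The only nontrivial step is bookkeeping: carefully setting up $\Phi_\sharp$ on all cells so that it is a chain map and verifying $\Phi_\sharp(\tilde\gamma)=\gamma'$; no deeper obstacle is anticipated.
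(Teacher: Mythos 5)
Your proof is correct, and it takes a genuinely different route from the paper's. The paper chooses generating sets $\A'\subset\A$ with $\A\smallsetminus\A'\subset\ker(\phi)$ so that $\phi$ induces a map that collapses some edges to vertices and fixes the rest, and then enlarges \emph{both} relator sets by the (potentially large, but finite) set $S$ of all short trivial words of $H'$ to guarantee the resulting map $\rho\colon X\to X'$ extends over $2$-cells as a bona fide combinatorial retraction of complexes. You instead keep the generic generating set $\A\sqcup\A'$, define the relator set for $H'$ by applying $\phi$ letter-wise to each relator of $H$, and build a chain map $\Phi_\sharp$ rather than a cellular map — edges go to edge-paths, not to single edges. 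What each approach buys: the paper's construction produces an actual topological retraction, fitting its combinatorial-complexes framework, at the cost of adding $S$; your chain-level argument avoids $S$ entirely, is shorter, and moreover establishes $H_1(X')=0$ as a byproduct of the same filling computation rather than as a separate preliminary step. The trade-off is that your $\Phi_\sharp$ lives only at the level of cellular chain complexes, so the whole argument must stay at the chain level — but that is exactly where homological filling functions live, so nothing is lost. (One cosmetic remark: some $\phi_*(r)$ could freely reduce to the empty word, giving a degenerate $2$-cell; these can simply be discarded from $R_0'$ without affecting $H_1(X')$ or the filling bound. Also, your final inequality is more precisely $\FA_{X'}(n)\leq\FA_{X}(n)$, which gives $\FA_{H'}\preceq\FA_H$ only up to the $\simeq$-equivalences $\FA_{X'}\simeq\FA_{H'}$ and $\FA_X\simeq\FA_H$, but that is all that is needed.)
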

	\begin{proof}
		By \cite{Alonso}, we have that if $H'$ is a retract of a group of type $FP_2$, then $H'$ is of type $FP_2$ itself, and hence $\FA_{H'}$ is defined. 
		
		The idea of the proof is to obtain homological Cayley complexes for $H$ and $H'$ and a cellular retraction between them. 
		We achieve this as follows. 
		We claim that we can choose generating sets $\A$ and $\A'$ for $H$ and $H'$, respectively, in such a way that $\A'\subset \A$ and $\A\smallsetminus \A'\subset \ker(\phi)$. Indeed, choose an arbitrary generating set $\A'\subset H'$. The group $H$ decomposes as a semidirect product $H=H'\ltimes\ker(\phi)$, hence $H$ is generated by $\A'$ and $\ker(\phi)$. Since $H$ is finitely generated, there exists a finite generating set $\A_0\subset H$, and for each $a\in\A_0$ there exists a word $w_a$ in generators from $\A'\cup\ker(\phi)$ such that $a=w_a$ in $H$. Let $\B$ be the set of elements of $\ker(\phi)$ which participate in words $w_a$ as $a$ ranges over $\A_0$. Denote $\A=\A'\cup\B$. We see that $\A$ is finite, generates $H$, contains $\A'$ and that $\A\smallsetminus\A'=\B\subset\ker(\phi)$, as desired.
		
		Let $\A$ and $\A'$ be as above and let  $\la \A\left|\right| R_0\ra$ and $\la \A'\left|\right| R_1 \ra$ be homological
		finite presentations for $H$ and $H'$, respectively. 
		We are going to modify these presentations as follows. 
		Let $S$ denote the set of all words in generators $\A'$ which give trivial elements of $H'$ and have length $\le\max\{|r|\mid r\in R_0\}$. We claim that 
		$\la \A'\left|\right| R_1\cup S\ra$ is a homological finite presentation for $H'$. Indeed, the homological Cayley complex for $\la \A'\left|\right| R_1\cup S\ra$ is obtained from the one for $\la \A'\left|\right| R_1\ra$ by adding $2$-cells corresponding to words from $S$ which are trivial in $H'$, and hence this operation does not change the first homology of the complex. Similarly, we claim that $\la \A\left|\right| R_0\cup R_1\cup S\ra$ is a homological finite presentation for $H$, for the same reason as above: relators $R_1\cup S$ are words in generators $\A'\subset\A$, which represent trivial element of $H'$, and hence are trivial in $H$ as well.
		
		Let $X$ be the homological Cayley complex for $\la \A\left|\right| R_0\cup R_1\cup S\ra$ and $X'$ be the homological Cayley complex for $\la \A'\left|\right| R_1\cup S \ra$. 
		Since $\A'\subset \A$ and $R_1\cup S\subset R_0\cup R_1\cup S$ we have a cellular inclusion $X'\to X$. 
		We also obtain a map $X\to X'$ as follows. Recall that the $1$-skeletons of $X$ and $X'$ are naturally identified with the Cayley graphs $\Gamma(H,\A)$ and $\Gamma(H',\A')$, respectively.
		Since $\phi$ is a homomorphism such that $\phi(a)=a$ for all $a\in\A'$ and $\phi(a)=1$ for all $a\in\A\smallsetminus \A'$, it gives rise to a map of the Cayley graphs $\rho\colon X^{(1)}\to (X')^{(1)}$ in the following way. Each vertex $g\in X^{(0)}$ gets sent to the corresponding vertex $\phi(g)\in(X')^{(0)}$. If $e$ is an edge  of $\Gamma(H,\A)$ having a pair of vertices $(g,ga)$ as endpoints, then we define $\rho$ to map the edge $e$ homeomorphically onto the edge between vertices $(\phi(g),\phi(g)a)$ of $\Gamma(H',\A')$ if $a\in\A'$, and to map $e$ to the single vertex $\phi(g)$ if $a\in\A\smallsetminus\A'$. 
		One can see that the effect of applying $\rho$ is contracting all edges labeled by elements of $\A\smallsetminus\A'$, and sending edges labeled by elements from $\A'$ onto edges with the same label. In particular, $\rho$ does not increase the lengths of paths.
		
		We can extend $\rho$ over $2$-cells to obtain a cellular map of $X\to X'$ in the following way. 
		Observe that each $2$-cell $\sigma$ of $X$ has boundary $\partial \sigma$ labeled by a word from either $R_0$, or $R_1\cup S$. 
		In the latter case, all letters of the attaching word belong to $\A'$, and hence $\rho$ maps $\partial\sigma$ to a loop in $X'$ with the same label, and there is a corresponding $2$-cell $\sigma'$ in $X'$ having the same boundary $\partial\sigma$. This allows us to define $\rho|_\sigma\colon\sigma\to\sigma'$ via the identity map. If $\partial \sigma$ reads an element of $R_0$, we see that $\rho(\partial\sigma)$ is a loop in $(X')^{(1)}$ whose length is less than or equal to the length of $\partial\sigma$. Since $S$ contains all trivial words of $H'$ of length $\leq \max\{|r|\mid r\in R_0\}$, we see that the label of $\rho(\partial\sigma)$  belongs to $S$. 
		Thus there is a $2$-cell $\sigma'$ in $X'$ with boundary label $\rho(\partial\sigma)$. 
		We can extend $\rho$ by mapping $\sigma$ to $\sigma'$. 
		Thus we obtain a map of homological Cayley complexes that is a retraction $X\to X'$. 
		
		Let $\gamma$ be a $1$-cycle in $X'$ and let $c$ be a filling for $\gamma$ in $X$. 
		Then $\rho(c)$ is a filling for $\gamma$ in $X'$ and since each $2$-cell of $X$ maps to a single $2$-cell of $X'$, we have that $|c| = |\rho(c)|$. 
		Hence $\HArea_{X'}(\gamma) \le \HArea_{X}(\gamma)$, which implies that $\FA_{X'}(|\gamma|)\preceq \FA_X(|\gamma|)$. 
	\end{proof}
	
	\begin{proposition}\label{prop:freeproducts}
		Let $H_1, H_2$ be groups of type $FP_2$. Let $H = H_1\ast H_2$. Then $\FA_H$ is $\simeq$ equivalent to $\max\bigl(\FA_{H_1}, \FA_{H_2}\bigr)$.
	\end{proposition}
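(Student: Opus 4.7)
The plan is to establish both inequalities in the $\simeq$-equivalence. For the lower bound $\max(\FA_{H_1}, \FA_{H_2}) \preceq \FA_H$, I will observe that each factor $H_i$ is a retract of $H_1 \ast H_2$ via the homomorphism that collapses the other factor to the identity; Proposition~\ref{prop:retract} then gives $\FA_{H_i} \preceq \FA_H$ for $i = 1, 2$, whence $\max(\FA_{H_1}, \FA_{H_2}) \preceq \FA_H$.

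For the upper bound, I will fix homological finite presentations $\la \A_i \, || \, R_i\ra$ for each $H_i$ with homological Cayley complexes $X_i$, and show that the concatenated presentation $\la \A_1 \sqcup \A_2 \, || \, R_1 \sqcup R_2\ra$ is a homological finite presentation for $H$. Its Cayley complex $X$ decomposes as a union of subcomplexes $X^{g,i}$ indexed by cosets $gH_i$, each isomorphic to $X_i$, any two of which meet in at most a single vertex (reflecting the Bass--Serre tree structure of $H_1 \ast H_2$). A Mayer--Vietoris argument using $H_1(X_i) = 0$ then yields $H_1(X) = 0$, so $X$ is indeed the homological Cayley complex of $H$ for this presentation.

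Given a $1$-cycle $\gamma$ in $X$, I will produce a decomposition $\gamma = \sum_j \gamma_j$ in which each $\gamma_j$ is a $1$-cycle supported in a single copy $X^{g_j, i(j)}$ and $|\gamma| = \sum_j |\gamma_j|$ (no edge cancellation). This is done by iteratively splitting at cut vertices: if $v$ is a cut vertex separating $X \smallsetminus \{v\}$ into components $C_1, \dots, C_k$, then each restriction $\gamma|_{C_l \cup \{v\}}$ is again a $1$-cycle, as can be seen by applying the chain-level retraction $X \to C_l \cup \{v\}$ that crushes every other component to the single point $v$: this is a chain map, so it carries the cycle $\gamma$ to a cycle, which is exactly $\gamma|_{C_l \cup \{v\}}$. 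Only finitely many cut vertices of $X$ meet the support of $\gamma$, so the iteration terminates in finitely many steps.

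Filling each piece $\gamma_j$ in its copy of $X_{i(j)}$ then gives
\[
\HArea_X(\gamma) \;\leq\; \sum_j \FA_{H_{i(j)}}(|\gamma_j|) \;\leq\; \sum_j M(|\gamma_j|),
\]
where $M \vcentcolon= \max(\FA_{H_1}, \FA_{H_2})$. By Proposition~\ref{prop:superadd}, each $\FA_{H_i}$ is $\simeq$-equivalent to a superadditive function $f_i$; since $\max(f_1, f_2) \simeq f_1 + f_2$ and the sum of two superadditive functions is superadditive, $M$ is itself $\simeq$-equivalent to a superadditive function. Therefore $\sum_j M(|\gamma_j|) \preceq M\bigl(\sum_j |\gamma_j|\bigr) = M(|\gamma|)$, yielding $\FA_H(n) \preceq M(n)$ and completing the proof. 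The main subtlety is the cut-vertex decomposition, which uses in an essential way that $H_1 \ast H_2$ carries no amalgamation (so edge spaces in the associated tree of spaces are points and contribute no cycles).
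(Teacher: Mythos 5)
Your proof is correct, and the lower bound (via the retractions $H\to H_i$ and Proposition~\ref{prop:retract}) is the same as the paper's, but your upper bound takes a genuinely different route. The paper sidesteps the fact that copies of $X_1$ and $X_2$ in the Cayley complex share vertices by building an auxiliary \emph{tree of spaces}: copies of the $X_i$ are kept pairwise disjoint and joined by bridge edges, one per element of $H$. It then takes a minimal representative of a $1$-cycle $w$ as a sum of loops and observes that a loop crossing a bridge edge must backtrack, so each loop already lies in a single copy of $X_1$ or $X_2$. You instead work directly with the Cayley complex of the concatenated presentation, where the copies genuinely share vertices, and you split the cycle by iterating the cut-vertex retraction. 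Your key observation --- that the restriction $\gamma|_{C_l\cup\{v\}}$ is again a cycle because crushing the other lobes to $v$ is a cellular chain map --- is correct; one can also see it directly, since $\partial\gamma=0$ together with the fact that boundaries of $1$-chains have total coefficient sum zero forces the coefficient of $v$ in $\partial\bigl(\gamma|_{C_l\cup\{v\}}\bigr)$ to vanish. Both arguments terminate for the same reason (the total size $\sum_j|\gamma_j|$ is conserved while every nontrivial step strictly increases the number of pieces); your stated reason, that only finitely many cut vertices meet $\supp\gamma$, is a slight shorthand, since you also need that once you have split at $v$ no resulting piece need be split at $v$ again, but that is clear. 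The final superadditivity bookkeeping is organized slightly differently from the paper's (you establish that $M=\max(\FA_{H_1},\FA_{H_2})$ is itself $\simeq$-superadditive, while the paper applies superadditivity to each $\FA_{H_i}$ separately before taking the max), but the two give the same bound. On balance, the paper's construction trades the cut-vertex bookkeeping for the auxiliary bridge edges; your version avoids modifying the space but needs the cut-vertex termination argument. Your closing remark is apt: both arguments rely essentially on the edge groups of the free product being trivial.
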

	\begin{proof}
		For full details on trees of spaces associated to graphs of groups we refer the reader to \cite{ScottWall}.
		Let $X_i$ be a homological Cayley complex for $H_i$. 
		Define $X$ as the following tree of spaces. 
		The underlying tree $T$ is a bipartite tree with vertex set $V_1 \sqcup V_2$. To each vertex in $V_i$ we associate $X_i$ and for each edge $e$ we associate a point. 
		Vertices in $V_i$ have valence $\operatorname{Card}(H_i)$, and for each $v_i\in V_i$, the edges adjacent to $v_i$ are in one-to-one correspondence with $X_i^{(0)}$.
		Let $e$ be the edge corresponding to $x\in X_i^{(0)}$, then the map $\{*\}\to X_i$ is given by $*\mapsto x$. 
		
		It is worth noting that the underlying tree is the Bass--Serre tree for the free product. 
		There is a free action of $H$ on $X$, extending the action of $H_i$ on $X_i$.
		The quotient $X/H$ is the union of $X_1/H_1 \cup X_2/H_2$ with an edge joining the unique vertex of $X_1/H_1$ to the unique vertex of $X_2/H_2$. 
		Thus $X$ has a free, proper, cellular and cocompact action of $H$. Denote $A_1$ the disjoint union of all copies of $X_1$ and all the open intervals corresponding to edges $e$, and define $A_2$ similarly. 
		Then from the Mayer--Vietoris sequence applied to the decomposition $X=A_1\cup A_2$, it follows that $H_1(X) = 0$. 
		Thus we see that $\FA_X$ is well defined and by \Cref{prop:spaces} we have $\FA_X\simeq \FA_H$. 
		
		To obtain the upper bound let $w$ be a $1$-cycle in $X$. 
		Then $w$ is a sum of loops $w_j$. 
		Let $f\colon S^1\to X$ be a loop. 
		If the image of $f$ is not contained in a single copy of $X_i$ for some $i= 1$ or $2$, then there is an edge of $X$ which the image of $f$ traverses twice in opposite directions. 
		Hence $f$ is not a minimal representative for its homology class. 
		Thus, by taking a minimal representative for $w$ we have that each loop $w_j$ belongs to a single copy of $X_1$ or a single copy of $X_2$. 
		
		We can now find a minimal representative for $w$ which is a sum $\sum_j w_j^1 + \sum_j w_j^2$, where $w_j^i$ are loops contained in copies of $X_i$.
		Let $n_1 = \sum_j |w_j^1|$ and $n_2 = \sum_j |w_j^2|$. 
		Then, we have the following inequalities: 
		\begin{multline*}\HArea_X(w)= \HArea_X(\textstyle\sum_j w_j^1 + \textstyle\sum_j w_j^2) 
		\leq \textstyle\sum_j\HArea_{X_1}( w_j^1) + \textstyle\sum_j\HArea_{X_2}(w_j^2)\\
		\leq \textstyle\sum_j\FA_{X_1}(|w_j^1|) +\textstyle\sum_j\FA_{X_2}(|w_j^2|)
		\leq \FA_{X_1}(n_1) + \FA_{X_2}(n_2).
		\end{multline*}
		In the last inequality we used the superadditivity of $\FA_{X_i}\simeq\FA_{H_i}$, which holds by \Cref{prop:superadd}.
		
		Since homological filling functions are non-decreasing and $|w| = n_1 + n_2$, we obtain a bound $\HArea_X(w) \leq \FA_{X_1}(|w|) + \FA_{X_2}(|w|)$. 
		By taking the maximum of the two numbers $\FA_{X_1}(|w|)$ and $\FA_{X_2}(|w|)$, we obtain an upper bound of the form 
		\[
		\HArea_X(w)\le\max_i\bigl\{\FA_{X_i}(|w|)+ \FA_{X_i}(|w|)\bigr\}\leq \max_i\bigl\{\FA_{X_i}(2|w|)\bigr\}.
		\]
		Thus we see that $\FA_X\preceq \max\bigl(\FA_{X_1},\FA_{X_2}\bigr)$. 
		
		To obtain the lower bound, we use \Cref{prop:retract}. 
		There are retractions $H\to H_1$ and $H\to H_2$. 
		Thus we see that $\FA_{H_1}\preceq \FA_{H}$ and $\FA_{H_2}\preceq \FA_{H}$, hence $ \max\bigl(\FA_{X_1},\FA_{X_2}\bigr)\preceq \FA_{H}$, which establishes the equivalence $\FA_X\simeq \max\bigl(\FA_{X_1},\FA_{X_2}\bigr)$.
	\end{proof}
	
	\medskip
	We now recall the definition of quasi-isometric groups.
	\begin{definition}\label{def:qi}
		(Quasi-isometry)
		Let $(X,d_X)$ and $(Y,d_Y)$ be metric spaces. 
		A map $f\colon X\to Y$ is a {\em quasi-isometry} if there exists a  constant $K\geq 1$ %and $K, K\geq 0$
		such that the following conditions hold: 
		\begin{enumerate}
			\item for all $x,x'\in X$ we have,
			$\frac1K\cdot d_X(x,x')-K\le d_Y\big(f(x),f(x')\big)\le K\cdot d_X(x,x')+K.$
			\item For all $y\in Y$ there exists $x\in X$ such that $d_Y(f(x), y) \leq K$. 
		\end{enumerate}

		Two metric spaces $(X,d_X)$ and $(Y,d_Y)$ are \emph{quasi-isometric} if there exists a quasi-isometry $f\colon X\to Y$.
		Two groups $G$ and $H$ are \emph{quasi-isometric}, if they are quasi-isometric as metric spaces $(G,d_\A)$ and $(H,d_\B)$ with the word metrics with respect to some generating sets $\A\subset G$, $\B\subset H$.
	\end{definition}

	The Dehn function of a  finitely presented group is an invariant of the quasi-isometry class of the group, as was shown in~\cite{AlonsoFrench}. An analogous statement for the homological Dehn function of a finitely presented group was proved in \cite[Th.\,2.1]{Fle} and in \cite[Lem.\,1]{You}. The proof in \cite{You} relies on the work of \cite{AWP} for which a simply connected complex is required to extend the maps. 
	Here we appropriately modify the proof from \cite{AWP} to show quasi-isometry invariance of homological filling functions for groups of type $FP_2$. Note that the property $FP_n$ is a quasi-isometry invariant of a group, see~\cite{Alonso}.
	
	\begin{theorem}\label{thm:qi}
		The homological filling function is a quasi-isometry invariant for groups of type $FP_2$.
	\end{theorem}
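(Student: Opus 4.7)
The plan is to adapt the Alonso--Whyte--Pride style argument used in \cite{AWP} for Dehn functions. The essential obstacle is that homological Cayley complexes only satisfy $H_1 = 0$, not simple connectivity, so continuous extensions of the induced quasi-isometry over $2$-cells are not available. Everything will therefore be done on the level of cellular chains, and a single continuous homotopy will be replaced by an explicitly constructed chain homotopy at degrees $0$ and $1$.

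Fix homological finite presentations for $G$ and $H$ with homological Cayley complexes $X_G$ and $X_H$ (available by \Cref{prop:equiv}), so that $\FA_{X_G} \simeq \FA_G$ and $\FA_{X_H} \simeq \FA_H$ by \Cref{prop:spaces}. Let $q\colon G\to H$ be a quasi-isometry with quasi-inverse $\bar q$ and quasi-isometry constant $K$, and let $M$ be an upper bound on the length of attaching words of $2$-cells in either complex. I will build a chain map $\phi_\bullet\colon C_\bullet(X_G)\to C_\bullet(X_H)$ as follows: set $\phi_0(g) = q(g)$ on vertices; for each edge $e$ with endpoints $g_1, g_2$ let $\phi_1(e)$ be a fixed geodesic $1$-chain in $X_H^{(1)}$ from $q(g_1)$ to $q(g_2)$, of length at most $2K$; and for each $2$-cell $\sigma$ of $X_G$ note that $\phi_1(\partial\sigma)$ is a $1$-cycle in $X_H$ of length at most $2KM$, so since $H_1(X_H) = 0$ we may choose a $2$-chain $\phi_2(\sigma)$ with $\partial\phi_2(\sigma) = \phi_1(\partial\sigma)$ and $|\phi_2(\sigma)|\le \FA_H(2KM) =: C_1$. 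Define $\psi_\bullet\colon C_\bullet(X_H)\to C_\bullet(X_G)$ analogously using $\bar q$, with a constant $C_1' := \FA_G(2KM)$ bounding $|\psi_2(\tau)|$ per $2$-cell $\tau$ of $X_H$.

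Next I construct a chain homotopy $h_\bullet\colon C_\bullet(X_G)\to C_{\bullet+1}(X_G)$ realizing $\psi\phi \simeq \id$ in degrees $\leq 1$. For each vertex $g$, choose $h_0(g)$ to be a fixed path in $X_G^{(1)}$ from $g$ to $\bar q(q(g))$; since $d(g,\bar q(q(g)))\le K'$ for a uniform constant $K'$, such a path has length $\le K'$. For each edge $e$ with $\partial e = g_2 - g_1$, the $1$-chain $\psi\phi(e) - e - h_0(g_2) + h_0(g_1)$ is a $1$-cycle in $X_G$ of uniformly bounded length (at most $2K^2 + 1 + 2K'$), and since $H_1(X_G) = 0$ we can fill it with a $2$-chain $h_1(e)$ of size at most $\FA_G(2K^2+1+2K') =: C_3$. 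By construction $\partial h_1(e) + h_0(\partial e) = \psi\phi(e) - e$, which is the chain homotopy identity in degree $1$.

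Given a $1$-cycle $\gamma$ in $X_G$ with $|\gamma|\le n$, push it over to $\phi(\gamma)$, a $1$-cycle in $X_H$ with $|\phi(\gamma)|\le 2Kn$. Pick a filling $c'\in C_2(X_H)$ of $\phi(\gamma)$ of area at most $\FA_H(2Kn)$. Then $\psi(c')\in C_2(X_G)$ has boundary $\psi\phi(\gamma)$ and size at most $C_1'\cdot\FA_H(2Kn)$, while $h_1(\gamma)$ has boundary $\psi\phi(\gamma) - \gamma$ (because $\partial\gamma = 0$) and size at most $C_3\cdot n$. Therefore $\psi(c') - h_1(\gamma)$ is a $2$-chain in $X_G$ filling $\gamma$, yielding
\[
\HArea_{X_G}(\gamma) \;\le\; C_1'\cdot\FA_H(2Kn) \,+\, C_3\cdot n.
\]
This gives $\FA_G \preceq \FA_H$, and the symmetric argument gives the reverse inequality. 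The main delicate point is the construction of $h_1$: its existence depends exactly on $H_1(X_G) = 0$, and this is where the hypothesis of type $FP_2$ is used to substitute for the stronger simple connectivity employed in \cite{AWP}.
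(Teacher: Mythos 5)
Your argument is correct and takes a genuinely different route from the paper's. The paper handles the ``$Y_0$ is not simply connected'' obstruction \emph{geometrically}: it enlarges the target homological Cayley complex $Y_0$ to a new complex $Y$ by adjoining an $H$-orbit of $2$-cells for every loop of length at most $2KL$, checks via \Cref{prop:freeimpliesCayley} that $Y$ is still a homological Cayley complex (so $\FA_Y\simeq\FA_H$), and then extends $\phi$ to an honest cellular map $f\colon X\to Y$ sending $2$-cells to $2$-cells; the ``correction'' between a loop $w$ in $Y$ and its image $\lambda=f(\gamma)$ is then filled by $n$ quadrilaterals of uniformly bounded area. You instead work entirely at the level of cellular chains: you replace the continuous extension over $2$-cells by a choice of bounded-size $2$-chain filling $\phi_2(\sigma)$ (using $H_1(X_H)=0$ and the finiteness of $\FA_{X_H}$, which the paper supplies in \Cref{prop:attain}), and you replace the quadrilateral fillings by an explicit chain homotopy $h_1$ between $\psi\phi$ and $\id$ in degree $1$. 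These two ``correction'' devices are morally the same — your $h_1(e)$ plays exactly the role of the paper's quadrilateral at the edge $e$ — but your version has the advantages of avoiding any modification of the complex (and hence any appeal to invariance under change of homological presentation for the new complex), and of making the homological-algebra content transparent: the theorem is literally the statement that a quasi-isometry induces chain maps in degrees $\leq 2$ together with a degree-$1$ chain homotopy, with all sizes bounded uniformly per cell. The paper's version buys an actual cellular map $f\colon X\to Y$, which is more in keeping with the geometric flavor of the rest of that section. Two small remarks: your constant $2K^2+1+2K'$ for $|\psi\phi(e)-e-h_0(g_2)+h_0(g_1)|$ should be something like $4K^2+1+2K'$ (each of the $\leq 2K$ edges of $\phi_1(e)$ is sent by $\psi_1$ to a path of length $\leq 2K$), but this does not affect the argument; and one should say explicitly that $C_1=\FA_{X_H}(2KM)$ and $C_3=\FA_{X_G}(\cdot)$ are finite, which follows from \Cref{prop:attain} applied to the homological Cayley complexes — this is the same finiteness the paper uses when it sets $C=\FA_Y(6K^3+2K^2+2K+1)$.
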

	\begin{proof}
		By \Cref{prop:loops}, it suffices to check what happens for loops. 
		Let $G, H$ be quasi-isometric groups of type $FP_2$. 
		Let $\phi\colon G\to H$ be a quasi-isometry. 
		
		Let $\la \A\left|\right|R_0\ra$ and $\la \B\left|\right|S_0\ra$ be some homological finite presentations for $G$ and $H$, respectively.  
		Let $X$ be the homological Cayley complex for $\la \A\left|\right|R_0\ra$ and $Y_0$ be the homological Cayley complex for $\la \B\left|\right|S_0\ra$. 
		We can view $\phi$ as a map from $X^{(0)}\to Y_0^{(0)}$. 
		We can extend $\phi$ across edges in the following way. If $x,x'$ are two vertices in $X^{(0)}$ connected with an edge in $X$, then  $d_{\B}\big(\phi(x),\phi(x')\big)\le K\cdot 1+K=2K$. We choose a geodesic path in $Y_0^{(1)}$ connecting $\phi(x)$ and $\phi(x')$ and, since $Y_0^{(1)}$ is the Cayley graph with respect to the generating set $\B$, we see that this map sends each edge in $X^{(1)}$ to a path of length $\leq 2K$ in $Y_0^{(1)}$. Doing so for all edges of $X$, we obtain a cellular map $\phi\colon X^{(1)}\to Y_0^{(1)}$.

		Since $Y_0$ is not simply connected there is no reason why we can now extend this map over $2$-cells. 
		Thus we must make modifications to $Y_0$ as follows. 
		Let $L$ be the maximum length of a relation of $R_0$. 
		Then each $2$-cell of $X$ has boundary of length $\leq L$. 
		Thus each of these loops maps to a loop of length $\leq 2KL$ in $Y_0$. 
		Mimicking the construction from \Cref{def:CayCx}, we can add an $H$-orbit of disks to all loops of length $\leq 2KL$ in $Y_0$ to obtain a new complex $Y$. 
		Up to the action of $H$ there are only finitely many such loops in $Y_0$, thus $H$ acts on $Y$ freely, cellularly and cocompactly with one orbit of vertices. 
		Thus, by \Cref{prop:freeimpliesCayley}, we see that $Y$ is the homological Cayley complex for a finite homological presentation of $H$. 
		Therefore, $\FA_Y \simeq \FA_H$. 
		
		As previously mentioned, each disk of $X$ is attached along a loop of length $\leq L$ and this maps to a loop of length $\leq 2KL$ under the extension of $\phi$ to $X^{(1)}$. 
		All these loops are null-homotopic in $Y$ and thus we can extend the quasi-isometry $\phi$ to a cellular map $f\colon X\to Y$.
		Moreover, we can assume that the image of each $2$-cell of $X$ is a single $2$-cell of $Y$.  
		
		Now let $w = (h_0, h_1, \dots, h_n)$ be an edge loop in $Y$ of length $n$. 
		I.e.\ $h_i$ are vertices of $Y$ such that $h_i, h_{i+1}$ are adjacent for all $i=0,\dots,n-1$, and $h_n=h_0$. 
		For each $h_i$ we can find a vertex $x_i\in X$ such that $d(f(x_i), h_i)\leq K$. 
		Hence $d\big(f(x_i), f(x_{i+1})\big)\leq 2K+1$ and since $\phi$ is a $(K, K)$-quasi-isometry we see that $\frac1K\cdot d(x_i, x_{i+1})-K\leq d\big(f(x_i), f(x_{i+1})\big)$, and, combining with the previous inequality, that $d(x_i, x_{i+1})\leq 3K^2+K.$
		Let $\gamma_i$ be an edge path from $x_i$ to $x_{i+1}$ of length $|\gamma_i|\leq 3K^2 + K$, and let $\gamma$ be the concatenation of the edge paths $\gamma_i$. 
		We have: $|\gamma| \leq n(3K^2+K)$. 
		
		Let $\lambda$ be the image of $\gamma$ under $f$. 
		Thus, $\lambda$ is a loop in $Y$ of length $|\lambda|\leq 2K|\gamma|\leq  2Kn(3K^2+K)$. 
		We obtain a bound on $\HArea_Y(w)$ by first getting a filling for $\lambda$ and then a filling for $w - \lambda$. 
		
		We begin with $\lambda$. 
		The loop $\gamma$ has a filling in $X$ of size $\leq \FA_X\big(n(3K^2+K)\big)$. 
		We can map this filling to $Y$ by $f$. 
		Since each $2$-cell of $X$ maps to a single $2$-cell of $Y$, the map $f$ does not increase the size of the filling. 
		Thus we obtain a filling for $\lambda$ of size $\leq \FA_X\big(n(3K^2+K)\big)$. 
		
		Now consider $w - \lambda$. 
		We can represent this $1$-cycle $w-\lambda$ as a sum of  $n$ quadrilaterals with vertices $h_i, f(x_i), f(x_{i+1})$ and $h_{i+1}$. 
		We do this by picking for each $i$ a path from $h_i$ to $f(x_i)$ of length $\leq K$. 
		The path from $f(x_i)$ to $f(x_{i+1})$ is on $\lambda$ and has length $\leq 2K(3K^2 + K) = 6K^3 + 2K^2$. 
		Thus the boundary of this quadrilateral has length $6K^3 + 2K^2 + 2K +1$. 
		Let $C = \FA_Y(6K^3 + 2K^2 + 2K +1)$, note that this is a constant. 
		Then $w - \lambda$ has a filling of size $\leq nC$. 
		
		Adding these two fillings together we see that $\HArea_Y(w)\leq \FA_X(n(3K^2+K)) + nC$. 
		Thus by \Cref{prop:loops}, we obtain that $\FA_Y\preceq \FA_X$. 
		
		Since quasi-isometry is an equivalence relation, we can reverse the roles of $G$ and $H$ and  obtain an inequality in the other direction $\FA_X\preceq \FA_Y$, thus proving that $\FA_G\simeq \FA_H$. 
	\end{proof}

	\section{Polyhedral complexes and Leary's construction}\label{sec:leary}
	
	\subsection{Polyhedral complexes and Lipschitz fillings}\label{sec:complexes}
	
	We now recall the definition of $M_\kappa$-poly\-hed\-ral complexes from~\cite{BH}.
	These are defined using the model space $M_\kappa^n$ for $\kappa\in \R$. 
	We will only be interested in the cases that $\kappa = 0$ or $1$, in these cases $M_0^n = \R^n$ with the $\ell^2$-metric and $M_1^n$ is the unit sphere in $\R^{n+1}$ with the arc-length metric. 
	For full details see~\cite[I.7]{BH}.

	\begin{definition}
		Fix $\kappa\in\{0, 1\}$. 
		A {\em convex $M_\kappa$-polyhedral cell} $C\subset M_\kappa^n$ is the convex hull of a finite set of points $P\subset M_\kappa^n$, if $\kappa = 1$, then $P$ (and hence $C$) is required to lie in an open ball of radius $\frac{\pi}{2}$. 
		The {\em dimension} of $C$ is the dimension of the smallest $m$-plane containing it. The {\em interior} of $C$ is the interior of $C$ as a subset of this $m$-plane.
		
		Let $H$ be a hyperplane in $M_\kappa^n$. 
		If $C$ lies in one of the closed half-spaces bounded by $H$, and if $H\cap C\neq\varnothing$, then $F = H\cap C$ is called a {\em face} of $C$.
		The {\em dimension} of a face $F$ is the dimension of the smallest $m$-plane containing it.
		The {\em interior} of $F$ is the interior of $F$ in this plane. 
		The $0$-dimensional faces of $C$ are called its {\em vertices}. 
		The {\em support} of $x\in C$, denoted $\supp(x)$, is the unique face containing $x$ in its interior.
	\end{definition}
	
	The complexes we will study throughout will be built out of convex $M_\kappa$-polyhedra. 
	Such complexes are known as $M_\kappa$-polyhedral complexes. 
	Precisely, we have the following definition: 
	
	\begin{definition}
		Let $(C_\lambda\colon \lambda\in \Lambda)$ be a family of $M_\kappa$-polyhedral cells and 
		let $X = {\bigsqcup_{\lambda\in\Lambda}C_\lambda}$ be their disjoint union. 
		Let $\sim$ be an equivalence relation on $X$ and let $K = X/{\sim}$. Let $p\colon X \to K$ be the natural projection and define $p_\lambda\colon C_\lambda\to K$ as the restriction of $p$ to $C_\lambda$.
		
		The quotient $K$ is called an {\em $M_\kappa$-polyhedral complex} if:
		\begin{enumerate}
			\item  for every $\lambda\in\Lambda$, the restriction of $p_\lambda$ to the interior of each face of $C_\lambda$ is injective;
			
			\item for all $\lambda_1, \lambda_2\in\Lambda$ and $x_1 \in C_{\lambda_1}, x_2 \in C_{\lambda_2}$, if $p_{\lambda_1}(x_1) =   p_{\lambda_2}(x_2)$, then there is an isometry $h\colon \supp(x_1)\to \supp(x_2)$ such that $p_{\lambda_1}(x_1) =   p_{\lambda_2}(h(y))$ for all $y\in \supp(x_1)$.
		\end{enumerate}
		The set of isometry classes of the faces of the cells $C_\lambda$ is denoted $\operatorname{Shapes}(K)$.
	\end{definition}
	
	Let $K$ be a $M_\kappa$-polyhedral complex. 
	We construct a pseudometric on $K$ in the following way. A \emph{piecewise geodesic path from $x$ to $y$} in $K$ is a map $c\colon[a,b]\to K$ such that $c(a)=x$, $c(b)=y$ and there is a subdivision $a=t_0\le t_1\le \dots\le t_k=b$ and geodesic paths $c_i\colon[t_{i-1},t_i]\to C_{\lambda_i}$ such that for each $t\in[t_{i-1},t_i]$ we have $c(t)=p_{\lambda_i}(c_i(t))$. The length $l(c)$ of $c$ is defined as $l(c)\vcentcolon=\sum_{i=1}^k l(c_i)$ where $l(c_i)=d_{C_{\lambda_i}}(c_i(t_{i-1}),c_i(t_i))$. The \emph{intrinsic pseudometric} on $K$ is the function
	\[
	d(x,y)\vcentcolon=\inf\{l(c)\mid \text{$c$ is a piecewise geodesic path from $x$ to $y$}\}.
	\]
	If there is no piecewise geodesic path from $x$ to $y$, we set $d(x,y)\vcentcolon=\infty$.
	If $\operatorname{Shapes}(K)$ is finite, then $d$ is a metric and $(K,d)$ is a complete geodesic metric space~\cite[Th.\,I.7.50]{BH}.

	\medskip
	We say that an $M_0$-polyhedral complex is a {\em cube complex} if each polyhedral cell is $[0,1]^n\subset\R^n$ for some $n$. Such cells will be called \emph{$n$-cubes}, or \emph{cubes}, for brevity. If $n=2$ we will also call them \emph{squares}.
	
	\begin{definition}\label{def:links}
		Let $x$ be a vertex of a cube $C_\lambda$. 
		The \emph{link} $\Lk(x, C_\lambda)$ of $x$ in $C_\lambda$ is the set of initial vectors of geodesic segments in $C_\lambda$ starting at $x$ (one can say that the link consists of the tangent vectors at $x$ of unit length which ``point inside $C_\lambda$''). 
		We metrise the link by specifying the distance between two points as the angle between the corresponding vectors. 
		Here the angle is the euclidean angle. 
		If $C_\lambda$ is a cube of dimension $n$, then the link is an $(n-1)$-simplex metrised as a subset of the unit sphere $S^{n-1}$. Thus, the vertices of $\Lk(x, C_\lambda)$ are in one-to-one correspondence with the edges of $C_\lambda$ adjacent to $x$, and in general, the $k$-dimensional faces of $\Lk(x, C_\lambda)$ correspond to $(k+1)$-dimensional cubical faces of $C_\lambda$ containing $x$.  
		
		There is a topological embedding $\chi_\lambda\colon\Lk(x, C_\lambda)\to C_\lambda$, given by mapping the vertices $\Lk(x, C_\lambda)$ to points at distance $\frac{1}{4}$ from $x$ along the corresponding edges and extending affinely. 
		Let $v$ be a vertex of a cube complex $K$. 
		We define {\em the link of $v$ in $K$}, $\Lk(v, K)$, to be the quotient space $\Lk(v, K)\vcentcolon=\bigsqcup\Lk(x, C_\lambda)/{\sim}$, where the disjoint union is taken over all pairs $(x,\lambda)$ such that $x\in C_\lambda$ and $p_\lambda(x)=v$, and the 
		equivalence relation $\sim$ is given by the condition: 
		$z\sim z'$ if and only if $p_\lambda(\chi_\lambda(z)) = p_{\lambda'}(\chi_{\lambda'}(z'))$. 
		The maps $\chi_\lambda$ above induce a map $\chi\colon\Lk(v, K)\to K$. This turns $\Lk(v, K)$ into an $M_1$-polyhedral complex. If $K$ is finite dimensional, then $\Lk(v, K)$ endowed with the pseudometric induced by the angle metric, as described above, is a complete geodesic metric space \cite[Th.\,I.7.50]{BH}.
	\end{definition}
	
	A cube complex is called \emph{simple} if the link of every vertex is a simplicial complex. 
	A simplicial complex is \emph{flag} if any collection of $k+1$ pairwise adjacent vertices spans a $k$-simplex. 
	A cube complex is \emph{non-positively curved} if the link of each vertex is a flag simplicial complex. 
	By~\cite[Th.\,II.5.20]{BH} non-positively curved cube complexes are exactly cube complexes which are locally $\CAT(0)$.

	\subsection{Lipschitz filling functions}
	
	We now discuss an equivalent formulation of the homological filling discussed in \Cref{sec:homfunctions}.
	The material of this subsection will be used in the proof of \Cref{thm:lowerbound}. 
	
	The following definition of mass is adapted from~\cite{Gro83}.
	
	\begin{definition}(Lipschitz $n$-chain, mass)
		Let $X$ be a metric space and $\sigma\colon C\to X$ be a Lipschitz map of an $n$-dimensional convex $M_\kappa$-polyhedral cell $C$.
		We define the {\em mass} of $\sigma$, denoted $\mass(\sigma)$, to be the infimum of the total volumes of those Riemannian metrics on $C$ for  which the map $\sigma$ is $1$-Lipschitz.  
		Let $C_n^{\Lip}(X)$ be the free abelian group with basis the set of Lipschitz maps $C\to X$. We call elements of $C_n^{\Lip}(X)$ \emph{Lipschitz $n$-chains}.
		Let $c\in C_n^{\Lip}(X)$ be given by $c = \sum_\sigma a_\sigma\sigma$. 
		We define $\mass(c)\vcentcolon = \sum_\sigma |a_\sigma| \mass(\sigma)$.
	\end{definition}

	Given a convex $M_\kappa$-polyhedral cell $C$,  we can take its barycentric subdivision $C'$. By~\cite[I.7.44]{BH}, $C'$ is a $M_\kappa$-simplicial complex isometric to $C$. Thus we can view $\sigma \colon C\to K$ as a map of a simplicial complex to $K$ and thus we can view it as a singular chain in $K$. This gives an inclusion $C_n^{\Lip}(K)\to C_n(K)$.
	Since the boundary of a singular chain is the sum of restrictions to faces, we see that the boundary of a Lipschitz chain, under this inclusion, is the image of a Lipschitz chain.
	Thus we obtain a differential $C_n^{\Lip}(K)\to C_{n-1}^{\Lip}(K)$.
	One can check that the inclusion $C_n^{\Lip}(K)\to C_n(K)$ is a chain homotopy equivalence.

	\begin{definition}(Homological Lipschitz area, homological Lipschitz filling function)
		Let $X$ be an $M_\kappa$-polyhedral complex with $H_1(X) = 0$. 
		Let $\gamma$ be a cellular $1$-cycle. Define the {\em homological Lipschitz area} of $\gamma$ as
		\[	
		\HArea_X^{\Lip}(\gamma) \vcentcolon= \inf\{ \mass(c)\mid \partial c = \gamma, c\in C_2^{\Lip}(X)\}.
		\]
		Define the {\em homological Lipschitz filling function} of $X$ as
		\[
		\FA_X^{\Lip}(n) \vcentcolon= \sup\bigl\{\HArea_X^{\Lip}(\gamma)\mid \gamma\text{ is a cellular $1$-cycle in }X^{(1)},\,|\gamma|\le n\bigr\}.
		\]
	\end{definition}
	
	Before proving the main result of this subsection, we require the following hugely simplified version of the Federer--Fleming Deformation Theorem \cite[Th.\,5.5]{FF}. Compare with \cite[Th.\,2.1]{ABDDY} or \cite[Th.\,10.3.3]{Ep}. 
	The key point of this theorem is that we can replace a Lipschitz chain $c$ with a cellular chain $P(c)$ of similar size. 
	
	\begin{theorem}[\cite{FF}]\label{thm:FF}
		Let $X$ be an $M_\kappa$-polyhedral complex, with $\operatorname{Shapes}(X)$ finite.
		Let $\gamma$ be a cellular $1$-cycle in $X$ and let $c$ be a Lipschitz $2$-chain in $X$ such that $\partial c = \gamma$.
		Then there is a constant $K\ge0$ depending only on $X$ and a cellular $2$-chain $P(c)$ in $X$ such that:
		\begin{enumerate}
			\item $|P(c)|\leq K\mass(c)$; 
			\item $\partial P(c) = \gamma$. \qed
		\end{enumerate} 
	\end{theorem}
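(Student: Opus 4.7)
The plan is to adapt the classical Federer--Fleming deformation argument to the $M_\kappa$-polyhedral setting, proceeding by a downward induction on skeleta. Starting from $c$ viewed as a Lipschitz chain in the top-dimensional skeleton, at each step I push the chain from the current $k$-skeleton into the $(k-1)$-skeleton via radial projection from carefully chosen interior points of the $k$-cells. The boundary $\gamma\subset X^{(1)}$ is left untouched throughout.

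Concretely, set $N = \dim X$. For $k$ descending from $N$ to $3$, suppose I have a Lipschitz $2$-chain $c_k$ supported on $X^{(k)}$ with $\partial c_k = \gamma$ and $\mass(c_k) \le K_k \mass(c)$. For each $k$-cell $e$, I replace the part of $c_k$ inside $e$ by its image $\pi_{p,e\ast}(c_k|_e)$, where $\pi_{p,e}\colon e \smallsetminus\{p\}\to \partial e$ is radial projection from an interior point $p$. The key Federer--Fleming Fubini estimate shows that the average of $\mass(\pi_{p,e\ast}(c_k|_e))$ over $p\in\operatorname{int}(e)$ is bounded by $C_e \cdot \mass(c_k|_e)$, where $C_e$ depends only on the isometry type of $e$. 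Finiteness of $\operatorname{Shapes}(X)$ then gives a uniform constant $C$. Since $c_k|_e$ is $2$-dimensional and $k\geq 3$, a generic $p$ lies outside its image; and since $\pi_{p,e}$ fixes $\partial e$, the pushed pieces reassemble across cells into $c_{k-1}$ on $X^{(k-1)}$ with the same boundary $\gamma$.

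After $N-2$ steps I obtain a Lipschitz $2$-chain $c'$ supported on $X^{(2)}$ with $\partial c' = \gamma$ and $\mass(c')\le K'\mass(c)$. Since $\partial c'$ lies in $X^{(1)}$, the chain $c'$ represents a relative class in $H_2(X^{(2)},X^{(1)}) = \bigoplus_{e^2}\Z\cdot[e^2]$, and so $[c'] = \sum_{e^2}b_{e^2}[e^2]$ for unique integers $b_{e^2}$, computed as the signed count of preimages of generic interior points of $e^2$ under $c'$. Define $P(c)\vcentcolon= \sum_{e^2}b_{e^2}\cdot e^2$. A standard area-formula estimate gives $|b_{e^2}|\le \mass(c'|_{e^2})/\mass(e^2)$, where $c'|_{e^2}$ denotes the part of $c'$ mapping into $\overline{e^2}$; combined with the uniform lower bound on $\mass(e^2)$ this yields $|P(c)|\le K\mass(c)$. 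Finally, applying the connecting map $H_2(X^{(2)},X^{(1)})\to H_1(X^{(1)})$ to the identity $[c']=\sum b_{e^2}[e^2]$ gives $\gamma = \sum b_{e^2}\partial e^2 = \partial P(c)$ as cellular $1$-chains.

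The main obstacle I anticipate is executing the Federer--Fleming Fubini estimate uniformly across $M_\kappa$ cells of varying shape, especially in the $\kappa=1$ case where the Jacobian of radial projection has an integrable singularity at the center whose total weight must be bounded independently of the cell. This is precisely where the hypothesis that $\operatorname{Shapes}(X)$ is finite enters essentially, and it also explains why a naive extension to complexes with infinitely many cell shapes fails.
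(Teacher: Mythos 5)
The paper does not prove this theorem: it states it with a \texttt{\textbackslash qed}, attributing it to Federer--Fleming and pointing the reader to the adaptations in \cite[Th.\,2.1]{ABDDY} and \cite[Th.\,10.3.3]{Ep}. Your sketch is the standard Federer--Fleming deformation argument, appropriately adapted to the $M_\kappa$-polyhedral setting, and it is correct: the downward induction via radial projection from generic interior points works for $k\ge 3$ precisely because a $2$-dimensional image has measure zero in a $k$-cell, the Fubini averaging over the choice of projection center $p$ gives the mass bound with a constant controlled by $\operatorname{Shapes}(X)$ being finite, and the final cellularization step on $X^{(2)}$ via degrees recovers an integral cellular $2$-chain with the right boundary. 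The one elision worth flagging is in the last sentence: the connecting map argument gives $[\gamma]=[\partial P(c)]$ in $H_1(X^{(1)})$, and to upgrade this to an equality of cellular $1$-chains you need the (easy, but worth saying) observation that $X^{(1)}$ is a $1$-complex, so $H_1(X^{(1)})$ coincides with the group $Z_1(X^{(1)})$ of cellular $1$-cycles and two homologous cellular $1$-cycles in a graph are equal.
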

	
	\begin{theorem}\label{thm:lipcellequivalent}
		Let $X$ be an $M_\kappa$-polyhedral complex with $H_1(X) = 0$ and\/ $\operatorname{Shapes}(X)$ finite. 
		Then $\FA_X\simeq \FA_X^{\Lip}$. 
	\end{theorem}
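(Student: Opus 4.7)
The plan is to prove the two inequalities $\FA_X^{\Lip} \preceq \FA_X$ and $\FA_X \preceq \FA_X^{\Lip}$ separately. The inequality $\FA_X \preceq \FA_X^{\Lip}$ is essentially an immediate consequence of the Federer--Fleming statement (\Cref{thm:FF}): given a cellular $1$-cycle $\gamma$ with $|\gamma|\le n$, pick a Lipschitz $2$-chain $c$ with $\partial c=\gamma$ and $\mass(c)$ close to $\HArea_X^{\Lip}(\gamma)$, then apply \Cref{thm:FF} to produce a cellular $2$-chain $P(c)$ with $\partial P(c)=\gamma$ and $|P(c)|\le K\mass(c)$. Thus $\HArea_X(\gamma)\le K\HArea_X^{\Lip}(\gamma)$, and taking the supremum over $|\gamma|\le n$ gives $\FA_X(n)\le K\FA_X^{\Lip}(n)$.

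For the reverse inequality, I would exploit the finiteness of $\operatorname{Shapes}(X)$. Each $2$-cell of $X$ is, by definition, the image under some $p_\lambda\colon C_\lambda\to X$ of a convex $M_\kappa$-polyhedral cell $C_\lambda$ of dimension $2$, and $p_\lambda$ is an isometry on the interior of $C_\lambda$. In particular, each such $p_\lambda$ is a Lipschitz map, and the identity Riemannian metric on $C_\lambda$ realises it as a $1$-Lipschitz map with total volume equal to the area of $C_\lambda$. Hence $\mass(p_\lambda)\le \operatorname{vol}(C_\lambda)$, and because $\operatorname{Shapes}(X)$ is finite, there is a uniform constant $M$ with $\mass(p_\lambda)\le M$ for every $2$-cell. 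A cellular $2$-chain $c=\sum_i a_i\sigma_i$ then can be viewed as a Lipschitz $2$-chain with $\mass(c)\le M\sum_i|a_i|=M|c|$, and its boundary is unchanged by this identification.

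Putting these together: given a cellular $1$-cycle $\gamma$ with $|\gamma|\le n$, choose a cellular filling $c$ with $|c|=\HArea_X(\gamma)$, view it as a Lipschitz filling, and conclude $\HArea_X^{\Lip}(\gamma)\le\mass(c)\le M\HArea_X(\gamma)\le M\FA_X(n)$, so $\FA_X^{\Lip}(n)\le M\FA_X(n)$. Combined with the first paragraph, this establishes $\FA_X\simeq\FA_X^{\Lip}$.

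The only conceptual subtlety is the identification of a cellular chain with a Lipschitz chain via characteristic maps and the observation that the characteristic maps have uniformly bounded mass; everything else is a routine packaging of the Federer--Fleming deformation theorem, which does the heavy lifting in the other direction. The ``hard part'' is therefore hidden in \Cref{thm:FF} itself, which is taken as given.
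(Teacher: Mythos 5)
Your proposal is correct and is essentially identical to the paper's argument: one direction views cellular chains as Lipschitz chains via characteristic maps, using finiteness of $\operatorname{Shapes}(X)$ to bound mass uniformly, and the other direction applies the Federer--Fleming deformation theorem (\Cref{thm:FF}) to convert a Lipschitz filling into a cellular one of comparable size. Nothing to flag.
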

	\begin{proof}

		The characteristic map of a $M_\kappa$-polyhedral cell is a $1$-Lipschitz map. Thus we have a homomorphism from the cellular chain complex to the Lipschitz chain complex. It is injective and so we can consider the cellular chains as a subspace of the Lipschitz chains. Since $\operatorname{Shapes}(X)$ is finite there exists a $D\in\R$ such that $\mass(\sigma) < D$ for every characteristic map $\sigma$. Thus the inclusion satisfies $\mass(c) < D |c|$ for all cellular chains $c$.
		
		Let $\gamma$ be a $1$-cycle. 
		By the above we have the inequality $\HArea_X^{\Lip}(\gamma)\leq D\cdot\HArea_X(\gamma)$ and $\FA_X^{\Lip}(n)\leq D\cdot\FA_X(n)$. 
		Thus, $\FA_X^{\Lip}\preceq \FA_X$. 
		
		Thus we are left to prove the other inequality. 
		Let $c$ be a Lipschitz chain with boundary~$\gamma$. 
		By \Cref{thm:FF}, we can find a cellular $2$-chain $P(c)$ with boundary $\gamma$ and  $|P(c)|\leq K\cdot\mass(c)$.
		Thus, $\HArea_X(\gamma)\leq K\cdot\HArea_X^{\Lip}(\gamma)$ and $\FA_X(n)\leq K\cdot\FA_X^{\Lip}(n)$, 
		We conclude that $\FA_X\preceq \FA_X^{\Lip}$, which completes the proof. 
	\end{proof}
	
	The following will also be useful in comparing the mass of fillings in different spaces. 
	
	\begin{lemma}\label{lem:lipfillingconstant}
		Let $f\colon X\to Y$ be a $k$-Lipschitz map between metric spaces $X$ and $Y$. 
		Let $c$ be a Lipschitz $n$-chain in $X$. 
		Then $f(c)$ is a Lipschitz chain in $Y$ satisfying $\mass(f(c))\leq k^n\mass(c)$.
	\end{lemma}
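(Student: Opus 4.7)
The plan is to work simplex by simplex, using the definition of mass directly. Write $c = \sum_\sigma a_\sigma \sigma$ with each $\sigma\colon C\to X$ a Lipschitz map from an $n$-dimensional convex $M_\kappa$-polyhedral cell $C$. Since the composition of Lipschitz maps is Lipschitz, each $f\circ \sigma\colon C\to Y$ is again Lipschitz, so $f(c) \vcentcolon= \sum_\sigma a_\sigma (f\circ \sigma)$ is indeed a Lipschitz $n$-chain in $Y$ and the quantity $\mass(f(c))$ is defined. By the triangle inequality for mass it suffices to prove the single-simplex bound $\mass(f\circ\sigma) \le k^n\,\mass(\sigma)$, since then
\[
\mass(f(c)) \le \textstyle\sum_\sigma |a_\sigma|\,\mass(f\circ\sigma) \le k^n \textstyle\sum_\sigma |a_\sigma|\,\mass(\sigma) = k^n\,\mass(c).
\]

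For the single-simplex bound I would argue as follows. Fix $\varepsilon>0$ and pick a Riemannian metric $g$ on $C$ with respect to which $\sigma$ is $1$-Lipschitz and whose total volume satisfies $\operatorname{Vol}(C,g) \le \mass(\sigma) + \varepsilon$. Consider the rescaled metric $g' \vcentcolon= k^2 g$. Distances in $(C,g')$ are exactly $k$ times those in $(C,g)$, so if $\sigma$ is $1$-Lipschitz with respect to $g$ and $f$ is $k$-Lipschitz on $X$, then for any $x,y\in C$,
\[
d_Y\bigl(f(\sigma(x)),f(\sigma(y))\bigr) \le k\,d_X(\sigma(x),\sigma(y)) \le k\,d_g(x,y) = d_{g'}(x,y),
\]
so $f\circ\sigma$ is $1$-Lipschitz with respect to $g'$. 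Under the rescaling $g\mapsto k^2g$ the Riemannian volume form, locally $\sqrt{\det g}\,dx^1\wedge\cdots\wedge dx^n$, is multiplied by $k^n$, hence $\operatorname{Vol}(C,g') = k^n\,\operatorname{Vol}(C,g)$. Therefore
\[
\mass(f\circ\sigma) \le \operatorname{Vol}(C,g') = k^n\,\operatorname{Vol}(C,g) \le k^n\bigl(\mass(\sigma)+\varepsilon\bigr),
\]
and sending $\varepsilon\to 0$ gives the desired inequality.

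The only mildly subtle step is the scaling behavior of the volume form under $g\mapsto k^2 g$; this is a standard computation ($\sqrt{\det(k^2 g_{ij})} = k^n \sqrt{\det g_{ij}}$ in any local coordinates), and it is exactly why the factor on the right is $k^n$ rather than $k$ or $k^2$. The rest of the argument is just assembling the definitions, so I do not expect any serious obstacle beyond being careful about the infimum in the definition of mass, which is handled cleanly by the $\varepsilon$-approximation above.
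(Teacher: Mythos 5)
Your proof is correct and uses the same core idea as the paper: pull back a near-optimal Riemannian metric on the model cell, rescale it so that the post-composition with $f$ becomes $1$-Lipschitz, and track how the volume form scales. The paper's write-up splits into the cases $k\ge 1$ and $k<1$ and is stated somewhat loosely (the scaling factor written there does not obviously match the claimed $k^n$ volume change), whereas your uniform rescaling $g' = k^2 g$ of the metric tensor handles all $k>0$ at once and makes the $k^n$ factor transparent via $\sqrt{\det(k^2 g_{ij})} = k^n\sqrt{\det g_{ij}}$. The reduction from chains to single simplices and the $\varepsilon$-approximation of the infimum are routine and present (at least implicitly) in both. In short: same route, but your version is cleaner and avoids the case split.
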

	\begin{proof}
		We will prove it in the case that $\sigma\colon C\to X$ is a Lipschitz map of an $n$-dimensional polyhedral cell $C$. 
		Let $d$ be a Riemannian metric on $C$ such that $\sigma$ is $1$-Lipschitz. Then $f\circ\sigma$ is $k$-Lipschitz with respect to the same metric. 
		
		If $k\geq 1$, then $f\circ\sigma$ is $1$-Lipschitz thus the metric $d$ can be used in the infimum defining $\mass$ and we see that $\mass(f\circ\sigma)\leq \mass(\sigma)\leq k^n\mass(\sigma)$. 
		
		If $k<1$, then we can dilate the metric $d$ to obtain a new metric $d'\vcentcolon=\frac{1}{k}d$. With respect to $d'$ the map $f\circ\sigma$ is $1$-Lipschitz. 
		Thus we see that $d'$ can be used for the infimum in the definition of $\mass$. 
		The volume of $C$ with respect to $d'$ is $k^n$ times the volume of $C$ with respect to $d$. 
		Thus, we obtain the inequality $\mass(f\circ\sigma)\leq k^n\mass(\sigma)$. 
		
		Since the mass of a chain is the sum of the masses of cells we obtain the desired conclusion. 
	\end{proof}

	\subsection{Morse theory on cube complexes}\label{subsec:morse}
	
	We adapt the notion of a Morse function from \cite{BeBr} to cube complexes:
	
	A map $f\colon X\to\R$ defined on a cube complex $X$ is a \emph{Morse function} if 
	\begin{itemize}
		\item for every cell $e$ of $X$, with the characteristic map $\chi_e\colon [0,1]^m\to e$, the composition $f\chi_e\colon [0,1]^m\to\R$ extends to an affine map $\R^m\to \R$ and $f\chi_e$ is constant only when $\dim e=0$;
		\item the image of the $0$-skeleton of $X$ is discrete in $\R$.
	\end{itemize}
	
	Suppose $X$ is a cube complex, $f\colon X\to \R$ a Morse function.
	The {\em ascending link} of a vertex $v$, denoted $\Lk_{\uparrow}(v, X)$ is the subcomplex of $\Lk(v, X)$ corresponding to cubes $e$ such that $f|_e$ attains its minimum at $v$. The {\em descending link}, $\Lk_{\downarrow}(v, X)$, is defined similarly replacing minimum with maximum. 
	
	The \emph{right-angled Artin group}, or RAAG, associated to a flag complex $L$, is a group $A_L$ given by the presentation:
	\[
	A_L=\la v_i\in\Vertices(L)  \mid [v_i,v_j]=1 \text{ if } (v_i,v_j)\in \Edges(L)\ra.
	\] 
	
	Given a flag complex $L$, the {\em Salvetti complex} $S_{L}$ is defined as follows. 
	For each vertex $v_i$ in $L$, let $S^1_{v_i} = S^1$ be a copy of the circle endowed with a structure of a combinatorial complex with a single $0$-cell and a single $1$-cell. For each simplex $\sigma = [v_0, \dots, v_n]$ of $L$ there is an associated $(n+1)$-dimensional torus $T_{\sigma} = S^1_{v_0}\times\dots\times S^1_{v_n}$. If $\tau \subset \sigma$ is another simplex of $L$, then there is a natural cellular inclusion $T_{\tau}\hookrightarrow T_{\sigma}$. We define
	\[
	S_{L} = \bigg(\coprod_{\sigma\subset L} T_{\sigma}\bigg)\Big/\sim
	\]
	where the equivalence is generated by the inclusions $T_{\tau}\hookrightarrow T_{\sigma}$. 
	
	Given a flag complex $L$, the {\em spherical double} $\S(L)$ is defined as follows. 
	For each vertex $v_i$ in $L$, let $S^0_{v_i} = \{v_i^+, v_i^-\}$ be a copy of $S^0$. 
	For each simplex $\sigma = [v_0, \dots, v_n]$ of $L$ there is an associated join $\S_{\sigma} = S^0_{v_0}\ast\dots\ast S^0_{v_n}$. If $\tau \subset\sigma$ is another simplex of $L$, then there is a natural simplicial inclusion $\S_{\tau}\hookrightarrow \S_{\sigma}$. We define
	\[
	\S(L) = \bigg(\coprod_{\sigma\subset L} \S_{\sigma}\bigg)\Big/\sim
	\]
	where the equivalence is generated by the inclusions $\S_{\tau}\hookrightarrow \S_{\sigma}$.
	
	The link of the single vertex of $S_L$ is $\S(L)$.
	This is a flag simplicial complex by~\cite[Lem.\,5.8]{BeBr}, and hence $S_L$ is a non-positively curved cube complex. 
	It follows that the universal cover $\widetilde S_L$ is a $\CAT(0)$ cube complex. 
	
	We define a homomorphism $\phi\colon A_L\to\Z$ by sending each generator to $1$. Denote $BB_L=\ker\phi$. (It is called the Bestvina--Brady kernel in the literature.)
	Let $X_L$ be the universal cover of $S_L$. 
	We can define a map $f\colon X_L\to \R$ as follows. 
	Since $A_L$ acts transitively on the vertices of $X_L$ we have a one-to-one correspondence between vertices of $X_L$ and elements of $A_L$. 
	Given this we define $f(v) = \phi(g_v)$, where $g_v$ is the group element corresponding to $v$. 
	We can extend this over higher skeleta via affine extensions $\R^n\to \R$. 
	The map $f$ descends to a map which we also call $f\colon X_L/BB_L\to \R$. 
	This is a Morse function. 
	The ascending link of $f$ is a copy of $L$ spanned by all vertices $v_i^-$ for $v_i\in\Vertices(L)$, and similarly, the descending link of $f$ is a copy of $L$ spanned by all vertices $v_i^+$.

	\subsection{Leary's construction}\label{learygroups}
	
	In~\cite{Lea}, Ian Leary developed machinery 
	that takes as input a flag complex $L$ with no local cut points and a set $S\subset \Z$. 
	From this data a group $G_L(S)$ is constructed whose  finiteness properties are controlled by the topology of $L$.
	In this subsection, we recall the necessary facts from \cite{Lea}, that we will require for our proofs. 
	Throughout the paper $L$ will always denote a connected compact flag simplicial complex with no local cut points. 
	By Lemma~6.3 of \cite{Lea} every connected compact simplicial complex is homotopy equivalent to a connected compact flag simplicial complex with no local cut points.
	
	\begin{theorem}[\protect{\cite[Cor.\,10.2,\,15.4]{Lea}}]\label{thm:lea} 
		Let $L$ be a simplicial flag complex with no local cut points, and let $\widetilde L$ be its universal cover. Then if $L$ and $\widetilde{L}$ are $n$-acyclic, then $G_L(S)$ is of type $FP_{n+1}$. If $L$ and $\widetilde{L}$ are acyclic, then $G_L(S)$ is of type $FP$. 
		Moreover, if $\pi_1(L)$ is nontrivial, then the groups $G_L(S)$ form uncountably many isomorphism classes, as $S\subset\Z$ varies. \qed
	\end{theorem}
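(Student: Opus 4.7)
\smallskip
\noindent\textbf{Proof proposal.} The plan is to build a specific cube-like complex $Y_L(S)$ on which $G_L(S)$ acts freely and cocompactly, then to run Bestvina--Brady style Morse theory on it to extract the finiteness properties, and finally to use an $S$-dependent isomorphism invariant to obtain uncountably many isomorphism classes. The complex $Y_L(S)$ that I have in mind is a branched cover of $Y_L \vcentcolon= X_L/BB_L$: using the Morse function $f \colon Y_L \to \R$ from \Cref{subsec:morse}, whose ascending and descending links are copies of $L$, we take the cover which is universal over each integer level set indexed by $n \in \Z\smallsetminus S$ and is trivial over each level indexed by $n \in S$. The deck group of $Y_L(S) \to Y_L$ is the group $G_L(S)$; since the action of the deck group on the branch hyperplanes is by the universal-cover action of $\pi_1(L)$, this gives a concrete model of $G_L(S)$ acting freely and cellularly on $Y_L(S)$ with cocompact quotient (a finite orbifold-like model of $Y_L$).

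For the finiteness statements, I would lift $f$ to a $G_L(S)$-equivariant Morse function $\widetilde f \colon Y_L(S) \to \R$ and apply Brown's filtration criterion for $FP_{n+1}$ to the sublevel sets $\widetilde f^{-1}((-\infty,t])$. The local analysis at a vertex $v$ comes in two flavours. At a vertex lying over a non-branched integer (i.e.\ $n\in S$), the ascending/descending link at $v$ is a copy of $L$, as in the classical Bestvina--Brady setup. At a vertex lying over a branched integer (i.e.\ $n\in\Z\smallsetminus S$), the branching replaces this link by a copy of $\widetilde L$. Hence, if both $L$ and $\widetilde L$ are $n$-acyclic, every ascending and descending link in the filtration is $n$-acyclic, so crossing each critical level changes reduced homology only in degrees $>n$. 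A standard Mayer--Vietoris / mapping-cone argument then shows that each sublevel set has the same reduced homology up to dimension $n$ as any other sublevel set, which are all acyclic in a suitable range. Together with the free cocompact $G_L(S)$-action on each $G_L(S)$-invariant sublevel set, Brown's criterion yields type $FP_{n+1}$; if both $L$ and $\widetilde L$ are acyclic, the filtration gives a free resolution of $\Z$ of finite length with finitely generated terms in each degree, which is exactly type $FP$.

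For the uncountability statement I would look for an isomorphism invariant of $G_L(S)$ that recovers $S$ (or at least enough of $S$) to force uncountably many isomorphism classes. The natural candidate is the collection of conjugacy classes of certain canonical subgroups of $G_L(S)$ corresponding to branch loci: for each $n \in \Z\smallsetminus S$ the branch locus produces a subgroup of $G_L(S)$ isomorphic to $\pi_1(L)$, while for each $n \in S$ the corresponding object is trivial (or free, or in any case structurally distinguishable because no branching has occurred). The hypothesis that $\pi_1(L) \ne 1$ ensures that these two local structures are genuinely different. One then argues that an isomorphism $G_L(S) \to G_L(T)$ must match these canonical subgroups up to a shift, so that $S$ and $T$ must agree up to a $\Z$-translation. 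Since the orbit of any $S \subset \Z$ under translation is countable while the power set of $\Z$ is uncountable, this produces uncountably many isomorphism classes.

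The main obstacle, I expect, will be the uncountability step: one needs a robust \emph{invariant} characterisation of the canonical branch subgroups purely in terms of the abstract group $G_L(S)$ (for instance as centralisers, maximal subgroups of a certain isomorphism type, or stabilisers in a canonical action on a tree), so that any abstract isomorphism is forced to preserve them. Extracting $S$ up to translation from the group structure is where the real work lies; the Morse-theoretic finiteness analysis, by contrast, is a fairly direct adaptation of the classical Bestvina--Brady argument, with the branching creating only the mild additional requirement that $\widetilde L$ be $n$-acyclic in addition to $L$.
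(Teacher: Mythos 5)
This theorem is imported from Leary's paper and stated here without proof, so I evaluate your reconstruction on its own terms. The main gap is in the finiteness argument: you propose to run Brown's filtration criterion on the sublevel sets $\widetilde f^{-1}\big((-\infty,t]\big)$ of $X_L^{(S)}$, invoking a free cocompact $G_L(S)$-action. Neither hypothesis holds. The quotient $X_L^{(S)}/G_L(S) = X_L/BB_L$ is an infinite complex, so the action is not cocompact on the ambient space nor on the $(-\infty,t]$-preimages; and when $\pi_1(L)$ is infinite the action is not even proper --- each vertex at a height in $\Z\smallsetminus S$ has stabilizer isomorphic to $\pi_1(L)$ (Cor.~9.2 of Leary, recorded in subsection 3.4). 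Brown's criterion would then demand control on the finiteness type of these stabilizers, which is not available. The intended route uses Morse theory on the contractible $X_L^{(S)}$ only to prove that the single level set $Z_L^{(S)} = \big(f^{(S)}\big)^{-1}(1/2)$ is $n$-acyclic (resp.\ acyclic), and then observes that $G_L(S)$ acts freely, properly, and cocompactly on $Z_L^{(S)}$, from which $FP_{n+1}$ (resp.\ $FP$) follows directly, with no filtration machinery. Your ascending/descending link analysis is the right local input; it just has to be aimed at acyclicity of the level set, not at a sublevel-set filtration.

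For uncountability you correctly identify the branch-vertex stabilizers as the structure that should remember $S$, and you honestly flag the obstacle: showing an abstract isomorphism respects them. That gap is genuine and remains open in your proposal. Leary bypasses the rigidity problem with a more explicit argument: for a non-nullhomotopic edge loop $(a_1,\dots,a_l)$ in $L$, the word $a_1^n a_2^n \cdots a_l^n$ is trivial in $G_L(S)$ if and only if $n\in S$ (his Lemma~15.3, which this paper invokes in the proof of \Cref{thm:glsword}). This reads $S$ directly off triviality of words rather than off preservation of canonical subgroups, and a counting argument over subsets of $\Z$ then yields uncountably many isomorphism classes. Your structural route is plausible in spirit but incomplete as stated.
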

	
	For each integer $n$ there is a single vertex $v$ of $X_L/BB_L$ such that $f(v) = n$. 
	Let $V_S = \{v\in (X_L/BB_L)^{(0)} \mid f(v)\notin S\}$. 
	Then $G_L(S) \vcentcolon= \pi_1\big((X_L/BB_L)\smallsetminus V_S\big)$. 
	
	The group $G_L(S)$ also acts on a $\CAT(0)$ cube complex $X_L^{(S)}$, which is defined as follows. 
	Take the universal cover of $(X_L/BB_L)\smallsetminus V_S$, lift the metric and then take the metric completion, the resulting cube complex is $X_L^{(S)}$. 
	There is a map $b\colon X_L^{(S)}\to X_L/BB_L$ which is a covering map away from the vertices.
	By considering the composition $X_L^{(S)}\to X_L/BB_L\to \R$, we obtain a Morse function  $f^{(S)}\colon X_L^{(S)}\to \R$. 
	
	The action of $G_L(S)$ is not cocompact since $X_L^{(S)}/G_L(S) = X_L/BB_L$ is an infinite cube complex. 
	Let $v$ be a vertex of $X_L^{(S)}$. 
	If $f^{(S)}(v)\in S$, then the link of $v$ in $X_L^{(S)}$ is $\S(L)$. 
	If $f^{(S)}(v)\notin S$, then the link of $v$ in $X_L^{(S)}$ is $\widetilde{\S(L)}$.
	In the case that $L$ is connected, not a single point and has no local cut points, it was shown in \cite[Prop.\,6.9,\,Cor.\,7.2]{Lea} 
	that $\widetilde{\S(L)} = \S(\widetilde{L})$.
	If $L$ has infinite fundamental group, then the action of $G_L(S)$ on $X_L^{(S)}$ will not be proper.
	Let $Z_L^{(S)} = {\big(f^{(S)}\big)}^{-1}\big(\frac{1}{2}\big)\subset X_L^{(S)}$. 
	Then the action of $G_L(S)$ on $Z_L^{(S)}$ is free, proper and cocompact~\cite[Cor.\,9.2,\,9.3]{Lea}. (Note that in~\cite{Lea} the space $Z_L^{(S)}$ is denoted $X_{1/2}^{(S)}$.)
	
	\begin{theorem}[\protect{\cite[Prop.\,10.1]{Lea}}]
		If $L$ and $\widetilde{L}$ are $n$-acyclic, then so is $Z_L^{(S)}$. 
		If $L$ and $\widetilde{L}$ are acyclic, then so is $Z_L^{(S)}$. \qed
	\end{theorem}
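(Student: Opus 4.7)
The plan is to combine a Mayer--Vietoris argument with Bestvina--Brady style Morse theory on the $\CAT(0)$ cube complex $X_L^{(S)}$ equipped with the Morse function $f^{(S)}$. First I would cover $X_L^{(S)}$ by the open sets $U_- = (f^{(S)})^{-1}(-\infty, 1/2 + \varepsilon)$ and $U_+ = (f^{(S)})^{-1}(1/2 - \varepsilon, \infty)$ for a small $\varepsilon > 0$. Since $f^{(S)}$ has no critical values in $(0,1)$, the set $U_-$ deformation retracts onto the closed sublevel set $(f^{(S)})^{-1}(-\infty, 1/2]$, similarly for $U_+$, and $U_- \cap U_+$ deformation retracts onto $Z_L^{(S)}$. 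Because $X_L^{(S)}$ is $\CAT(0)$, it is contractible and hence acyclic, so the Mayer--Vietoris sequence immediately yields an isomorphism $H_k(Z_L^{(S)}) \cong H_k(U_-) \oplus H_k(U_+)$ for every $k \geq 1$, while in degree zero it shows $Z_L^{(S)}$ is connected as soon as $U_\pm$ are. The task thus reduces to proving that both $U_-$ and $U_+$ are $n$-acyclic.

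For this I would invoke the Bestvina--Brady Morse lemma in the form: crossing a critical vertex $v$ of $f^{(S)}$ when enlarging a sublevel set $(f^{(S)})^{-1}(-\infty, t]$ (respectively, when shrinking a superlevel set) preserves $H_k$ for $k \leq n$ provided that the ascending link $\Lk_\uparrow(v)$ (respectively, the descending link $\Lk_\downarrow(v)$) is $n$-acyclic. According to the discussion preceding the theorem in \Cref{learygroups}, both links at every vertex $v$ of $X_L^{(S)}$ are isomorphic to $L$ when $f^{(S)}(v) \in S$ and to $\widetilde{L}$ when $f^{(S)}(v) \notin S$, and by hypothesis both $L$ and $\widetilde{L}$ are $n$-acyclic. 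Filtering the sublevel set from the empty set upward through the successive critical levels and applying the Morse lemma at each step, one concludes that $(f^{(S)})^{-1}(-\infty, 1/2]$ is $n$-acyclic; the dual filtration from the empty superlevel set downward gives the same for $(f^{(S)})^{-1}[1/2, \infty)$. The fully acyclic case of the theorem then follows by taking $n$ arbitrary.

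The one delicate point is that $X_L^{(S)}$ is not cocompact under any natural group action once $\pi_1(L)$ is infinite, so each critical integer level of $f^{(S)}$ contains infinitely (typically uncountably) many critical vertices. The Morse attachment argument must therefore be carried out one vertex, or one $G_L(S)$-orbit, at a time and completed via a direct-limit argument, using that singular homology commutes with filtered colimits. This bookkeeping issue is essentially the same one that Leary navigates in his proof of \Cref{thm:lea}, so no technique beyond what is already in his setup is required.
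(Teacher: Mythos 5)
Your Mayer--Vietoris reduction is sound: $U_-\cap U_+$ deformation retracts onto $Z_L^{(S)}$, and contractibility of the $\CAT(0)$ complex $X_L^{(S)}$ gives $H_k(Z_L^{(S)})\cong H_k(U_-)\oplus H_k(U_+)$ for $k\ge1$, so it does suffice to show $U_\pm$ are $n$-acyclic. But the Morse-theory step that is meant to accomplish this has a genuine gap. You propose to filter the sublevel set ``from the empty set upward,'' yet $f^{(S)}$ is unbounded below, so $(f^{(S)})^{-1}(-\infty,t]$ is nonempty for every $t\in\R$ and there is no empty base case from which to start the induction. (The intersection over $t\to-\infty$ is empty, but homology does not commute with inverse limits, so that gives no anchor either.) The ``empty superlevel set'' has the same difficulty at $+\infty$.

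The filtration must instead be anchored at $X_L^{(S)}$ itself, which you already know is acyclic. Setting $V_t=(f^{(S)})^{-1}(-\infty,t]$ and increasing $t$ from $1/2$ towards $+\infty$, the Morse lemma says that at each crossing of a critical level $V_t$ grows, up to homotopy, by coning off the \emph{descending} links of the new vertices (you wrote ascending; the swap is harmless here only because both links are copies of $L$ or $\widetilde L$), so $H_k(V_t)\to H_k(V_{t+1})$ is an isomorphism for $k\le n$ once those links are $n$-acyclic. Since singular homology commutes with the directed union $\bigcup_t V_t = X_L^{(S)}$ and $X_L^{(S)}$ is contractible, one gets $H_k(U_-)\cong H_k(X_L^{(S)})=0$ for $1\le k\le n$, and likewise for $U_+$. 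With this correction your argument closes, although at that point the Mayer--Vietoris detour becomes redundant: one may simply grow $(f^{(S)})^{-1}[a,b]$ symmetrically around $\{1/2\}$ and compare $Z_L^{(S)}$ directly to $X_L^{(S)}$, which is essentially how Leary's Proposition~10.1 (which the paper cites without reproving) proceeds.
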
 
	
	It will be useful to give $Z_L^{(S)}$ the structure of a $M_0$-polyhedral complex. 
	This can be done by defining the $k$-cells of $Z_L^{(S)}$ to be the intersection of $Z_L^{(S)}$ with the $(k+1)$-cubes of $X_L^{(S)}$. In what follows it will also be convenient to modify the combinatorial structure of $X_L^{(S)}$ by subdividing it  so that $Z_L^{(S)}$ becomes a combinatorial subcomplex of $X_L^{(S)}$.  
	Since $G_L(S)$ acts on $X_L^{(S)}$ by permuting cubes, we obtain that the restricted action on $Z_L^{(S)}$ is a cellular action.

	\subsection{Geodesic fellow-travelling in CAT(0) cube complexes} 
	
	Let $X$ be a finite dimensional (but not necessarily locally finite) CAT(0) cube complex. In this subsection, we show how to construct geodesic edge paths in $X$ with strong control on how they fellow-travel.  The argument is adapted from that of \cite{NR} where the hypothesis of $X$ admitting a cocompact group action by isometries is only needed to establish regularity of certain languages.

	The following definitions are taken from section 3 of \cite{NR}. 
	
	A sequence of cubes $\{C_i\}_{i=1}^m$ in $X$ is called a \emph{cube-path} if each $C_i$ has a pair of  vertices $v_{i-1} \not= v_i$ such that  $C_i$ is the unique minimal dimension cube containing $\{v_{i-1},v_i\}$, and $C_{i} \cap C_{i+1} = \{v_i\}$. We say that the cube-path is from $v_0$ to $v_m$. Note that the union $\bigcup_{i=1}^m C_i$ is a connected subcomplex of $X$. The \emph{length} of the cube-path $\{C_i\}_{i=1}^m$  is defined to be $m$. 
	
	A cube-path $\{C_i\}_{i=1}^m$ is said to be \emph{normal} if $St(C_i) \cap C_{i+1} = \{v_i\}$ for each $1 \leq i \leq m$. Recall that if $C$ is a cube in $X$, then the \emph{star of $C$}, $St(C)$, is defined to be the union of all cubes of $X$ which contain $C$ as a face. This collection includes $C$ itself.
	
	In \cite{NR} families of  hyperplanes are used to construct normal cube-paths between points. Recall that if $[0, 1]^m$ is a model $m$-cube, then a \emph{midcube} is a codimension one subspace obtained by setting one coordinate equal to $\frac{1}{2}$. 
	A \emph{hyperplane} $H$ in a CAT(0) cube complex $X$ is a connected subspace which is the union of a family of midcubes of cubes of $X$ with the property that for each cube $C$ of $X$, either $H \cap C = \varnothing$ or $H\cap C$ is a single midcube of $C$. The hyperplane $H$ is said to be \emph{dual} to the edge $e$ of $X$, if $H$ intersects $e$ in the midpoint (midcube) of $e$. 
	
	Here are some pertinent facts about hyperplanes and normal cube-paths in a finite dimensional CAT(0) cube complex X. The first three are proven in  \cite{Sageev}  using slightly different language. The last three are proven in \cite{NR} and the proofs hold for finite dimensional CAT(0) cube complexes. 
	
	\setlist[enumerate,1]{label={(H.\arabic*)}}
	
	\begin{enumerate}
		\item   A hyperplane $H$ in $X$ is itself a CAT(0) cube complex and the inclusion $H \to X$ is an isometric embedding. 
		\item Each hyperplane $H$ in $X$ separates $X$ into two connected components, called \emph{half\-spaces} and denoted by $H^+$ and $H^-$. The hyperplane $H$ is said to \emph{separate} vertices $v$ and $w$ of $X$ if one of $v$, $w$ is contained in $H^+$ and the other in $H^-$.  
		\item The union of all cubes of $X$ which intersect $H$ is a convex subcomplex $N(H)$ of $X$ which has the cubical structure of $H \times [0,1]$. We denote the subcomplex $H\times \{i\}$ of $N(H)$ by $\partial_i(N(H))$ for $i=0,1$.   
		\item \cite[Prop.\,3.3]{NR} Given vertices $v$ and $w$ of $X$, there exists a unique normal cube-path from $v$ to $w$. 
		\item \cite[Lem.\,3.2]{NR} Let $v$ and $w$ be vertices of $X$, let $\{C_i\}_{i=1}^m$ be the normal cube path from $v$ to $w$, and let $H$ be a hyperplane of $X$ which separates $v$ and $w$. Then $H$ meets $\bigcup_{i=1}^mC_i$ in a connected subset; namely, the midcube of  $C_j$ for some $1 \leq j \leq m$. 
		\item \cite[Proof of Prop.\,3.3]{NR} Suppose that each of the distinct hyperplanes $H_1, \ldots, H_k$ separates vertex $v$ from vertex $w$ and intersects the star neighborhood $St(v)$. There exists a $k$-dimensional cube $C$ in $St(v)$ so that  $H_i\cap C$ are the $k$ midcubes of $C$. 
	\end{enumerate}
	
	It follows from (H.5) that the set of hyperplanes in $X$ which separate $v$ from $w$ is in bijective correspondence with the collection of midcubes of the normal cube-path from $v$ to $w$, and so has cardinality equal to $\sum_{i=1}^m{\rm dim}(C_i)$. It follows from (H.2) and the definition of hyperplane that any combinatorial path from $v$ to $w$ has to have length bounded below by  $\sum_{i=1}^m{\rm dim}(C_i)$. 
	In particular, if $\rho$ is a geodesic edge path in 
	$\bigcup_{i=1}^mC_i$ from $v$ to $w$, then $\rho$ is a geodesic in $X^{(1)}$. We say that such geodesics $\rho$ are \emph{carried} by the  normal cube-path from $v$ to $w$. 
	
	\setlist[enumerate,1]{label={(\arabic*)}}
	
	Normal cube-paths are constructed recursively in \cite{NR}. We provide a sketch of the construction here, since the framework will be used in the proof of Lemma~\ref{lem:NibloReeves} below. 
	
	\begin{enumerate}
		\item Given vertices $v, w$ of $X$, the family ${\mathcal H}(v,w)$ of hyperplanes of $X$ which separate $v$ from $w$ is finite. Indeed, if $\beta$ is a combinatorial edge path from $v$ to $w$, then every hyperplane in ${\mathcal H}(v,w)$ intersects $\beta$ in midpoints of edges. Since distinct hyperplanes meet $\beta$ in distinct midpoints, we have 
		$|{\mathcal H}(v,w)| \leq |\beta| < \infty$.  
		\item Let $\{H_1, \ldots , H_k\} \subset {\mathcal H}(v,w)$ be the collection of hyperplanes of ${\mathcal H}(v,w)$ which intersect $St(v)$. By (H.6) there exists a $k$-dimensional cube $C_1$ in $St(v)$ so that  $H_i\cap C_1$ are the $k$ midcubes of $C_1$. Let $v_1$ be the unique vertex of $C_1$ which is separated from $v$ by each of the $H_i$. Verify that ${\mathcal H}(v,w) \smallsetminus \{H_1, \ldots, H_k\} = {\mathcal H}(v_1, w)$. 
		\item Repeat step 2 above for the points $v_1$ and $w$ to obtain a cube $C_2$ in $St(v_1)$ and prove  that the normality condition $C_2 \cap St(C_1) = \{v_1\}$ holds. 
		\item Continue inductively until ${\mathcal H}(v,w)$ is exhausted.  
	\end{enumerate}

	\begin{lemma}\label{lem:NibloReeves}
		Let $X$ be a finite dimensional CAT(0) cube complex, let $v, w, w'$ be vertices in $X$ with $w$ adjacent to $w'$, and let $\rho_{w'}$ be a geodesic edge path from $v$ to $w'$ which is carried by the normal cube-path from $v$ to $w'$. Then there exists a geodesic edge path $\rho_w$ from $v$ to $w$ which is carried by the normal cube-path from $v$ to $w$, and the smallest full subcomplex of $X$ containing the images of $\rho_{w}$ and $\rho_{w'}$ is an embedded disk diagram of one of the four types presented in~\Cref{fig:fellow}.
		\begin{figure}
			\def\svgwidth{4in}
			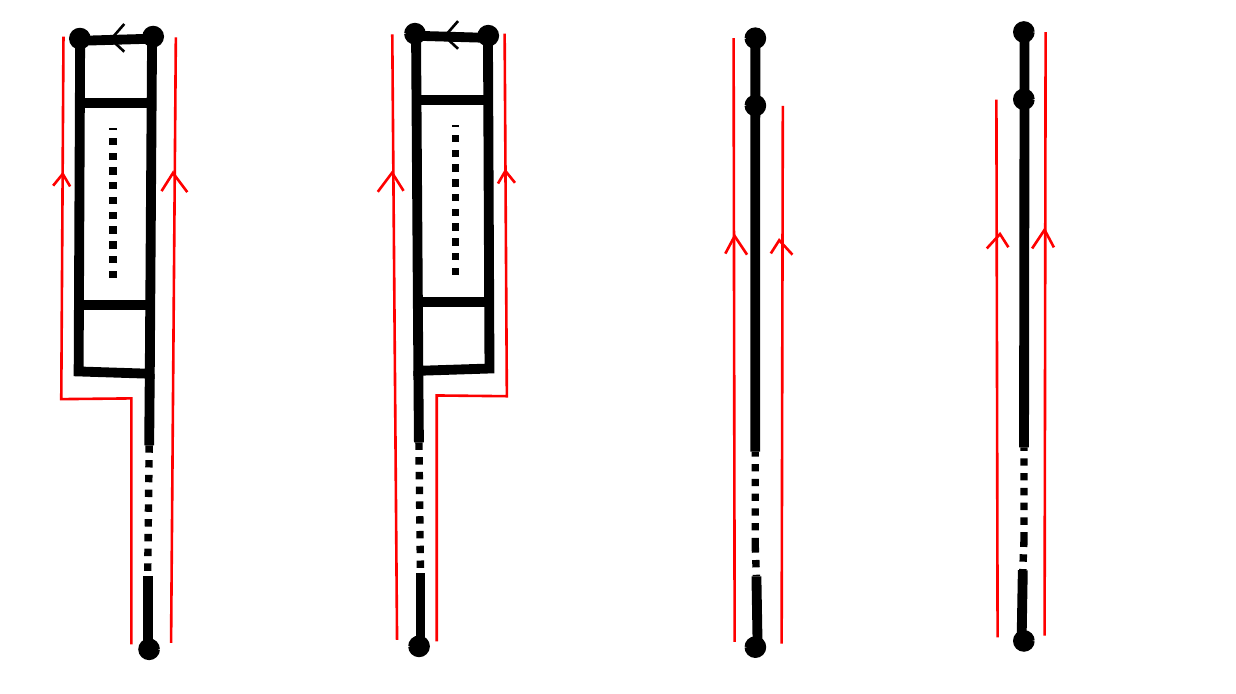
			\caption{\label{fig:fellow}Fellow-travelling for geodesics carried by normal cube-paths. (Geodesic paths $\rho_{w'}$ and $\rho_w$ are indicated in red.)}
		\end{figure}
	\end{lemma}
	
	\begin{proof}
		Let $H$ denote the hyperplane dual to the edge $[w,w']$ and choose halfspace labels so that $w \in H^+$ and $w' \in H^-$. Suppose that $v \in H^-$ (the case $v\in H^+$ is handled similarly). By (H.4) there are unique normal cube-paths  $\{C_i\}_{i=1}^m$ from $v =v_0$ to $w=v_m$ and $\{C'_i\}_{i=1}^{m'}$ from $v=v'_0$ to $w'=v'_{m'}$.  
		
		Now $\bigcup_{i=1}^m C_i$ is a connected subcomplex of $X$ containing $v$ and $w$ and $H$ separates $X$ with $v$ and $w$ in different components. Therefore,  $H \cap \bigcup_{i=1}^m C_i \not= \varnothing$ and, by (H.5), this intersection is the midcube of some $C_j$. 
		Let ${\mathcal H}(p,q)$ denote the collection of all hyperplanes of $X$ which separate vertices $p$, $q$. Note that ${\mathcal H}(v,w) = \{H\} \cup {\mathcal H}(v,w')$.
		Therefore, by the way normal cube-paths are constructed, $C'_i = C_i$  for $1 \leq i < j$ and $v'_{i} = v_{i}$ for $0 \leq i < j$. In particular, $v'_{j-1} =v_{j-1}$. Now the subset of hyperplanes in ${\mathcal H}(v_{j-1},w)$ which meet $St(v_{j-1})$ is the union of $\{H\}$ and the subset of hyperplanes in ${\mathcal H}(v_{j-1},w')$ which meet $St(v_{j-1})$. This implies that $C'_j$ is a codimension 1 face of $C_j$ and $[v'_j, v_j]$ is an edge of $C_j$ which is dual to the hyperplane $H$.  
		
		There are two cases to consider: $j<m$ and $j=m$. In the case $j=m$ there are 2 subcases depending on the dimension of $C_m$. If $C_m$ has dimension 1, then $H$ is the unique hyperplane separating $v_{m-1}$ from $w$. Therefore, $w' = v_{m-1}$,  $m' =m-1$ and the geodesic $\rho_{w}$ is defined to be the concatenation of  the geodesic $\rho_{w'}$ and the geodesic $[w',w]$.  The full subcomplex containing the images of these geodesic paths is indicated in the 3rd image of Figure~\ref{fig:fellow}. If $C_m$ has dimension 2 or more, then $C'_m$ is a codimension 1 face of $C_m$. Note that $m' =m$ in this case.  Now $\rho_w$ can be  defined as the concatenation of the geodesic $\rho_{w'}$ and the geodesic $[w', w] = [v'_m, v_m]$ and the full subcomplex is again described by the 3rd image in Figure~\ref{fig:fellow}.  Note that there is some variation possible here. One could define $\rho_w$ as the concatenation of the subpath of $\rho_{w'}$ from $v$ to $v'_{m-1} =v_{m-1}$ and any geodesic edge path in $C_m$ from $v_{m-1}$ to $v_m =w$ which projects onto the subpath of $\rho_{w'}$ from $v_{m-1}$ to $v'_m = w'$ under the codimension 1 projection $C_m \to C'_m$.  In this case the full subcomplex containing the images of $\rho_w$ and $\rho_{w'}$ can be either the first or the third image in Figure~\ref{fig:fellow}. 
		
		Finally, we consider the case $j<m$. Recall that  $C'_j$ is a codimension 1 face of $C_j$ and $[v'_j, v_j]$ is an edge of $C_j$ which is dual to the hyperplane $H$. In particular, the dimension of $C_j$ is 2 or more. The intersections $[v'_j, v_j] \cap H$ and $[w',w] \cap H$ are vertices in the cubical structure on the hyperplane $H$. Let $\{C^H_i\}_{i=1}^k$ be the unique normal cube-path in the hyperplane $H$ from $[v'_j, v_j] \cap H$ to $[w',w] \cap H$. 
		
		\noindent
		\emph{Claim:} The normal cube-path $\{C^H_i\}_{i=1}^k$ in the hyperplane $H$ determines a sequence of  cubes $\{C^H_i \times [0,1]\}$  in the cubical neighborhood $N(H)$. This sequence in turn determines normal cube-paths $\{ C^H_i \times 0\}$ from $v'_j$ to $w'$ and $\{ C^H_i \times 1\}$ from $v_j$ to $w$.  
		
		It follows from this  claim that $m' = m$ and that the dimension of $C'_i$ equals the dimension of $C_i$ for $j+1 \leq i \leq m$. Furthermore, given a geodesic path $\rho_{w'}$ from $v$ to $w'$ carried by $\{C'_i\}_{i=1}^m$, one defines $\rho_w$ as follows. Use the subpath of $\rho_{w'}$ from $v$ to $v'_j$ to define a geodesic from $v$ to $v_j$ as in the case $j=m$. Denote the complementary subpath of $\rho_{w'}$ (from $v'_j$ to $w'$) by $\rho_{w'}^c$. Note that $\rho_{w'}^c$ is carried by the normal cube-path from  $v'_j$ to $w'$ and so has image in $\partial_0N(H)$. Under the projection  $N(H) = N \times [0,1] \to N\times 1 = \partial_1N(H)$ the path $\rho_{w'}^c$  determines a geodesic edge path from $v_j$ to $w$ which is carried by the normal cube-path from $v_j$ to $w$.  Denote this by $\rho_w^c$. The full subcomplex of $X$ containing the images of $\rho_{w'}^c$ and $\rho_w^c$  is isomorphic (as a cube complex) to the product of $[0,1]$ with the image of $\rho_{w'}^c$. Finally, the geodesic path $\rho_w$ is defined as the concatenation of the geodesic path from $v$ to $v_j$ with the path $\rho_w^c$. By construction, the full subcomplex containing the images of $\rho_w$ and $\rho_{w'}$ is as shown in the first image in Figure~\ref{fig:fellow}. 
		
		Finally, we establish the claim. Recall that ${\mathcal H}(v,w) = {\mathcal H}(v,w') \cup \{H\}$ and that ${\mathcal H}(v,v_j) = {\mathcal H}(v,v'_j) \cup \{H\}$. Therefore, ${\mathcal H}(v_j, w) = {\mathcal H}(v'_j,w')$.  Given $K \in {\mathcal H}(v_j, w)$, note that $[v_j, v'_j] \cup H \cup [w',w]$ is a connected set containing points $v_j$ and $w$ in distinct halfspaces of $K$. Therefore, $K \cap ([v_j, v'_j] \cup H \cup [w',w]) \not= \varnothing$. But $K \cap [v_j,v'_j] = \varnothing = K \cap [w,w']$  (since these edges are dual to $H$ and $K \not= H$), and so $K \cap H \not=\varnothing$. The intersection $K\cap H = L$ is a hyperplane in the cube complex $H$. Note that $L$ separates the vertices $[v_j, v'_j] \cap H$ and $[w,w'] \cap H$ (if it did not then K would not separate $v_j$ and $w$ which contradicts $K \in {\mathcal H}(v_j, w)$). 
		
		Conversely, given a hyperplane $L'$ in $H$ which separates the vertices $[v_j, v'_j] \cap H$ and $[w,w'] \cap H$, note that $L' \times [0,1]$ is a hyperplane in the cubical neighborhood $N(H) = H \times [0,1]$ of $H$ in $X$. Extend $L'\times [0,1]$ to a unique hyperplane $K'$ in $X$. %Note that $K'$ is equal to the preimage of $L'$ under the nearest point retraction map $X \to H$. Therefore, if $K'$ did not separate $v_j$ from $w$ in $X$, there would be a path connecting these vertices in the complement of $K'$; this path would project to a path connecting $[v_j, v'_j] \cap H$ and $[w,w']\cap H$ in the complement of $L'$, a contradiction. 
		Note that $K'$ separates $v_j$ from $w$ since the concatenation of the geodesic from $v_j$ to $[v_j,v'_j] \cap H$, the geodesic from  $[v_j,v'_j] \cap H$ to $[w,w'] \cap H$, and the geodesic from $[w,w'] \cap H$ to $w$ is a connected path which crosses $K'$ exactly once.
		This establishes a bijection between ${\mathcal H}(v_j, w) = {\mathcal H}(v'_j,w')$ and the set of hyperplanes in 
		$H$ which separate $[v_j, v'_j] \cap H$ and $[w,w'] \cap H$. 
		
		It is clear from the construction of the hyperplane $K'$ in $X$ from the hyperplane $L'$ in $H$ by extending the product $L'\times [0,1]$ that if the hyperplane $L'$ in $H$ meets the star neighborhood of $([v_j,v'_j]\cap H)$ in the cube complex $H$, then the hyperplane $K'$ meets the star neighborhood of $v_j$ in $X$. This sets up a bijective  correspondence (cube-by-cube) between the normal cube-path  in $H$ from  $[v_j, v'_j] \cap H$ to  $[w,w'] \cap H$ and the normal cube path in $X$ from $v_j$ to $w$ (and also from $v'_j$ to $w'$). This proves the claim. 
	\end{proof}

	\subsection{\texorpdfstring{Filling loops in  $Z_L^{(S)}$ by disks in  $X_L^{(S)}$ with controlled geometry}{Controlled fillings of loops}}

	Throughout, let $G = G_L(S)$, $X=X_L^{(S)}$, $f=f^{(S)}$, and  $Z = Z_L^{(S)}$. 	
	
	Let $\gamma$ be an embedded combinatorial loop in $Z$. The  goal of this subsection is to construct a  disk filling  of  $\gamma$ in $
	X$ whose geometry and link combinatorics are controlled in terms of the length $|\gamma|$. 
	
	In order to describe how to build a controlled disk filling $\Delta$ of $\gamma$ in $X$, we will need a way of describing particular subcomplexes of $X$. Recall that we modified the combinatorial structure of the cube complex $X$ by subdividing the cubes intersecting $Z$ so that $Z$ becomes a combinatorial subcomplex of $X$. For this discussion we will  use the notation $X_c$ to denote the original cubical structure on $X$ before the subdivision, % at $f^{-1}(1/2)$, 
	and will use $X_c^{(i)}$ to denote the $i$-skeleton of $X_c$. Note that $X_c^{(2)}$ is a subcomplex of $X^{(2)}$ but not equal to $X^{(2)}$ since it does not include any $2$-cells of $Z$.

	\begin{proposition}\label{prop:controldehn}	
		Let $\gamma$ be an embedded combinatorial loop in $Z$ of length $n$. There exists a  disk filling $\pi\colon \Delta \to X$  of $\gamma$ with the following properties.  
		\begin{enumerate}
			\item $\pi(\Delta)$ is contained in $X^{(2)}_c$; 
			\item $\pi^{-1}(X_c^{(0)})$ consists of at most $n^2/2$ vertices of $\Delta$; the other vertices of $\Delta$ are sent to $f^{-1}(1/2)$; 
			\item if $v  \in \pi^{-1}(X_c^{(0)})$, then $|\Lk(v, \Delta)| \leq 3n$; 
			\item $\pi(\Delta)$ is contained in the sublevel set $f^{-1}([-n/4, 1 + n/4])$. 
		\end{enumerate}
	\end{proposition}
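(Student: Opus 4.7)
I would build $\Delta$ as a geodesic cone in the (un\-sub\-divi\-ded) CAT(0) cube complex~$X_c$, with apex a carefully chosen vertex $w^{\ast}\in X_c^{(0)}$ and base a combinatorial loop $\tilde\gamma\subset X_c^{(1)}$ of length at most~$n$ obtained from~$\gamma$ by a corner-swap projection; the fellow-travelling from~\Cref{lem:NibloReeves} will then control the geometry of $\Delta$.

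For the projection, note that each edge $[v_i,v_{i+1}]$ of $\gamma$ lies inside a unique square $Q_i$ of $X_c$ as the diagonal that cuts off one corner of $Q_i$; call this corner $u_i\in X_c^{(0)}$ (it sits at level $0$ or~$1$, depending on which corner of $Q_i$ is closed off by the hyperplane $f^{-1}(1/2)$), and let $T_i$ denote the triangular $2$-cell of the subdivided $X$ with vertices $v_i,v_{i+1},u_i$. Because the consecutive corners $u_i$ and $u_{i+1}$ are both endpoints of the vertical edge of $X_c$ through the vertex $v_{i+1}$, they are either equal or adjacent in $X_c^{(1)}$, and so $u_0,\ldots,u_{n-1}$ close up to a cellular loop $\tilde\gamma\subset X_c^{(1)}$ of length $\le n$ with vertices at levels $0$ and~$1$. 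I would then pick $w^{\ast}$ among the $u_i$ minimising $\max_i d_{X_c}(w^{\ast},u_i)$; by the cyclic structure of $\tilde\gamma$ this gives $d(w^{\ast},u_i)\le n/2$ for all $i$. Apply \Cref{lem:NibloReeves} inductively to produce geodesic edge paths $\rho_i$ from $w^{\ast}$ to $u_i$ carried by normal cube-paths, so that consecutive pairs $\rho_i,\rho_{i+1}$ together with $[u_i,u_{i+1}]$ bound embedded sector disks $D_i\subset X_c^{(2)}$ of one of the four types in Figure~\ref{fig:fellow}. The diagram $\Delta$ is then assembled by gluing the triangles $T_i$ (each attached along $[v_i,v_{i+1}]\subset\gamma$ and along $[u_i,u_{i+1}]$) and the sector disks $D_i$, refining any square of a $D_i$ that is bisected by $Z$ into the matching half-cells of subdivided $X$ so that $\pi\colon\Delta\to X$ is genuinely combinatorial.

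Verifying the four properties: (1) is immediate since every $2$-cell of $\Delta$ sits inside a square of $X_c^{(2)}$. For~(2), the preimage $\pi^{-1}(X_c^{(0)})$ consists of the apex, the $u_i$, and the interior vertices of the $\rho_i$, yielding
\[
|\pi^{-1}(X_c^{(0)})|\le 1+\textstyle\sum_{i=1}^{n-1}|\rho_i|\le 1+(n-1)\cdot n/2\le n^2/2,
\]
while all other vertices of $\Delta$ lie on $\gamma$ itself or at midpoints introduced by bisecting squares, and hence are sent to $f^{-1}(1/2)$. For~(3), the four-type classification in~\Cref{lem:NibloReeves} gives a uniform $O(1)$ bound on $|\Lk(v,\Delta)|$ at each interior vertex of a single $\rho_i$; the only vertex whose link can grow with~$n$ is~$w^{\ast}$, which is incident to each of the $n$ sector disks $D_i$ (contributing at most $2$ corners each by the classification) and to at most two triangles $T_i$, so $|\Lk(w^{\ast},\Delta)|\le 2n+O(1)\le 3n$. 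For~(4), every edge of $X_c$ changes $f$ by $\pm 1$; since the endpoints of each $\rho_i$ lie at level $0$ or $1$ and $|\rho_i|\le n/2$, a case-by-case count of ascending versus descending edges confirms that each $\rho_i$ is contained in $f^{-1}([-n/4,\,1+n/4])$, and the corners of every $2$-cell of $\Delta$ lie either on some $\rho_i$ or in the set $\{0,1/2,1\}$ of levels, so the same sublevel-set bound propagates to all of $\pi(\Delta)$.

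The principal obstacle is the corner-swap projection of the first paragraph: one has to verify uniformly (across the two cases where $Z$ separates off the minimum or the maximum corner of each $Q_i$) that the resulting sequence $u_0,\ldots,u_{n-1}$ closes up to a loop of length at most~$n$ in $X_c^{(1)}$. A secondary delicate point is the link-size estimate~(3) at the apex $w^{\ast}$, where one must use the four-type classification of~\Cref{lem:NibloReeves} to pin down each sector's corner-contribution to exactly $O(1)$; without this, one loses control of the coefficient in front of~$n$.
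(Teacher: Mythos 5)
Your overall plan coincides with the paper's: project $\gamma$ to a loop $\tilde\gamma$ of length at most $n$ in $X_c^{(1)}$ by the corner construction (this is precisely the paper's \Cref{lem:annulus}), fill the resulting annulus with corner triangles, and fill $\tilde\gamma$ with a geodesic cone built inductively from \Cref{lem:NibloReeves} (this is the paper's \Cref{lem:fillalpha}). Choosing the apex $w^*$ to minimize the max distance to the $u_i$ is an unnecessary refinement --- the paper simply takes the apex to be the initial vertex of $\tilde\gamma$, and the cyclic structure of a length-$\le n$ loop already gives $d(\alpha(0),\alpha(i))\le\min\{i,n-i\}\le n/2$. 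The arguments for (1), (2) and (4) are essentially right.

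There is a genuine gap in your argument for (3). You claim that the Niblo--Reeves classification ``gives a uniform $O(1)$ bound on $|\Lk(v,\Delta)|$ at each interior vertex of a single $\rho_i$'', and hence that ``the only vertex whose link can grow with $n$ is $w^*$''. Neither claim is justified, and the second is false in general. The geodesics $\rho_i$ emanating from the apex may share long initial segments, so a vertex $v$ near $w^*$ can lie on $\rho_i$ for a whole range of indices $i$. Each fellow-travelling subdiagram $D_i$ between $\rho_i$ and $\rho_{i+1}$ can contribute up to $2$ corners to the link of any vertex on its boundary, and there can be up to $n$ such subdiagrams, so $|\Lk(v,\Delta)|$ can grow linearly in $n$ at vertices other than the apex. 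The correct argument, used in the paper's \Cref{lem:fillalpha}, does not single out the apex at all: it observes that each $\Delta_i$ has no interior vertices (it is a ladder of squares, possibly preceded by a segment), hence contributes at most $2$ to the link of \emph{every} vertex of the cone, and sums over the $\le n$ subdiagrams to obtain the uniform bound $|\Lk(v,\Delta_\alpha)|\le 2n$; adding the $\le n$ contribution from the annular part (property (3) of \Cref{lem:annulus}) yields $|\Lk(v,\Delta)|\le 3n$ for all $v\in\pi^{-1}(X_c^{(0)})$. Your final constant happens to be correct, but the route you propose --- treating all non-apex vertices as having $O(1)$ links --- would not establish it. Separately, the assertion that $w^*$ meets ``at most two triangles $T_i$'' can also fail (all the $u_i$ can coincide), though in that degenerate case the cone is empty and the annulus alone contributes at most $n$, so the bound survives for a different reason than you state.
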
 
	
	\begin{proof}
		The  disk filling $\pi\colon \Delta \to X$ is built from two pieces.  By Lemma~\ref{lem:annulus} below, there exists  a combinatorial loop $\alpha\colon S^1 \to X_c$ which is freely homotopic to $\gamma$, is contained in $f^{-1}([0,1])$, and satisfies $|\alpha| \leq |\gamma|$. Furthermore, there is an annular diagram $A_{\gamma}$ and a map $\pi_{\gamma}\colon A_{\gamma} \to X$ which realises a free homotopy between $\gamma$ and the barycentric subdivision $\alpha'$ of $\alpha$. Note that $\alpha'\colon S^1 \to X$ is a combinatorial loop of length $|\alpha'|=2|\alpha|$. 
		
		Lemma~\ref{lem:fillalpha} below establishes the existence of a  disk filling $\pi_\alpha\colon \Delta_\alpha \to X_c^{(2)}$ of $\alpha$ with suitable control on its geometry and combinatorics. The cell structure on $X_c^{(2)}  $ pulls back via the map $\pi_\alpha$ to yield a subdivision $\Delta^\ast_\alpha$  of $\Delta_\alpha$ and a  disk filling $\Delta^\ast_\alpha \to X$ (which we  also denote by $\pi_\alpha$) of the barycentric subdivision $\alpha'$ of $\alpha$. Note that the new vertices in the cell structure of $\Delta^\ast_\alpha$ all map to $f^{-1}(1/2)$. 
		
		One of the boundary circles of $A_{\gamma}$ maps to the image of  $\alpha'$ in $X$ and this map factors through the map $\Delta^\ast_\alpha \to X$. Now, take $\Delta$ to be $\Delta^\ast_\alpha$ with $A_{\gamma}$ attached via this factor map and set $\pi$ to be the result of pasting together the maps $\pi_\alpha$ and $\pi_{\gamma}$. 
		
		Property (1) follows from property~(2) of Lemma~\ref{lem:annulus}, the fact that the image of $\Delta_\alpha$ is in $X_c^{(2)}$,  and the construction of $\Delta_\gamma$.  
		Property (2) follows from property~(3) of Lemma~\ref{lem:annulus},  from property~(1) of Lemma~\ref{lem:fillalpha}, and how  $\Delta^\ast_\alpha$  is glued to $A_{\gamma}$ to obtain $\Delta_\gamma$.  Property (3) follows form properties (3) of Lemma~\ref{lem:annulus} and  property~(2) of Lemma~\ref{lem:fillalpha}. Finally, property (4) follows from property (2) of Lemma~\ref{lem:annulus} and property (4) of Lemma~\ref{lem:fillalpha}.
	\end{proof}

	The next lemma shows how to construct the annular portion of the  disk filling of $\gamma$ in Proposition~\ref{prop:controldehn}. To understand the construction we need to describe how $2$-cells of $X_c$ are subdivided in $X_c$. Let $\sigma$ be a square in $X_c^{(2)}$. Then when $1/2\in f(\sigma)$, $\sigma$ is subdivided into a triangle, called a \emph{corner of a square} and denoted by $\tau$, and a pentagon. 
	There are two possibilities: either $f(\sigma) = [-1,1]$ and $\tau = \sigma \cap f^{-1}\bigl([1/2,1]\bigr)$, or $f(\sigma) = [0,2]$ and $\tau = \sigma \cap f^{-1}\bigl([0,1/2]\bigr)$, see~\Cref{fig:sigma}.
	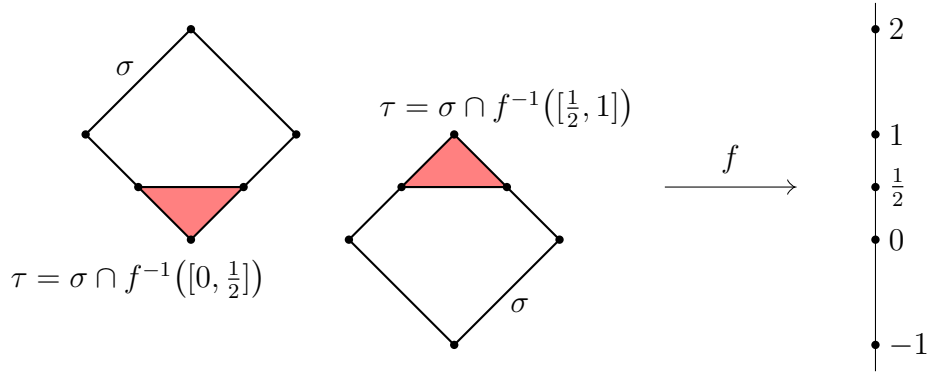
\begin{figure}
	\begin{center}
		\begin{tikzpicture}[scale=0.35]
			\begin{scope}
			\fill[red!50] (-2,-2)--(2,-2)--(0,-4)--(-2,-2);
			\fill (0,4) circle (4.5pt); \fill (0,-4) circle (4.5pt); \fill (-4,0) circle (4.5pt); \fill (4,0) circle (4.5pt);
			\fill (-2,-2) circle (4.5pt); \fill (2,-2) circle (4.5pt);
			\draw[thick] (0,4)--(-4,0)--(0,-4)--(4,0)--(0,4);
			\draw[thick] (-2,-2)--(2,-2);
			\draw (-2.5,2.5) node {$\sigma$};
			\draw (-2,-5.5) node {$\tau=\sigma\cap f^{-1}\bigl([0,\frac12]\bigr)$};
			\end{scope}
			
			\begin{scope}[xshift=10cm, yshift=-4cm]
			\fill[red!50] (-2,2)--(2,2)--(0,4)--(-2,2);
			\fill (0,4) circle (4.5pt); \fill (0,-4) circle (4.5pt); \fill (-4,0) circle (4.5pt); \fill (4,0) circle (4.5pt);
			\fill (-2,2) circle (4.5pt); \fill (2,2) circle (4.5pt);
			\draw[thick] (0,4)--(-4,0)--(0,-4)--(4,0)--(0,4);
			\draw[thick] (-2,2)--(2,2);
			\draw (2.5,-2.5) node {$\sigma$};
			\draw (2,5) node {$\tau=\sigma\cap f^{-1}\bigl([\frac12,1]\bigr)$};
			\end{scope}

			\begin{scope}[xshift=18cm]
			\draw[->-=0.99] (0,-2)--node[above=1pt] {$f$}(5,-2);
			\draw (8,-9)--(8,5);
			\fill (8,4) circle (4.5pt) node[right=1pt] {$2$};
			\fill (8,0) circle (4.5pt) node[right=1pt] {$1$};
			\fill (8,-2) circle (4.5pt) node[right=1pt] {$\frac12$};
			\fill (8,-4) circle (4.5pt) node[right=1pt] {$0$};
			\fill (8,-8) circle (4.5pt) node[right=1pt] {$-1$};
			\end{scope}

		\end{tikzpicture}
	\end{center}
	\caption{\label{fig:sigma}Two possibilities of subdivision of a $2$-cell of $X_c$.}
	\end{figure}

	\begin{lemma}\label{lem:annulus} 
		Let  $\gamma \subset Z$ be an embedded combinatorial loop of length $n$. There exists a combinatorial  loop $\alpha\colon S^1 \to  X_c^{(1)}$ and a combinatorial map of an  annular diagram $\pi_{\gamma}\colon A_{\gamma} \to X$ realizing a free homotopy between $\gamma$ and the barycentric subdivision $\alpha'$  of $\alpha$ such that:
		\begin{enumerate}
			\item $|\alpha| \leq |\gamma|$; 
			\item The $2$-cells of $A_{\gamma}$ are triangles which map homeomorphically to corners of squares in $X$. In particular, the image of $A_{\gamma}$ in $X$ is contained in the sublevel set  $f^{-1}([0,1])$;  
			\item  If $v \in \pi_{\gamma}^{-1}(X_c^{(0)})$, then   $v$ is contained in the boundary circle of $A_{\gamma}$ which maps to the image of $\alpha'$  in $X$ and 
			$|\Lk(v, A_{\gamma})| \leq n$.  
		\end{enumerate}
	\end{lemma}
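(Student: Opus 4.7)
For each edge $e_i$ of $\gamma$ ($i = 1, \ldots, n$, indices mod $n$), there is a unique square $\sigma_i$ of $X_c$ such that $e_i = \sigma_i \cap Z$; since $1/2$ lies in the interior of $f(\sigma_i)$, the level set $e_i$ cuts $\sigma_i$ into a pentagon and a triangular corner $\tau_i$ contained in $f^{-1}([0,1])$, with hypotenuse $e_i$ and apex vertex $p_i \in X_c^{(0)}$. The two remaining sides of $\tau_i$ are halves of edges of $X_c^{(1)}$ joining $p_i$ to the two endpoints $m_{i-1}, m_i$ of $e_i$ (which are midpoints of edges of $X_c^{(1)}$, hence vertices of $Z$).

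I will build $A_\gamma$ abstractly as a ring of these corners. Take $n$ triangles $T_1, \ldots, T_n$ with vertices labelled $M_{i-1}, M_i, P_i$. Identify the sides $[M_{i-1}, M_i]$ cyclically to form the inner boundary, declared to be $\gamma$, and glue consecutive triangles $T_i, T_{i+1}$ at the shared inner vertex $M_i$ only. Define $\pi_\gamma \colon A_\gamma \to X$ by sending each $T_i$ homeomorphically onto $\tau_i$, with $M_i \mapsto m_i$ and $P_i \mapsto p_i$. Since $m_i$ is the midpoint of a unique edge of $X_c^{(1)}$, the two outer half-edges $[M_i, P_i]$ of $T_i$ and $[M_i, P_{i+1}]$ of $T_{i+1}$ both map into that edge. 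Together they constitute the barycentric subdivision of a single edge of $X_c^{(1)}$ between $p_i$ and $p_{i+1}$ (or a degenerate backtrack if the two halves coincide). Letting $\alpha$ be the resulting cyclic word of such edges in $X_c^{(1)}$ (with backtracks cancelled if necessary), the image $\pi_\gamma(\partial^{\mathrm{out}} A_\gamma)$ is exactly the barycentric subdivision $\alpha'$.

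The three properties now follow. For (1), $\alpha$ carries one edge per triangle, yielding $|\alpha| \leq n = |\gamma|$. For (2), each $\tau_i$ lies in $f^{-1}([0,1])$ by definition of the corner at level $1/2$, so $\pi_\gamma(A_\gamma) \subset f^{-1}([0,1])$. For (3), the inner vertices $M_i$ map to vertices of $Z$, i.e., midpoints of $X_c^{(1)}$-edges, so they are \emph{not} in $\pi_\gamma^{-1}(X_c^{(0)})$. Hence only the apex vertices contribute, and every such apex lies on the outer boundary (which maps to $\alpha'$). Each triangle incident to a given apex contributes a single segment to its link in $A_\gamma$, and there are at most $n$ triangles in total, so $|\mathrm{Lk}(v, A_\gamma)| \leq n$.

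\textbf{Main obstacle.} The delicate step is verifying that $A_\gamma$ is genuinely a topological annulus. At each inner vertex $m_i$ the corners $\tau_i$ and $\tau_{i+1}$ may lie on opposite sides of $Z$ (one in $f^{-1}([0, 1/2])$, the other in $f^{-1}([1/2, 1])$), in which case the naive gluing at $M_i$ produces a disconnected link and hence a non-manifold point. The plan to handle this is to insert, whenever necessary, additional corner triangles at $m_i$ coming from other squares of $X_c$ incident to $m_i$, chosen so that their apex directions trace a path in $\mathrm{Lk}(m_i, X)$ between the outgoing direction of $\tau_i$ and that of $\tau_{i+1}$. All inserted 2-cells remain corners of squares (preserving property (2)), and one shows that the extra triangles contribute to links at apex vertices rather than increasing $|\alpha|$, so that (1) and (3) still hold with the stated bounds.
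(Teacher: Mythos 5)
Your core construction (a ring of corner triangles $\tau_i$, one per edge of $\gamma$, glued along $\gamma$ to form an annulus and mapped into $X$) is essentially the paper's proof. However, the paragraph labelled ``Main obstacle'' misdiagnoses the issue, and the proposed patch would not be harmless. In your construction consecutive triangles $T_i, T_{i+1}$ are glued at the vertex $M_i$ \emph{only}, so the link of $M_i$ in $A_\gamma$ is two disjoint arcs no matter which side of $Z$ the corners $\tau_i, \tau_{i+1}$ lie on; the ``opposite sides'' case is not special. More importantly, this is simply not a problem: an annular diagram in the paper's sense is $S^2 \smallsetminus (e_\infty \cup e_0)$ for \emph{some} combinatorial cell structure on $S^2$, and a cyclic chain of $2$-cells meeting at cut vertices is readily realised this way (embed the ring in the plane; $e_0$ is the bounded complementary region, $e_\infty$ the unbounded one). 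Indeed, the paper's own proof produces precisely such a necklace --- the cones $C(\gamma_j)$ are attached to one another ``along the unique vertex'' mapping to the barycenter of $[w_j,w_{j+1}]$. Inserting additional corner triangles at the $m_i$ to make $A_\gamma$ a manifold is therefore unnecessary, and moreover dangerous: the number of squares of $X_c$ containing the $X_c$-edge through $m_i$ has no bound in terms of $n$, so the links at the apices $p_i, p_{i+1}$ of your enlarged diagram could exceed $n$, and property~(3) could fail.

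There is a genuine, if small, gap in the handling of backtracks. You write ``Letting $\alpha$ be the resulting cyclic word of such edges $\dots$ (with backtracks cancelled if necessary), the image $\pi_\gamma(\partial^{\mathrm{out}} A_\gamma)$ is exactly the barycentric subdivision $\alpha'$.'' But the lemma requires $\alpha' = \pi_\gamma \circ \phi_{e_\infty}$ as combinatorial loops, not merely an equality of images. If backtracks are cancelled in the \emph{word} $\alpha$ but the diagram is left as a ring of $n$ triangles with a $2n$-edge outer boundary, the factorisation fails. Two fixes work: either do not cancel backtracks at all (then $\alpha$ is a combinatorial loop with $|\alpha| = n = |\gamma|$, which still satisfies~(1)), or fold the diagram whenever $p_i = p_{i+1}$ by identifying $[M_i,P_i]$ with $[M_i,P_{i+1}]$. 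The latter is what the paper does implicitly by grouping consecutive corners with a common apex $w_j$ into the cone $C(\gamma_j)$; after this folding, (3) is no longer trivial (an apex can now be shared by up to $|\gamma_j| \leq n$ triangles) but the stated bound $\leq n$ is exactly what that grouping gives.
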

	
	\begin{proof}
		Let $\gamma$ be a loop in $Z$ of length $n$ with consecutive vertices labeled by $t_0, t_1, \ldots, t_n = t_0$.  Corresponding to each edge $[t_i, t_{i+1}]$ of $\gamma$  there is a unique square $\sigma^2_i$ in $X_c^{(2)}$ so that $\sigma^2_i \cap Z = [t_i, t_{i+1}]$. The edge $[t_i, t_{i+1}]$ is an edge of the corner $\tau_i$ of the square $\sigma^2_i$. Let $v_i$ denote the unique vertex of $\sigma^2_i$ which is in $\tau_i$. 
		The corner $\tau_i$ provides a homotopy between the edge $[t_i,t_{i+1}]$ of $\alpha$ and the directed path $[t_i,v_i] \cup [v_i, t_{i+1}]$ composed of edges in the barycentric subdivision of  $X_c^{(1)}$. The loop $\gamma$ gives rise to a homotopic loop  $\bigcup_i ([t_i,v_i] \cup [v_i, t_{i+1}])$ of length $2n$. Note that the $f$-image of this loop is contained in $[0,1]$ since $f(\tau_i)$ is either $[0,1/2]$ or $[1/2,1]$ for each index $i$. 
		
		Now we tighten this loop to obtain the desired edge path loop $\alpha$. 
		The segment $[t_{i-1}, t_i] \cup [t_i,t_{i+1}]$ of $\alpha$ gives rise to the directed path 	$$[t_{i-1},v_{i-1}] \cup [v_{i-1}, t_i] \cup [t_i,v_i] \cup [v_i, t_{i+1}].$$ 
		Note that $t_i$ is the midpoint of a unique edge of $X_c$ and that $v_{i-1}$ and $v_i$ are vertices of this edge. Either $f(v_{i-1}) =f(v_i)$ in which case $v_{i-1} =v_i$ and the segment 	$[v_{i-1}, t_i] \cup [t_i,v_i]$ is a backtrack, or $f(v_{i-1}) \not=f(v_i)$ in which case $|f(v_{i-1}) - f(v_i)| =1$ and the segment $[v_{i-1}, t_i] \cup [t_i,v_i]$ is the barycentric subdivision of the edge $[v_{i-1}, v_i]$ in $X_c$. We tighten by eliminating backtracks (see~\Cref{fig:backtracks}). This yields an edge path loop (of length at most $2n$) which is the barycentric subdivision of an edge path loop $\alpha$ in $X_c$. Therefore, $2|\alpha| \leq 2n$ and so $|\alpha| \leq n$, establishing property (1). 
	    \begin{figure}	
		\begin{center}
		\def\svgwidth{3.5in}
		%% Creator: Inkscape 1.0 (4035a4f, 2020-05-01), www.inkscape.org
%% PDF/EPS/PS + LaTeX output extension by Johan Engelen, 2010
%% Accompanies image file '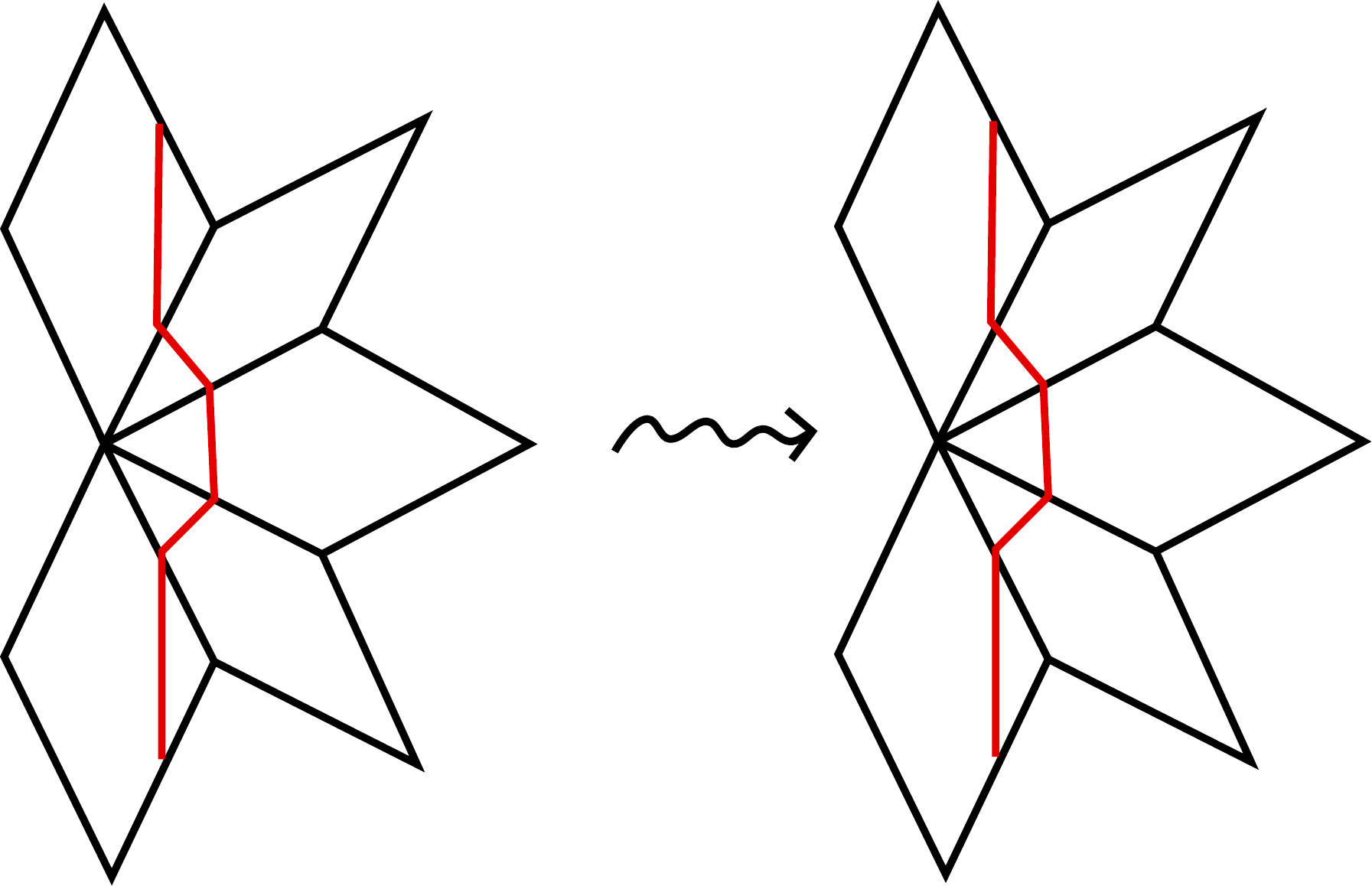' (pdf, eps, ps)
%%
%% To include the image in your LaTeX document, write
%%   \input{<filename>.pdf_tex}
%%  instead of
%%   \includegraphics{<filename>.pdf}
%% To scale the image, write
%%   \def\svgwidth{<desired width>}
%%   \input{<filename>.pdf_tex}
%%  instead of
%%   \includegraphics[width=<desired width>]{<filename>.pdf}
%%
%% Images with a different path to the parent latex file can
%% be accessed with the `import' package (which may need to be
%% installed) using
%%   \usepackage{import}
%% in the preamble, and then including the image with
%%   \import{<path to file>}{<filename>.pdf_tex}
%% Alternatively, one can specify
%%   \graphicspath{{<path to file>/}}
%% 
%% For more information, please see info/svg-inkscape on CTAN:
%%   http://tug.ctan.org/tex-archive/info/svg-inkscape
%%
\begingroup%
  \makeatletter%
  \providecommand\color[2][]{%
    \errmessage{(Inkscape) Color is used for the text in Inkscape, but the package 'color.sty' is not loaded}%
    \renewcommand\color[2][]{}%
  }%
  \providecommand\transparent[1]{%
    \errmessage{(Inkscape) Transparency is used (non-zero) for the text in Inkscape, but the package 'transparent.sty' is not loaded}%
    \renewcommand\transparent[1]{}%
  }%
  \providecommand\rotatebox[2]{#2}%
  \newcommand*\fsize{\dimexpr\f@size pt\relax}%
  \newcommand*\lineheight[1]{\fontsize{\fsize}{#1\fsize}\selectfont}%
  \ifx\svgwidth\undefined%
    \setlength{\unitlength}{516.20656149bp}%
    \ifx\svgscale\undefined%
      \relax%
    \else%
      \setlength{\unitlength}{\unitlength * \real{\svgscale}}%
    \fi%
  \else%
    \setlength{\unitlength}{\svgwidth}%
  \fi%
  \global\let\svgwidth\undefined%
  \global\let\svgscale\undefined%
  \makeatother%
  \begin{picture}(1,0.64573425)%
    \lineheight{1}%
    \setlength\tabcolsep{0pt}%
    \put(0,0){\includegraphics[width=\unitlength,page=1]{homotopy.pdf}}%
    \put(0.44984184,0.38299022){\color[rgb]{0,0,0}\makebox(0,0)[lt]{\lineheight{1.25}\smash{\begin{tabular}[t]{l}tighten\end{tabular}}}}%
    \put(0,0){\includegraphics[width=\unitlength,page=2]{homotopy.pdf}}%
  \end{picture}%
\endgroup%

			\caption{\label{fig:backtracks}Tightening the edge path.}
		\end{center}
		\end{figure}
		
		Now we build the annular diagram. Suppose $w_{j-1}$, $w_j$, and $w_{j+1}$ are three consecutive vertices along the path $\alpha$. By construction there is a subsegment $\gamma_j$ of the path $\gamma$ whose endpoints  meet $\alpha$ in the barycenters of $[w_{j-1}, w_j]$ and $[w_j, w_{j+1}]$ and so that the union of corners of squares in $X$ determined by $\gamma_j$ is a disk which is topologically a cone on $\gamma_j$ with cone point $w_j$.  We can build copies $C(\gamma_j)$ of each of these cones in the plane and map them via combinatorial homeomorphisms onto the union of corresponding corners of squares in $X$.   Next attach the $C(\gamma_i)$ together in a necklace fashion in the plane by identifying $C(\gamma_j)$ with $C(\gamma_{j+1})$ along the unique vertex in each which maps to the barycenter of $[w_j, w_{j+1}]$. In this way we construct an annular diagram $A_{\gamma}$ and a combinatorial map $\pi_{\gamma}\colon A_{\gamma} \to X$ realizing a free homotopy between $\gamma$ and the barycentric subdivision $\alpha'$ of $\alpha$. The complement of this $A_{\gamma}$ in the plane consists of two connected components; a bounded and an unbounded component. The diagram $A_{\gamma}$  can be embedded in the plane in such a way that $\pi_{\gamma}$ restricted to the boundary of the bounded region (resp.\ unbounded region)  gives the combinatorial map $\alpha'$ (resp.\ $\gamma$).  Property (2) follows immediately from this construction. 
		
		By construction the only vertices of $A_{\gamma}$ which map to $X_c^{(0)}$ are the cone points of $C(\gamma_j)$ and the link of such a cone point in $A_{\gamma}$ has length equal to $|\gamma_j| \leq |\gamma| =n$.   This establishes property~(3). 
	\end{proof}

	Next we show that it is possible to  fill a combinatorial loop $\alpha$ in $X_c$ with a  disk $\pi_\alpha\colon \Delta_\alpha \to X_c$  whose geometry and link combinatorics are bounded in terms of the length of $\alpha$.

	\begin{lemma}\label{lem:fillalpha} 
		Let $\alpha$ be a combinatorial loop in $X_c$ of length $n$. There exists a   disk filling $\pi_\alpha: \Delta_\alpha \to X_c$ of $\alpha$ with the following properties: 
		\begin{enumerate}
			\item The disk $\Delta_\alpha$ has at most $n^2/2$ vertices and at most $n^2/2$ $2$-cells; 
			\item If $v \in \Delta_\alpha$ is a vertex,  then $|\Lk(v, \Delta_\alpha)| \leq 2n$; 
			\item The image  $\pi_\alpha(\Delta_\alpha)$ is contained in $f^{-1}([a-n/4, b+n/4])$ where  the image of $\alpha$ is contained in  $f^{-1}([a,b])$.
		\end{enumerate}
	\end{lemma}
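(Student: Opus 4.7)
My plan is to construct $\Delta_\alpha$ by combing the loop $\alpha$ to a basepoint via geodesics carried by normal cube-paths, using~\Cref{lem:NibloReeves} to control the local structure of the resulting disk.

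Choose a basepoint vertex $v_0$ on $\alpha$ and label the consecutive vertices as $v_0, v_1, \ldots, v_n = v_0$. I will construct inductively, for each $0 \le i \le n$, a geodesic edge path $\rho_i$ in $X^{(1)}$ from $v_0$ to $v_i$ carried by the unique normal cube-path from $v_0$ to $v_i$. Start with $\rho_0$ equal to the constant path at $v_0$. Given $\rho_i$, apply~\Cref{lem:NibloReeves} with $v = v_0$, $w' = v_i$, $w = v_{i+1}$ to obtain $\rho_{i+1}$ together with an embedded fellow-travel disk $D_i \subset X^{(2)}_c$ whose boundary is the concatenation of $\rho_i$, the edge $[v_i, v_{i+1}]$, and $\rho_{i+1}$ reversed. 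Gluing the disks $D_0, \ldots, D_{n-1}$ along the shared geodesics $\rho_i$ yields a disk diagram $\Delta_\alpha$ together with a combinatorial map $\pi_\alpha \colon \Delta_\alpha \to X^{(2)}_c$ whose restriction to $\partial \Delta_\alpha$ is $\alpha$.

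For the size bound~(1), travelling along $\alpha$ the short way gives $d_{X^{(1)}}(v_0, v_i) \le \min(i, n-i) \le n/2$, hence $|\rho_i| \le n/2$. Inspection of the four patterns in~\Cref{fig:fellow} shows that each $D_i$ has at most $\min(|\rho_i|, |\rho_{i+1}|) \le n/2$ two-cells (attained in the ladder pattern) and a comparable number of interior vertices, so $\Delta_\alpha$ contains at most $n \cdot n/2 = n^2/2$ two-cells, and a similar count gives at most $n^2/2$ vertices. For the image bound~(3), along any geodesic $\rho_i$ of length $\ell \le n/2$, the function $f$ executes a $\pm 1$ walk of length $\ell$ from $f(v_0) \in [a,b]$ to $f(v_i) \in [a,b]$; such a walk attains maximum at most $\tfrac{1}{2}(f(v_0) + f(v_i) + \ell) \le b + n/4$ and minimum at least $a - n/4$. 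Since $f$ is affine on every cell of $X_c$, the disks $D_i$ themselves lie in the slab $f^{-1}\bigl([a-n/4,\, b+n/4]\bigr)$.

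For the link bound~(2), a vertex $u \in \Delta_\alpha$ lying in the interior of a single geodesic $\rho_i$ is contained only in the two adjacent strips $D_{i-1}$ and $D_i$; each of the four patterns of~\Cref{fig:fellow} contributes at most a small constant number of two-cells to $\Lk(u,\Delta_\alpha)$, so such vertices have link size $O(1)$. The basepoint $v_0$ lies on every $\rho_i$, but each of the $n$ strips contributes at most one two-cell incident to $v_0$, giving link size at most $n$. A similar telescoping argument handles vertices shared by several consecutive $\rho_i$'s arising when those geodesics share initial segments, yielding the uniform bound $2n$.

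The main obstacle I anticipate lies in the link-size bound~(2): since consecutive geodesics $\rho_i, \rho_{i+1}, \rho_{i+2}, \ldots$ may share long initial segments, a single vertex $u$ can appear on many of the $\rho_i$'s, and one must verify, by tracking divergence points and performing a case analysis over the four patterns in~\Cref{fig:fellow} for each pair of adjacent strips, that the local contributions to $\Lk(u,\Delta_\alpha)$ telescope to a bound linear in $n$. The boundary cases at $i=0$ and $i=n-1$ (where $\rho_0$ and $\rho_n$ are trivial, so $D_0$ and $D_{n-1}$ contain no two-cells) require minor care in the gluing, but cause no additional difficulty.
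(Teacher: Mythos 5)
Your construction is essentially identical to the paper's: you comb $\alpha$ to a basepoint $\alpha(0)$ via geodesics carried by normal cube-paths, apply Lemma~\ref{lem:NibloReeves} to build the fellow-travel strip $\Delta_i$ between consecutive geodesics, and glue. Your counts for~(1) and~(3) are correct, and your $\pm 1$-walk argument for~(3) is a cleaner phrasing of the paper's ``geodesics of length $\le n/2$ stay in an $n/4$-neighborhood of $\alpha$.''

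The obstacle you flag for~(2) is not actually an obstacle, and your worry about vertices lying on many consecutive $\rho_i$'s leads you toward an unnecessary telescoping analysis. The paper's resolution is to sum contributions \emph{by strip}, not by appearance along geodesics: for any vertex $v$ of $\Delta_\alpha$ and any strip $\Delta_i$, the contribution $|\Lk(v,\Delta_i)|$ is at most $2$ (this is immediate from the ladder shapes in Figure~\ref{fig:fellow}, regardless of where $v$ sits in the strip, including the basepoint and the corners where $\rho_i$ and $\rho_{i+1}$ diverge). Since $\Delta_\alpha$ is the union of $n$ such strips and the link of $v$ in $\Delta_\alpha$ is the union of its links in the strips containing $v$, the bound $|\Lk(v,\Delta_\alpha)| \le 2n$ follows directly, with no need to track divergence points or distinguish the basepoint from interior vertices.
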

	
	\begin{proof}
		Suppose that the combinatorial loop is given as   a combinatorial path $\alpha: [0,n] \to X_c^{(1)}$ with $\alpha(0) = \alpha(n)$. We build a  disk filling of $\alpha$ inductively as follows. 
		
		We will make use of combinatorial geodesics in $X_c^{(1)}$ between $\alpha(0)$ and an arbitrary vertex $\alpha(i)$. Denote the unique geodesic from $\alpha(0)$ to $\alpha(1)$ by $\rho_{\alpha(1)}$. A single $1$-cell in the plane which maps to the edge $[\alpha(0), \alpha(1)]$ is a  disk filling (with no $2$-cells) for the combinatorial loop $\alpha|_{[0,1]}\cdot \overline{\rho_{\alpha(1)}}$ (we denote by the overline the path traversed in the opposite direction). By the inductive hypothesis,   there is a geodesic path $\rho_{\alpha(i)}$ from $\alpha(0)$ to $\alpha(i)$ and a  disk filling of $\alpha_{[0,i]}\cdot \overline{\rho_{\alpha(i)}}$ whose  disk diagram   contains an embedded path $p_i$ in its boundary circle through which the geodesic $\rho_{\alpha(i)}$ factors. Lemma~\ref{lem:NibloReeves}   guarantees the existence of a geodesic path $\rho_{\alpha(i+1)}$ from $\alpha(0)$ to $\alpha(i+1)$ which fellow travels the geodesic $\rho_{\alpha(i)}$ and a  disk filling $\pi_i: \Delta_i \to X$ of the loop 
		$\rho_{\alpha(i)}\cdot \alpha_{[i, i+1]} \cdot \overline{\rho_{\alpha(i+1)}}$ with no interior vertices and containing embedded edge paths $q_i$ and $p_{i+1}$ in its boundary, with $\rho_{\alpha(i)}$ factoring  through $q_i$ and $\rho_{\alpha(i+1)}$ factoring  through $p_{i+1}$.  Take $\Delta_i$ to be a copy of the appropriate diagram from Figure~\ref{fig:fellow} drawn in the plane, with $\pi_i$ mapping $v$ to $\alpha(0)$, $w'$ to $\alpha(i)$, and $w$  to $\alpha(i+1)$.  Relabel the path $\rho_{w'}$ as $q_i$ and the path $\rho_w$ as $p_{i+1}$.  The inductive step is completed by gluing these two disk diagrams together by identifying the embedded paths $p_i$ and $q_i$ respecting the $\rho_{\alpha(i)}$ factorization. The result is a  disk filling of 
		$\alpha_{[0,i+1]}\cdot \overline{\rho_{\alpha(i+1)}}$  with an embedded path $p_{i+1}$ in its boundary through which $\rho_{\alpha(i+1)}$ factors. Note that $\Delta_i$ is a subdiagram of this  disk diagram. The induction ends with a  disk filling $\Delta_\alpha$ of the combinatorial  loop $\alpha$.
		
		By construction, all vertices of $\Delta_\alpha$ are contained in the union of the paths $p_i$. There are $n-1$ such paths and $|p_i| = |\rho_{\alpha(i)}| \leq n/2$ because the $\rho_{\alpha(i)}$ are geodesics connecting $\alpha(0)$ and $\alpha(i)$ which are at most $\min\{i, n-i\}$ apart. Therefore $\Delta_\alpha$ has at most $(n-1)n/2 \leq n^2/2$ vertices.
		
		Also every $2$-cell comes from subcomplex described in \Cref{lem:NibloReeves}. 
		Each such complex has at most $n/2$ $2$-cells and hence we see that $\Delta_\alpha$ has at most $(n-1)n/2 \leq n^2/2$ $2$-cells and property~(1) holds. 
		
		If $v$ is a vertex of $\Delta_\alpha$,  note that the contribution to $|\Lk(v, \Delta_\alpha)|$ coming from the  disk subdiagram $\Delta_i$ is $|\Lk(v,\Delta_i)| \leq 2$. Since there are  $n$ such subdiagrams in $\Delta_\alpha$, the total contribution to $|\Lk(v,\Delta_\alpha)|$ is at most $2n$. This establishes property~(2).  
		
		Property~(3) follows from the fact that the vertices of $\Delta_\alpha$ map to geodesic paths between $\alpha(0)$ and $\alpha(i)$. These paths have length at most $n/2$ and so  remain in an $n/4$-neighborhood of the image of $\alpha$. The rest of $\Delta_\alpha$ maps into the same neighborhood, since $\pi_\alpha\colon \Delta_\alpha \to X_c$ is combinatorial and $f$ is Morse and attains its max/min at vertices of cells. 
	\end{proof}

	\section{Pushing maps and bounds on homological Dehn functions}\label{sec:pushbounds}
	
	In \cite{ABDDY}, a construction of a map from $X_L$ to the level set of the Morse function $f$ was given. 
	This map was then used to study the Dehn functions of Bestvina--Brady groups. 
	The map had the property that it expanded distance linearly with height from the level set. 
	Here we will adapt these maps to $X_L^{(S)}$ and use them to study the homological filling functions of $G_L(S)$. 

    For any non-integer $a,b\in\R$ let $I = [a, b]$ and denote $X_I = \big(f^{(S)}\big)^{-1}\big(I\big)$. 
	Recall that $Z_L^{(S)}$ is exactly $X_I$ for $I = \{\frac{1}{2}\}$. 
	
	Let $V$ be the set of vertices in $X_L^{(S)}\smallsetminus X_I$. 
	Denote by $B$ the open radius $\frac{1}{4}$ neighbourhood of $V$ in the induced $\ell^1$ metric. 
	We are going to define a retraction $\mathcal{Q}\colon X_L^{(S)}\smallsetminus B\to X_I$ as a slight modification of the construction from Theorem\,4.2 of~\cite{ABDDY}. We proceed as follows.
	
	For all $x\in X_I$ we define $\mathcal{Q}(x) = x$.
	
	Throughout we will identify $\Lk(v, X_L^{(S)})$ with its image in $X_L^{(S)}$, via the map $\chi$ from \Cref{def:links}. 
	We start with the case of $\Lk(v, X_L^{(S)})$ where $f^{(S)}(v)>b$. 
	The case of $f^{(S)}(v)<a$ is analogous with descending links replaced by ascending links. Recall that the link of $v$ is either $\S(L)$ or $\S(\widetilde{L})$, and the descending link of $v$ is either $L$ or $\widetilde{L}$, respectively~\cite[Th.\,9.1]{Lea}. There is a canonical retraction $s\colon \Lk(v, X_L^{(S)})\to \Lk_{\downarrow}(v, X_L^{(S)})$, which can be described as follows. Recall that for any simplicial complex $K$, the $0$-skeleton  $\S(K)^{(0)}=\{v_i^+,v_i^-\}$ is the disjoint union of two copies of the $0$-skeleton $K^{(0)}=\{v_i\}$. Thus we can identify $L$ (resp. $\widetilde L$) with a subcomplex of $\S(L)$ (resp. $\S(\widetilde L)$) spanned by all vertices $v_i^+$. Then the retraction $s$ is defined by sending any pair of vertices $\{v_i^-,v_i^+\}$ to $v_i^+$ and extending simplicially.
	
	Now each point $x$ in the link $\Lk(v, X_L^{(S)})$ belongs to some simplex $\tau$ of the link. 
	There is a natural inclusion $\S(\tau)\to \S(L)$ (resp. $\S(\tau)\to \S(\widetilde L)$) which extends to an  inclusion of a flat subspace $F\to X_L^{(S)}$ (called a \emph{sheet} in~\cite[Sect.\,9]{Lea}).

	 The retraction $s\colon \Lk(v, X_L^{(S)})\to \Lk_{\downarrow}(v, X_L^{(S)})$ induces a retraction $s\colon F\to O$ where $O$ is an orthant of $F$ (called \emph{the downward part of the sheet} in \cite[Sect.\,12]{Lea}). 
	We now define $\mathcal{Q}(x)$ to be the point in $X_b$ which intersects the ray from $v$ through $s(x)$. 
	Using a similar procedure for vertices with $f^{(S)}(v) <a$ we have defined $\mathcal{Q}$ on $\partial B$. 
	
	Let $C$ be a cube of $X_L^{(S)}$. 
	There are three cases to consider; if $C\subset X_I$, we are done. 
	
	Secondly, suppose that $f^{(S)}(C)\subset (b, \infty)$. Having defined $\mathcal Q$ on $C\cap \partial B$, we define $\mathcal Q$ on other faces of  $C\smallsetminus B$ inductively as follows (see~\Cref{fig:pushing}). Let  $e$ be an edge of $C$ that connects two vertices $u,v$ of $C$, such that $f^{(S)}(u)>f^{(S)}(v)$. Then $e\smallsetminus B$ connects  some vertex $u_i^+$ of the descending link of $u$ with some vertex $v_j^-$ of the ascending link of $v$ (which we view as embedded subspaces in $X_L^{(S)}$). Consider the flat $F_C$ containing $C$ which is defined by a simplex at the unique highest vertex of $C$. In $F_C$ the ray from $v$ through $v_j^+$ lies (as a subset) entirely in the ray from $u$ through $u_i^+$. Since the image of $v_j^-$ under the retraction $s$ (computed at the link of $v$) is $v_j^+$, this proves that both $u_i^+$ and $v_j^-$ are mapped under $\mathcal Q$ to the same point  in $X_b$. This allows us to define $\mathcal Q(e\smallsetminus B)$ to be this common image point $\mathcal Q(u_i^+)=\mathcal Q(v_j^-)$.
	Now having defined  $\mathcal{Q}$ on all the faces of $C\smallsetminus B$ of dimension less than $i$, we define it on the $i$-dimensional faces by the following way. Notice that each $i$-dimensional face of $C\smallsetminus B$ is a convex polyhedron in the flat $F_C$ homeomorphic to a disk $D^{i}$, and we have already defined $\mathcal Q$ on its boundary, which is homeomorphic to $S^{i-1}$. We choose a basepoint $x_0\in S^{i-1}$ and notice that any point $x\in D^i$ can be uniquely presented as $x=(1-t)x_0+t\theta$ with $\theta\in S^{i-1}$ and $t\in[0,1]$. We define $\mathcal Q(x)=(1-t)\mathcal Q(x_0)+t\mathcal Q(\theta)$.
	
	A similar construction applies if $f^{(S)}(C)\subset (-\infty, a)$. 
	
	Finally, if $C$ intersects both $X_I$ and its complement non-trivially, then we can divide $C$ into three subsets $C\cap \big(f^{(S)}\big)^{-1}\big((-\infty, a)\big)$, $C\cap \big(f^{(S)}\big)^{-1}\big((b, \infty)\big)$ and $C\cap X_I$. 
	By the above, we can define $\mathcal{Q}$ on each of these subsets and piecing these together we have defined $\mathcal{Q}$ on~$C$. 
	
	Arguing as in~\cite[Lem.\,4.1]{ABDDY}, we conclude that there exists $k\ge1$ such that the map $\mathcal{Q}$ is $kt+k$ Lipschitz on $\big(f^{(S)}\big)^{-1}\big((b,b+t)\big)$ and $\big(f^{(S)}\big)^{-1}\big((a-t,a)\big)$, for $t\in (0, \infty)$. 

\medskip
	With this in mind, we are ready to prove our main results concerning various Dehn functions for $G_L(S)$. Recall that the key properties of  the space  $Z_L^{(S)}=\big(f^{(S)}\big)^{-1}(\frac12)$  were given at the end of subsection~\ref{subsec:morse}.
	
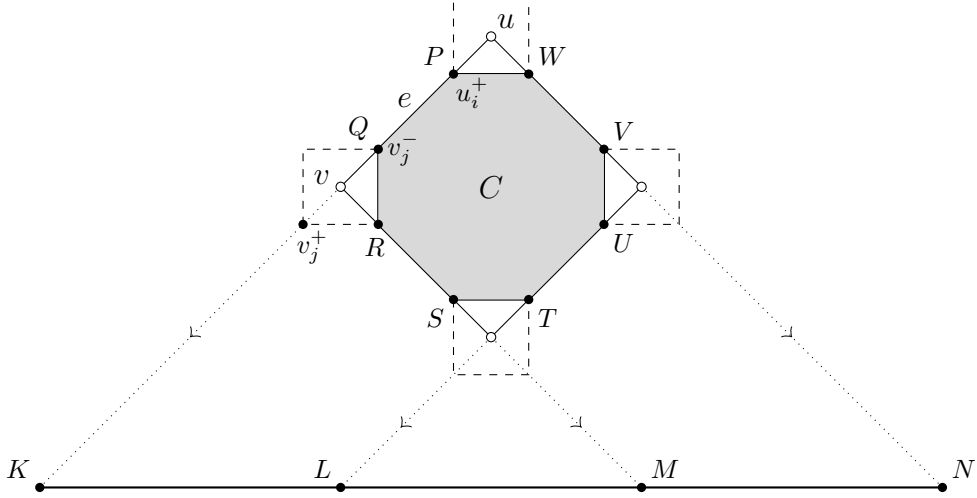
\begin{figure}
\begin{center}
\begin{tikzpicture}[scale=0.5]
\begin{scope}
\draw (0,4)--(-4,0)--(0,-4)--(4,0)--(0,4);

\begin{scope}[dotted]
\draw[->-=0.5] (-4,0)--(-12,-8); \draw[->-=0.5] (4,0)--(12,-8);
\draw[->-=0.6] (0,-4)--(-4,-8);  \draw[->-=0.6] (0,-4)--(4,-8);
\end{scope}
\draw[thick] (-12,-8)--(-4,-8)--(4,-8)--(12,-8);

\draw[thick] (-1,3)--(-3,1)--(-3,-1)--(-1,-3)--(1,-3)--(3,-1)--(3,1)--(1,3)--(-1,3);
\fill[black!15] (-1,3)--(-3,1)--(-3,-1)--(-1,-3)--(1,-3)--(3,-1)--(3,1)--(1,3)--(-1,3);
\draw (0,0) node {$C$};

\fill[white] (0,4) circle (3.5pt); \fill[white] (0,-4) circle (3.5pt); 
\draw (0,4) circle (3.5pt); \draw (0,-4) circle (3.5pt); 
\fill[white] (4,0) circle (3.5pt); \fill[white] (-4,0) circle (3.5pt);
\draw (4,0) circle (3.5pt); \draw (-4,0) circle (3.5pt);

{\footnotesize
\fill (-1,3) circle (3.5pt) [above left=2pt] node {$P$}; \draw (-0.5,2.5) node {$u_i^+$}; 
\fill (-1,-3) circle (3.5pt) [below left=2pt] node {$S$}; 
\fill (1,-3) circle (3.5pt) [below right=2pt] node {$T$}; \fill (1,3) circle (3.5pt) [above right=2pt] node {$W$}; 
\fill (-3,1) circle (3.5pt) [above left=2pt] node {$Q$} [right=2pt] node {$v_j^-$}; 
\fill (-3,-1) circle (3.5pt); \draw (-3.1,-1.6) node {$R$}; 
\fill (3,-1) circle (3.5pt) [below right=2pt] node {$U$}; \fill (3,1) circle (3.5pt) [above right=2pt] node {$V$}; 
\fill (-12,-8) circle (3.5pt) [above left=2pt] node {$K$}; \fill (-4,-8) circle (3.5pt) [above left=2pt] node {$L$}; 
\fill (4,-8) circle (3.5pt) [above right=2pt] node {$M$}; \fill (12,-8) circle (3.5pt) [above right=2pt] node {$N$};
\fill (-5,-1) circle(3.5pt); \draw (-4.75,-1.6) node {$v_j^+$};
}
\draw[dashed] (-1,3)--(-1,5)--(1,5)--(1,3); \draw[dashed] (-1,-3)--(-1,-5)--(1,-5)--(1,-3);
\draw[dashed] (-3,1)--(-5,1)--(-5,-1)--(-3,-1); \draw[dashed] (3,1)--(5,1)--(5,-1)--(3,-1);

\draw (-2.3,2.3) node {$e$};
\draw (0.4,4.4) node {$u$}; \draw (-4.5,0.25) node {$v$}; 
\end{scope}
\end{tikzpicture}
\end{center}
\caption{\label{fig:pushing}The retraction $\mathcal Q$ applied to a square $C$ with the neighbourhood $B$ removed. 
It maps $PQ\to K$, $RS\to M$, $UT\to L$, $WV\to N$, $QR$ onto $KM$, $ST$ onto $ML$, $UV$ onto $LN$, $PW$ and the whole gray octagon onto the segment $KN\subset X_I$.}
\end{figure}

	\begin{theorem}\label{thm:boundingfill}
		Let $L$ be a finite connected flag complex with no local cut points and $S\subset \Z$. 
		Suppose that $H_1(L) = 0$. 
		Let $\FA_H$ be the homological Dehn function for $H = \pi_1(L)$. 
		Then the homological filling function of $Z_L^{(S)}$ is bounded above by both $n^2\cdot\FA_H(n^2)$ and $n^4\cdot\FA_H(n)$. 
	\end{theorem}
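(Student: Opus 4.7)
The plan is to convert a cellular $1$-cycle $\gamma$ in $Z_L^{(S)}$ of length at most $n$ into a Lipschitz $2$-chain filling in $Z_L^{(S)}$ of controlled mass, then invoke \Cref{thm:FF} to pass to a cellular filling of comparable size. By \Cref{prop:superadd} we may reduce to the case of an embedded combinatorial loop, and by \Cref{prop:controldehn} we obtain a disk filling $\pi\colon\Delta\to X_L^{(S)}$ of $\gamma$ with $|\Delta|=O(n^2)$, image in the slab $f^{-1}([-n/4,\,1+n/4])$, at most $n^2/2$ singular vertices mapping into $X_c^{(0)}$, and link sizes bounded by $3n$ at each such vertex.

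The core step is to push $\Delta$ into $Z_L^{(S)}$ via the retraction $\mathcal{Q}$, excising small neighborhoods of each singular vertex where $\mathcal{Q}$ is undefined. The resulting composition $\mathcal{Q}\circ\pi$ on the disk-with-holes is a Lipschitz $2$-chain in $Z_L^{(S)}$ whose boundary is $\gamma+\sum_v\partial_v$, where $\partial_v$ is the $\mathcal{Q}$-image of the excised boundary at $v$. Since $\mathcal{Q}$ is $O(n)$-Lipschitz on the slab under consideration, \Cref{lem:lipfillingconstant} bounds the mass of this main chain by $O(n^2)\cdot|\Delta|=O(n^4)$, which is absorbed by either claimed upper bound. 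To close the filling, each loop $\partial_v$ must be filled inside $Z_L^{(S)}$. The excised hole boundary is a combinatorial loop of length at most $3n$ in $\Lk(v,X_L^{(S)})\in\{\S(L),\S(\widetilde L)\}$, and the canonical retraction to the descending or ascending link projects it to a loop $\ell_v$ of length at most $3n$ in $L$ or $\widetilde L$; fillings inside $\widetilde L$ are then controlled by $\FA_{\widetilde L}\simeq\FA_H$ via \Cref{prop:spaces}.

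The two bounds correspond to two ways of filling $\partial_v$. For $n^4\FA_H(n)$: fill $\ell_v$ inside $\widetilde L$ with a Lipschitz chain of mass $O(\FA_H(n))$, realise this chain inside the sheet through $v$ carrying $\ell_v$, and push it through $\mathcal{Q}$; \Cref{lem:lipfillingconstant} introduces a factor of $O(n^2)$ in mass, and summing over $O(n^2)$ singular vertices yields $O(n^4\FA_H(n))$. For $n^2\FA_H(n^2)$: first push $\ell_v$ through $\mathcal{Q}$, so that $\partial_v$ lives as a combinatorial loop of length $O(n^2)$ inside $Z_L^{(S)}$, and fill $\partial_v$ directly there at cost $O(\FA_H(n^2))$ by exploiting a copy of $\widetilde L$ naturally embedded inside the pushed sheet in $Z_L^{(S)}$ at a scale for which the $\FA_H$-filling uses $Z_L^{(S)}$-cells; summing over holes gives the second bound.

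The main technical obstacle lies in making the second strategy rigorous: one must identify the $\mathcal{Q}$-image of the descending or ascending link through $v$ with an embedded combinatorial copy of $\widetilde L$ or $L$ inside $Z_L^{(S)}$ in which a loop of length $O(n^2)$ can genuinely be filled by $O(\FA_H(n^2))$ cellular $Z_L^{(S)}$-cells. This requires careful control on how $\mathcal{Q}$ restricts to the various sheets through $v$ and on how these intersect $Z_L^{(S)}$, and is where the assumption that $L$ has no local cut points enters, via the identification $\widetilde{\S(L)}=\S(\widetilde L)$ ensuring that the link retractions behave consistently across sheets.
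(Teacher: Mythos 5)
Your overall structure — a controlled disk diagram in $X_L^{(S)}$ via \Cref{prop:controldehn}, excision near singular vertices, filling the link loops, pushing to $Z_L^{(S)}$ via $\mathcal{Q}$, passing to cellular chains by \Cref{thm:FF} — matches the paper, and your derivation of the $n^4\FA_H(n)$ bound is essentially the paper's argument. However, your route to the $n^2\FA_H(n^2)$ bound has a real gap, and it is a different mechanism from what makes the paper's estimate work.

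You propose to push the link loop $\ell_v$ through $\mathcal{Q}$ first, obtaining a loop $\partial_v$ of length $O(n^2)$ in $Z_L^{(S)}$, and then to fill $\partial_v$ at cost $O(\FA_H(n^2))$ inside a copy of $\widetilde L$ ``at the right scale.'' But the $\mathcal{Q}$-image of the descending link of $v$ is a radial projection, not a combinatorial copy of $\widetilde L$: each simplex of the link gets spread over roughly $h^{\dim}$ cells of $Z_L^{(S)}$, where $h\sim n$ is the height of $v$. A loop of length $O(n^2)$ in this pushed image therefore corresponds to a loop of length only $O(n)$ in $\widetilde L$ itself; pushing its $\widetilde L$-filling of mass $\FA_H(O(n))$ through $\mathcal Q$ yields mass $O(n^2\FA_H(n))$, not $O(\FA_H(n^2))$. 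These differ in general — for $\FA_H(m)\simeq m$ the first is $\simeq n^3$ and the second is $\simeq n^2$ — and summing the correct per-hole estimate over $O(n^2)$ holes gives $O(n^4\FA_H(n))$ again, i.e.\ the first bound, not the second.

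The paper obtains $n^2\FA_H(n^2)$ by a different device: the link loops $\gamma_v$ are filled in $\Lk(\pi(v),X_L^{(S)})$ \emph{before} pushing, at cost $\FA_H(|\gamma_v|)$ each, and then \emph{superadditivity} of $\FA_H$ (\Cref{prop:superadd}) is applied to the total. Since each $2$-cell of $\Delta$ contributes at most four corners to links of interior vertices, $\sum_v|\gamma_v|\le 4|\Delta|\le 2n^2$, hence
\[
\sum_v\FA_H(|\gamma_v|)\;\le\;\FA_H\Bigl(\sum_v|\gamma_v|\Bigr)\;\le\;\FA_H(2n^2).
\]
The single push through $\mathcal{Q}$ then multiplies the whole Lipschitz chain by a factor of $O(n^2)$, giving $O(n^2\FA_H(n^2))$. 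The two bounds of the theorem thus come from two ways of estimating the \emph{same} quantity $\sum_v\FA_H(|\gamma_v|)$ — the uniform per-vertex bound $\FA_H(3n)$ times the vertex count for one, superadditivity for the other — not from two different orderings of pushing and filling. Without superadditivity your argument does not yield the quartic bound of \Cref{cor:hyplinks}.
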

	\begin{proof}
		Let $\gamma$ be a loop in $Z = Z_L^{(S)}$ of length $n$.
		We will construct a filling $c$ of $\gamma$ in $Z$ to show that $\HArea_Z(\gamma)$ is bounded above by both $n^2\cdot\FA_H(n^2)$ and $n^4\cdot\FA_H(n)$. 
		
		We have a disk filling $\pi\colon\Delta\to X_L^{(S)}$ for $\gamma$ in $X_L^{(S)}$ from \Cref{prop:controldehn}. 
		This filling satisfies the following properties:
		\begin{enumerate}
			\item $\pi(\Delta)$ is contained in $X^{(2)}_c  $, 
			\item $\pi^{-1}(X_c^{(0)})$ consists of at most $n^2/2$ vertices of $\Delta$; the other vertices of $\Delta$ are sent to $f^{-1}(1/2)$, 
			\item If $v  \in \pi^{-1}(X_c^{(0)})$, then $|\Lk(v, \Delta)| \leq 3n$, 
			\item $\pi(\Delta)$ is contained in the sublevel set $f^{-1}([-n/4, 1 + n/4])$. 
		\end{enumerate}
		
		We will replace this disk filling with a Lipschitz $2$-chain filling $\beta$ in $X_L^{(S)}\smallsetminus B$ as follows. There will be two types of $2$-cells of $\beta$. The first one comes from the link of each  vertex $v$ in  $\pi^{-1}(X_c^{(0)})$. To define it, let $\gamma_v$ be the $\Lk(v, \Delta)$ itself viewed as a loop.
		The image $\pi(\gamma_v)$ gives us a loop in $\Lk(\pi(v), X_L^{(S)})$.
		We can find a $2$-chain filling $\beta_v$ for this loop in $\Lk(\pi(v), X_L^{(S)})$ of mass bounded by $K\FA_H(|\gamma_v|)$ for some $K$. The sum of $\beta_v$ for all $v\in \pi^{-1}(X_c^{(0)})$ forms the first type of $2$-cells of $\beta$. The second type of $2$-cells of $\beta$ are the truncated $2$-cells of the original filling $\pi(\Delta)$.
		
		Thus we obtained a  Lipschitz filling $\beta$ for $\gamma$ in $X_L^{(S)}\smallsetminus B$. 
		Each $2$-cell of the original filling contributes mass bounded by a constant $K'$. 
		Thus we obtain a Lipschitz filling of mass bounded by $K'n^2 + \sum_v \mass(\beta_v)$, this is in turn bounded by $K'n^2 + \sum_v K\FA_H(|\gamma_v|)$. 
		
		We can push this filling to $Z$. 
		Since the filling $\Delta$ only contains vertices of heights bounded by $n/4<n$ , the pushing map $\mathcal{Q}$ is $kn + k$ Lipschitz. 
		Thus we see by \Cref{lem:lipfillingconstant} that we obtain a Lipschitz filling in $Z$ of mass bounded by $(kn+k)^2(K'n^2 + \sum_v K\FA_H(|\gamma_v|))$. 
		
		Note that since each $2$-cell of the diagram has four interior vertices we see that $\sum_v |\gamma_v|$ is bounded by $4$ times the number of $2$-cells in the diagram. 
		Thus $\sum_v |\gamma_v|\leq 4\cdot n^2/2=2n^2$. 
		Using superadditivity of $\FA_H$ we can obtain an upper bound of the form $(kn+k)^2(K'n^2 + K\FA_H(2n^2))$. 
		Since $\max(f, g)\preceq f+g\preceq 2\max(f, g)$ we obtain $f+g\simeq \max(f, g)$. 
		Since $\FA_H(n^2)\succeq n^2$, we obtain that $\FA_Z\preceq n^2\FA_H(n^2)$. 
		
		On the other hand, note that $\FA_H(|\gamma_v|)\leq \FA_H(3n)$ and there are at most $n^2/2$ vertices so the  sum $\sum_v \FA_H(|\gamma_v|)$ is bounded by $n^2\FA_H(3n)/2$. 
		Thus, $\FA_Z(n)$ is bounded by $(kn+k)^2(K'n^2 + Kn^2\FA_H(3n)/2)$.
		As above, we can now obtain the upper bound $\FA_Z(n)\preceq n^4\FA_H(n)$. 
		
		Since $\FA_Z(n)$ is bounded by both $n^2\FA_H(n^2)$ and $n^4\FA_H(n)$ we obtain the desired result. 
	\end{proof}
	
	\begin{corollary}\label{cor:hyplinks}
		Let $L$ be as in \Cref{thm:boundingfill}.
		If $\pi_1(L)$ is a hyperbolic group, then the homological Dehn function for $G_L(S)$ is bounded above by $n^4$. 
	\end{corollary}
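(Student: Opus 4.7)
The plan is to combine Theorem~\ref{thm:boundingfill} with two ingredients: an upper bound on $\FA_{\pi_1(L)}$ coming from hyperbolicity, and an identification of $\FA_{G_L(S)}$ with $\FA_{Z_L^{(S)}}$ coming from Proposition~\ref{prop:spaces}.

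First I would note that since $\pi_1(L)$ is hyperbolic, its Dehn function is linear, so $\bar\delta_{\pi_1(L)}$ is also linear. By Proposition~\ref{prop:boundbyDehn}, it follows that $\FA_{\pi_1(L)}(n)\preceq n$, and the reverse bound is automatic, giving $\FA_{\pi_1(L)}(n)\simeq n$. Plugging this into the first bound of Theorem~\ref{thm:boundingfill} yields
\[
\FA_{Z_L^{(S)}}(n)\;\preceq\; n^2\cdot\FA_{\pi_1(L)}(n^2)\;\preceq\; n^2\cdot n^2\;=\;n^4.
\]
(The second bound $n^4\FA_{\pi_1(L)}(n)\preceq n^5$ is weaker, which is why I use the first.)

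Next I would translate this bound on $Z_L^{(S)}$ into a bound on the group $G_L(S)$. The space $Z_L^{(S)}$ admits a free, proper, cellular, cocompact action of $G_L(S)$ by the results of subsection~\ref{learygroups}. Because $L$ is a finite connected flag complex with $H_1(L)=0$ and no local cut points, and because hyperbolicity of $\pi_1(L)$ implies $\widetilde L$ is simply connected (hence has $H_1(\widetilde L)=0$), Leary's result implies that $Z_L^{(S)}$ is $1$-acyclic; in particular $H_1(Z_L^{(S)})=0$. Applying Proposition~\ref{prop:spaces} gives $\FA_{G_L(S)}\simeq \FA_{Z_L^{(S)}}\preceq n^4$, which is the desired conclusion.

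I do not anticipate a serious obstacle: the argument is a short assembly of results already proved in the paper. The only subtlety worth flagging is the verification of the hypotheses of Proposition~\ref{prop:spaces}, in particular the vanishing of $H_1(Z_L^{(S)})$, which is where hyperbolicity of $\pi_1(L)$ enters a second time (to ensure that $\widetilde L$ is $1$-acyclic and hence Leary's acyclicity transfer applies).
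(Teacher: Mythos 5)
Your proof is correct and follows essentially the same route as the paper: apply \Cref{prop:boundbyDehn} (plus hyperbolicity giving a linear Dehn function) to get $\FA_{\pi_1(L)}(n)\preceq n$, then use the $n^2\FA_H(n^2)$ bound from \Cref{thm:boundingfill}. You are usefully more explicit than the paper in invoking \Cref{prop:spaces} to pass from $\FA_{Z_L^{(S)}}$ to $\FA_{G_L(S)}$, which the paper leaves implicit. One small misstatement: you claim hyperbolicity of $\pi_1(L)$ is what makes $\widetilde L$ simply connected, but $\widetilde L$ is the universal cover of $L$ and is simply connected by definition, so $H_1(\widetilde L)=0$ holds automatically and hyperbolicity is only used once, for the linear bound on $\FA_{\pi_1(L)}$.
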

	\begin{proof}
		The homological filling function of a hyperbolic space is bounded above by a linear function by \Cref{prop:boundbyDehn}. 
		Thus we can use \Cref{thm:boundingfill} with the upper bound given by $n^2\FA_H(n^2)$ to obtain the result.
	\end{proof}
	
	Later we will require the Dehn function of $G_L(S)$ when $S$ is finite. In this case we obtain the following. 
	
	\begin{theorem}\label{thm:Dehnbound}
		Let $L$ be a flag complex with no local cut points and $S$ be a finite set of integers.
		Let $\delta_H$ be the Dehn function for $H = \pi_1(L)$. 
		Then $G_L(S)$ is finitely presented and the Dehn function of $G_L(S)$ is bounded above by $n^4\cdot\delta_H(n)$. 
	\end{theorem}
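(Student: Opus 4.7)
\textit{Plan.} The argument mirrors the proof of \Cref{thm:boundingfill}, with disk fillings replacing $2$-chain fillings throughout. I first need to verify finite presentability: $G_L(S)$ acts freely, properly and cocompactly on $Z = Z_L^{(S)}$, and since $S$ is finite, the branching of $X_L^{(S)}$ occurs in only orbit-finitely-many slabs. Attaching finitely many orbits of $2$-cells to $Z/G_L(S)$ along link loops around the finitely many orbits of branch points, together with $2$-cells coming from a finite presentation of $H = \pi_1(L)$, produces a cocompact simply connected $G_L(S)$-$2$-complex, hence a finite presentation for $G_L(S)$.

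For the Dehn function bound, let $\gamma$ be a combinatorial loop of length $n$ in $Z$ representing the identity in $G_L(S)$. By \Cref{prop:controldehn}, there is a disk filling $\pi\colon \Delta \to X = X_L^{(S)}$ of $\gamma$ with image in $X_c^{(2)}$, at most $n^2/2$ corner vertices $v \in \pi^{-1}(X_c^{(0)})$ with $|\Lk(v, \Delta)| \leq 3n$, and with image in the slab $f^{-1}([-n/4, 1 + n/4])$. At each corner $v$ the link loop $\gamma_v = \pi(\Lk(v, \Delta))$ has length $\leq 3n$ and lies in $\Lk(\pi(v), X)$, which is either $\S(L)$ (when $\pi(v)$ is at a height in $S$) or $\S(\widetilde L) = \widetilde{\S(L)}$ (otherwise). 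In the latter (``bad'') case the link is simply connected and is the universal cover of a finite $2$-complex with fundamental group $H$, so every loop in it is fillable by a disk of area $\preceq \delta_H(3n)$. In the ``good'' case where the link is $\S(L)$, one must first verify that $\gamma_v$ is null-homotopic in $\S(L)$; once this is established the same bound $\preceq \delta_H(3n)$ applies, as $\S(L)$ is a finite $2$-complex presenting (a quotient of) $H$.

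Gluing the link disks into $\Delta$ produces a modified disk filling $\Delta'$ of $\gamma$ whose image lies in $X \smallsetminus B$ and whose area is bounded by $Cn^2 + \tfrac{n^2}{2}\delta_H(3n) \preceq n^2\delta_H(n)$. Finally, apply the pushing retraction $\mathcal{Q}\colon X \smallsetminus B \to Z$ constructed earlier. Since $\pi(\Delta')$ stays within a slab of height $O(n)$ around $Z$, the map $\mathcal Q$ is $(kn + k)$-Lipschitz on the image. The number of combinatorial $2$-cells needed after pushing a disk by an $L$-Lipschitz map grows like $L^2$, so the pushed disk filling of $\gamma$ in $Z$ has area bounded by $(kn)^2 \cdot n^2\delta_H(n) \preceq n^4\delta_H(n)$, giving the theorem.

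The main obstacle I foresee is the step of showing that each link loop $\gamma_v$ at a good vertex is null-homotopic in $\S(L)$ itself, rather than merely in its contractible star in $X$. This is exactly where finiteness of $S$ is crucial: only finitely many orbits of good vertices occur, and their links are all copies of a single finite complex $\S(L)$. The argument should combine the fact that $\Delta$ restricts on a small disk neighborhood of $v$ to a map $D^2 \to \mathrm{star}(\pi(v), X)$ with the CAT(0) geometry of $X$ to push this disk off the apex $\pi(v)$; the technical bookkeeping to make this rigorous is the only nontrivial piece of the proof. Verifying finite presentability is routine once one is careful about how the branching is organized.
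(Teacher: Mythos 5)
Your plan pushes the modified disk filling all the way down to $Z = Z_L^{(S)}$, but this is where the argument breaks. When $\pi_1(L) \neq 1$, the level set $Z$ is \emph{not} simply connected: by Morse theory, $X_L^{(S)}$ (which is contractible) is obtained from $Z$ by coning off copies of $L$ and $\widetilde L$, so $\pi_1(Z)$ is normally generated by the images of $\pi_1(L)$ and is generically nontrivial. If your construction succeeded — producing a disk filling of every loop $\gamma \subset Z$ inside $Z$ — it would prove $Z$ simply connected, which is false. The obstruction you correctly flag, namely that at a ``good'' vertex $v$ (where $\Lk(\pi(v),X) = \S(L)$) the link loop $\gamma_v$ need not be null-homotopic in $\S(L)$, is therefore not a technical bookkeeping issue but a genuine failure. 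The resolution you sketch — pushing the small disk near $v$ off the apex using the CAT(0) geometry — produces a disk in the cone $\operatorname{star}(\pi(v),X)$, not in $\S(L)$; a null-homotopy in the cone does not project to a null-homotopy in the link (the cone on a circle is a disk, but a loop in the circle is still essential).

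The paper sidesteps this entirely by choosing a different target space. Fix an interval $I = [a - \tfrac12, b + \tfrac12]$ containing $S$, let $V$ be the set of vertices with $\Lk(v,X) = \S(\widetilde L)$ (the ``bad'' vertices), and set $Y_I = X_I \cap \bigl(X_L^{(S)} \smallsetminus D\bigr)$ where $D$ is the union of $\tfrac14$-balls around $V$. The key point is that $Y_I$ \emph{retains} the good vertices: their links $\S(L)$ are allowed to remain, so no filling of $\gamma_v$ at good vertices is ever needed. One only fills $\gamma_v$ at bad vertices, where the link $\S(\widetilde L)$ is simply connected and carries a cocompact free $H$-action, giving disk area $\preceq \delta_H(|\gamma_v|)$. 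Pushing the modified diagram via $\mathcal Q$ into $Y_I$ then simultaneously proves that $Y_I$ is simply connected (hence $G_L(S)$ is finitely presented, since the $G_L(S)$-action on $Y_I$ is free, proper and cocompact) and yields the bound $\delta_{Y_I}(n) \preceq n^4 \delta_H(n)$. Your proof as stated does not recover this, and the obstacle you identify is unavoidable without changing the target from $Z$ to a slab of $X$ in which the good vertices survive.
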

	\begin{proof}
		We first find a simply connected space upon which $G_L(S)$ acts freely, properly, cellularly and cocompactly. 
		Let $V$ be the set of vertices $v\in X_L^{(S)}$ such that $\Lk(v, X_L^{(S)}) = \S(\widetilde{L})$.
		Let $D$ be the $1/4$ neighbourhood of the vertices in $V$ in the induced $\ell^1$-metric.  
		Let $Y_L^{(S)}  = X_L^{(S)}\smallsetminus D$. 
		Since $G_L^{(S)}$ acts on $X_L^{(S)}$ cellularly and freely away from $V$, we see that $G_L^{(S)}$ acts cellularly and freely on $Y_L^{(S)}$. 
		Moreover, $G_L(S)$ acts cocompactly on $Y_I = X_I\cap Y_L^{(S)}$ for any compact interval $I$. 
		
		Let $a, b$ be integers such that $S\subset [a-1/2, b+1/2] = I$.
		We will show that $Y_I$ is simply connected, in doing so we will give an appropriate upper bound on the filling function $\delta_{Y_I}$. 
		The theorem will then follow since the Dehn function of $G_L(S)$ is equivalent to the filling function of $Y$. 
		
		Let $\gamma$ be a loop in $Y_I$ of length $n$. 
		We can find a filling for $\gamma$ in $X_L^{(S)}$ as in \Cref{prop:controldehn}.
		We now proceed as in the proof of \Cref{thm:boundingfill}. 
		At the vertices in $V$ we can fill the loop $\gamma_v = \Lk(v, \Delta)$ in $\S(\widetilde{L})$ with a disk, this filling has size bounded by $\delta_H(|\gamma_v|)$. 
		Since all vertices not in $Y$ are in $V$, we can push this filling to $Y_I$ using the map $\mathcal{Q}$. 
		
		The pushed filling, as in \Cref{thm:boundingfill}, has size bounded by $(kn+k)^2(K'n^2 + Kn^2\delta_H(3n)/2)$ and thus we have $\delta_{Y_I}\preceq n^4\delta_H$. 
	\end{proof}

	One can also obtain the other bound from \Cref{thm:boundingfill}, but it requires using the superadditive closure of the Dehn function, i.e.\ the bound will look like $n^2\cdot\bar\delta_H(n^2)$. Since we will not use this result, we leave this added technicality to the reader. 
	
	Now that we have an upper bound it will be useful to gain a similar lower bound using the topology of $L$. 
	To do this we will study a Lipschitz map $Z\to \Lk(v, X)$ for appropriately chosen $v$. 
	
	\begin{lemma}\label{lem:Lipretract}
		Let $v$ be a vertex of $X = X_L^{(S)}$ such that $f^{(S)}(v)>\frac{1}{2}$ (resp. $<\frac{1}{2}$). Then there is a Lipschitz map $L\colon Z_L^{(S)}\to \Lk(v, X)$ such that $L\circ\mathcal{Q}(x) = x$ for all $x\in \Lk_{\downarrow}(v, X)$ (resp. $\Lk_{\uparrow}(v, X)$).
	\end{lemma}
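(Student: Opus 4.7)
The plan is to define $L$ by a radial projection from $v$: for each $z\in Z_L^{(S)}$, let $L(z)$ be the initial direction at $v$ of the unique $\CAT(0)$ geodesic from $v$ to $z$, regarded as a point of $\Lk(v,X)$. Equivalently, if $\chi\colon\Lk(v,X)\to X$ denotes the standard embedding placing the link at distance $1/4$ from $v$, then $\chi(L(z))$ is the point on this geodesic at distance $1/4$ from $v$. This is well defined because $v\notin Z_L^{(S)}$: $f^{(S)}(v)$ is an integer, whereas $f^{(S)}(z)=1/2$. We treat the case $f^{(S)}(v)>1/2$; the other case is entirely symmetric, with ascending links and upward rays replacing descending links and downward rays.

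The section identity is immediate from the construction. Pick $x\in\Lk_\downarrow(v,X)$. By definition of $\mathcal{Q}$, the point $\mathcal{Q}(x)$ lies on the downward ray from $v$ through $\chi(x)$ inside the sheet $F$ associated to the simplex of $\Lk_\downarrow(v,X)$ that contains $x$. Since $F$ is a Euclidean flat orthant with apex $v$, this ray is a $\CAT(0)$ geodesic in $X$, so its initial direction at $v$ is precisely $x$, giving $L(\mathcal{Q}(x))=x$.

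The main work is verifying that $L$ is Lipschitz; this is the step I expect to be the real obstacle. Because $\operatorname{Shapes}(X)$ is finite, the Morse function $f^{(S)}$ is $K$-Lipschitz on $X$ for some uniform $K$, so the hypothesis $|f^{(S)}(v)-1/2|\ge 1/2$ forces $d(v,z)\ge r:=1/(2K)$ for every $z\in Z_L^{(S)}$. Parametrize the geodesic from $v$ to $z_i$ at constant speed on $[0,1]$; $\CAT(0)$ convexity then yields $d(\gamma_{z_1}(t),\gamma_{z_2}(t))\le t\cdot d(z_1,z_2)$. Combining this with the triangle inequality to align the (slightly different) parameter values at which the two geodesics cross the $1/4$-sphere around $v$, a routine comparison produces an estimate of the form
\[
d\bigl(\chi(L(z_1)),\chi(L(z_2))\bigr)\le \frac{d(z_1,z_2)}{2r}.
\]
Since $\chi$ is a bi-Lipschitz equivalence between $\Lk(v,X)$ (with its angular metric) and its image in $X$ (with the induced $\CAT(0)$ metric), this estimate transfers to $L$ itself, completing the proof. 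Everything besides the convexity comparison is formal; that estimate is the only genuinely technical point.
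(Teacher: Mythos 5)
Your plan is the same as the paper's: define $L$ by radial projection from $v$ (initial segment of the $\CAT(0)$ geodesic), observe the section identity is built into $\mathcal Q$, and prove Lipschitzness using the uniform lower bound on $d(v,z)$ for $z\in Z_L^{(S)}$. The paper derives the Lipschitz constant by working with the Euclidean comparison triangle $\bar\Delta(\bar v,\bar x,\bar y)$ directly, first bounding $\bar\theta\le\pi/4$ by the law of cosines and then using the law of sines to get $\sin\bar\theta\le d(x,y)/c$; your version replaces this with convexity of $t\mapsto d(\gamma_{z_1}(t),\gamma_{z_2}(t))$ and a chord estimate at the $1/4$-sphere. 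These are close variants of the same comparison argument.

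However, the step you dismiss as ``formal'' is where there is a genuine error. The claim that ``$\chi$ is a bi-Lipschitz equivalence between $\Lk(v,X)$ (with its angular metric) and its image in $X$'' is false in general. For precisely the vertices $v$ that are relevant in \Cref{thm:lowerbound} (those with $f^{(S)}(v)\notin S$) one has $\Lk(v,X)=\S(\widetilde L)$, which is noncompact with unbounded intrinsic diameter when $\pi_1(L)$ is infinite, whereas the image $\chi(\Lk(v,X))$ lies in the ball of radius $1/4$ about $v$ and so has ambient diameter at most $1/2$. No global bi-Lipschitz map can exist. What does hold is a purely local transfer: if the chord distance between two points of the $1/4$-sphere is small, the associated comparison angle at $v$ is small, hence the Alexandrov angle is small, and for a $\CAT(1)$ link the intrinsic link distance coincides with the Alexandrov angle once it is below $\pi$. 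This is exactly the role of the paper's bound $\bar\theta\le\pi/4$: it pins the argument to the regime where chord, angle, and link distance are all comparable. Inserting that local comparison in place of the bi-Lipschitz assertion closes the gap; as written, the shortcut does not go through.
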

	\begin{proof}
		By the definition of $Z_L^{(S)}$ we see that $v\notin Z_L^{(S)}$. 
		Since $\Lk(v, X)$ can be identified with the set of initial segments of geodesics at $v$ there is a map $r\colon X\smallsetminus\{v\}\to \Lk(v, X)$ given by sending a point $x$ to the initial segment of the unique geodesic from $v$ to $x$.
		
		We will show that when restricted to $Z = Z_L^{(S)}$, the map $r$ is locally Lipschitz. 
		Since the metric on $Z$ is defined using lengths of paths we see that locally Lipschitz implies Lipschitz. 
		
		Also since a path in $Z$ is a path in $X$, and the distance in $X$ is coming from minimizing lengths of paths, we see that $d_X(x, y)\leq d_Z(x, y)$. 
		Thus it suffices to prove the claim for the metric $d = d_X$ restricted to $Z$. 
		
		To show that the map is locally Lipschitz we will proceed as follows. 
		Since $v\notin Z$, there is a constant $c>0$ such that for any $x\in Z = Z_L^{(S)}$, we have $d(x, v)\geq c$.
		
		Now let $x, y$ be points of $Z$ such that $d(x, y)\leq \epsilon$ for some fixed $\epsilon$ to be specified later. 
		Let $d_x = d(x, v)$ and $d_y = d(y, v)$. 
		Consider the geodesic triangle $\Delta(v, x, y)\subset X$ and the comparison triangle $\bar{\Delta}(\bar{v}, \bar{x}, \bar{y})\subset \R^2$. 
		Let $\theta$ be the angle in $\Delta$ at $v$ and let $\bar{\theta}$ be the angle in $\bar{\Delta}$ at $\bar{v}$. 
		The CAT(0) inequality ensures that $\theta\leq\bar{\theta}$. 
		
		The euclidean law of cosines ensures that if $\frac{d(x, y)}{d_x}$ is sufficiently small we have $\bar{\theta}\leq \frac{\pi}{4}$, thus pick $\epsilon$ to achieve this bound.
		From this we can deduce that $d(r(x), r(y))\leq \theta$. (Recall that we defined the distance between two points in the link as the angle between the corresponding initial geodesic segments.) 
		Let $\bar{\alpha}$ be the angle of $\bar{\Delta}$ at $\bar{x}$. 
		The law of sines gives the equality $\frac{d(x, y)}{\sin(\bar{\theta})} = \frac{d_y}{\sin(\bar{\alpha})}$. 
		Thus we get the inequality $\sin(\bar{\theta})\leq \frac{d(x, y)}{d_y}\leq\frac{d(x, y)}{c}$. 
		Since $\sin$ is a Lipschitz function and increasing on the interval $[0, \frac{\pi}{4}]$, we obtain a $k$ such that $\theta\leq k\sin(\theta)\leq k\sin(\bar{\theta})\leq k\frac{d(x, y)}{c}$. 
		Thus, the map $r$ is locally Lipschitz when restricted to $Z$ and is thus globally Lipschitz. 	 
	\end{proof}

	\begin{theorem} \label{thm:lowerbound}
		Let L be a flag complex with no local cut points with $H_1(L)=0$.
		If $S$ is a proper subset of $\Z$, then the homological filling function of $Z_L^{(S)}$ is bounded below by the homological Dehn function for $H = \pi_1({L})$. 
	\end{theorem}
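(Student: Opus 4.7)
The plan is to exhibit a Lipschitz retraction from $Z=Z_L^{(S)}$ onto an embedded copy of $\widetilde L$ and use it to transport hard-to-fill cycles from $\widetilde L$ into $Z$. Since $S\subsetneq\Z$, pick $k\in\Z\smallsetminus S$; by symmetry we may assume $k\ge 1$. Fix a vertex $v\in X=X_L^{(S)}$ with $f^{(S)}(v)=k$. Then $\Lk(v,X)=\S(\widetilde L)$, and the descending link $\Lk_\downarrow(v,X)$ is a copy of $\widetilde L$; identify $\widetilde L$ with $\chi(\Lk_\downarrow(v,X))\subset X$, a subspace at distance $1/4$ from $v$ that is disjoint from $B$.

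The argument composes three Lipschitz maps. The pushing retraction $\mathcal Q\colon X\smallsetminus B\to Z$ has Lipschitz constant depending on $k$ but not on the input. The link map $L\colon Z\to\Lk(v,X)=\S(\widetilde L)$ of \Cref{lem:Lipretract} satisfies $L\circ\mathcal Q|_{\widetilde L}=\operatorname{id}$. The simplicial retraction $s\colon\S(\widetilde L)\to\widetilde L$ used in the construction of $\mathcal Q$ is Lipschitz and fixes the descending copy $\widetilde L\subset\S(\widetilde L)$. Hence $\Phi\vcentcolon=s\circ L\circ\mathcal Q$ is Lipschitz with some constant $K$ depending only on $k$ and $L$, and restricts to the identity on the embedded $\widetilde L$.

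Given a cellular $1$-cycle $\gamma$ in $\widetilde L^{(1)}$ of length $\le n$, its image $\gamma'\vcentcolon=\mathcal Q(\gamma)$ is a Lipschitz $1$-cycle in $Z$ of mass $\le C_0 n$ by \Cref{lem:lipfillingconstant}. A Federer--Fleming type deformation yields a cellular $1$-cycle $\tilde\gamma$ in $Z^{(1)}$ of length $\le C_1 n$ and a Lipschitz $2$-chain $\eta$ in $Z$ with $\partial\eta=\gamma'-\tilde\gamma$ and $\mass(\eta)\le C_2 n$. For any cellular $2$-chain $d$ in $Z$ with $\partial d=\tilde\gamma$, the chain $d+\eta$ is a Lipschitz filling of $\gamma'$ in $Z$ of mass $\le C_3(|d|+n)$; applying $\Phi$ produces a Lipschitz filling of $\gamma$ in $\widetilde L$ of mass $\le K^2 C_3(|d|+n)$ by \Cref{lem:lipfillingconstant}. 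Since $\widetilde L$ is an $M_0$-polyhedral complex with finite $\operatorname{Shapes}$ and $H_1(\widetilde L)=0$, \Cref{thm:FF} converts this into a cellular filling of $\gamma$ of size $\le C_4(|d|+n)$. Taking infimum over $d$ and supremum over $\gamma$ gives $\FA_{\widetilde L}(n)\preceq \FA_Z(n)$. Finally $H=\pi_1(L)$ acts freely, properly, cellularly, and cocompactly on the simply connected complex $\widetilde L$, so $\FA_{\widetilde L}\simeq \FA_H$ by \Cref{prop:spaces}, and we conclude $\FA_H\preceq \FA_{Z_L^{(S)}}$.

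The main technical obstacle is the Federer--Fleming step for $1$-chains: \Cref{thm:FF} is stated only for $2$-chains, so we need the analogous deformation producing a cellular approximation of the Lipschitz $1$-cycle $\gamma'$, with linear control on both the length of $\tilde\gamma$ and $\mass(\eta)$. This is a standard consequence of the general deformation theorem of \cite{FF} in complexes of bounded geometry, but requires some care. A secondary point is the rigorous verification that $\Phi|_{\widetilde L}=\operatorname{id}$, which amounts to unwinding that both $\mathcal Q$ (rays from $v$) and $L$ (initial segments of geodesics from $v$) act as inverses on the embedded $\widetilde L$, together with the observation that $s$ fixes the descending copy inside $\S(\widetilde L)$.
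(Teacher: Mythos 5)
Your proposal is correct in spirit and identifies the central idea shared with the paper: push a hard-to-fill $1$-cycle from $\widetilde L$ into $Z = Z_L^{(S)}$, observe that any filling in $Z$ can be mapped back by a Lipschitz map to a filling in the link, and then use Federer--Fleming to convert back to a cellular chain. However, your route introduces an additional technical step that the paper deliberately avoids, and you have correctly flagged it yourself as the ``main technical obstacle.''

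The difference is in how $\gamma$ is transported into $Z$. You push $\gamma$ forward by the Lipschitz retraction $\mathcal{Q}$, obtaining a Lipschitz (not cellular) $1$-cycle $\gamma' = \mathcal{Q}(\gamma)$ in $Z$, and then invoke a $1$-dimensional Federer--Fleming deformation to replace $\gamma'$ by a cellular cycle $\tilde\gamma$ together with a controlling $2$-chain $\eta$. The paper's \Cref{thm:FF} is stated only for $2$-chains, so you would need to establish the $1$-chain version from~\cite{FF} in the setting at hand, with linear control on both $|\tilde\gamma|$ and $\mass(\eta)$. The paper instead produces a cellular $1$-cycle in $Z$ directly, with no Federer--Fleming needed at this stage: each $1$-cell of $\gamma$ (viewed inside a square of $X_L^{(S)}$ adjacent to $v$) is replaced by the diagonal of that square at height $\tfrac12$. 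This yields an honest cellular $1$-cycle $\alpha \subset Z$ with $|\alpha| = |\gamma|$ exactly, and the map $r$ of \Cref{lem:Lipretract} sends $\alpha$ back to $\gamma$ inside the link. A second, smaller difference: you compose further with the simplicial retraction $s$ to land in $\widetilde L$ and deduce $\Phi|_{\widetilde L} = \id$, whereas the paper works directly in $\Lambda = \S(\widetilde L)$ and uses that both $U = \widetilde L$ and $\Lambda$ carry free, proper, cocompact $H$-actions, so $\FA_U \simeq \FA_\Lambda \simeq \FA_H$. Both of these are correct, but the paper's cell-by-cell diagonal pushing is the key simplification that eliminates the $1$-chain deformation step entirely and keeps the argument self-contained with only the $2$-chain \Cref{thm:FF} that is already stated. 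If you want to retain your structure, you should either prove the $1$-dimensional deformation lemma (which is more work than the rest of the argument) or replace the Lipschitz pushforward $\mathcal{Q}(\gamma)$ by the diagonal construction.
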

	\begin{proof}
		We have a Morse function $f^{(S)}\colon X_L^{(S)}\to \R$ and $G_L(S)$ acts on $X_L^{(S)}$ preserving the level sets. 
		Moreover, $G_L(S)$ acts properly freely and cocompactly on each non-integer level set. 
		This shows that $G_L(S)$ is isomorphic to $G_L(T)$ if $S = T+k$ for some $k\in \Z$. 
		This allows us to assume that $1\notin S$. 
		
		Recall that $Z = Z_L^{(S)}$ is the level set at height $\frac{1}{2}$. 
		Let $v$ be a vertex at height $1$. 
		Since $1\notin S$ we have that the descending link $U$ of $v$ is a copy of $\widetilde{L}$ and $\Lambda\vcentcolon=\Lk\bigl(v, X_L^{(S)}\bigr) = \S(\widetilde{L})$.
		Recall that by~\cite[Prop.\,6.9]{Lea}, $H$ is isomorphic to $\pi_1(\S(L))$, and by~\cite[Cor.\,7.2]{Lea}, $\S(\widetilde L)$ is the universal cover for $\S(L)$. 
		Hence both $U=\widetilde L$ and $\Lambda=\S(\widetilde L)$ have a proper, free and cocompact $H$ action and thus have homological functions equivalent to $\FA_H$. 
		Let $\gamma$ be a $1$-cycle in $U$, realizing $\FA_{\widetilde{L}}(n)$. 
		
		Since we have a map $\Lambda\to X_L^{(S)}$, we can view the cycle $\gamma$ as a subset of $X_L^{(S)}$ in the $\frac{1}{4}$ neighbourhood of $v$. 
		We can push the $1$-cycle $\gamma$ to $Z$ a cell at a time. 
		Each $1$-cell of $\gamma$ is in a square. 
		By taking instead the diagonal at height $\frac{1}{2}$, we obtain a $1$-cycle $\alpha$ in $Z$ such that $|\alpha| = |\gamma|$. 
		Pick a minimal filling $\beta$ for $\alpha$ in $Z_L^{(S)}$.
		
		Since $Z\subset X_L^{(S)}\smallsetminus \{v\}$ we have a Lipschitz map $r\colon Z\to \Lk(v, X_L^{(S)})$ from \Cref{lem:Lipretract}. 
		Let $k$ be the corresponding Lipschitz constant. % 
		
		We see that under the map $r$ the $1$-cycle $\alpha$ is sent to the $1$-cycle $\gamma$ in $\Lk(v, X_L^{(S)})$. 
		
		Since $r$ is $k$-Lipschitz, \Cref{lem:lipfillingconstant} shows that $\mass(r(\beta))\leq k^2\mass(\beta)$. 
		
		By \Cref{thm:FF}, we can find a cellular $2$-chain $P(r(\beta))$ with boundary $\gamma$ and $|P(r(\beta))|\le K \mass(r(\beta))$ for some constant $K\ge0$ depending just on $\Lambda$. 
		Arguing as in the proof of  \Cref{thm:lipcellequivalent}, we get $\mass(\beta) \leq D|\beta|$ for some $D\in \R$ and thus we have:
		\begin{multline*}
		\FA_{\widetilde{L}}(n) = \HArea_{\Lambda}(\gamma)\leq |P(r(\beta))|\le K\mass(r(\beta))\leq K\cdot k^2\mass(\beta)\\
		= K\cdot k^2\cdot D|\beta|\leq K\cdot k^2\cdot D\FA_Z(|\alpha|)\leq K\cdot k^2\cdot D\FA_Z(n),
		\end{multline*}
		which shows that $\FA_H\preceq \FA_Z$.
	\end{proof}
	
	Combining \Cref{thm:boundingfill} and \Cref{thm:lowerbound} we obtain the following Corollary.
	
	\begin{corollary}\label{cor:sameasH}
		Let $H$ be a finitely presented perfect group with $\FA_H(n)\simeq n^4\FA_H(n)$.
		Let $L$ be a flag complex with no local cut points such that $\pi_1(L)  = H$.
		If $S\neq \Z$, then the homological Dehn function of $G_L(S)$ is $\simeq$ equivalent to $\FA_H$. \qed
	\end{corollary}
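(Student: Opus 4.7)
The corollary will follow by combining the upper bound of \Cref{thm:boundingfill} with the lower bound of \Cref{thm:lowerbound}, once the hypotheses are verified and the filling function of the level set $Z_L^{(S)}$ is identified with that of $G_L(S)$. No new geometric input is needed; the work is in checking that all the machinery built in the section applies.

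First I would verify that Theorems~\ref{thm:boundingfill} and \ref{thm:lowerbound} are applicable. Since $H=\pi_1(L)$ is perfect, Hurewicz gives $H_1(L)\cong H^{\mathrm{ab}}=0$, which is exactly the required hypothesis on $L$; finiteness, connectedness, flagness and the no local cut point property are assumed. Moreover, the universal cover $\widetilde L$ is simply connected, so both $L$ and $\widetilde L$ are $1$-acyclic. By Leary's theorem quoted in \Cref{learygroups}, this forces $Z_L^{(S)}$ to be $1$-acyclic as well, and in particular $H_1(Z_L^{(S)})=0$, so its homological filling function is defined. The action of $G_L(S)$ on $Z_L^{(S)}$ is free, proper, cellular and cocompact, so \Cref{prop:spaces} identifies the two filling functions: $\FA_{G_L(S)}\simeq\FA_{Z_L^{(S)}}$.

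Then the upper bound from \Cref{thm:boundingfill} gives $\FA_{Z_L^{(S)}}(n)\preceq n^4\FA_H(n)$, and the standing hypothesis $\FA_H(n)\simeq n^4\FA_H(n)$ collapses this to $\FA_{G_L(S)}(n)\preceq \FA_H(n)$. For the matching lower bound, the assumption $S\neq\Z$ is precisely what \Cref{thm:lowerbound} requires, and it yields $\FA_{Z_L^{(S)}}(n)\succeq \FA_H(n)$, hence $\FA_{G_L(S)}(n)\succeq \FA_H(n)$. Combining the two inequalities proves the claim. There is essentially no obstacle: the only subtlety is bookkeeping --- ensuring that perfectness of $H$ supplies the $H_1=0$ hypothesis on $L$ and propagates to $Z_L^{(S)}$ so that Proposition~\ref{prop:spaces} legitimately bridges $\FA_{Z_L^{(S)}}$ and $\FA_{G_L(S)}$.
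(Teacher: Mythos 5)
Your proposal is correct and takes essentially the same route as the paper, which proves the corollary simply by citing Theorems \ref{thm:boundingfill} and \ref{thm:lowerbound} (with the implicit translation from $\FA_{Z_L^{(S)}}$ to $\FA_{G_L(S)}$ via \Cref{prop:spaces}). Your spelled-out bookkeeping — perfectness of $H$ giving $H_1(L)=0$, $1$-acyclicity of $L$ and $\widetilde L$ propagating to $Z_L^{(S)}$, and the free/proper/cellular/cocompact action justifying \Cref{prop:spaces} — is exactly what the paper leaves tacit.
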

	
	Given $H$ satisfying the requirements of the above Corollary, we can vary the set $S$ and obtain uncountably many groups of type $FP_2$ with homological Dehn function $\FA_H$.
	In fact, by \cite[Th.\,1.1]{KLS}, we can obtain uncountably many quasi-isometry classes of such groups. 
	
	By improving the homological properties of $L$, we can create uncountably many groups of type $FP_n$ or $FP$ with such a homological filling function. 
	For instance, we can prove the following theorem:
	
	\begin{theorem}\label{thm:uncountablymanyexp}
		For each $k\ge1$, there exist uncountably many groups of the form $G_L(S)$ of type $FP$ whose homological filling function is the iterated exponential $\exp^{(k)}(n)$. 
	\end{theorem}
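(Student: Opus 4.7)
The plan is to apply Corollary~\ref{cor:sameasH} and Theorem~\ref{thm:lea} to a carefully chosen family of flag complexes. For each $k \ge 1$, I aim to produce a finite connected flag complex $L_k$ with no local cut points such that $\pi_1(L_k) = H_k$ is a finitely presented perfect group with $\FA_{H_k}(n) \simeq \exp^{(k)}(n)$, and such that both $L_k$ and its universal cover $\widetilde{L_k}$ are acyclic. Once such an $L_k$ has been produced, the conclusion is immediate: Theorem~\ref{thm:lea} shows that each $G_{L_k}(S)$ is of type $FP$ and that as $S$ ranges over subsets of $\Z$ the groups $G_{L_k}(S)$ fall into uncountably many isomorphism classes (since $\pi_1(L_k)$ is nontrivial). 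Since $\exp^{(k)}(n)$ dominates every polynomial, the technical hypothesis $\FA_{H_k}(n) \simeq n^4\FA_{H_k}(n)$ of Corollary~\ref{cor:sameasH} holds automatically, and that corollary therefore yields $\FA_{G_{L_k}(S)}(n) \simeq \exp^{(k)}(n)$ for every $S \subsetneq \Z$.

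To construct $L_k$, I would first take a known finitely presented group $G_k$ whose Dehn function is $\simeq \exp^{(k)}(n)$, such as a Brady--Bridson free-by-cyclic group or a member of the snowflake/hydra families; each such $G_k$ comes equipped with a compact aspherical presentation complex, and Proposition~\ref{prop:boundbyDehn} then gives the matching upper bound on $\FA_{G_k}$. I would then embed $G_k$ as a retract of a finitely presented perfect acyclic group $H_k$ of type $F$, using a Baumslag--Dyer--Heller or Kan--Thurston style construction arranged so that the acyclification contributes only linearly-bounded homological filling. Proposition~\ref{prop:retract} then provides the lower bound $\FA_{H_k} \succeq \FA_{G_k}$, while the controlled acyclification supplies the matching upper bound. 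Finally, I pass to a finite aspherical $K(H_k,1)$-model, subdivide it to be simplicial and flag, and---if necessary---take a join with a suitable acyclic complex (e.g.\ an iterated suspension that is subsequently flagified) to kill local cut points. Both operations preserve acyclicity of $L_k$ and $\widetilde{L_k}$ and leave $\pi_1$ unchanged.

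The main obstacle is the group-theoretic step: constructing $H_k$ so that all three features hold simultaneously---iterated exponential homological Dehn function, type $F$ with a finite aspherical acyclic classifying space, and perfectness together with a retraction onto a subgroup witnessing the Dehn function from below. Producing groups with prescribed iterated exponential Dehn function is classical, but combining this with acyclicity while maintaining control on $\FA$ requires choosing the acyclification carefully. Once $H_k$ is in hand, the remaining work---flagification, removal of local cut points, and verification of the hypotheses of Corollary~\ref{cor:sameasH} and Theorem~\ref{thm:lea}---is a routine packaging step that perturbs neither $\FA$ up to $\simeq$ equivalence nor the uncountability of the resulting family $\{G_{L_k}(S)\}_{S \subsetneq \Z}$.
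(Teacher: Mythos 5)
Your high-level strategy matches the paper's: exhibit, for each $k$, a finitely presented perfect group $H_k$ with an acyclic, aspherical $K(H_k,1)$ and with $\FA_{H_k}\simeq\exp^{(k)}(n)$, flagify without local cut points, and then let the general bounds on $\FA_{G_L(S)}$ and Leary's uncountability result finish the job. The routine packaging steps at the end are fine, as is the observation that the hypothesis $\FA_{H}\simeq n^4\FA_H$ of Corollary~\ref{cor:sameasH} is automatic for iterated exponentials.

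The gap, which you flag yourself as ``the main obstacle,'' is precisely the part that the paper's proof is actually about: producing $H_k$. Your proposal to achieve this via a Baumslag--Dyer--Heller or Kan--Thurston acyclification ``arranged so that the acyclification contributes only linearly-bounded homological filling'' is an assertion, not a construction. Neither Baumslag--Dyer--Heller embeddings nor Kan--Thurston models come with any control on Dehn or homological Dehn functions; there is no known way to make these acyclifications simultaneously (i) finite and aspherical, (ii) perfect, and (iii) quasi-preserve $\FA$. You also propose to get the lower bound by making $G_k$ a \emph{retract} of $H_k$. That is strictly stronger than what the paper's construction delivers (and stronger than needed): in the paper's group $Q=A\ast_{b=x_m}B_m$, the factor $B_m$ is not a retract -- $A$ is perfect, so there is no surjection $A\to\Z$ compatible with $b\mapsto x_m$ -- and the lower bound is obtained instead via embedded disk diagrams and Theorem~\ref{thm:embeddeddisk}.

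What the paper actually does is much more explicit: it chooses a $C'(1/6)$ small cancellation group $A$ on two generators with a balanced presentation (hence aspherical, acyclic, perfect), amalgamates it with the iterated Baumslag--Solitar group $B_m$ along $\langle b\rangle=\langle x_m\rangle$, checks via Dehn's algorithm in $A$ and a height homomorphism in $B_m$ that powers of the amalgamated generator are geodesic, and then applies Brick's theorem on Dehn functions of amalgamated products to pin down $\delta_Q\simeq\exp^{(m)}(n)$. The upper bound on $\FA_Q$ then follows from Proposition~\ref{prop:boundbyDehn} (superadditivity of $\exp^{(m)}$), and the lower bound from embedded diagrams in $B_m$ together with Theorem~\ref{thm:embeddeddisk}. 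Until you can produce a concrete $H_k$ with these verifiable properties -- rather than appealing to acyclification machinery that does not a priori control filling functions -- your argument is incomplete at its central step.
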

	Recall that the iterated exponential function is defined as $\exp^{(k)}(n)\vcentcolon=2^{\exp^{(k-1)}(n)}$, with $\exp^{(0)}(n)\vcentcolon=n$. To prove this theorem we will need several group theoretic constructions.
	
	Let $A$ be the group given by the following presentation
	\[
	A\vcentcolon=\la\, a,b\mid a[b,a][b^2,a^2]\dots[b^{8},a^{8}], \quad b[a^{9},b^{9}]\dots[a^{11},b^{11}]\,\ra,
	\]
	where $[a, b] = a^{-1}b^{-1}ab$.
	\begin{proposition}\label{prop:hkl}
		The group $A$ satisfies the $C'(1/6)$ small cancellation condition, and 
		its presentation complex is aspherical and acyclic.
	\end{proposition}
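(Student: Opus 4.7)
The plan is to establish the three assertions---$C'(1/6)$, asphericity, and acyclicity---in sequence.

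\emph{Small cancellation.} First compute $|r_1|=1+4(1+2+\dots+8)=145$ and $|r_2|=1+4(9+10+11)=121$. The strategy for piece bounds is to exploit the rigid syllable structure of both relators. After cyclic reduction, every syllable of $r_1$ (viewed cyclically) has length in $\{1,\dots,9\}$ and every (letter, sign, length) triple occurs at most once, with $a^9$ as the unique length-$9$ syllable---this arises because the final $a^8$ of the last commutator merges cyclically with the initial $a$. Similarly, every cyclic syllable of $r_2$ has length in $\{9,10,11,12\}$, each triple occurring at most once, with $b^{12}$ unique (from the merge of the final $b^{11}$ with the initial $b$). This uniqueness forces any piece $p$ appearing at two distinct cyclic positions to avoid containing any interior full syllable, so $p$ spans at most two consecutive syllables (or lies inside one). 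A brief case analysis over the four transition types $X^{\pm}\to Y^{\pm}$ with $X,Y\in\{a,b\}$ then gives the bounds: within $r_1$ the optimum is $14$, realized for instance by $b^{-7}a^{-7}$ at the boundary between commutators $7$ and $8$; within $r_2$ the optimum is $20$, realized by $a^{10}b^{10}$ appearing at both the $(a^{10},b^{10})$ and $(a^{11},b^{12})$ boundaries; and between $r_1$ and $r_2^{-1}$ the unique long match is $b^8 a^9 b^{-1}$ of length $18$, obtained by extending the shared syllable $a^9$. Since $20<121/6<145/6$, the $C'(1/6)$ condition follows.

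\emph{Asphericity and acyclicity.} Since $a^9$ (resp.\ $b^{12}$) occurs as the unique longest syllable of $r_1$ (resp.\ $r_2$), neither relator is a proper power. Combined with $C'(1/6)$, the classical theorem that a $C'(1/6)$ presentation with no relator a proper power has an aspherical standard $2$-complex (see Lyndon--Schupp, Ch.\,V, Thm.\,13.3) immediately yields asphericity. For acyclicity, compute the cellular chain complex $0\to\Z^2\xrightarrow{\partial_2}\Z^2\xrightarrow{\partial_1}\Z\to 0$; since $\partial_1=0$ and abelianising any commutator gives $0$, we have $\partial_2(r_1)=a$ and $\partial_2(r_2)=b$, so $\partial_2$ is the identity matrix. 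Thus $H_1=H_2=0$ and the complex is acyclic.

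The delicate point is the piece analysis for the $C'(1/6)$ step, since the bound $20$ is tight against the threshold $121/6=20.1\overline{6}$. One must carefully enumerate candidate long pieces---in particular the cross-relator piece $b^8 a^9 b^{-1}$, which exploits the coincidence that $a^9$ is a common syllable of $r_1$ and $r_2^{-1}$---and verify on each side that no further letter extends the match. Once the piece bound $\le 20$ is secured, the remaining conclusions follow by standard arguments.
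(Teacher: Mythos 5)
Your proof is correct and essentially the same in spirit as the paper's, though you supply more detail at two points. For the $C'(1/6)$ condition, the paper simply asserts that the longest piece is $a^{10}b^{10}$ of length $20$; you carry out the syllable-structure analysis that justifies this, including the nontrivial cross-relator piece $b^8a^9b^{-1}$ of length $18$ and the observation that neither relator is a proper power. The latter point is worth making explicit: Lyndon's asphericity theorem for $C'(1/6)$ presentations does require that no relator be a proper power, and the paper's citation of Olshanskii, Th.~13.3 omits this check (your attribution ``Lyndon--Schupp, Ch.\,V, Thm.\,13.3'' appears to be a slip conflating the two sources; the theorem number 13.3 belongs to Olshanskii's book). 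For acyclicity, you observe directly that abelianizing sends each commutator to $0$, so $\partial_2$ is the identity matrix on $\Z^2$, giving $H_1=H_2=0$ at once; the paper instead computes $H_1=0$ from the abelianization and then deduces $H_2=0$ from an Euler characteristic count. Both arguments are elementary and correct---yours is a touch more direct, the paper's a touch more generic (it would also apply to any balanced perfect presentation without tracking $\partial_2$).
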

	\begin{proof}
		The longest piece in the above presentation is $a^{10}b^{10}$ which has length $20$. 
		The length of the first relation is $145$ and the length of the second relation is $121$, thus $A$ satisfies condition $C'(1/6)$ and hence the presentation $2$-complex is aspherical, see~\cite[Th.\,13.3]{Olsh}.
		
		To see that the presentation $2$-complex is acyclic, one can look at the cellular chain complex.
		First note that the first homology vanishes as can be seen by abelianizing the presentation.
		Since the presentation complex is a $2$-complex, the second homology is free abelian. 
		The Euler characteristic of this $2$-complex is: $\chi=\#\text{($0$-cells)}-\#\text{($1$-cells)}+\#\text{($2$-cells)}=1-2+2=1$. On the other hand, $\chi=b_0-b_1+b_2$, where $b_i$ is the $i$-th Betti number, and hence $b_2 = 1 - b_0 + b_1 = 1 - 1 + 0 = 0$, which means that the second homology vanishes. 
		Once again since the space is a $2$-complex we see that it is acyclic. 
	\end{proof}
	
	Now let
	\[
	B_m\vcentcolon=\la\, x_0,x_1,\dots, x_m\mid x_{i}^{-1}x_{i-1}x_i=x_{i-1}^2\text{ for } i=1,\dots,m\,\ra
	\]
	for some fixed $m$, and let $Q$ be the amalgamated free product of $A$ and $B_m$ identifying the cyclic subgroups $\la b\ra$ and $\la x_m\ra$:
	\[
	Q\vcentcolon=A\textstyle\bigast\limits_{b=x_m}B_m.
	\]

	\begin{lemma}\label{lem:acycasphere}
		The presentation complex for $Q$ is aspherical and acyclic.
	\end{lemma}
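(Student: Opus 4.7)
The plan is to view $Q = A *_{b = x_m} B_m$ as an amalgamated free product of two aspherical pieces glued along a $\pi_1$-injective circle, which will give asphericity, and then to run Mayer--Vietoris to obtain acyclicity from the acyclicity of $P(A)$ (\Cref{prop:hkl}) together with a direct calculation on $P(B_m)$.

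For asphericity of $P(B_m)$, I would exhibit $B_m$ as an iterated HNN extension of $\Z = \la x_0\ra$ with stable letters $x_1, \ldots, x_m$, where at step $i$ the associated subgroups are $\la x_{i-1}\ra$ and $\la x_{i-1}^2\ra$. By induction on $i$ one verifies simultaneously that $P(B_i)$ is a $K(B_i,1)$ and that each $x_j$ has infinite order in $B_i$ for every $j \le i$: both follow from the classical fact that an HNN extension along injective associated subgroups of an aspherical space is itself aspherical, together with Britton's normal form which keeps every generator from collapsing. The next step is asphericity of $P(Q) = P(A) \cup_{S^1} P(B_m)$, glued along the loop $b \leftrightarrow x_m$. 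Since $P(A)$ is aspherical and $2$-dimensional by \Cref{prop:hkl}, the group $A$ has cohomological dimension $\le 2$ and is therefore torsion-free by Serre's theorem; in particular $b$ has infinite order in $A$. Likewise $x_m$ has infinite order as the top stable letter in $B_m$. The standard theorem on graphs of aspherical spaces with $\pi_1$-injective edge inclusions then gives that $P(Q)$ is aspherical.

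For acyclicity, I first compute $H_*(P(B_m))$. Abelianizing the presentation of $B_m$ turns each relation $x_{i-1}x_i = x_i x_{i-1}^2$ into $x_{i-1} = 0$, so $H_1(P(B_m)) \cong \Z$, generated by the class of $x_m$. The Euler characteristic of $P(B_m)$ equals $1-(m+1)+m = 0$, and $H_2$ of a $2$-complex is free abelian, forcing $H_2(P(B_m)) = 0$. Combined with $\widetilde H_*(P(A)) = 0$ from \Cref{prop:hkl}, the Mayer--Vietoris sequence for the decomposition $P(Q) = P(A) \cup_{S^1} P(B_m)$ yields
\[
0 \to H_2(P(Q)) \to H_1(S^1) \xrightarrow{\varphi} H_1(P(A)) \oplus H_1(P(B_m)) \to H_1(P(Q)) \to 0,
\]
where $\varphi$ sends the generator of $\Z$ to $(0, \pm[x_m])$, which is an isomorphism onto $0 \oplus \Z$. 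Hence both $H_2(P(Q))$ and $H_1(P(Q))$ vanish, and since $P(Q)$ is a $2$-complex all higher homology vanishes too.

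The main obstacle, or at least the only place where genuine care is needed, is verifying inductively that every $x_j$ retains infinite order throughout the HNN tower used to build $B_m$; this is what guarantees $\pi_1$-injectivity of the associated subgroups at each step, which in turn underwrites the asphericity of $P(B_m)$. Once this is in place, the amalgam asphericity theorem and the Mayer--Vietoris computation are entirely routine.
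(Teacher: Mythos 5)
Your approach is essentially the same as the paper's: realize $B_m$ as an iterated HNN extension of $\Z$, invoke an asphericity theorem for graphs of aspherical presentations with (free) cyclic edge groups for $P(B_m)$ and then for the amalgam $P(Q)$, and finish with a homology computation. The paper cites Theorem~4.6 of~\cite{aspherical} for both asphericity steps and then uses the balanced-presentation Euler-characteristic argument (as in \Cref{prop:hkl}) for acyclicity, whereas you verify $\pi_1$-injectivity of the edge groups by hand and run Mayer--Vietoris; both finishes are valid, and your explicit injectivity check is the right instinct since the graph-of-spaces asphericity theorem genuinely needs it. There is one small gap worth flagging: to conclude that $\la b\ra\cong\Z$ inside $A$ you invoke Serre's theorem (torsion-freeness from $\operatorname{cd}(A)\le 2$), but this only rules out $b$ having \emph{finite order} $>1$; it does not rule out $b=1$, and indeed $b$ maps to $0$ in $H_1(A)=0$, so nontriviality is not visible in homology. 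You should instead appeal to the $C'(1/6)$ small cancellation hypothesis on $A$: by Dehn's algorithm no power $b^n$ contains more than half of any relator, so $b^n\ne 1$ for all $n\ge1$; this is exactly the argument the paper uses in the proof of \Cref{lem:homfilling} to show $|b^n|=n$. With that one-line fix your proof is complete.
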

	\begin{proof}
		The group $B_m$ can be written iteratively as an HNN extension $B_k = (B_{k-1})\text{\large$\ast$}_{\Z}$, $k=1,\dots,m$, with $B_0 = \Z$. 
		Theorem 4.6 of \cite{aspherical} (taking into account a remark on p.~2 therein) tells us that the fundamental group of a graph of groups with each vertex group aspherical and each edge group free, is aspherical. Thus the presentation $2$-complex of $B_m$ is itself aspherical. 
		By \Cref{prop:hkl}, we have that the presentation $2$-complex of $A$ is aspherical. 
		Applying Theorem~4.6 of \cite{aspherical} again, we see that the presentation complex for $Q$ is aspherical. 
		
		To see that the complex is acyclic, we observe that its first homology vanishes and that the presentation is balanced (i.e.\ the number of relations equals the number of generators). Hence we 
		can apply the Euler characteristic argument as in the proof of~\Cref{prop:hkl} above to conclude that the presentation complex for $Q$ is acyclic. 
	\end{proof}
	
	\begin{lemma}\label{lem:homfilling}
		The homological Dehn function of $Q$ is $\simeq$ equivalent to  $\exp^{(m)}(n)$. 
	\end{lemma}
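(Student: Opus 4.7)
The plan is to establish the upper and lower bounds separately, using classical results about iterated Baumslag--Solitar-type groups together with tools developed earlier in this paper.

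For the upper bound, first observe that $B_m$ is an iterated HNN extension of $\Z$ of Baumslag--Gersten type; a standard Gersten-style tower argument shows $\delta_{B_m}(n) \simeq \exp^{(m)}(n)$, with the nested Baumslag words $w_n$ of length $O(n)$ requiring area $\succeq \exp^{(m)}(n)$ to fill. Since $Q = A \bigast_{b=x_m} B_m$ is an amalgamation of $B_m$ with the hyperbolic group $A$ over a cyclic (in fact infinite cyclic) subgroup, standard estimates for Dehn functions of amalgamated products with a hyperbolic factor over a cyclic edge group yield $\delta_Q \simeq \delta_{B_m} \simeq \exp^{(m)}(n)$. Because $\exp^{(m)}$ is manifestly superadditive, \Cref{prop:boundbyDehn} immediately gives $\FA_Q(n) \preceq \exp^{(m)}(n)$.

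For the lower bound, let $w_n$ be the Baumslag tower word realising $\delta_{B_m}$, so $|w_n| = O(n)$ and any filling of $w_n$ requires area $\succeq \exp^{(m)}(n)$. Since $B_m$ embeds in $Q$ by the Bass--Serre theory of amalgamated products, $w_n$ is a nontrivial loop in the universal cover $\widetilde{X}$ of the presentation complex of $Q$. By \Cref{lem:acycasphere}, $\widetilde{X}$ is contractible, so in particular $H_2(\widetilde{X}) = 0$, and \Cref{thm:embeddeddisk} is available: any loop which bounds an embedded disk diagram $\Delta$ satisfies $|\Delta| \leq \FA_Q(|\gamma|)$. The classical tower diagrams $\Delta_n$ for $w_n$ are built entirely from Baumslag relators of $B_m$ (no $A$-relators appear), and are known to be embedded in the universal cover of the presentation complex of $B_m$. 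Since these diagrams are supported on a single vertex stabiliser copy of $\widetilde{B_m}$ inside $\widetilde{X}$ under the Bass--Serre decomposition, they remain embedded in $\widetilde{X}$. Applying \Cref{thm:embeddeddisk} yields $\FA_Q(n) \succeq |\Delta_n| \succeq \exp^{(m)}(n)$, which combined with the upper bound completes the proof.

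The main obstacle is the verification that the Baumslag tower diagrams remain embedded after passing from $\widetilde{B_m}$ to $\widetilde{X}$. This is handled by observing that the Bass--Serre tree for $Q$ has vertex stabilisers that are conjugates of $A$ and $B_m$, and the vertex spaces corresponding to $B_m$ embed in $\widetilde{X}$ as full subcomplexes; since the Baumslag diagrams only involve $B_m$-relators, they lift to a single such vertex space and inherit the embedding property from the known situation over $B_m$ itself.
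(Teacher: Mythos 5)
Your proposal follows essentially the same two-part strategy as the paper: upper bound via the Dehn function of $Q$ (an amalgamation estimate combined with \Cref{prop:boundbyDehn}), lower bound via the embedded Baumslag tower diagrams of $B_m$ combined with \Cref{thm:embeddeddisk}. On the lower bound you are in fact more explicit than the paper --- you spell out why the diagrams remain embedded after passing from $\widetilde{K}_{B_m}$ to the universal cover of $Q$'s presentation complex, namely that the preimage of the $B_m$-subcomplex in $\widetilde{K}_Q$ has simply connected components because $B_m \hookrightarrow Q$ --- whereas the paper just says ``appealing to the proof of \Cref{thm:embeddeddisk}.''

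There is, however, a gap in your upper bound. You invoke ``standard estimates for Dehn functions of amalgamated products with a hyperbolic factor over a cyclic edge group'' to conclude $\delta_Q \simeq \delta_{B_m}$. Hyperbolicity of $A$ does give that $\langle b\rangle$ is undistorted in $A$, but that is only half of what is needed: the Brick-type estimate the paper cites (\cite[Prop.~3.5]{brick}) requires the edge subgroup to be undistorted in \emph{both} factors, and undistortion of $\langle x_m\rangle$ in $B_m$ is not automatic. (Baumslag--Solitar relators famously produce exponential distortion, so this is exactly the kind of thing one must check.) The paper verifies it by exhibiting the retraction $\phi\colon B_m\to\Z$ sending $x_m\mapsto 1$ and $x_i\mapsto 0$ for $i<m$, which shows $|x_m^n|=n$. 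Your argument would fail, for instance, if one instead amalgamated over $x_0$, which is badly distorted. So you should add the verification that $x_m$ generates an undistorted cyclic subgroup of $B_m$; once that is inserted, the proof is complete and agrees with the paper's.
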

	\begin{proof}
		We first appeal to Proposition~3.5 of~\cite{brick} to see that the Dehn function of $Q$ is $\simeq \exp^{(m)}(n)$. 
		To apply this result it suffices to show that in $A$ we have $n = |b^n|$ and in $B_m$ we have $n = |x_m^n|$.

		For the first we can use Dehn's algorithm for small cancellation groups \cite[Sec.~VI]{LyndonSchupp}. 
		It states that if $w$ is not a geodesic word in a small cancellation group, then there is a relator $r$ such that $w$ contains a subword which is at least half of a conjugate of $r$. 
		We can see from the presentation of $A$ that $b^n$ does not contain half a relator for any $n$. 
		Thus, $b^n$ is a geodesic word and $|b^n| = n$. 
		
		For the inequality in $B_m$, we use the following homomorphism. 
		Define $\phi\colon B_m\to \Z$ by $x_i\mapsto 0$ if $i\neq m$ and $x_m\mapsto 1$. 
		Now suppose $|x_m^n| \leq n$, let $w$ be a geodesic for $x_m^n$. 
		Since $\phi$ reduces word length we see that $|\phi(w)|\leq |w| \leq n$. 
		However, we also see that $\phi(w) = n$ and so $n\leq |\phi(w)|$. 
		Thus $|w| = n$. 
		
		Since $\exp^{(m)}(n)$ is superadditive, \Cref{prop:boundbyDehn} shows that $\FA_H$ is bounded above by $\exp^{(m)}(n)$.
		We must now obtain a similar lower bound. 
		The lower bound on the Dehn function of $B_m$ comes from embedded diagrams in the universal cover of a presentation complex for $B_m$. 
		It can be shown \cite[Ex.~7.2.11]{BriChap}, that there is a sequence of  words $w_n$ such that $|w_n|\simeq n$ and $w_n$ bounds an embedded diagram with area $\simeq \exp^{(m)}(n)$. 
		Thus, appealing to the proof of  \Cref{thm:embeddeddisk}, we see that $\HArea(w_n)\simeq \exp^{(m)}(n)$ and hence $\exp^{(m)}(n)\preceq \FA_Q(n)$. 
	\end{proof}
	
	\Cref{thm:uncountablymanyexp} follows immediately from the following theorem. 
	
	\thmiteratedexp
	\begin{proof}
		Let $K$ be the presentation complex for $Q$ from above. 
		We can find a homotopy equivalent complex $L$ which is flag and has no local cut points. 
		Since $L$ is acyclic and aspherical by \Cref{lem:acycasphere}, $G_L(S)$ is of type $FP$. 
		By \Cref{thm:boundingfill} and \Cref{lem:homfilling}, the homological filling function of $G_L(S)$ is bounded above by $n^4\exp^{(m)}(n)\simeq \exp^{(m)}(n)$. 
		\Cref{thm:lowerbound} gives a lower bound on the homological filling function of $\exp^{(m)}(n)$ as long as $S\neq \Z$. 
		Thus, by varying $S\subsetneq\Z$ we obtain from \cite[Th.\,5.2]{KLS} that there are uncountably many quasi-isometry classes of such groups. 
	\end{proof}
	
	\begin{corollary}\label{cor:depends}
		The homological filling function is not independent of $S$. 
	\end{corollary}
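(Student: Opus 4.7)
The plan is to contrast the behavior of $\FA_{G_L(S)}$ for $S = \Z$ with its behavior for $S \subsetneq \Z$, using a flag complex $L$ with sufficiently wild fundamental group. Specifically, I would take the same flag complex $L$ constructed in the proof of \Cref{thm:uncountablymanyexp} above, i.e., a finite connected flag complex with no local cut points and $H_1(L)=0$ such that $\pi_1(L) = Q$, where $Q$ is the amalgamated product defined just before \Cref{lem:homfilling}. Recall that for such an $L$, the aspherical/acyclic property established in \Cref{lem:acycasphere} ensures $G_L(S)$ is of type $FP$ for every $S \subseteq \Z$.

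First, for any proper subset $S \subsetneq \Z$, the combination of \Cref{thm:boundingfill} and \Cref{thm:lowerbound} (as applied in the proof of \Cref{thm:uncountablymanyexp}) yields
\[
\FA_{G_L(S)}(n) \simeq \exp^{(m)}(n),
\]
where $m \geq 1$ is the parameter from the construction of $Q$ in \Cref{lem:homfilling}. On the other hand, for $S = \Z$ the set $V_S$ of ``bad'' vertices of $X_L/BB_L$ is empty, so $G_L(\Z)$ coincides with the Bestvina--Brady kernel $BB_L$. By the result of \cite{ABDDY} recalled just after \Cref{thm:1.2}, the homological Dehn function of $G_L(\Z)$ is bounded above by $n^4$, that is,
\[
\FA_{G_L(\Z)}(n) \preceq n^4.
\]

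Combining these two facts, and observing that $\exp^{(m)}(n) \not\preceq n^4$ for any $m \geq 1$ (since the left side grows at least exponentially while the right is polynomial), we conclude that $\FA_{G_L(\Z)} \not\simeq \FA_{G_L(S)}$ for any choice of $S \subsetneq \Z$ in this construction. This exhibits the required dependence of the homological filling function on $S$.

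There is no real obstacle here: the corollary is essentially a direct comparison of the upper bound from \cite{ABDDY} at $S = \Z$ with the lower bound coming from \Cref{thm:lowerbound} applied to an $L$ whose fundamental group has a sufficiently large homological Dehn function. The only thing to be careful about is to invoke an $L$ (such as the one from \Cref{thm:uncountablymanyexp}) where $\FA_{\pi_1(L)}$ is strictly bigger than $n^4$ up to $\simeq$, so that the two regimes $S = \Z$ and $S \subsetneq \Z$ are genuinely distinguishable; the construction of $Q$ via iterated HNN extensions amalgamated with the small-cancellation group $A$ supplies exactly this.
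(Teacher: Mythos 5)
Your proposal is correct and matches the paper's own argument: the paper's proof of this corollary likewise observes that the $L$ from the iterated-exponential construction gives $\FA_{G_L(S)}\simeq\exp^{(m)}(n)$ for $S\subsetneq\Z$ by Theorems~\ref{thm:boundingfill} and~\ref{thm:lowerbound}, while $G_L(\Z)=BB_L$ has $\FA\preceq n^4$ by the result of \cite{ABDDY}. One small improvement in your write-up over the paper's terse statement is that you explicitly trace the $n^4$ bound at $S=\Z$ back to \cite{ABDDY} (rather than to ``the above proof,'' which only establishes the $S\neq\Z$ side), which makes the logic cleaner.
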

	\begin{proof}
		In the above proof we see that if $S = \Z$, then the homological filling function is bounded by $n^4$. 
		However, when $S\neq \Z$, the homological filling function is an iterated exponential. 
	\end{proof}

	\section{Groups with quartic homological filling function}\label{sec:n4bounds}
	In this section we will give a construction of an aspherical, acyclic flag complex $L$ such that $G_L(S)$ has homological Dehn function $n^4$ for any set $S$.

	%\subsection[The complex]{Complex \texorpdfstring{$L$}{L}} 

	Let $K_A$ be the presentation $2$-complex of $A$, 
	and let $K$ be the second barycentric subdivision of $K_A$. 
	\begin{proposition}
		$K$ is a flag simplicial complex with no local cut points.
	\end{proposition}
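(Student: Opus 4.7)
My approach is to combine two standard facts about barycentric subdivisions of $\Delta$-complexes with a direct check at the unique vertex of $K_A$. The intermediate complex $K_1 := K_A^{(1)}$ is obtained by inserting a barycenter in each edge and in each relator polygon and coning the subdivided polygon boundary to the interior barycenter. The resulting $K_1$ is a $\Delta$-complex whose $2$-simplices have vertices $\{v_0, b_e, c_i\}$, where $v_0$ is the unique vertex of $K_A$, $b_e$ is the midpoint of an edge $e\in\{a,b\}$, and $c_i$ is the barycenter of a relator polygon $R_i$. This $K_1$ is not simplicial, since repeated occurrences of $a^{\pm1}, b^{\pm1}$ in the relators produce distinct $2$-simplices with identical vertex sets.

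For the flag and simplicial parts of the proposition, I would identify $K = K_A^{(2)}$ with the order complex of the face poset $\mathcal{P}(K_1)$: vertices of $K$ are cells of $K_1$, and $k$-simplices of $K$ are chains of $k+1$ cells under face inclusion. Since distinct cells of $K_1$ contribute distinct barycenters, this is an honest simplicial complex, and the order complex of any poset is automatically flag, since any pairwise comparable finite collection of elements forms a chain. Hence $K$ is a flag simplicial complex.

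For the absence of local cut points, I would use that this condition depends only on the underlying topological space, so it suffices to check it for $K_A$ itself. Points interior to a $2$-cell have Euclidean neighborhoods. Points interior to an edge $a$ or $b$ have neighborhoods that are unions of closed half-disks meeting in an arc through the point, one half-disk per corner of a relator polygon incident to that edge; since both $a$ and $b$ appear in $R_1$ or $R_2$, removing such a point leaves a locally connected set. The only remaining case is the vertex $v_0$, whose topological link is a graph on the four half-edge vertices $a^{\pm}, b^{\pm}$ with one edge per corner of $R_1$ or $R_2$; inspecting the first four corners of $R_1 = a b^{-1} a^{-1} b a \cdots$ already produces edges between $a^-$ and $b^-$, between $b^+$ and $a^-$, between $a^+$ and $b^+$, and between $b^-$ and $a^+$, so the link is connected.

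The main bookkeeping hurdle is identifying $K$ with the order complex of $\mathcal{P}(K_1)$: this requires verifying that the single subdivision $K_1$ is already a regular CW complex (each cell having an embedded characteristic map), so that the second subdivision realizes the order complex construction. After this is in hand, the three conclusions follow routinely from the facts above.
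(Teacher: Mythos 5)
Your proposal is correct but takes a genuinely different route from the paper. The paper treats the flag simplicial claim as a standard consequence of double barycentric subdivision and devotes the whole proof to the no-local-cut-points condition, which it verifies by examining the link of each type of vertex of $K$ (interior barycenter, edge barycenter, original vertex) and citing Leary's Prop.~6.1; the connectedness of $\Lk(v_0,K_A)$ is established by observing that all four subword types $ab, ab^{-1}, a^{-1}b, a^{-1}b^{-1}$ occur in the relators. You instead prove both parts: the flag simplicial claim via identifying $K$ with the order complex of the face poset of the regular CW complex $K_1=\mathrm{sd}(K_A)$, which is automatically flag; and the no-local-cut-points claim by invoking topological invariance to reduce to a direct case analysis on $K_A$ (2-cell interiors, edge interiors, the vertex). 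The acknowledged gap --- regularity of $K_1$ --- is routine here: each triangle of $K_1$ has the three distinct vertices $v_0, b_e, c_i$ and three distinct edges (a half-edge and two cone edges), so its characteristic map is an embedding. Your explicit corner computation from $R_1=ab^{-1}a^{-1}ba\cdots$ giving the 4-cycle $a^- - b^- - a^+ - b^+ - a^-$ is a correct substitute for the paper's subword observation. One small slip: in your edge-interior case, ``locally connected'' should read ``connected.'' The trade-off is that your argument is more self-contained and makes the flag property explicit, whereas the paper's is shorter because it works with vertex links of $K$ directly and needn't invoke topological invariance of the no-local-cut-points condition.
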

	\begin{proof}
		We only need to check that the complex satisfies the no local cut points condition. By~\cite[Prop.\,6.1]{Lea}, this requires that the link of every vertex be connected and not equal to a point. 
		There are two types of vertices in the complex $K_A$, the original $0$-cell from the presentation $2$-complex and the vertices coming from barycenters. 
		
		The vertices from barycenters are all contained in $2$-cells. 
		If they are from the interior of the $2$-cell then their link is isomorphic to $S^1$. If they are barycenters of edges then their link is the suspension of a discrete non-empty set and is thus connected. 
		
		The link of the original $0$-cell $v$ is connected, non-empty and not a point. 
		Indeed, since all four of the words $ab, ab^{-1}, a^{-1}b$ and $a^{-1}b^{-1}$ appear as subwords of the relations we see that the link of this $0$-cell in $K_A$ is connected. 
		After taking the second barycentric subdivision the link of the $0$-cell corresponding to $v$ is isomorphic to the second barycentric subdivision of the link of $v$ and hence is connected, non-empty and not a point. 
	\end{proof}
	
	Consider a flag simplicial complex $F$ depicted in~\Cref{fig:flag}. It was shown in~\cite[I.2.5.2]{BRS} that the Bestvina--Brady kernel $BB_F$ of $F$ has the Dehn function $\simeq$ equivalent to $n^4$.
	Let $L$ be the simplicial complex obtained by identifying an edge of $K$ with the bold edge of $F$.
	
	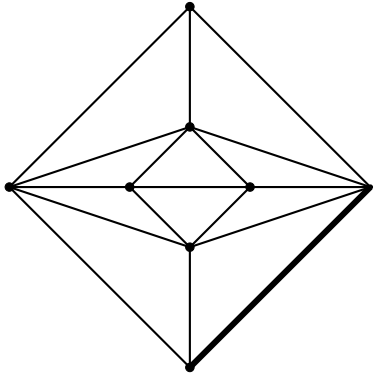
\begin{figure}
		\centering
		\begin{tikzpicture}[scale=0.8]
		\filldraw[black] (3,0) circle (1pt);
		\filldraw[black] (-3,0) circle (2pt);
		\filldraw[black] (0,3) circle (2pt);
		\filldraw[black] (0,-3) circle (2pt);
		\filldraw[black] (0,-1) circle (2pt);
		\filldraw[black] (0,1) circle (2pt);
		\filldraw[black] (1,0) circle (2pt);
		\filldraw[black] (-1,0) circle (2pt);
		\draw[thick] (3,0) -- (-3,0)--(0, -3)--(3, 0)--(0, 3) -- (-3, 0);
		\draw[thick] (1, 0) -- (0, 1) -- (-1, 0) -- (0, -1) -- (1, 0);
		\draw[thick] (3, 0) -- (0, 1) -- (-3, 0) -- (0, -1) -- (3, 0);
		\draw[thick] (0, 3) -- (0, 1);
		\draw[thick] (0, -3) -- (0, -1);
		\draw[line width=0.8mm
		%ultra thick
		] (3,0) -- (0,-3);
		\end{tikzpicture}
		\caption{A complex $F$ for which the Bestvina--Brady kernel $BB_F$ has a quartic Dehn function.}
		\label{fig:flag}
	\end{figure}
	
	The complex $L$ has no local cut points. 
	Also $\pi_1(L) = A$ and $L$ is aspherical and acyclic as it is homotopy equivalent to $K_A$. 
	We will now study the homological filling function of $G_L(S)$. 
	
	\begin{theorem}\label{thm:n4} Let $L$ be as above. Then the homological Dehn function of $G_L(S)$ is $\simeq$ equivalent to $n^4$ for any subset $S\subset \Z$. 
	\end{theorem}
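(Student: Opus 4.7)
The proof breaks into an upper and a lower bound, both of which must be uniform in $S\subset\Z$.

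The upper bound is immediate: by \Cref{prop:hkl} the group $A=\pi_1(L)$ is $C'(1/6)$ and hence hyperbolic, so $\FA_A(n)\simeq n$. Applying \Cref{cor:hyplinks} gives $\FA_{G_L(S)}(n)\preceq n^4$ for every $S$.

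The lower bound is subtler, because \Cref{thm:lowerbound} only produces the linear bound coming from $\FA_A$. To obtain a quartic lower bound, my plan is to exploit the subcomplex $F\subset L$, whose Bestvina--Brady kernel $BB_F$ is known from \cite{BRS} to have Dehn function $\simeq n^4$, realized by explicit ``snowflake'' $1$-cycles $\gamma_n\subset Z_F$ of length $\sim n$ that bound \emph{embedded} disks $\Delta_n$ of area $\sim n^4$ in the contractible $2$-complex $Z_F$. Combining \Cref{thm:embeddeddisk} with \Cref{thm:lipcellequivalent}, this embedded-disk witness for $BB_F$ upgrades to the Lipschitz bound $\HArea^{\Lip}_{Z_F}(\gamma_n)\succeq n^4$. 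Because $F$ is a full subcomplex of $L$ meeting $K$ only in one edge and is simply connected, the inclusion $S_F\hookrightarrow S_L$ is locally isometric; it lifts to a convex subcomplex inclusion $X_F\hookrightarrow X_L$ compatible with both the Morse functions and with $BB_F\le BB_L$. Consequently there is an isometric embedding of $X_F$ into $X_L^{(S)}$ for every $S$, which restricts to an isometric copy of $Z_F$ sitting inside $Z_L^{(S)}$ containing each $\gamma_n$.

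The main step is then to bound from below the homological area of $\gamma_n$ when fillings in $Z_L^{(S)}$ are allowed to leave this copy of $Z_F$. For this I plan to construct a Lipschitz retraction $r\colon N\to Z_F$ from a sufficiently large open neighborhood $N\subset Z_L^{(S)}$ of $Z_F$ (large enough to contain every filling of $\gamma_n$ of sub-$n^4$ mass) whose Lipschitz constant is bounded independently of $n$ and $S$. The retraction will be assembled sheet-by-sheet in the cubical structure of $X_L^{(S)}$: within each flat use the orthogonal cubical projection onto the subflat spanned by $F$, then compose with a level-set push in the spirit of the map $\mathcal{Q}$ built in Section~\ref{sec:pushbounds} to stay at height $1/2$. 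Once $r$ is in place, \Cref{lem:lipfillingconstant} sends any Lipschitz filling $\beta$ of $\gamma_n$ in $Z_L^{(S)}$ to a filling $r(\beta)$ of $\gamma_n$ in $Z_F$ with $\mass(r(\beta))\le C\,\mass(\beta)$; the quartic lower bound on $\HArea^{\Lip}_{Z_F}(\gamma_n)$ then transfers to give $\HArea_{Z_L^{(S)}}(\gamma_n)\succeq n^4$. The principal obstacle is verifying that $r$ has bounded Lipschitz constant uniformly in $S$: the deletion set $V_S$ changes the branching structure of $X_L^{(S)}$, and one must check that the piecewise definition of $r$ remains globally well-defined and has controlled distortion across this branching. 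A fallback is to bypass the retraction and apply \Cref{thm:embeddeddisk} directly to $\gamma_n$ inside a suitable aspherical $2$-complex model for $G_L(S)$, available because $L$ itself is aspherical and acyclic; there the issue shifts to checking $H_2=0$ of the appropriate universal cover and to showing that $\Delta_n$ remains embedded in the larger model.
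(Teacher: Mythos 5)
Your upper bound matches the paper's: $A=\pi_1(L)$ satisfies $C'(1/6)$ and is therefore hyperbolic, so \Cref{cor:hyplinks} yields $\FA_{G_L(S)}\preceq n^4$ uniformly in $S$.

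Your lower bound takes a genuinely different route from the paper's, and it contains a gap that you flag but do not close. You plan to push fillings of snowflake cycles $\gamma_n\subset Z_F$ back into $Z_F$ via a Lipschitz retraction $r\colon N\to Z_F$ with Lipschitz constant uniform in $n$ and $S$. The obstacle you name is real and unresolved: $Z_L^{(S)}$ is a level set rather than a convex subcomplex, and although $X_F$ may embed convexly in $X_L^{(S)}$, there is no general ``orthogonal cubical projection'' of a $\CAT(0)$ cube complex onto a convex subcomplex that one can simply restrict to the level set with controlled distortion; moreover, the branching of $X_L^{(S)}$ at the deleted vertices depends on $S$, so even a sheet-by-sheet definition is not obviously globally well-defined, let alone uniformly Lipschitz. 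Your fallback fails more starkly: \Cref{thm:embeddeddisk} requires a \emph{finitely presented} group and a presentation $2$-complex, but $G_L(S)$ is not finitely presented when $S$ is infinite and proper, which is precisely the interesting case.

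The paper instead argues entirely on the level of groups, sidestepping the geometry. From the presentation $G_L(S)=G_K(S)\ast_{e'=e}G_F(S)$ one sees $G_F(S)=BB_F$ (since $F$ is simply connected), so $BB_F\le G_L(S)$. Leary's Theorem~14.1 gives an epimorphism $G_L(S)\to G_L(\Z)=BB_L$, and the $\Z^2$-amalgam splitting $A_L=A_K\ast_{\Z^2}A_F$ gives a retraction $BB_L\to BB_F$; composing yields a group retraction $G_L(S)\to BB_F$. Then \Cref{prop:retract} gives $\FA_{BB_F}\preceq\FA_{G_L(S)}$, and $\FA_{BB_F}\simeq n^4$ follows from the embedded snowflake diagrams of~\cite{BRS} via \Cref{thm:embeddeddisk} (which does apply here, since $BB_F$ is finitely presented). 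If you wish to keep the geometric flavor of your idea, the repair is to realize the group retraction $G_L(S)\to BB_F$ as a cellular retraction of homological Cayley complexes --- which is exactly what the proof of \Cref{prop:retract} constructs --- rather than a Lipschitz retraction on the cube complex.
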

	\begin{proof}
		As $A$ is a hyperbolic group, the universal cover of $L$ is a hyperbolic metric space.
		We obtain an upper bound of $n^4$ for the homological filling function by \Cref{cor:hyplinks}.
		
		To obtain the lower bound we use the simplicial complex $F$ from \Cref{fig:flag}. 
		From \cite{Lea}, given a flag complex with no local cut points $P$, we have a presentation for $G_P(S)$ of the form
		\[		
		\langle E(P)\mid R_1\cup R_2\cup R_3\rangle,
		\]
		where
		\begin{itemize}
		    \item $E(P)$ is the set of directed edges of $P$. 
		    \item $R_1$ is the set of words of the form $e\bar e$ for each $e\in E(P)$. 
		    \item $R_2$ is the set of words of the form $abc$ and $a^{-1}b^{-1}c^{-1}$for each directed triangle $(a,b,c)$ in $P$. 
		    \item $R_3$ is the set of words of the form $a_1^na_2^n\dots a_l^n$ where $n\in S$ and the edge loops  $(a_1, a_2, \dots, a_l)$ normally generate $\pi_1(P)$. 
		\end{itemize}

		Now let $e$ be the bold edge in \Cref{fig:flag} and $e'$ be the edge of $K$ which is identified with $e$ to create $L$. 
		Since $F$ is simply connected we can assume that all the loops for the relators in $R_3$ are contained in $K$. 
		
		Writing down the presentation for $G_K(S)\ast_{e' = e}G_F(S)$ and canceling the generator $e$ using the relation $e = e'$ we obtain exactly the presentation for $G_L(S)$ given above. 
		Thus we can see that $G_L(S) = G_K(S)\ast_{e' = e}G_F(S)$ for all $S$. 
		
		Since the flag complex $F$ is simply connected, we can take $R_3$ to be empty for $G_F(S)$. 
		Thus we see that $G_F(S)$ is independent of $S$ and by \cite{DicksLeary}, we can see that $G_F(S)$ is isomorphic to $BB_F$ for all $S$.
		Thus $G_L(S) = G_K(S)\ast_{e' = e}BB_F$, for all $S$. 
		
		As we mentioned above, by \cite[I.2.5.2]{BRS}, the Dehn function for $BB_F$ is $n^4$. 
		The proof of this uses embedded diagrams to obtain the lower bound and thus we can appeal to \Cref{thm:embeddeddisk} to see that the homological Dehn function of $BB_F$ is also $n^4$. 
		
		To obtain the lower bound on $G_L(S)$ we will construct a retraction $G_L(S)\to BB_F$ and appeal to \Cref{prop:retract}. 
		In \cite[Th.\,14.1]{Lea}, it is shown that if $S\subset T$ then there is an epimorphism $G_L(S)\to G_L(T)$. 
		Thus we obtain a homomorphism  $G_L(S)\to G_L(\Z) = BB_L$. 
		Now $BB_L$ is a subgroup of $A_L$ which is the kernel of a map to $\Z$. 
		We see that the right-angled Artin group $A_L$ has a splitting $A_L = A_K\ast_{\Z^2}A_F$, where the copy of $\Z^2$ corresponds to the bold edge in \Cref{fig:flag}. 
		We obtain a retraction $A_L\to A_F$, by mapping the generators corresponding to each vertex not in $F$ to the identity. 
		This restricts to a retraction $BB_L\to BB_F$. 
		Composing with $G_L(S)\to BB_L$ we obtain a retraction $G_L(S)\to BB_F$ and thus, by \Cref{prop:retract}, we conclude that the homological Dehn function of $G_L(S)$ is bounded below by $n^4$.
	\end{proof}
	
	\thmquartic

	\begin{proof}
		Taking $L$ as above, we see from \Cref{thm:lea} that $G_L(S)$ is of type $FP$ as $L$ is aspherical and acyclic. 
		Moreover, $G_L(S)$ has homological Dehn function $n^4$ for any set $S$. 
		Thus, by \cite[Th.\,5.2]{KLS}, we obtain uncountably many quasi-isometry classes of such groups. 
	\end{proof}

	\section{The word problem}\label{sec:wordproblem}
	Given a finitely generated group $G$ one can ask if there is an algorithm that takes as input words in the finite generating set and outputs whether or not they represent the trivial element. 
	For finitely presented groups such an algorithm exists if and only if the Dehn function is bounded by a recursive function. 
	For a detailed introduction to decision problems in group theory we refer the reader to \cite{Mil}.
	
	For groups of type $FP_2$, one may suspect that the word problem is solvable if and only if the homological Dehn function is bounded by a recursive function. 
	However, there are at most countably many groups with a solvable word problem. 
	Thus,  \Cref{thm:n4} shows that there are groups of type $FP_2$ (and even of type $FP$) whose homological filling function is $n^4$ but the word problem is unsolvable. 
	Thus we immediately obtain: 
	\thmsubrecursunsolv
	
	For completeness we prove a criterion  showing exactly when $G_L(S)$ has solvable word problem, see~\Cref{thm:glsword} below. 
	There are two cases where the word problem is obviously solvable. 
	By~\cite{DicksLeary}, in the case that $S = \Z$ or $\pi_1(L) = 1$, we have $G_L(S) = BB_L$ which is a subgroup of a right-angled Artin group.
	Hence, $G_L(S)$  has solvable word problem in these cases. 
	The next lemma shows that in general for solvability of the word problem for $G_L(S)$, it is necessary for $\pi_1(L)$ to have a solvable word problem.  
	
	\begin{lemma}\label{lem:pi1lembeds}
		If $S\neq \Z$, then $\pi_1(L)$ embeds in $G_L(S)$. 
	\end{lemma}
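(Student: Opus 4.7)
The plan is to realize $\pi_1(L)$ geometrically as the local fundamental group around a deleted vertex, and then show it injects into $\pi_1(Y)$ by a covering space argument. Since $S \neq \Z$, pick $n \in \Z \smallsetminus S$ and a vertex $v$ of $X_L/BB_L$ with $f(v) = n$, so $v \in V_S$. Writing $Y \vcentcolon= (X_L/BB_L)\smallsetminus V_S$, the embedding $\chi$ of Definition~\ref{def:links} provides a ``small sphere'' $\iota\colon \S(L) \hookrightarrow Y$ around $v$ inside the deleted punctured neighborhood. Composing with the isomorphism $\pi_1(L) \cong \pi_1(\S(L))$ from \cite[Prop.\,6.9]{Lea} yields a homomorphism $\iota_*\colon \pi_1(L) \to \pi_1(Y) = G_L(S)$, and the task reduces to showing $\iota_*$ is injective.

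The key tool is the identification, from Leary's construction, of the universal cover of $Y$ with $\widetilde Y \vcentcolon= X_L^{(S)} \smallsetminus b^{-1}(V_S)$, with covering projection given by the restriction of $b\colon X_L^{(S)} \to X_L/BB_L$. Pick any lift $v' \in b^{-1}(v) \subset X_L^{(S)}$. By \cite[Th.\,9.1]{Lea}, since $n \notin S$, the link of $v'$ in $X_L^{(S)}$ is $\S(\widetilde L)$, which by \cite[Cor.\,7.2]{Lea} equals $\widetilde{\S(L)}$; moreover, the restriction of $b$ to this link is precisely the universal covering map $\S(\widetilde L) \to \S(L)$. Using the embedding $\chi$ at $v'$ we therefore get an embedding $\widetilde\iota\colon \S(\widetilde L) \hookrightarrow \widetilde Y$ lying over $\iota\colon \S(L) \hookrightarrow Y$.

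Now let $\gamma$ be a loop in $\S(L)$ based at some point $p$ with $[\gamma] \in \ker \iota_*$; by covering space theory, $\iota\circ\gamma$ lifts to a loop in $\widetilde Y$ based at any chosen lift of $\iota(p)$. Choose the lift $p' \in \widetilde\iota(\S(\widetilde L))$ sitting over $\iota(p)$. The unique lift of $\iota\circ\gamma$ starting at $p'$ is $\widetilde\iota$ applied to the lift of $\gamma$ in the universal covering $\S(\widetilde L) \to \S(L)$ (this uniqueness uses that $\iota(\gamma)$ avoids $V_S$, so its lifts stay in $\widetilde Y$). This lifted path is a loop iff $\gamma$ is null-homotopic in $\S(L)$, i.e.\ iff $[\gamma]$ is trivial in $\pi_1(L)$. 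Hence $\iota_*$ is injective, proving the lemma.

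The argument is essentially a careful unpacking of definitions; the main conceptual point, and the only place where any real content enters, is Leary's identification of the link at $v'$ with $\widetilde{\S(L)}$ and of the covering $\widetilde Y \to Y$ with the restriction of $b$. Once these are in hand, the lifting comparison between the cover $\S(\widetilde L) \to \S(L)$ and the cover $\widetilde Y \to Y$ near the puncture is automatic, so there is no significant obstacle.
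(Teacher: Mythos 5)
Your proof is correct, and it takes a genuinely different route from the one in the paper. The paper's proof is a one-liner: it cites Leary's Corollary~9.2, which states directly that the stabiliser of a vertex $v \in X_L^{(S)}$ with $f^{(S)}(v) \notin S$ under the $G_L(S)$-action is isomorphic to $\pi_1(L)$, and concludes immediately that $\pi_1(L) \leq G_L(S)$. You instead build the embedding explicitly: you realize $\pi_1(L) \cong \pi_1(\S(L))$ as the local fundamental group around a deleted vertex of $Y = (X_L/BB_L)\smallsetminus V_S$, and prove injectivity of $\iota_*\colon \pi_1(\S(L)) \to \pi_1(Y)$ by a lifting comparison between the universal cover $\S(\widetilde L) \to \S(L)$ of the link and the universal cover $\widetilde Y = X_L^{(S)} \smallsetminus b^{-1}(V_S) \to Y$, drawing on Leary's Theorem~9.1, Proposition~6.9, and Corollary~7.2. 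The two are morally the same argument --- the stabiliser computation in Leary's Corollary~9.2 is proved by precisely this kind of local covering-space analysis --- but the paper delegates the work to that citation while yours is self-contained modulo the structural identifications. What your version buys is a concrete geometric picture of how the copy of $\pi_1(L)$ sits inside $G_L(S)$ as the fundamental group of a punctured neighbourhood; what it costs is length, and a dependence on the compatibility between the link cover and the ambient cover that you correctly flag but that the paper's citation sidesteps entirely.
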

	\begin{proof}
		If $S\ne\Z$, there is a vertex $v\in X_L^{(S)}$ such that $f^{(S)}(v)\notin S$. By~\cite[Corollary~9.2]{Lea}, the  stabiliser of $v$ under the action of $G_L^{(S)}$ on $X_L^{(S)}$ is isomorphic to $\pi_1(L)$.
		In particular, $\pi_1(L)$ is a subgroup of $G_L(S)$.
	\end{proof}
	
	\begin{lemma}\label{lem:finitepressolv}
		If $\pi_1(L)$ has a solvable word problem and $S$ is finite, then $G_L(S)$ has a solvable word problem. 
	\end{lemma}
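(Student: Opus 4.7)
The plan is to deduce solvability of the word problem for $G_L(S)$ from the upper bound on its Dehn function established in \Cref{thm:Dehnbound}, combined with the classical equivalence between solvability of the word problem and recursive boundedness of the Dehn function for finitely presented groups.

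First, since $L$ is a finite flag complex, the group $H=\pi_1(L)$ is finitely presented. The classical theorem of Madlener--Otto (see also \cite{BriChap}) states that a finitely presented group has solvable word problem if and only if its Dehn function is bounded above by a recursive function. Hence the assumption that $\pi_1(L)$ has solvable word problem gives us a recursive function $\rho\colon\N\to\N$ with $\delta_H(n)\le\rho(n)$ for all $n$.

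Next, because $S$ is finite, \Cref{thm:Dehnbound} applies and tells us two things: the group $G_L(S)$ is finitely presented, and its Dehn function satisfies $\delta_{G_L(S)}(n)\preceq n^4\cdot\delta_H(n)$. Combining this with the bound $\delta_H\le\rho$, we conclude that $\delta_{G_L(S)}$ is bounded above by a recursive function (namely, a fixed constant multiple and re-parametrization of $n^4\rho(n)$, which is again recursive since the class of recursive functions is closed under multiplication, composition with polynomials, and the additive/multiplicative constants arising from $\preceq$).

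Finally, we invoke the Madlener--Otto equivalence in the other direction: the finitely presented group $G_L(S)$ has Dehn function bounded by a recursive function, and therefore has solvable word problem. The only subtlety worth flagging is bookkeeping about the $\preceq$ relation: one must check that if $f\preceq g$ and $g$ is recursive, then $f$ is bounded by a recursive function as well, which is immediate from the definition $f(n)\le Ag(Bn+C)+Dn+E$ with all constants integers. There is no real obstacle; the argument is essentially an application of \Cref{thm:Dehnbound} packaged with standard facts about recursive functions and Dehn functions.
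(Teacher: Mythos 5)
Your argument is correct and matches the paper's proof essentially step for step: both deduce finite presentability of $G_L(S)$ from finiteness of $S$, invoke \Cref{thm:Dehnbound} to bound $\delta_{G_L(S)}$ by $n^4\delta_{\pi_1(L)}(n)$, and then close the loop using the standard equivalence between solvability of the word problem and recursive boundedness of the Dehn function for finitely presented groups. The only difference is cosmetic: you are slightly more careful in spelling out the bookkeeping around $\preceq$ and the closure properties of recursive functions, which the paper leaves implicit.
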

	\begin{proof}
		Since $S$ is finite, we see that $G_L(S)$ is finitely presented. 
		It suffices to show that the Dehn function of $G_L(S)$ is bounded by a recursive function. 
		Let $\delta$ be the Dehn function of $\pi_1(L)$. 
		As in \Cref{thm:Dehnbound}, we see that the Dehn function of $G_L(S)$ is bounded by $n^4\delta(n)$. 
		Since $\delta(n)$ is a recursive function, we see that $n^4 \delta(n)$ is a recursive function. 
		Thus, $G_L(S)$ has a solvable word problem. 
	\end{proof}
	
	We are now ready to do the case of general $S$. 
	
	\begin{theorem}\label{thm:glsword}
		Suppose that $S\neq \Z$ and  $\pi_1(L)\neq 1$. 
		Then the group $G_L(S)$ has a solvable word problem if and only if $\pi_1(L)$ has a solvable word problem and $S$ is recursive.
	\end{theorem}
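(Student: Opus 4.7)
$(\Rightarrow)$ Suppose $G_L(S)$ has solvable word problem. I would first argue $\pi_1(L)$ has a solvable word problem. By \Cref{lem:pi1lembeds}, since $S\ne\Z$, there is an explicit embedding $\iota\colon\pi_1(L)\hookrightarrow G_L(S)$ realizing $\pi_1(L)$ as the stabilizer of a vertex $v\in X_L^{(S)}$ with $f^{(S)}(v)\notin S$. Taking the standard edge-path presentation of $\pi_1(L)$, each generator rewrites effectively as a word in the generating set $E(L)$ of $G_L(S)$, and Leary's identification sends such a word representing $\gamma\in\pi_1(L)$ to the same word viewed in $G_L(S)$; hence the word problem in $\pi_1(L)$ reduces to that in $G_L(S)$.

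To show $S$ is recursive, I would choose a loop $\alpha=(a_1,\dots,a_l)$ from the finite set $\mathcal A$ of edge-loops normally generating $\pi_1(L)$ used to define $R_3$, with $[\alpha]\ne 1$ in $\pi_1(L)$. For each $n\in\Z$, form the word $w_n=a_1^n a_2^n\cdots a_l^n$ and establish the key claim that $w_n=1$ in $G_L(S)$ if and only if $n\in S$. The forward direction is immediate since $w_n\in R_3$ for $n\in S$. For the converse, if $n\notin S$, the plan is to show $w_n$ represents a nontrivial element in the stabilizer $\pi_1(L)$ of a vertex $v_n\in X_L^{(S)}$ at height $n$: using the RAAG commutation relations in $A_L$ (which hold in $G_L(S)$ via $R_1\cup R_2$), $w_n$ can be rewritten as a loop in the link $\Lk(v_n)=\widetilde{\S(L)}$ whose image under the covering $\widetilde{\S(L)}\to\S(L)$ is $\alpha$; since $v_n\in V_S$ is removed and $[\alpha]\ne 1$ in $\pi_1(\S(L))=\pi_1(L)$, this loop represents a nontrivial element of $\pi_1\bigl((X_L/BB_L)\setminus V_S\bigr)=G_L(S)$. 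Given this claim, solvability of the word problem for $G_L(S)$ yields a decision procedure for membership of $n$ in $S$.

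$(\Leftarrow)$ Suppose $\pi_1(L)$ has solvable word problem and $S$ is recursive. For each $N\in\N$, set $S_N=S\cap[-N,N]$, which is finite and computable from $N$ since $S$ is recursive. By \Cref{lem:finitepressolv}, each $G_L(S_N)$ has a solvable word problem, and the inclusion $S_N\subset S$ induces a surjection $G_L(S_N)\twoheadrightarrow G_L(S)$. I plan to exhibit a computable function $N\colon\N\to\N$ such that every word $w$ of length at most $n$ is trivial in $G_L(S)$ if and only if it is trivial in $G_L(S_{N(n)})$; this yields a decision procedure via computing $N(|w|)$, then $S_{N(|w|)}$, and then applying \Cref{lem:finitepressolv}. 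The backward implication of the reduction is immediate from the surjection. The forward implication is a height bound on van Kampen diagrams for $G_L(S)$: a trivial word of length $n$ admits a filling using only $R_3$-relators $w_k$ with $|k|\le N(n)=O(n)$. I would obtain this by realizing the algebraic filling as a disk filling in $X_L^{(S)}$ via the $G_L(S)$-action on $Z_L^{(S)}$, then invoking \Cref{prop:controldehn}, which confines such disks to $\bigl(f^{(S)}\bigr)^{-1}\bigl([-n/4,\,1+n/4]\bigr)$; the $R_3$-relators whose corresponding 2-cells live in this sublevel set are exactly those indexed by $|k|\le O(n)$.

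The main obstacles are geometric, in both directions. In $(\Rightarrow)$(b), identifying $w_n$ with a loop winding around $v_n$ requires a careful rewriting via commutations that stays within the presentation of $G_L(S)$ (in particular, without invoking any $w_k$ with $k\notin S$), combined with verifying that the resulting class in $\pi_1(\Lk(v_n))$ equals $[\alpha]$. In $(\Leftarrow)$, transferring the height bound of \Cref{prop:controldehn} from geometric loops in $Z_L^{(S)}$ to algebraic van Kampen diagrams for $G_L(S)$ demands a careful matching of $R_3$-relators of $G_L(S)$ to 2-cells of $X_L^{(S)}$ at corresponding heights, ensuring that only small-index relators are used in the filling.
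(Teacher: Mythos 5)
Your overall structure matches the paper's proof exactly: $(\Rightarrow)$ reduces solvability of the word problem of $\pi_1(L)$ to that of $G_L(S)$ via \Cref{lem:pi1lembeds}, and recursiveness of $S$ via the fact that $n\in S$ if and only if $w_n=a_1^n\cdots a_l^n$ is trivial in $G_L(S)$; $(\Leftarrow)$ truncates $S$ to $T=S\cap[-O(n),O(n)]$, uses the height bound from \Cref{prop:controldehn} to show a trivial word of length $n$ is already trivial in $G_L(T)$, and applies \Cref{lem:finitepressolv}. These are precisely the steps the paper takes.

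The one substantive difference is how you handle the fact that $w_n=1$ in $G_L(S)$ if and only if $n\in S$. The paper simply cites this as~\cite[Lem.\,15.3]{Lea}; you instead sketch a geometric re-derivation, and that sketch as written does not hold up. You say $w_n$ ``can be rewritten as a loop in the link $\Lk(v_n)=\widetilde{\S(L)}$ whose image under the covering $\widetilde{\S(L)}\to\S(L)$ is $\alpha$,'' but $\widetilde{\S(L)}$ is simply connected, so any loop in it projects to a nullhomotopic loop in $\S(L)$, contradicting $[\alpha]\neq 1$. The correct picture (which is in Leary's Lemma 15.3 and its supporting material) is that $w_n$ stabilises the vertex $v_n$ and acts on $\Lk(v_n)\cong\widetilde{\S(L)}$ as the nontrivial deck transformation corresponding to $[\alpha]\in\pi_1(\S(L))\cong\pi_1(L)$; equivalently, $w_n$ traces a path (not a loop) in the link from a chosen basepoint to a distinct lift of the same point of $\S(L)$. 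You are better off citing Leary's lemma directly rather than re-deriving it, both for brevity and to avoid this pitfall. Apart from that, the proposal is sound and mirrors the published argument.
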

	\begin{proof}
	    The solvability of the word problem passes to subgroups and hence by \Cref{lem:pi1lembeds}, we see that if $G_L(S)$ has solvable word problem so does $\pi_1(L)$. 
	    
	    Now since $\pi_1(L)$ is non-trivial we can find an edge path $(a_1, \dots, a_l)$ which gives a non-trivial element of $\pi_1(L)$. 
	    By \cite[Lem.\,15.3]{Lea}, the set of $n$ such that $a_1^na_2^n\dots a_l^n$ is trivial in $G_L(S)$ is exactly $S$. 
	    Hence if $G_L(S)$ has solvable word problem we can compute the set of all numbers $n$ such that $a_1^na_2^n\dots a_l^n$ is trivial and thus compute $S$. 
	    Hence, if $G_L(S)$ has a solvable word problem, then $S$ is a recursive set. 
	
		We now show the other direction. Suppose that $\pi_1(L)$ has a solvable word problem and $S$ is recursive. 
		Let $w$ be a word in $G_L(S)$. 
		Since $G_L(S)$ acts properly and cocompactly on $Z$, we see that $Z$ is quasi-isometric to the Cayley graph $\Gamma$ of $G_L(S)$. 
		Since $Z$ is connected we can create a continuous quasi-isometry $q\colon\Gamma\to Z$. 
		To see this, let $v$ be a vertex of $Z$, since $G_L(S)$ acts freely on $Z$ there is a one-to-one correspondence between the orbit of $v$ and $G_L(S)$. 
		This defines the map $q$ on vertices. 
		Since $Z$ is connected, we can extend over the edge $(g, gs)$ by picking a shortest length path $g\cdot v$ to $gs\cdot v$ in $Z$. 
		
		Thus, if $w$ is trivial in $G_L(S)$, then $q(w)$ gives a loop $\gamma$ in $Z$. 
		If $K$ is the quasi-isometry constant for $q$, then $|\gamma|\leq 2K|w|$. 
		By \Cref{prop:controldehn} this loop bounds a disk in $f^{-1}([-2K|w|, 2K|w|+1])$. 
		We can now map this disk to $(X_L/BB_L)\smallsetminus V_S$. 
		The disk lives in the subspace $(X_L/BB_L)\smallsetminus V_T$, where $T = S\cap [-2K|w|, 2K|w|+1]$. 
		Thus we see that if $w$ is trivial it is also trivial in $G_L(T)$.
		Since $S$ is recursive, we can compute $T$ algorithmically. 
		However, we can now appeal to \Cref{lem:finitepressolv} to see that $G_L(T)$ has solvable word problem. 
		Hence we can decide if $w$ is trivial. 
	\end{proof}

	\section{Concluding remarks}\label{sec:con}
	
	Throughout we have studied the groups $G_L(S)$ and their homological filling functions. Most of the results have shown that there is very little dependence on $S$. For instance \Cref{cor:depends} is the only case we have shown dependence on $S$ and in this case there is only one outlier, namely, when $S = \Z$.
	This leads us to the following question: 
	\Qdependence
	This lack of dependence on $S$ allows us to construct uncountably many groups with a given homological filling function. 
	We can prove the following theorem: 
	
	\begin{theorem}\label{thm:manyisoclasses}
		Let $H$ be a group of type $FP_2$. Suppose that $\FA_H\succeq n^4$. Then there are uncountably many groups with homological Dehn function $\FA_H$. 
		Moreover, these groups can be constructed to satisfy the same homological finiteness properties as $H$. 
	\end{theorem}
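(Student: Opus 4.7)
My plan is to construct the required groups as free products of $H$ with the uncountable family $\{G_L(S)\}_{S\subset\Z}$ coming from Section \ref{sec:n4bounds}, using the hypothesis $\FA_H\succeq n^4$ to absorb the quartic Dehn function of the $G_L(S)$ factor.

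I would take $L$ to be the flag complex from \Cref{thm:n4}, so that $\FA_{G_L(S)}\simeq n^4$ for every $S\subset\Z$, independently of $S$. Because $L$ is aspherical and acyclic with no local cut points (and $\pi_1(L)$ is nontrivial), \Cref{thm:lea} ensures that each $G_L(S)$ is of type $FP$ and that the family $\{G_L(S)\}_{S\subset\Z}$ comprises uncountably many isomorphism classes. Set $H_S\vcentcolon= H\ast G_L(S)$. By \Cref{prop:freeproducts},
\[
\FA_{H_S} \simeq \max\bigl(\FA_H,\FA_{G_L(S)}\bigr) \simeq \max\bigl(\FA_H,n^4\bigr) \simeq \FA_H,
\]
the last equivalence using the hypothesis $\FA_H\succeq n^4$. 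A free product of two groups of type $FP_n$ is itself of type $FP_n$ for every $n\ge1$ (standard Bass--Serre/Mayer--Vietoris argument gluing partial projective resolutions along the Bass--Serre tree), and since $G_L(S)$ is of type $FP$, the free product $H_S$ inherits every homological finiteness property enjoyed by $H$.

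It remains to extract uncountably many distinct isomorphism classes of $H_S$. Both $H$ (being $FP_2$, hence finitely generated) and $G_L(S)$ (generated by the finite edge set of $L$) are finitely generated, so $H_S$ is finitely generated and admits a Grushko--Kurosh decomposition into freely indecomposable non-cyclic factors and a free factor, unique up to reordering and conjugation. Concatenating the Grushko decompositions of $H$ and $G_L(S)$ gives one for $H_S$; hence any isomorphism $H_S\cong H_{S'}$ forces the multisets of non-cyclic freely indecomposable factors contributed by $G_L(S)$ and $G_L(S')$ to agree, which forces $G_L(S)\cong G_L(S')$. Consequently the assignment $[G_L(S)]\mapsto[H_S]$ on isomorphism classes is injective, yielding uncountably many groups $H_S$ with homological Dehn function $\simeq\FA_H$. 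The delicate point I expect is this Grushko cancellation step: potential isomorphisms between freely indecomposable factors of $H$ and of $G_L(S)$ need to be handled carefully, but since $H$ has only finitely many Grushko factors, at most countably many collisions can occur and the uncountable count is preserved.
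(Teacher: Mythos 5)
Your proposal is correct and follows essentially the same route as the paper: take $L$ from \Cref{thm:n4}, form $H\ast G_L(S)$, invoke \Cref{prop:freeproducts} together with $\FA_H\succeq n^4$, deduce the finiteness properties from Brown's criterion applied to the Bass--Serre tree, and separate isomorphism classes by uniqueness of the Grushko decomposition. The only stylistic difference is your closing hedge about ``at most countably many collisions,'' which is unnecessary: the multiset cancellation you had already set up (the finite multiset of Grushko factors of $H$ cancels cleanly from both sides) directly gives $G_L(S)\cong G_L(S')$, exactly as in the paper.
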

	\begin{proof}
		Let $G_L(S)$ be the uncountably many groups constructed from \Cref{thm:n4}. Let $H_L(S) =  G_L(S)\ast H$. Using the Grushko decomposition we can see that $H_L(S)\cong H_L(T)$ if and only if $G_L(S)\cong G_L(T)$, thus we obtain uncountably many isomorphism classes of groups. 
		By \Cref{prop:freeproducts} and the fact that $\FA_H\succeq n^4$ we see that $\FA_{H_L(S)} \simeq \FA_H$. 
		
		Considering the action of $H_L(S)$ on the Bass--Serre tree for the free splitting we can apply Proposition~1.1 of \cite{Bro87} and the fact that $G_L(S)$ is of type $FP$ to see that $H_L(S)$ is of type $FP_n$ if $H$ is of type $FP_n$. 
		The same reasoning applies if $H$ is of type $FP_\infty$ or $FP$. 
	\end{proof}
	
	If, in addition, $H$ is finitely presented, then we can also gain uncountably many quasi-isometry classes of such groups.

		\thmsnowflake
		
		To prove \Cref{thm:manyclasses} we will use the approach of \cite{MOW}, based on Grigorchuk's construction of the space of marked groups (for details see~\cite{Gri84,CG05,BK20}). 
		
		Given any $n\in \mathbb{N}$, let $F_n$ denote the free group with a fixed basis $\{a_1,\dots,a_n\}$.
		The {\em space of $n$-generated marked groups}, $\mathcal{G}_n$, is the set of normal subgroups of $F_n$ endowed with the topology induced from the product topology on the set $2^{F_n}$ of all subsets of $F_n$ (identified with the Cartesian product $\{0,1\}^{\aleph_0}$).
		
		Given a group $G$ and a tuple $X = (x_1, \dots, x_n)\in G^n$ such that $\{x_1, \dots, x_n\}$ generates $G$, we obtain a surjection $\phi_G\colon F_n\to G$ given by $a_i\mapsto x_i$ and we can associate to $(G,X)$ the point $\ker(\phi_G)\in \mathcal{G}_n$. 
		This gives a surjection from pairs $(G, X)$ to $\mathcal{G}_n$.
		When we refer to $(G, X)$ as a marked group we will mean its image in $\mathcal{G}_n$ under the above surjection. 
		
		The topology on $\mathcal{G}_n$ can also be described in terms of convergence as follows. 
%		visualized in terms of Cayley graphs as follows. 
%		For a group $G$, generated by a set $X$, and for any $r > 0$, let
%		$B_{G,X}(r)$ be the ball of radius $r$ in the Cayley graph $\Gamma(G, X)$.
		Let $(G, X)$ and $(H, Y)$ be $n$-generated marked groups. 
		Let $\phi_G\colon F_n\to G$ and $\phi_H\colon F_n\to H$ be the maps above.
		We say that $(G, X)$ and $(H, Y)$ are {\em $r$-locally isomorphic}, for some $r>0$ if $\ker(\phi_G)\cap B_{F_n}(r) = \ker(\phi_H)\cap B_{F_n}(r)$.
%		
%		if there exists an isomorphism between the induced subgraphs of $\Gamma(G, X)$ and $\Gamma(H, Y)$ with vertex sets $B_{G,X}(r)$ and $B_{H, Y}(r)$ which send the identity to the identity and edges labeled $x_i$ to edges labeled $y_i$ for all $i$. 
%		Equivalently, let $\phi_G\colon F_n\to G, \phi_H\colon F_n\to H$ be the maps above.
%		Then $(G, X)$ and $(H, Y)$ are $r$-locally isomorphic, if $\ker(\phi_G)\cap B_{F_n}(r) = \ker(\phi_H)\cap B_{F_n}(r)$.
%		
%		
		A sequence of marked groups $(G_i,X_i)$ converges to $(G,X)$ in $\mathcal{G}_n$ if for every $r>0$ there exists $l>0$, such that $(G_i, X_i)$ and $(G, X)$ are $r$-locally isomorphic for all $i>l$.
		
		\begin{lemma}\label{lem:localfree}
			Let $(G, X)$ be a marked $m$-generated group and $(H_1, Y_1)$ and $(H_2, Y_2)$ be marked $n$-generated groups. 
			Then $(H_1, Y_1)$ and $(H_2, Y_2)$ are $r$-locally isomorphic if and only if $(G\ast H_1, X\cup Y_1)$ and $(G\ast H_2, X\cup Y_2)$ (with homomorphisms $\phi_{G\ast H_i}$ naturally defined by $\phi_G$ and $\phi_{H_i}$) are $r$-locally isomorphic. 
		\end{lemma}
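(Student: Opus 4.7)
The plan is to reformulate both sides of the equivalence in terms of equality of the degree-$r$ portions of the kernels of $\phi_{H_i}\colon F_n\to H_i$ and $\phi_{G\ast H_i}\colon F_{m+n}\to G\ast H_i$, and then to pass between them using the natural free-product decomposition $F_{m+n}=F(X)\ast F(Y)$, where $F(X)$ and $F(Y)$ denote the free subgroups of $F_{m+n}$ on the first $m$ and last $n$ basis elements respectively. Under this splitting, $\phi_{G\ast H_i}$ is the map induced by $\phi_G$ on $F(X)$ and $\phi_{H_i}$ on $F(Y)$, followed by the canonical inclusions $G\hookrightarrow G\ast H_i$ and $H_i\hookrightarrow G\ast H_i$.

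For the $(\Leftarrow)$ direction, I would use that the canonical map $H_i\hookrightarrow G\ast H_i$ is injective, so that for any $v\in F(Y)\subset F_{m+n}$ one has $\phi_{G\ast H_i}(v)=1$ in $G\ast H_i$ if and only if $\phi_{H_i}(v)=1$ in $H_i$. Hence, if $(H_1,Y_1)$ and $(H_2,Y_2)$ fail to be $r$-locally isomorphic, any witness $v\in F(Y)$ with $|v|\le r$ lying in $\ker(\phi_{H_1})\setminus\ker(\phi_{H_2})$ (or vice versa) automatically witnesses that $(G\ast H_1,X\cup Y_1)$ and $(G\ast H_2,X\cup Y_2)$ are not $r$-locally isomorphic, since $|v|\le r$ as an element of $F_{m+n}$ as well.

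For the $(\Rightarrow)$ direction, I would invoke the normal form theorem for free products. Any reduced $w\in B_{F_{m+n}}(r)$ decomposes uniquely as an alternating product $w=u_1v_1u_2v_2\cdots u_kv_k$ with $u_j\in F(X)$ and $v_j\in F(Y)$ (allowing $u_1$ or $v_k$ to be trivial), where $\sum_j(|u_j|+|v_j|)\le r$ so that each piece individually has length $\le r$. The image $\phi_{G\ast H_i}(w)$ can then be evaluated by iteratively deleting pieces whose image is trivial and concatenating the resulting neighbors within the same free factor; the element $w$ lies in $\ker(\phi_{G\ast H_i})$ precisely when this process terminates in the empty word. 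Since deletions and free cancellations only decrease lengths, throughout the reduction every triviality check concerns a word of length $\le r$ in either $F(X)$ (tested against $\ker(\phi_G)$, which is fixed) or $F(Y)$ (tested against $\ker(\phi_{H_i})\cap B_{F_n}(r)$, which is identical for $i=1,2$ by hypothesis). Hence the reduction processes in $G\ast H_1$ and $G\ast H_2$ proceed identically on $w$, so $w\in\ker(\phi_{G\ast H_1})$ if and only if $w\in\ker(\phi_{G\ast H_2})$.

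The main technical point requiring care is the invariant that all pieces checked for triviality during the normal-form reduction have length at most $r$. This is not a genuine obstacle, but it does need to be spelled out: after a deletion, the concatenation of two neighboring pieces in the same free factor has length bounded by the sum of their original lengths, which is itself bounded by the total length of the remaining word, which is monotone nonincreasing and was initially at most $r$. This keeps every intermediate test inside the radius-$r$ portion of $\ker(\phi_G)$, $\ker(\phi_{H_1})$, or $\ker(\phi_{H_2})$, so that the hypothesis on $r$-local isomorphism applies at every step.
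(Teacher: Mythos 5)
Your proof is correct and follows essentially the same route as the paper. For the direction from the factors to the free product, the paper proceeds by induction on $r$, at each step locating a single alternating block of $w$ that is trivial in $G$ or in $H_1$ (by the normal form theorem for free products), deleting it, and invoking the inductive hypothesis on the shorter word; your version runs the same reduction iteratively and tracks the length invariant explicitly, which is the same argument in different clothing. For the converse direction, both you and the paper use the injectivity of the canonical inclusion $H_i \hookrightarrow G\ast H_i$ together with the commutativity of the square $F_n \to H_i \to G\ast H_i$; you phrase it as a contrapositive (a witness in $F(Y)$ for the failure of $r$-local isomorphism of the $H_i$ is a witness for the free products), the paper phrases it directly, but the content is identical.
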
 
		\begin{proof}
		Identifying $F_{m+n}$ with $F_m\ast F_n$, we have the following commutative diagram:
		\begin{center}
		\begin{tikzcd}
		F_m\arrow[r,hook]{r}\arrow[d,"\phi_G"] & F_m \ast F_{n} \arrow[d,"\phi_{G\ast H_i}"]& F_n\arrow[l,hook']\arrow[d,"\phi_{H_i}"]\\
		G\arrow[r,hook] & G\ast H_i & H_i \arrow[l,hook'] 
		\end{tikzcd}
		\end{center}

			To prove the lemma in one direction, assume that $(G\ast H_1, X\cup Y_1)$ and $(G\ast H_2, X\cup Y_2)$ are $r$-locally isomorphic and suppose that $w$ is a word of length $\leq r$ in $F_n$ which is trivial under $\phi_{H_1}$. 
			Then $w$ also maps to the trivial element under the homomorphism $\phi_{G\ast H_1}$. Since $(G\ast H_1, X\cup Y_1)$ and $(G\ast H_2, X\cup Y_2)$ are $r$-locally isomorphic, the word $w$ maps to the trivial element under the homomorphism $\phi_{G\ast H_2}$ as well. 
			From the commutativity of the diagram we see that $w$ must also map to the trivial element under the homomorphism~$\phi_{H_2}$. 
			
			For the other direction, suppose that $(H_1, Y_1)$ and $(H_2, Y_2)$ are $r$-locally isomorphic. 
			We will prove that  $(G\ast H_1, X\cup Y_1)$ and $(G\ast H_2, X\cup Y_2)$ are $r$-locally isomorphic by induction on $r$.
			Note that any two marked groups are $0$-locally isomorphic. 
			Now suppose $w$ is a word of length $r$ in $F_{n+m}$ which maps to the trivial element in $G\ast H_1$. From the description of normal forms in free products we conclude that $w$ contains a subword $v$ of length $\leq r$ such that either $v$ is trivial under the homomorphism $\phi_{H_1}$ or $v$ is trivial under the homomorphism $\phi_G$. Writing $w=uvu'$ for some (possibly trivial) subwords $u,u'$, we can remove $v$ and obtain a word $w'=uu'$ of length $l<r$ which still maps to the trivial element in $G\ast H_1$.
			By induction, $w'\in B_{F_{n+m}}(l)\cap \ker(\phi_{G\ast H_2})$. 
			If $v$ was trivial under $\phi_G$, then by the commutative diagram above we see that $v$ is also trivial under the homomorphism $\phi_{G\ast H_2}$, and hence $w$ is trivial under $\phi_{G\ast H_2}$ as well.
			If $v$ was trivial under $\phi_{H_1}$, then since $(H_1, Y_1)$ and $(H_2, Y_2)$ are $r$-locally isomorphic we see that $v\in \ker(\phi_{H_2})\cap B_{F_n}(r)$. Then $v$ is trivial under the homomorphism $\phi_{G\ast H_2}$ as well, and hence $w=uvu'$ is trivial under $\phi_{G\ast H_2}$ again. 
		\end{proof}
		
		Recall, that a topological space $X$ is {\em perfect} if every point $x$ of $X$ is a limit of points in $X\smallsetminus\{x\}$. We denote as $2^\Z$ the set of all subsets of $\Z$. 
		
		\begin{lemma}\label{lem:perfectimage}
			Fix a finite flag complex $L$ with $\pi_1(L)\neq 1$. Let $n$ be the number of edges of $L$. Then the map $2^{\Z}\to \mathcal{G}_n$ given by $S\mapsto (G_L(S), E(L))$ is injective and has perfect image.
		\end{lemma}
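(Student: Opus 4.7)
The plan is to prove injectivity by reading off $S$ from the image of $\phi_{G_L(S)}$ using \cite[Lem.~15.3]{Lea}, and to prove perfectness by showing that a word of length at most $r$ in $F_n$ cannot distinguish $G_L(S)$ from $G_L(S')$ whenever $S$ and $S'$ agree on a sufficiently large symmetric interval around $0$. For injectivity, the nontriviality of $\pi_1(L)$ lets me fix an edge loop $(a_1,\dots,a_l)$ representing a nontrivial class and consider the words $w_n \vcentcolon= a_1^n a_2^n \cdots a_l^n$; by \cite[Lem.~15.3]{Lea}, $w_n \in \ker\phi_{G_L(S)}$ if and only if $n\in S$, so the kernel determines $S$ and the map $S\mapsto(G_L(S),E(L))$ is injective.

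For perfectness, fix $S\subseteq\Z$ and $r>0$. I claim there is $N=N(r,S)$ such that whenever $S'\cap[-N,N]=S\cap[-N,N]$, the marked groups $(G_L(S),E(L))$ and $(G_L(S'),E(L))$ are $r$-locally isomorphic. Granted this, choosing $n_r$ with $|n_r|>N(r,S)$ and setting $S_r\vcentcolon= S\triangle\{n_r\}$ produces a sequence of $S_r\neq S$ with $(G_L(S_r),E(L))\to(G_L(S),E(L))$, so $S$ is not isolated. To prove the claim, fix $w\in F_n$ of length at most $r$; by symmetry it suffices to show that if $w$ is trivial in $G_L(S)$ then it is trivial in $G_L(S')$. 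Following the proof of \Cref{thm:glsword}, I fix a continuous $K$-quasi-isometry $q\colon \Gamma(G_L(S),E(L))\to Z_L^{(S)}$ coming from the free, proper, cocompact action of $G_L(S)$ on $Z_L^{(S)}$. Triviality of $w$ means the path $w$ closes up to a loop in $\Gamma$, so $q$ sends it to a combinatorial loop $\gamma\subset Z_L^{(S)}$ of length at most $2Kr$, which by \Cref{prop:controldehn} (applied to an embedded decomposition of $\gamma$ if needed) admits a disk filling $\pi\colon\Delta\to X_L^{(S)}$ with image in $f^{-1}([-Kr/2,\,1+Kr/2])$. Choosing $N>Kr/2+1$ and projecting $\pi$ along the covering map to $(X_L/BB_L)\setminus V_S$ yields a disk filling $\bar\pi$ of the projected loop $\bar\gamma$ whose image lies in $f^{-1}([-N,N])$. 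Since $V_S\cap f^{-1}([-N,N])=V_{S'}\cap f^{-1}([-N,N])$, the same map $\bar\pi$ is a valid disk filling of $\bar\gamma$ in $(X_L/BB_L)\setminus V_{S'}$, so $w$ represents the trivial element of $G_L(S')=\pi_1((X_L/BB_L)\setminus V_{S'})$.

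The main obstacle will be ensuring that the identification of $w$ with the loop $\bar\gamma$ is consistent across $(X_L/BB_L)\setminus V_S$ and $(X_L/BB_L)\setminus V_{S'}$, so that the null-homotopy of $\bar\gamma$ in the latter witnesses the triviality of the same algebraic word $w$. This is handled by noting that the generators $E(L)$ correspond to edge loops in the half-integer level set $f^{-1}(\tfrac12)$, which is disjoint from $V_S\cup V_{S'}$ and hence common to both quotients; consequently both the basepoint and the generating loops live in a subspace shared by $(X_L/BB_L)\setminus V_S$ and $(X_L/BB_L)\setminus V_{S'}$, and $w$ represents the same based homotopy class of loops in either quotient, making the transfer of the null-homotopy algebraically faithful.
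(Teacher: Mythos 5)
Your injectivity argument is correct: you use \cite[Lem.~15.3]{Lea} to recover $S$ from $\ker\phi_{G_L(S)}$, whereas the paper's own proof cites \cite[Lem.~14.4]{Lea} to exhibit an element of $K_S\smallsetminus K_T$ directly; both work.

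For perfectness you take a genuinely different route, which is worth comparing. The paper approximates a given $S$ by sets $T_i$ with $T_i\subset S$ (if $S$ is infinite) or $T_i\supset S$ (if $S$ is finite), obtaining a containment of kernels $K_{T_i}\subset K_S$ (resp.\ $K_S\subset K_{T_i}$) for free from \cite[Th.~14.1]{Lea}, and then invokes \cite[Lem.~3.2]{KLS} for a lower bound on the length of the shortest element in the symmetric difference of the kernels. You instead approximate $S$ by the symmetric differences $S_r=S\triangle\{n_r\}$, and reconstruct the needed length control from scratch via the disk-filling machinery of \Cref{prop:controldehn}, following the proof of \Cref{thm:glsword}. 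That is an attractive, self-contained alternative to citing \cite{KLS}. However, there is a gap in the ``by symmetry'' step. Because $S_r$ is neither a subset nor a superset of $S$, you need \emph{both} inclusions $\ker\phi_{G_L(S)}\cap B_{F_n}(r)\subset\ker\phi_{G_L(S_r)}$ and $\ker\phi_{G_L(S_r)}\cap B_{F_n}(r)\subset\ker\phi_{G_L(S)}$. Your argument establishes only the first, with a threshold $N(r,S)$ depending on the quasi-isometry constant $K=K(S)$ of the chosen map $q\colon\Gamma(G_L(S),E(L))\to Z_L^{(S)}$; the reverse inclusion would require a filling in $X_L^{(S_r)}$ whose heights are controlled by $K(S_r)$, which is not controlled by your choice of $N(r,S)$. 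The gap is fixable --- for instance by checking that $K(S)$ can be taken uniform in $S$ (the quotient $Z_L^{(S)}/G_L(S)$ is the fixed compact complex $f^{-1}(\tfrac12)\subset X_L/BB_L$, independent of $S$, and the generators $E(L)$ have uniformly bounded translation lengths), or, more simply, by switching to the paper's one-sided approximation $T_i=S\cap[-i,i]$ so that the containment $K_{T_i}\subset K_S$ makes one direction automatic and only your one-directional filling argument is needed --- but as written the step is not justified.
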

		\begin{proof}
			Let $K_S$ be the kernel of the natural homomorphism $F_n\to G_L(S)$ where $F_n$ is the free group with basis in bijection with $E(L)$. 
			Let $S, T\subset \Z$. 
			Suppose $s\in S\smallsetminus T$. 
			Then \cite[Lem.\,14.4]{Lea} shows that we can find an element of $K_S\smallsetminus K_T$, thus showing that the map is injective. 
			
			To show that the image is perfect, pick $S\subset \Z$. 
			We consider two cases. First, suppose $S$ is infinite. Let $T_i = S\cap [-i, i]$. 
			Then there are surjections $G_L(T_i)\to G_L(S)$ and by \cite[Lem.\,3.2]{KLS}, the shortest element of the kernel has length $(i+1)\sqrt{2/(d+1)}$. We see that $(G_L(T_i), E(L))$ and $(G_L(S), E(L))$ are $r$-locally isomorphic for all $r<(i+1)\sqrt{2/(d+1)}$.
			
			In the case that $S$ is finite let $T_i = S\cup \{j\mid  j\leq -i$ or $j\geq i\}$. 
			We can then apply the same reasoning as above to see that $(G_L(T_i), E(L))$ and $(G_L(S), E(L))$ are $r$-locally isomorphic for all $r<(i+1)\sqrt{2/(d+1)}$.	
		\end{proof}

	Following the proof of \Cref{lem:perfectimage} and \Cref{lem:localfree} we obtain the following. 
	\begin{corollary}\label{cor:perfectsubsets}
		Fix a finite flag complex $L$ with $\pi_1(L)\neq 1$. Let $n$ be the number of edges of $L$. Let $G$ be a finitely presented group. Let $(G, X)\in \mathcal{G}_m$. Then the image of $2^{\Z}\to \mathcal{G}_{m+n}$ given by $S\mapsto (G\ast G_L(S), X\sqcup E(L))$ is perfect. 
		Moreover, this image has a dense subset of finitely presented groups corresponding to finite sets $S\subset \Z$.\qed
	\end{corollary}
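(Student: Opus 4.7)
\textbf{Proof plan for Corollary~\ref{cor:perfectsubsets}.} The plan is to treat the corollary as a bookkeeping assembly of Lemmas~\ref{lem:perfectimage} and~\ref{lem:localfree}. Lemma~\ref{lem:perfectimage} gives perfectness and injectivity of the map $\Psi\colon 2^{\Z}\to\mathcal{G}_n$, $S\mapsto(G_L(S),E(L))$, and Lemma~\ref{lem:localfree} is precisely the transfer tool that turns $r$-local isomorphisms on the $G_L(S)$ factor into $r$-local isomorphisms on the free product $G\ast G_L(S)$, in both directions. I will denote the map in question by $\Phi\colon 2^{\Z}\to\mathcal{G}_{m+n}$, $S\mapsto(G\ast G_L(S),X\sqcup E(L))$.

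I would first record that $\Phi$ is injective. If $\Phi(S)=\Phi(T)$, then for every $r$ the marked groups $(G\ast G_L(S),X\sqcup E(L))$ and $(G\ast G_L(T),X\sqcup E(L))$ are $r$-locally isomorphic; the ``only if'' direction of Lemma~\ref{lem:localfree} then gives that $\Psi(S)$ and $\Psi(T)$ are $r$-locally isomorphic for every $r$, hence $\Psi(S)=\Psi(T)$, and injectivity of $\Psi$ from Lemma~\ref{lem:perfectimage} yields $S=T$.

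Next I would establish perfectness. Fix $S\subset\Z$ and let $P_S\vcentcolon=\Phi(S)$. Using the construction in the proof of Lemma~\ref{lem:perfectimage}, I choose a sequence $T_i\neq S$ with $T_i=S\cap[-i,i]$ when $S$ is infinite and $T_i=S\cup(\Z\smallsetminus[-i,i])$ when $S$ is finite; in either case $\Psi(T_i)$ and $\Psi(S)$ are $r_i$-locally isomorphic with $r_i\to\infty$. The ``if'' direction of Lemma~\ref{lem:localfree}, applied at each radius $r_i$ with $H_1=G_L(T_i)$, $H_2=G_L(S)$, $Y_1=Y_2=E(L)$, promotes this to an $r_i$-local isomorphism between $\Phi(T_i)$ and $\Phi(S)$, so $\Phi(T_i)\to P_S$ in $\mathcal{G}_{m+n}$. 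By the injectivity just established, $\Phi(T_i)\neq P_S$, so $P_S$ is not isolated in the image.

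For density of the finitely presented locus I reuse the same approximation. For arbitrary $S\subset\Z$ set $S_i\vcentcolon=S\cap[-i,i]$ (or simply $S_i\vcentcolon=S$ when $S$ is already finite); each $S_i$ is finite, so $G_L(S_i)$ is finitely presented by \Cref{thm:Dehnbound} (or directly from the fact that only finitely many relators are required), and hence so is $G\ast G_L(S_i)$ since $G$ is assumed finitely presented. The convergence $\Psi(S_i)\to\Psi(S)$ in $\mathcal{G}_n$ from Lemma~\ref{lem:perfectimage} transfers through Lemma~\ref{lem:localfree} to $\Phi(S_i)\to\Phi(S)$ in $\mathcal{G}_{m+n}$, exhibiting the desired dense subset. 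I do not foresee a genuine obstacle: the only care needed is to apply Lemma~\ref{lem:localfree} radius-by-radius rather than attempting to pass directly to the limit, and to verify that the sequences $T_i$ are always distinct from $S$, which is immediate in both the infinite and finite cases of the construction.
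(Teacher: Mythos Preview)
Your proposal is correct and follows exactly the route the paper intends: the paper's own proof consists of a single sentence pointing to Lemmas~\ref{lem:perfectimage} and~\ref{lem:localfree}, and you have faithfully unpacked that reference by using the construction of the approximating sequences $T_i$ from the former and transferring $r$-local isomorphisms through free products via the latter. Your added observation that injectivity of $\Phi$ follows from the ``only if'' direction of Lemma~\ref{lem:localfree} together with injectivity of $\Psi$ is a clean way to verify that the approximating points $\Phi(T_i)$ are genuinely distinct from $\Phi(S)$.
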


		\begin{proof}[Proof of \Cref{thm:manyclasses}]
			Fix $L$ as in \Cref{thm:n4}. 
			Thus $G_L(S)$ is of type $FP$ and has homological Dehn function $n^4$ for every $S$.  
			Since $\FA_H\succeq n^4$ \Cref{prop:freeproducts} implies that the homological Dehn function of $H\ast G_L(S)$ is $\FA_H$ for all $S$. Then $H\ast G_L(S)$ has homological filling function $\FA_H$ by \Cref{prop:freeproducts}.  
			It follows from \Cref{cor:perfectsubsets} that $(H\ast G_L(S), X\sqcup E(L))$ is a perfect subset of $\mathcal{G}_k$ for some $k$ with a dense subset of finitely presented groups. 
			Thus we can apply Corollary 1.2 of \cite{MOW} to see that $H\ast G_L(S)$ form uncountably many quasi-isometry classes of groups. 
		\end{proof}

	\begin{corollary}\label{cor:denseset}
		There are uncountably many quasi-isometry classes of groups of type $FP$ with the homological Dehn function $n^{\alpha}$, where $\alpha = 2 \log_2({2p}/{q})$ with $p,q\ge1$ arbitrary integers such that ${p}/{q}\geq 2$. 	
		In particular, there is a countable dense subset of exponents $\alpha\in[4, \infty)$, containing all even integers $\alpha\ge4$, such that there are uncountably many groups of type $FP$ with the homological filling function $n^{\alpha}$.
	\end{corollary}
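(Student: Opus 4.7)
The plan is to reduce to \Cref{thm:manyclasses} by producing, for each pair of integers $p,q \geq 1$ with $p/q \geq 2$, a finitely presented group $H_{p,q}$ of type $FP$ whose homological Dehn function satisfies $\FA_{H_{p,q}}\simeq n^\alpha$ for $\alpha = 2\log_2(2p/q)$. For $H_{p,q}$ I would take the snowflake groups of Brady--Bridson from their work on the isoperimetric spectrum: these are finitely presented groups with $\delta_{H_{p,q}} \simeq n^\alpha$, arising as fundamental groups of compact non-positively curved piecewise euclidean $2$-complexes. Each $H_{p,q}$ is therefore of type $F$, hence of type $FP$, and its presentation $2$-complex has a contractible $\CAT(0)$ universal cover $X_{p,q}$; in particular $H_2(X_{p,q}) = 0$.

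The upper bound $\FA_{H_{p,q}} \preceq n^\alpha$ would follow from \Cref{prop:boundbyDehn}: for $\alpha \geq 1$ the function $n^\alpha$ is already superadditive, so $\bar{\delta}_{H_{p,q}} \simeq \delta_{H_{p,q}} \simeq n^\alpha$. For the matching lower bound I would appeal to \Cref{thm:embeddeddisk}. The lower bound on $\delta_{H_{p,q}}$ in Brady--Bridson is witnessed by a family of snowflake loops $w_n$ of length $\simeq n$, each bounding an embedded snowflake disk diagram in $X_{p,q}$ of area $\simeq n^\alpha$. Because $H_2(X_{p,q}) = 0$, \Cref{thm:embeddeddisk} forces $\HArea(w_n) \simeq n^\alpha$, yielding $\FA_{H_{p,q}} \succeq n^\alpha$.

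With $\FA_{H_{p,q}} \simeq n^\alpha$ and $\alpha \geq 4$, \Cref{thm:manyclasses} directly produces uncountably many quasi-isometry classes of groups whose homological Dehn function is $n^\alpha$. The construction in the proof of \Cref{thm:manyclasses} builds these groups as free products of the form $H_{p,q} \ast G_L(S)$, where $L$ is the flag complex from \Cref{thm:n4}, so each $G_L(S)$ is of type $FP$; since free products of type $FP$ groups are of type $FP$ (as invoked in the proof of \Cref{thm:manyclasses} via Brown's result), every such free product is of type $FP$. The density assertion is then a standalone observation: the map $r \mapsto 2\log_2(2r)$ is a homeomorphism $[2, \infty) \to [4, \infty)$, so the image of the dense set $\Q \cap [2, \infty)$ is a countable dense subset of $[4, \infty)$, and every even integer $2k \geq 4$ arises by taking $q = 1$ and $p = 2^{k-1}$.

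The main obstacle in writing the proof in full is verifying, by a precise citation of Brady--Bridson, that their lower-bound snowflake loops bound \emph{embedded} disk diagrams in $X_{p,q}$, so that \Cref{thm:embeddeddisk} applies and upgrades the $\delta$-lower bound to an $\FA$-lower bound. This embeddedness is intrinsic to the snowflake construction, but must be pulled out explicitly; once it is in hand, the rest of the argument is a straightforward bookkeeping reduction to \Cref{thm:manyclasses}.
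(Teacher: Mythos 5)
Your proposal is correct and follows essentially the same route as the paper: cite Brady--Bridson's snowflake groups, get the upper bound from \Cref{prop:boundbyDehn} via superadditivity of $n^\alpha$, get the lower bound from \Cref{thm:embeddeddisk} using the embeddedness of the snowflake disk diagrams, and then invoke \Cref{thm:manyclasses} with the constraint $p/q\ge2$ to ensure $n^\alpha\succeq n^4$. The one point you flag as needing verification — that the snowflake lower-bound loops bound embedded diagrams — is precisely what the paper asserts about the methods of \cite{snowflake}, so your caution is well placed but resolves in the expected way.
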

	\begin{proof}
		It was proved in \cite{snowflake} that there are groups with Dehn function $n^{\alpha}$ where $\alpha = 2 \log_2({2p}/{q})$, for arbitrary $p/q\ge1$.
		The methods used rely on embedded disks to show that the lower bound is achieved and thus we can use \Cref{thm:embeddeddisk} to see that the homological Dehn function is bounded below by $n^{\alpha}$. 
		Since $n^{\alpha}$ is superadditive, we see by \Cref{prop:boundbyDehn} that the homological filling function is also bounded above by $n^{\alpha}$. 
		The result now follows from \Cref{thm:manyclasses}, where to ensure that $n^\alpha\succeq n^4$, we must require $p/q\ge2$.
	\end{proof}

	\medskip
	It is also worth noting that the proof of~\Cref{cor:denseset} requires the exponent to be at least~$4$. However, we see no reason why this should be the general case and ask the following question: 
	\begin{question}	
		Are there uncountably many groups with the homological Dehn function $n^{\alpha}$ for a dense set of $\alpha\in [2, 4)$?
		Are there uncountably many groups with the homological Dehn function $n^2$? 
	\end{question}	
	Notice that answering the latter question would also answer the former one using the work of \cite{snowflake} in a similar way as we did  in the proof of~\Cref{cor:denseset}. 
	
	Finally, it would be of interest to find examples where the set $S$ plays a more prominent role in the calculation of the homological Dehn function. Doing so leads to the possibility of answering the following question: 
	
	\begin{restatable}{question}{Quncount}
		Are there uncountably many homological Dehn functions? 
	\end{restatable}

	Recall that the homological isoperimetric spectrum is the set 
	\[
	\operatorname{HIP} = \{\alpha\mid n^{\alpha} \text{ is the homological Dehn function of a group of type $FP_2$} \}.
	\]
	It would be of great interest to understand this set better.
	
	\smallskip
	It is known that for finitely presented groups the (analogously defined) isoperimetric spectrum has a gap between $\alpha=1$ and $\alpha=2$, both for Dehn functions and for homological Dehn functions. For Dehn functions this is a theorem of Gromov~\cite{Gro87} with detailed proofs given later in \cite{Ols91,Bow95,Pap}. For homological Dehn functions this follows from works~\cite[Thm.\,3.3]{Fle} and~\cite[Thm.\,A.1]{Ger98}. Remarkably, Fletcher's proof can be adapted to the homological Dehn functions of groups of type $FP_2$. Indeed, he works with surface diagrams for the so-called triangular presentations, in which all relators have length three. We can apply his arguments to the \emph{triangular homological finite presentations}, i.e.\ such homological finite presentations $\langle \mathcal{A}\mid\mid R_0\rangle$ in which each element of $R_0$ has length $3$. It is easy to see that by adding extra generators one can always ensure that any group of type $FP_2$ has a triangular finite homological presentation. Examining the proof of Theorem 3.3 of \cite{Fle}, we obtain:
\begin{theorem} 
Let $G$ be a group of type $FP_2$ with the homological Dehn function which is subquadratic. Then $G$ is hyperbolic. In particular, $\operatorname{HIP}\subset \{1\}\cup [2, \infty)$. \qed
\end{theorem}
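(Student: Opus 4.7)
The plan is to adapt Fletcher's proof of \cite[Thm.\,3.3]{Fle}, originally stated for finitely presented groups, to the $FP_2$ setting by replacing presentation $2$-complexes with homological Cayley complexes built from \emph{triangular} homological finite presentations. Throughout, for $G$ of type $FP_2$ with $\FA_G$ subquadratic, I would first fix a triangular homological finite presentation $\la\A\mid\mid R_0\ra$, i.e.\ one with $|r|=3$ for every $r\in R_0$. Such a presentation is obtained from an arbitrary homological finite presentation (existing by \Cref{prop:equiv}) by introducing, for every longer relator $r=a_1a_2\cdots a_k$, auxiliary generators $b_2,\ldots,b_{k-1}$ and triangular relators $b_2=a_1a_2$, $b_i=b_{i-1}a_i$ for $i<k$, together with the triangle $b_{k-1}a_{k-1}a_k$; by \Cref{prop:indeppres} this does not affect $\FA_G$ up to $\simeq$.

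Next I would translate Fletcher's surface-diagram machinery into the language of $2$-chain fillings. In a triangular presentation an integral $2$-chain $c=\sum a_i\sigma_i$ filling a $1$-cycle $\gamma$ can be encoded by a branched ``triangulated surface with boundary $\gamma$'': take $|a_i|$ oriented copies of each triangle $\sigma_i$ and glue them along pairs of cancelling oriented edges dictated by the fact that $\partial c=\gamma$. The number of triangles in this combinatorial object equals $\mass(c)$, and an optimal filling corresponds to a minimum-mass such surface. Running Fletcher's argument on these triangulated surfaces — together with the subquadratic hypothesis on $\FA_G$ — yields, for any large $1$-cycle, a short ``chord'' splitting the filling into sub-fillings of strictly smaller perimeter, whose perimeters sum with controlled slack. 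Iterating this in the style of Papasoglu's proof of the Gromov gap theorem produces a linear upper bound $\FA_G(n)\preceq n$.

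Finally, once $\FA_G$ is linear, I would invoke Gersten's theorem \cite[Thm.\,A.1]{Ger98}, which asserts that a group of type $FP_2$ with a linear homological isoperimetric inequality is hyperbolic (and in particular finitely presented). This establishes the first assertion. The inclusion $\operatorname{HIP}\subset\{1\}\cup[2,\infty)$ then follows immediately: by \Cref{prop:compact}-style lower bounds any $\FA_G\succeq n$, and if $\FA_G(n)\simeq n^\alpha$ with $\alpha<2$ the preceding dichotomy forces $\FA_G$ to be linear, so $\alpha=1$.

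The main obstacle will be the translation step. In the finitely presented case Fletcher works with planar disk or surface diagrams whose van Kampen-style combinatorics (each interior edge shared by exactly two faces, well-defined dual graphs, etc.) drive the cut-and-paste surgeries. Replacing them by integral $2$-chains requires verifying that every combinatorial move used in Fletcher's proof — subdividing a filling along a short arc, pairing cancelling boundary pieces, and estimating sub-diagram perimeters — is compatible with arbitrary integer coefficients and returns genuine $2$-sub-chains whose total mass sums correctly. The triangular assumption is what makes this workable: because every $2$-cell has exactly three boundary edges, the combinatorial geometry of a minimum-mass $2$-chain is rigid enough to carry the same arguments as in Fletcher's triangulated-surface setting, and no chain-level ``folding'' can decrease mass below what is predicted by the surface model.
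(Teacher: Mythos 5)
Your proposal follows essentially the same route as the paper: pass to a triangular homological finite presentation by introducing auxiliary generators (the paper asserts this is always possible for an $FP_2$ group), then adapt Fletcher's surface-diagram argument for triangular presentations to obtain the subquadratic-implies-linear dichotomy, and conclude hyperbolicity as in \cite[Thm.\,A.1]{Ger98}. The paper in fact gives only this sketch (delegating the adaptation to ``examining the proof of Theorem 3.3 of \cite{Fle}''), so your branched-triangulated-surface encoding of integral $2$-chains is a reasonable and more explicit way of carrying out the very step the paper leaves implicit.
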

Here a function $f\colon [0,\infty)\to[0,\infty)$ is called \emph{subquadratic}, if $\lim_{n\to\infty}f(n)/n^2=0$.
 
	By the argument as in the proof of \Cref{cor:denseset}, it follows from \cite{snowflake} that 
	\[
	\overline{\operatorname{HIP}} = \{1\}\cup [2, \infty).
	\]
	
	\vspace*{-1ex}
	\Quncountii

\end{document}